%% file: main3.tex
\documentclass[11pt]{amsart}
\usepackage{amssymb}
\usepackage{amsmath}
\usepackage{amsopn}
\usepackage{graphicx}
\usepackage{mathrsfs}
\usepackage{amsmath}
\usepackage{amsfonts}
\usepackage{mathtools}
\usepackage{amsthm}
\usepackage{comment}
\usepackage{mwe}
\usepackage{tikz}
\usepackage{pstool}
\usepackage{psfrag}
\usepackage{epstopdf}
\usepackage{pstricks}
\usepackage{tikz}
\usetikzlibrary{shapes,backgrounds,decorations}
\usepackage{tikz-cd}
\usetikzlibrary{matrix,arrows,decorations.pathmorphing}
\usepackage{textcomp}     
\usetikzlibrary{matrix,arrows}
\usepackage{todonotes}
\usepackage{marginnote}
\usepackage{lettrine}
\usepackage{booktabs}
\usepackage[all]{xy}
\usepackage{enumitem}
\usepackage{latexsym}
\usepackage{eepic}
\usepackage{epsfig}
\usepackage{pst-node}
\usepackage{lipsum}
\usepackage{hyperref}
\hypersetup{
  colorlinks   = true, 
  urlcolor     = black, 
  linkcolor    = blue, 
  citecolor   = red 
}
\usepackage[scale=0.75]{geometry}
\usepackage{amscd}
\usepackage{mathrsfs}
\usepackage[english]{babel}

\usepackage[utf8]{inputenc}
\usepackage[%
    style=alphabetic,sorting=nyt,
    maxnames=10,
    sortcites=true,doi=false,
    firstinits=true,hyperref,backend=bibtex]{biblatex}
    \renewbibmacro{in:}{%
      \ifentrytype{article}{}{\printtext{\bibstring{in}\intitlepunct}}}
\usepackage[babel]{csquotes}
\usepackage{guit}
\DeclareFieldFormat
[article,inbook,incollection,inproceedings,patent,thesis,unpublished]
  {title}{{#1\isdot}}
\DeclareFieldFormat{pages}{#1}

\usepackage{cancel}
\AtEveryBibitem{%
\ifentrytype{book}{
    \clearfield{url}%
    \clearfield{pages}
    \clearfield{isbn}
    \clearfield{urldate}%
    \clearfield{review}%
    \clearfield{series}
}{}
\ifentrytype{article}{
    \clearfield{url}%
    \clearfield{issn}
    \clearfield{doi}
    \clearfield{isbn}
    \clearfield{urldate}%
    \clearfield{review}%
}{}
\ifentrytype{collection}{
    \clearfield{url}%
    \clearfield{urldate}%
    \clearfield{review}%
    \clearfield{series}
}{}
\ifentrytype{incollection}{
    \clearfield{url}%
    \clearfield{urldate}%
    \clearfield{review}%
    \clearfield{series}
}{}
}

\allowdisplaybreaks
\usepackage[capitalise,noabbrev]{cleveref}

\numberwithin{equation}{section}

\newtheoremstyle{plain2}    
   {}            
   {}            
   {\itshape}    
   {}            
   {\bfseries}   
   {.}           
   {5pt plus 1pt minus 1pt}  
   {{\thmnumber{#1} \thmname{#2}{\thmnote{ (#3)}}}}          
\theoremstyle{plain2}

\newtheorem{theorem}[equation]{Theorem}

\newtheorem{cor}[equation]{Corollary}
\newtheorem{claim}[equation]{Claim}
\newtheorem{lemma}[equation]{Lemma}
\newtheorem{prop}[equation]{Proposition}

\newtheorem{defi}[equation]{Definition}

\newtheorem{prop/def}[equation]{Proposition/Definition}

\newtheoremstyle{definition2}    
   {}
   {}
   {\normalfont}
   {}
   {\bfseries}
   {.}
   {5pt plus 1pt minus 1pt}
   {{\thmnumber{#1} \thmname{#2}{\thmnote{#3}}}}
\theoremstyle{definition2}
\newtheorem{rem}[equation]{Remark}

\newtheorem{example}[equation]{Example}

\newtheorem*{theorem*}{Theorem}

\newtheoremstyle{stepstyle}
   {}     {}
   {\normalfont}
   {\parindent}
   {\itshape}
   {}
   {5pt plus 1pt minus 1pt}
   {{\thmname{#1} \thmnumber{#2}:{\thmnote{#3}}}}
\theoremstyle{stepstyle}

\newtheoremstyle{point}
   {}     {}
   {\normalfont}
   {}
   {\bfseries}
   {}
   {5pt plus 1pt minus 1pt}
   {{\thmname{#1}(\thmnumber{#2})\thmnote{ #3.}}}
\theoremstyle{point}

\input{macros.tex}
\title{Monge--Ampère measures on balanced polyhedral spaces}

\date{\today}

\thanks{The authors gratefully acknowledge the hospitality of the \emph{Mathematisches Forschungsinstitut Oberwolfach}, where part of this work was conducted. A.\,M.\,Botero was funded by the Deutsche Forschungsgemeinschaft (DFG, German Research Foundation) – Project-ID 491392403 – TRR 358. L. Pille-Schneider was supported by the SFB 1085 Higher Invariants (Universitaet Regensburg, funded by the DFG)}

\author[Botero]{Ana Mar\'ia Botero}
\address{Bielefeld University \\ Universit\"atsstra{\ss}e 25  \\ 133615 Bielefeld  \\ Germany }
\email{\href{mailto:abotero@math.uni-bielefeld.de}{abotero@math.uni-bielefeld.de}}

\author[Mazzon]{Enrica Mazzon}
\address{Centre de Mathématiques Laurent Schwartz, Ecole Polytechnique, CNRS, Institut Polytechnique de Paris, Palaiseau, France}
\email{e.mazzon15@alumni.imperial.ac.uk}

\author[Pille-Schneider]{Léonard Pille--Schneider}
\address{Fakultaet fuer Mathematik \\
Universitaet Regensburg \\Regensburg \\Germany}
\email{leonard.pille-schneider@mathematik.uni-regensburg.de}

\begin{document}
\maketitle
\begin{abstract}
We study classes of convex functions on balanced polyhedral spaces and establish various structural properties, including a compactness theorem for polyhedrally plurisubharmonic functions. Using tropical intersection theory, we construct Monge--Ampère measures, first associated with piecewise affine functions, and then we extend it to polyhedrally plurisubharmonic functions.

We investigate polyhedral Monge--Ampère equations on balanced polyhedral spaces via a variational approach, providing sufficient conditions for the existence of solutions as well as explicit counterexamples. Finally, we relate our framework to non-archimedean pluripotential theory and explore its connection with the non-archimedean Monge--Ampère equation.
\end{abstract}

\tableofcontents

\section{Introduction}

Pluripotential theory is a branch of complex analysis in several variables that studies plurisubharmonic functions and their properties. It extends classical potential theory, which deals with harmonic and subharmonic functions in the setting of one complex variable. It has proven to be a strong tool in complex geometry with several applications, notably to the study of the complex Monge--Ampère equations in relation to the Calabi--Yau problem, but also in other areas of research of complex algebraic geometry (see for instance \cite{BEGZ,BHJ17,BHJ19}), non-archimedean and Arakelov theory (see \cite{CLD,BK,yuan-zhang}), and mirror symmetry \cite{LiNA}. 

The main goal of this paper is to develop an analogous pluripotential theory on \emph{balanced polyhedral spaces} and to investigate Monge--Ampère equations in this context, building on the variational approach of Boucksom, Favre, and Jonsson in the non-archimedean setting.

\subsection{Convex functions on polyhedral spaces}
A polyhedral space $\tX \subseteq \R^n$ is a finite union of polyhedra, called faces; it is said to be balanced if one can assign a strictly positive real weight to each face of maximal dimension, in such a way that these weights satisfy a balancing condition (Definition \ref{def:balanced-poly}).
Pluripotential theory on $\tX$ studies spaces of convex-type functions on $\tX$. Since $\tX$ is generally not a convex subset of $\R^n$, several notions of convexity may be considered; see for instance \cite{BBS}. Of particular relevance for our purposes are the \emph{polyhedrally convex} functions.
A function $\phi \colon \tX \to \R$ is polyhedrally convex (in short $\PC$) if 
$
\phi(x) \leq \sum_{i \in I}\alpha_i \phi(x_i)
$
for any polyhedrally convex combination $\left(x, (x_i)_{i \in I}, (\alpha_i)_{i \in I}\right)$ in $\tX$; such a combination is a special type of convex combination $x=\sum_i \alpha_i x_i$ that respects the polyhedral structure of $\tX$. 

$\PC$ functions on $\tX$ may be viewed as a natural generalization of convex functions on $\R^n$. Indeed, polyhedral convexity is a local property, it implies continuity, and the restriction of a $\PC$ function to any face of $\tX$ is convex in the classical sense. Nevertheless, $\PC$ functions on $\tX$ do not, in general, extend to convex functions on $\R^n$, which makes this class of functions particularly interesting to study.

Within the space of $\PC$ functions, we single out those that are moreover piecewise affine (abbreviated $\PA$), namely functions that, after possibly refining the faces of $\tX$, are affine on each face. One may view $\PA$ functions as the polyhedral analogue of smooth functions in the complex setting. In particular, piecewise affine polyhedral convex (in short $\PAPC$) functions allow us to introduce the following spaces.
Fix a reference $\PA$ function $\gamma$, playing a role analogous to that of a reference line bundle in algebraic geometry. We define $\PSH(\tX,\gamma)$ and $\PCreg(\tX,\gamma)$ as the spaces of pointwise limits, respectively uniform limits, of decreasing sequences of $\PAPC$ functions which grow as $\gamma$, that is, functions $\phi$ such that $\phi=\gamma+O(1)$. We refer to such functions as plurisubharmonic and regularizable, respectively. 
These spaces satisfy the inclusions 
$$\PCreg(\tX, \gamma) \subseteq \PSH(\tX, \gamma) \subseteq \PC(\tX),$$
and are closed under taking maxima and convex combinations. Moreover, $\PSH$ functions satisfy the following compactness property.

\begin{theorem}[Theorem~\ref{thm:comp}]\label{th:comp-intro}
The space $\PSH(\tX, \gamma)/\R$ is compact for the topology of pointwise convergence.
\end{theorem}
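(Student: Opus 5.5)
The plan is to prove compactness in three steps: a uniform Lipschitz-type bound for normalized functions, an Arzelà--Ascoli extraction on compact subsets of $\tX$, and a closedness argument showing the limit stays in $\PSH(\tX,\gamma)$. Throughout we normalize by fixing a base point $x_0 \in \tX$ and identifying $\PSH(\tX,\gamma)/\R$ with the set of $\phi \in \PSH(\tX,\gamma)$ such that $\sup_{\tX}(\phi - \gamma) = 0$. An equivalent normalization is $\phi(x_0)=0$, and either one will do once we know $\phi - \gamma$ is bounded.

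\textbf{Step 1: uniform bounds.} We show that the normalized family is locally equicontinuous and satisfies a uniform two-sided bound $-C \le \phi - \gamma \le 0$ on each compact set.

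For the upper side, use that $\phi \in \PC(\tX)$ restricts to a convex function on each face. Since $\tX$ is balanced, every point lies in the relative interior of a polyhedrally convex combination of points of maximal faces, so the convexity inequality controls $\phi$ in terms of its values on any given compact set.

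For the lower side, take $x$ and write it, via polyhedrally convex combinations, as $x=\sum\alpha_i x_i$. The defining inequality $\phi(x)\le\sum\alpha_i\phi(x_i)$ bounds $\phi(x)$ from above. To bound $\phi(x)$ from below, reverse the roles: express a point where $\phi-\gamma$ is close to $0$ as a combination involving $x$. This gives $\phi(y)\le \alpha\phi(x)+(1-\alpha)\phi(z)$, and hence a lower bound on $\phi(x)$ depending only on the geometry and on $\gamma$.

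Convexity on each face together with these local uniform bounds then gives a uniform Lipschitz bound on compact subsets of the relative interiors. Continuity of $\PC$ functions up to the boundary of faces comes from the polyhedral structure, where each face is a polytope or polyhedron that we refine so that $\gamma$ is affine on it.

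\textbf{Step 2: extraction.} By Arzelà--Ascoli on an exhaustion of $\tX$ by compact sets, plus a diagonal argument, we extract a subsequence converging locally uniformly to a function $\phi$. Since $\tX$ has finitely many faces, convergence on recession directions is handled by the growth bound $\phi=\gamma+O(1)$, whose constant is uniform by Step 1. Along the way we check that $\PC$ passes to pointwise limits, because the defining inequality is closed.

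\textbf{Step 3: closedness (main obstacle).} The hardest part is showing the limit lies in $\PSH(\tX,\gamma)$, i.e.\ that it is a decreasing limit of $\PAPC$ functions growing like $\gamma$, rather than merely being in $\PC(\tX)$ with $\phi-\gamma$ bounded. A general net in $\PSH$ need not be decreasing, so the plan is to show that $\PSH(\tX,\gamma)$ is closed under pointwise limits, in two parts:
\begin{enumerate}
\item Closure under decreasing limits, which is immediate from the definition.
\item Closure under the regularized $\limsup$ of a sequence, via $\psi_j := \sup_{k\ge j}\phi_k$. Each $\psi_j$ should be in $\PSH$ using closure under maxima, and the $\psi_j$ decrease to the limit.
\end{enumerate}
For the sup over infinitely many functions, we use that an increasing limit of $\PSH$ functions that is uniformly bounded relative to $\gamma$ and continuous (by the equicontinuity of Step 1) lies in $\PCreg$. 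This holds by Dini's theorem on compacts, combined with approximating each $\max(\phi_j,\dots,\phi_k)$ from above by $\PAPC$ functions.

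I expect this increasing-limit regularization to be the delicate point. If the machinery is available, I would instead use a characterization of $\PSH(\tX,\gamma)$ as the $\PC$ functions $\phi$ with $\phi-\gamma$ bounded above together with a local $\gamma$-convexity condition. That characterization is closed under pointwise limits, which would make this step formal.
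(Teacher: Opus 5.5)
Your architecture (uniform bounds on compacts, then Arzel\`a--Ascoli with a diagonal extraction) matches the paper's. However, Step~1 carries all the technical weight there (Proposition~\ref{prop:equi PAPC}), and your sketch of it has two gaps.

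\emph{The lower bound.} The normalization $\sup_{\tX}(\phi-\gamma)=0$ gives nothing on a compact set unless the supremum is attained in a fixed compact region. The paper gets this from Lemma~\ref{lem:decr}: $\psi\le\psi\circ r_{\Pi'}$, because $\psi=\phi-\gamma$ is convex and bounded above on unbounded faces. By convexity on bounded faces, the supremum is then attained at a vertex of $\Pi$. Your ``reverse the roles'' step uses a point $y$ that depends on $\phi$. Polyhedral convex combinations are local: the $x_i$ must lie in faces containing a face through $y$. So reaching an arbitrary $x\in K$ needs a chain of such combinations with $\phi$-independent weights. The paper builds this chain by propagating from that vertex along bounded edges of $\Sk(\Pi)$, using a fixed strictly positive $1$-cycle that exists because $\Pi$ is chosen quasi-projective (Lemma~\ref{lem: equi bound vertices}). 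Relatedly, for $\phi\in\PSH(\tX,\gamma)$ the difference $\phi-\gamma$ is only bounded above, so the uniform $\phi=\gamma+O(1)$ you invoke in Step~2 is false.

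\emph{Equicontinuity at lower-dimensional faces.} Convexity on faces plus local bounds gives Lipschitz bounds only on compact subsets of relative interiors. Continuity of each individual $\PC$ function does not make the family equicontinuous at boundary points. On $\tX=\R$, the functions equal to $1$ on $(-\infty,0]$ and to $\max(0,1-nt)$ on $[0,\infty)$ are convex on both faces and uniformly bounded, yet not equicontinuous at $0$. What excludes them is the $\PC$ inequality across faces, used quantitatively. This is the paper's Lemma~\ref{lem: equi bound derivatives}: a uniform lower bound on $D_v\psi(w)$ at boundary points, obtained by pairing with a strictly positive cycle and inducting on the dimension of faces of $\Sk(\Pi)$. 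Without it, Arzel\`a--Ascoli on compact sets meeting lower-dimensional faces is unjustified. Since the statement is about pointwise convergence, you could instead extract on each of the finitely many relative interiors. A real-valued pointwise limit of $\PC$ functions is $\PC$, hence continuous by \cite{BBS}; the two-sided local bound is still needed for this.

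Step~3 goes beyond the paper, whose proof has no separate closedness step. After proving continuity of $\phi\mapsto\sup_{\tX}(\phi-\gamma)$, it only extracts a subsequence converging uniformly on compacts (Proposition~\ref{prop:equi PSH} and Arzel\`a--Ascoli). Your closedness argument does not work as written, for three reasons.
\begin{enumerate}
\item Functions in $\PSH(\tX,\gamma)$ are only bounded \emph{above} relative to $\gamma$. For instance, if $\gamma$ is $\PAPC$ and positive at infinity, then $0\in\PSH(\tX,\gamma)$ while $0-\gamma\to-\infty$. So $\sup_{k\ge j}\phi_k$ is not in $\PCreg(\tX,\gamma)$ in general.
\item More seriously, Dini on compacts gives only locally uniform convergence. $\PAPC$ approximants that are good on compacts cannot then be assembled into a \emph{globally} decreasing sequence. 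Adding summable constants needs uniform control on all of $\tX$, which is lost at infinity. On $\tX=\R$ with $\gamma=|x|$, the functions $\max((1-1/j)|x|,|x|-1)$ increase to $|x|$ locally uniformly but stay at sup-distance $1$ from it.
\item Your fallback $\PSH(\tX,\gamma)=\{\phi\in\PC(\tX):\phi\le\gamma+O(1)\}$ is Corollary~\ref{cor:envpsh}, which the paper proves only under the regularity property, not assumed here.
\end{enumerate}
Even closure under decreasing limits is not ``immediate from the definition''. It goes through the truncations $\max(\phi_j,\gamma'-t)\in\PCreg(\tX,\gamma)$ with $\gamma'\in\PAPC(\tX,\gamma)$. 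These are \emph{uniformly} approximable, which is what makes the diagonal argument work.
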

Theorem~\ref{th:comp-intro} relies on an equicontinuity result for $\PSH(\tX,\gamma)$ functions proved in Section~\ref{sec:equi}. As we shall see later, Theorem \ref{th:comp-intro} is a key ingredient in the variational approach to solving a polyhedral analogue of the Monge--Ampère equation.

\subsection{The polyhedral Monge--Ampère operator}
Given a balanced polyhedral space $\tX$ of dimension $d$, the polyhedral Monge--Ampère measure of a $\PA$ function is a discrete measure on $\tX$, defined using tropical intersection theory. More precisely, let $\phi$ be a $\PA$ function and let $\Pi$ be a collection of faces such that $\phi$ is affine on each face of $\Pi$. The measure $\PMA(\phi)$ is supported on the zero-dimensional faces $v$ of $\Pi$, with weights $a_v$ given by the intersection product
\[
f^d \cdot [\tX] = \sum_{v \in \Pi^{(0)}} a_v v;
\]
see Section \ref{sec:intersection}.
Our second main result extends the polyhedral Monge--Ampère operator from the space of $\PA$ functions to $\PCreg(\tX,\gamma)$, for $\gamma$ positive at infinity (Definition \ref{def:posinf}) which we will assume for the rest of the introduction.

\begin{theorem}[Theorem \ref{thm:PMA PCreg}]
There exists a unique operator
    \[
    \PMA \colon (f_1, \ldots, f_d) \mapsto \PMA(f_1, \ldots, f_d) 
    \]
which assigns to any $d$-tuple in $\PCreg(\tX,\gamma)$ a positive Radon measure on $\tX$ of total mass $\deg(\gamma)$ (Definition \ref{def:degree}), extends the polyhedral Monge--Ampère operator on $\PAPC$ functions, and is continuous along decreasing sequences of regularizable $\PC$ functions. 
\end{theorem}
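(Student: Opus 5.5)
Uniqueness is immediate: by definition every element of $\PCreg(\tX,\gamma)$ is the uniform limit of a decreasing sequence of $\PAPC$ functions with growth $\gamma$. So any operator that extends the $\PAPC$ one and is continuous along decreasing sequences is determined by its values on $\PAPC$ tuples. The work is therefore existence. I would follow the Bedford--Taylor / Chambert-Loir--Ducros strategy and define
\[
\PMA(f_1,\ldots,f_d) := \lim_{k\to\infty} \PMA(f_{1,k},\ldots,f_{d,k}),
\]
where $f_{i,k}$ are $\PAPC$ with $f_{i,k}-\gamma=O(1)$ and $f_{i,k}\searrow f_i$ uniformly. Here $\PMA(f_{1,k},\ldots,f_{d,k})$ is the mixed measure given by the tropical intersection product $f_{1,k}\cdots f_{d,k}\cdot[\tX]$ from Section~\ref{sec:intersection}. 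I then need to show three things: this limit exists weakly, it is independent of the chosen sequences, and it has the stated mass and positivity.

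\textbf{Step 1 (the $\PAPC$ case).} For $\PAPC$ functions with $\gamma$-growth I would establish:
\begin{itemize}
\item Positivity of the mixed measure, using local convexity at vertices: on the star of a vertex, the intersection product computes a mixed volume of the local Newton polytopes.
\item Total mass $\deg(\gamma)$. The intersection number depends only on the recession (asymptotic) behaviour, and this is exactly where $\gamma$ positive at infinity and Definition~\ref{def:degree} enter.
\item Symmetry and multilinearity.
\end{itemize}

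\textbf{Step 2 (the key estimate).} The main tool is an integration-by-parts identity. If $u$ is a $\PA$ function that is bounded (a difference of two $\PAPC$ functions with the same $\gamma$-growth), then for any bounded continuous test function $\psi$ I want
\[
\Big|\int \psi\, \PMA(u+g, g_2,\ldots,g_d)-\int \psi\,\PMA(g,g_2,\ldots,g_d)\Big|\le C(\psi)\,\|u\|_\infty .
\]
I would prove this first for $\psi$ a $\PA$ function that is compactly supported or of bounded slope. The route is the tropical Stokes/symmetry formula $\int \psi \, (u\cdot T) = \int u\,(\psi\cdot T)$, where $T=g_2\cdots g_d\cdot[\tX]$ is a balanced $1$-cycle. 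This formula needs balancedness together with the vanishing of boundary terms at infinity; the latter I expect from $u$ being bounded and $\psi$ being eventually affine along the unbounded rays.

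\textbf{Step 3 (convergence and independence).} Apply the Step 2 estimate one slot at a time (telescoping over the $d$ slots). It shows that $\int\psi\,\PMA(f_{1,k},\ldots,f_{d,k})$ is Cauchy whenever the sequences converge uniformly. Since the total masses are constant (Step 1), density of $\PA$ test functions among continuous compactly supported ones gives weak convergence to a positive Radon measure. Interleaving two approximating sequences gives independence of the choice.

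\textbf{Step 4 (mass and continuity).} Mass preservation in the limit needs tightness: no mass escaping to infinity. I would get this by using a test function equal to $1$ on a large compact set and controlling the mass near infinity through the growth of $\gamma$. This step, together with the boundary terms in Step 2, is the one I expect to be hardest. My first attempt would be to show that outside a large compact set every $f_{i,k}$ agrees up to a bounded error with $\gamma$ in a way that makes the measure there computed purely by $\gamma$, which by positivity at infinity contributes nothing. Finally, continuity along decreasing sequences $f_{i,j}\searrow f_i$ in $\PCreg$ comes from a diagonal argument. By Dini-type equicontinuity (Section~\ref{sec:equi}) the convergence is locally uniform, and the same Step 2 estimate then applies.
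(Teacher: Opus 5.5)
\textbf{The gap is in Step 4: the mechanism you propose there cannot work.} The growth condition $f_{i,k}-\gamma=O(1)$ holds on all of $\tX$, and by itself it localizes nothing. A single $\PA$ function may coincide with $\gamma$ off some compact set, but that set need not stay bounded as $k$ varies. Moreover, $\PMA(\phi)$ for $\phi\in\PCreg(\tX,\gamma)$ can have non-compact support. For instance, take $\tX=\R$ and $\gamma=2|x|$. The function $f_c=\max(2|x|,x+c)\in\PAPC(\tX,\gamma)$ equals $\gamma$ off $[-c/3,c]$, yet $\PMA(f_c)$ has an atom of mass $1$ at $x=c$. The only thing controlling the position of that atom is $\sup_{\tX}|f_c-\gamma|=c$.

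You also misplace where positivity at infinity enters. $\PMA(\gamma,\ldots,\gamma)$ is finitely atomic, so it vanishes far out for any $\gamma$. In Step 1, the mass $\deg(\gamma)$ of $\PAPC$ tuples holds for every $\gamma$ by the argument of Lemma \ref{lem:degree}.

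What is actually needed is a quantitative first-moment bound. Iterate the integration by parts formula of Lemma \ref{lem:integral-formulae} slot by slot, with test functions $\max(\gamma-j,0)\in\PAPC(\tX,\gamma)$; this is where positivity at infinity is used. For $\phi_i\in\PAPC(\tX,\gamma)$ this gives
\[
\int_{\tX}\min(\gamma,j)\,\PMA(\phi_1,\ldots,\phi_d)\le \int_{\tX}\max(\gamma,0)\,\PMA(\gamma)+2d\deg(\gamma)\max_i\sup_{\tX}|\phi_i-\gamma|,
\]
which is the computation in the proof of Proposition \ref{prop:suppMA}. Since $\gamma\to+\infty$, Chebyshev's inequality bounds the mass of $\{\gamma>R\}$ by $O(1/R)$, uniformly along your approximating sequences. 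That is the tightness Step 4 needs.

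The paper organizes this differently. It tests against $\PA_b$ functions that extend continuously to the compactification $\overline{\tX}_\Pi$; these are dense in $\mathcal{C}^0(\overline{\tX}_\Pi)$ by Proposition \ref{prop:bounded-dense}. The construction therefore happens on a compact space, and the mass $\deg(\gamma)$ passes to the limit by testing against $1$. The moment bound is used only afterwards, to show the limit does not charge $\overline{\tX}_\Pi\setminus\tX$.

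\textbf{Three smaller repairs are also needed.}

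First, in Step 2 the constant must not depend on $g,g_2,\ldots,g_d\in\PAPC(\tX,\gamma)$, since these move with $k$ in Step 3, and you give no reason for this. It is true. One has $\bigl|\int u\,(\psi\cdot T)\bigr|\le\|u\|_\infty\,|\psi\cdot T|(\tX)$. Write $\psi=(\psi+m\alpha)-m\alpha$ with $\alpha$ strictly convex on a quasi-projective structure carrying $\psi$ (Lemma \ref{lem:PA plus PAPC+}). Then both $(\psi+m\alpha)\cdot T$ and $\alpha\cdot T$ are non-negative, so $|\psi\cdot T|(\tX)\le 2m\deg(\alpha\cdot\gamma^{d-1}\cdot[\tX])$ by the argument of Lemma \ref{lem:degree}.

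Second, the symmetry $\int\psi\,(u\cdot T)=\int u\,(\psi\cdot T)$ requires $\psi$ bounded, i.e.\ of slope zero on unbounded faces. An eventually affine $\psi$ with non-zero slope produces boundary terms, already for $\tX=\R$, $\psi(x)=x$ and $u=\max(\min(x,1),0)$.

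Third, locally uniform convergence does not suffice for an estimate in $\|u\|_\infty$ over all of $\tX$. You need that decreasing sequences in $\PCreg(\tX,\gamma)$ converge uniformly on $\tX$. This is Lemma \ref{lem:decreasing cv uniform cv}, proved via the compactification and Dini, not the equicontinuity of Section \ref{sec:equi}.

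With these repairs, your sup-norm Lipschitz estimate is a legitimate alternative to the paper's slot-by-slot induction. The paper defines the measure by forcing the same integration by parts identity, and only needs continuity along decreasing sequences.
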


We establish two comparison results. First, for any $\PCreg$ function $\phi$ and for any maximal-dimensional face of $\tX$, the polyhedral Monge--Ampère measure $\PMA(\phi,\ldots,\phi)$ (which we will simply denote by $\PMA(\phi)$) coincides with the real Monge--Ampère measure of the restriction of $\phi$ to that face; see Proposition \ref{prop:comp MA}. Second, when $\tX$ has dimension one, the operator $\PMA$ on piecewise smooth $\PC$ functions agrees with the (unbounded version of the) Laplacian operator as defined by Baker and Faber in \cite{baker-faber}; see Proposition \ref{prop:laplacian}. 

Finally, we further extend the polyhedral Monge--Ampère operator to the space $\PSH(\tX,\gamma)$ (see Definition~\ref{def:MA-PSH}) and introduce an \emph{energy functional}
$$E\colon \PSH(\tX,\gamma)\to \R \cup \{-\infty\}$$
defined as an antiderivative of the Monge--Ampère operator, in analogy with the complex setting. This leads to the notion of $\gamma$-plurisubharmonic functions with finite energy, denoted by $\mathcal{E}^1(\tX,\gamma)$; see Definition~\ref{def:psh-finite-energy}.

\subsection{The polyhedral Monge--Ampère equation} 
We now turn our attention to the polyhedral Monge–Ampère equation, namely the problem of finding solutions to $\PMA(\cdot)=\mu$ for a given measure $\mu$ on $\tX$. Inspired by the variational approaches developed in \cite{BBGZ} and \cite{BoucksomFavreJonsson2015} in the complex and non-archimedean settings respectively, we introduce a functional $F_\mu$ on the space $\PSH(\tX,\gamma)$ and study conditions on the pair $(\tX,\gamma)$ under which a maximizer of $F_\mu$ yields a solution to the polyhedral Monge–Ampère equation.

The functional $F_\mu$ is defined in terms of the energy functional $E$, and the variational method relies on suitable differentiability properties of $E$, in particular of its composition with the envelope $P_\gamma$ (Definition~\ref{defi:envelope}). To this end, in Sections~\ref{sec:reg} and~\ref{sec:ortho} we introduce two structural properties of the pair $(\tX,\gamma)$, which we call \emph{regularity} and \emph{orthogonality} in analogy with the complex and non-archimedean settings, and prove the following result.
\begin{theorem}[Proposition \ref{prop:MA maxsol}] \label{theo:varintro}
Assume that $(\tX,\gamma)$ has the regularity and orthogonality properties. For any compactly supported Radon measure $\mu$ on $\tX$, the functional $F_{\mu}$ admits a maximizer $\phi \in \PSH(\tX, \gamma)$, which is a solution to $\PMA(\phi) = \mu$. Moreover if $\gamma$ is strictly convex, then $\phi \in \PCreg(\tX,\gamma)$. 
\end{theorem}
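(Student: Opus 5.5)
The approach is the variational method of \cite{BBGZ} and \cite{BoucksomFavreJonsson2015}, run in $\PSH(\tX,\gamma)$. Set
\[
F_\mu(\phi)=E(\phi)-\int_{\tX}(\phi-\gamma)\,d\mu
\]
on $\mathcal{E}^1(\tX,\gamma)$, possibly after fixing the normalisation $\sup_K(\phi-\gamma)=0$ on a compact $K\supseteq\operatorname{supp}\mu$. Since $E(\phi+c)=E(\phi)+c\deg(\gamma)$ and $\mu$ has to be normalised with mass $\deg(\gamma)$ (otherwise no solution exists), $F_\mu$ is invariant under adding constants. So we may work on $\PSH(\tX,\gamma)/\R$, which is compact by Theorem~\ref{thm:comp}. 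The plan has three steps: existence of a maximizer, the Euler--Lagrange equation, and regularity.

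\emph{Existence.} First I show $F_\mu$ is bounded above. Because $\mu$ is compactly supported and $\PSH$ functions are equicontinuous on compacts (Section~\ref{sec:equi}), $\int(\phi-\gamma)\,d\mu$ is controlled by $\sup_K(\phi-\gamma)$. Also $E(\phi)\le \deg(\gamma)\sup(\phi-\gamma)+C$ by monotonicity of $E$. Next I check that $E$ is upper semicontinuous along pointwise limits in $\PSH(\tX,\gamma)$, using monotonicity together with continuity along decreasing sequences, in the standard way via $\sup$-envelopes of tails. The term $\phi\mapsto\int\phi\,d\mu$ is continuous under pointwise convergence, since equicontinuity upgrades pointwise convergence to locally uniform convergence on $\operatorname{supp}\mu$. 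Then a maximizing sequence has a convergent subsequence by compactness, and its limit $\phi$ maximizes $F_\mu$. One also has $F_\mu(\phi)>-\infty$ (test $\phi=\gamma$ after regularization), so $\phi\in\mathcal{E}^1$.

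\emph{Euler--Lagrange equation.} For a compactly supported continuous $f$ (or a $\PA$ function with bounded difference from $\gamma$), consider $t\mapsto E(P_\gamma(\phi+tf))-\int(P_\gamma(\phi+tf)-\gamma)\,d\mu$. Since $P_\gamma(\phi+tf)\le\phi+tf$, this function is bounded above by
\[
g(t)=E(P_\gamma(\phi+tf))-\int(\phi+tf-\gamma)\,d\mu ,
\]
and $g$ attains its maximum at $t=0$. The orthogonality property (Section~\ref{sec:ortho}) is exactly the statement that $\int(\phi+tf-P_\gamma(\phi+tf))\,\PMA(P_\gamma(\phi+tf))=0$. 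Combined with regularity (Section~\ref{sec:reg}), which ensures that envelopes of PA data stay in the class where $E$ and $\PMA$ behave well and approximations work, this gives differentiability of $E\circ P_\gamma$ with derivative $\int f\,\PMA(P_\gamma(\phi+tf))$. Setting $g'(0)=0$ yields $\int f\,\PMA(\phi)=\int f\,d\mu$ for all such $f$, and hence $\PMA(\phi)=\mu$.

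\emph{Regularity and the main obstacle.} When $\gamma$ is strictly convex, I expect to show the maximizer is a uniform decreasing limit of PAPC functions by comparing $\phi$ with envelopes of $\PA$ approximants; strict convexity should give the uniform control at infinity needed for convergence to be uniform and not merely pointwise. The hardest step is the differentiability of $E\circ P_\gamma$. My first attempt is the proof for $\PA$ data $\phi+tf$: bound the difference quotient above and below using concavity of $E$ together with the inequalities $E(\psi)-E(\psi')\lessgtr\int(\psi-\psi')\,\PMA(\cdot)$. The orthogonality relation removes the error term, and the result then extends by monotone approximation using regularity and the continuity statement of Theorem~\ref{thm:PMA PCreg}.
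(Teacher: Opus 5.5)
\textbf{Main gap: the regularity clause.} Your last paragraph does not prove that $\phi\in\PCreg(\tX,\gamma)$ when $\gamma$ is strictly convex. It only states an expectation, and it never uses the equation $\PMA(\phi)=\mu$. Since $\PCreg(\tX,\gamma)=\PC(\tX,\gamma)\cap\PSH(\tX,\gamma)$, the claim is exactly a lower bound $\phi\ge\gamma-C$ on all of $\tX$. Without such a bound, comparing $\phi$ with envelopes of $\PA$ approximants only gives decreasing pointwise convergence, which you already have from $\phi\in\PSH(\tX,\gamma)$.

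The paper gets the bound from Proposition~\ref{prop:boundedsol}. The maximizer lies in $\mathcal{E}^1(\tX,\gamma)$, and $\PMA(\phi)=\mu$ is carried by the compact set $K=\operatorname{supp}\mu$. For $\phi_j\in\PAPC(\tX,\gamma)$ decreasing to $\phi$, Dini's lemma gives $\phi_j\le\phi+\varepsilon$ on $K$ for $j\gg1$. The domination principle (Proposition~\ref{prop:dom}), applied to the $\PCreg$ function $\phi_j-\varepsilon$ and the finite-energy function $\phi$, then propagates this inequality to all of $\tX$.

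The domination principle rests on two inputs. One is the capacity estimate of Lemma~\ref{lem:cap}, which comes from the comparison principle in $\mathcal{E}^1$ (Proposition~\ref{prop:comparisonE}). The other is Lemma~\ref{lem:cap open}: every nonempty open set has positive capacity. That lemma is exactly where strict convexity of $\gamma$ enters. Around any rational point $x$ of a maximal face one builds a perturbation $\gamma+\varepsilon f$ with $\gamma-1\le\gamma+\varepsilon f\le\gamma$ whose Monge--Ampère measure charges $x$. So strict convexity is not used for ``control at infinity''. It rules out nonempty open sets that are invisible to all Monge--Ampère measures, which forces the zero-capacity open set $\{\phi+\varepsilon'<\phi_j-\varepsilon\}$ to be empty. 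This idea is missing from your sketch.

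\textbf{The Euler--Lagrange step.} This follows the paper's route. You use $h(t)=E(P_\gamma(\psi+tv))-\int(\psi+tv)\,\mu$ with $\psi=\phi-\gamma$; in the paper's convention the envelope of $\phi+tv$ is $P_\gamma(\psi+tv)$. Differentiability of $E\circ P_\gamma$ is proved first at bounded $\PA$ data and then extended by approximation.

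Your sandwich argument for the $\PA$ case is valid. For $\phi_t=P_\gamma(u+tv)$ it gives $t\int v\,\PMA(\phi_t)\le E(\phi_t)-E(\phi_0)\le t\int v\,\PMA(\phi_0)$, using orthogonality at $u$ and at $u+tv$ together with continuity of $\PMA$ under uniform convergence. It is in fact shorter than the paper's proof of Theorem~\ref{th:diffEP}, which goes through $\mu(\Omega_t)=O(t)$ via the comparison principle and an auxiliary strictly convex function.

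Three corrections are needed.
\begin{enumerate}
\item Your inequality runs the wrong way. From $P_\gamma(\psi+tf)\le\phi+tf$ one gets $g(t)\le F_\mu(P_\gamma(\psi+tf))\le F_\mu(\phi)=g(0)$, and that is why $g$ peaks at $t=0$.
\item Orthogonality is assumed only for bounded $\PA$ data, so it is not available at $\psi+tf$ itself. The directions $f$ should also be taken in $\PA_b(\tX)$, which suffices by Proposition~\ref{prop:bounded-dense}.
\item In the approximation step the limits $P_\gamma(\psi+sv)$ lie in $\mathcal{E}^1(\tX,\gamma)$ but in general not in $\PCreg(\tX,\gamma)$; that is precisely the regularity question above. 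So $P_\gamma(u_j+sv)\searrow P_\gamma(\psi+sv)$ is not uniform, and the continuity in Theorem~\ref{thm:PMA PCreg} does not apply. You need continuity of $E$ along decreasing sequences (Lemma~\ref{lem:energy}) and of $\PMA$ along decreasing sequences with finite-energy limit (Proposition~\ref{prop:MAdecr}). Apply them to the integrated identity $E(P_\gamma(u_j+tv))=E(P_\gamma(u_j))+\int_0^t\int v\,\PMA(P_\gamma(u_j+sv))\,ds$, as in Corollary~\ref{cor:diff}.
\end{enumerate}
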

The regularity property implies in particular the equality $\PC(\tX,\gamma)=\PCreg(\tX,\gamma)$; see Theorem~\ref{th:regularisation}. This property is satisfied in dimension one (Proposition \ref{prop:env PA}), by complete polyhedral spaces, and by conical polyhedral spaces where one considers conical $\PC$ functions; see Remark \ref{rem:regularity}. The validity of the orthogonality property appears to be more delicate: we exhibit counterexamples already in dimension one, as well as in the special case of Bergman fans when $\gamma$ is not strictly convex; see Section \ref{sec:counter-example}. While such properties cannot be expected to hold for arbitrary balanced polyhedral spaces, determining the extent to which regularity and orthogonality are satisfied, and clarifying their geometric meaning, therefore appears as a natural problem that is likely to play a central role in the further development of pluripotential theory in the polyhedral setting.

In this direction, we introduce in dimension one the notion of a polyhedrally smooth space (Definition \ref{def:smooth}), show that the orthogonality property holds within this class (Proposition~\ref{prop:dim1 ortho}), thus prove the following result.
\begin{theorem}[Theorem \ref{theo:soldim1}]
Assume that $\tX$ is a connected, one-dimensional, polyhedrally smooth, balanced polyhedral space. For any positive at infinity, $\PAPC$ function $\gamma$ and any compactly supported Radon measure $\mu$ on $\tX$, there exists a unique (up to additive constant) function $\phi \in \PSH(\tX,\gamma)$ solving the Monge--Ampère equation $\PMA(\phi) = \mu.$ Moreover if $\gamma$ is strictly convex, then $\phi \in \PCreg(\tX,\gamma)$. 
\end{theorem}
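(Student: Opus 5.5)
The plan is to deduce existence from Theorem~\ref{theo:varintro} (Proposition~\ref{prop:MA maxsol}) and to prove uniqueness separately. Existence therefore reduces to checking that, in the present one-dimensional setting, the pair $(\tX,\gamma)$ has both the regularity and the orthogonality properties.

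\textbf{Verifying the hypotheses.} Regularity in dimension one is given by Proposition~\ref{prop:env PA}. It uses that $\gamma$ is $\PAPC$ and positive at infinity, so that the envelope $P_\gamma$ of a $\PA$ function is again $\PA$ and hence regularizable. Orthogonality for polyhedrally smooth spaces is Proposition~\ref{prop:dim1 ortho}. Here connectedness should guarantee that the envelope is well behaved globally. It also ensures that $\PSH(\tX,\gamma)/\R$ is the right normalisation for the compactness statement of Theorem~\ref{thm:comp}.

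\textbf{Existence.} With both properties in hand, Proposition~\ref{prop:MA maxsol} gives a maximizer $\phi\in\PSH(\tX,\gamma)$ of $F_\mu$ satisfying $\PMA(\phi)=\mu$. The same proposition gives $\phi\in\PCreg(\tX,\gamma)$ when $\gamma$ is strictly convex. One point needs checking: the total masses must match. If $F_\mu$ requires $\mu(\tX)=\deg(\gamma)$, this is the implicit normalisation, since $\PMA$ of anything in the class has mass $\deg(\gamma)$.

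\textbf{Uniqueness.} This is the step not covered by earlier results and the main obstacle. In dimension one, $\PMA$ is a linear operator. By Proposition~\ref{prop:laplacian} it is the Baker--Faber Laplacian, extended to $\PSH$ by continuity along decreasing sequences. Suppose $\phi_1,\phi_2\in\PSH(\tX,\gamma)$ both solve the equation. Then $u=\phi_1-\phi_2$ is bounded, since both functions are $\gamma+O(1)$. Moreover, $\PMA(u)=\PMA(\phi_1)-\PMA(\phi_2)=0$ as a signed measure, so $u$ is harmonic on the connected metric graph $\tX$.

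Next I would exploit the unbounded edges. On each unbounded ray, $u$ is affine (harmonicity in the interior of an edge) and bounded, hence constant there. This means $u$ has zero outgoing slope at infinity. A maximum principle argument then finishes the proof. Take a point where $u$ attains its maximum; it exists because $u$ is continuous and constant outside a compact set. Harmonicity, meaning the weighted sum of outgoing slopes vanishes by the balancing condition, forces all slopes at that point to be $0$. Propagating this along the connected graph shows $u$ is constant.

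The delicate point is justifying linearity of $\PMA$ on differences of $\PSH$ functions. This requires that, in dimension one, $\PMA(\phi)$ equals the distributional Laplacian of $\phi$ computed edgewise with weights. I would prove this by approximating with decreasing $\PAPC$ sequences, for which the identity is Proposition~\ref{prop:laplacian}, and passing to the limit using continuity of $\PMA$ along decreasing sequences together with weak convergence of second derivatives of convex functions on each edge.
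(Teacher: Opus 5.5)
Your route is the paper's. Existence comes from Propositions~\ref{prop:MA maxsol}, \ref{prop:env PA} and \ref{prop:dim1 ortho}, with the $\PCreg$ statement resting on Proposition~\ref{prop:boundedsol}. Uniqueness comes from reducing to a bounded $u$ with $\PMA(u)=0$, which is affine on edges, constant on unbounded edges, and then constant by an extremum principle plus connectedness.

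The step that does not hold as written is the boundedness of $u=\phi_1-\phi_2$. You justify it by ``both functions are $\gamma+O(1)$'', but an element of $\PSH(\tX,\gamma)$ only satisfies $\phi\le\gamma+O(1)$ (Proposition~\ref{prop:gamma-psh}(ii)). Likewise, your remark that $\PMA$ of anything in the class has mass $\deg(\gamma)$ is only true on $\PCreg(\tX,\gamma)$ or on $\mathcal{E}^1(\tX,\gamma)$ (Lemma~\ref{lem:total-mass-finite-energy}). Everything after that step depends on it: constancy on unbounded edges, existence of a maximum point, and the linearity of $\PMA$ you flag as delicate. The paper's proof also passes over this point silently when it reduces to $f\in\mathcal{C}^0_b(\tX)$.

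In dimension one the gap closes, and this is exactly where $\mu(\tX)=\deg(\gamma)$ is needed for uniqueness, not only for existence. Beyond $\operatorname{supp}\mu$, each solution $\phi_i$ is affine on the tail of every unbounded edge $e$. This follows from Proposition~\ref{prop:comp MA} applied to $\phi_i^{\langle t\rangle}$ on $\{\phi_i>\gamma-t\}$, via Remark~\ref{rem:MApsh}. The tail slope satisfies $a_e\le s_e$, where $s_e>0$ is the slope of $\gamma$ on $e$. If $a_e<s_e$, the kink of $\max(\phi_i,\gamma-t)$ on $e$, of mass $[\tX](e)(s_e-a_e)$, escapes to infinity and is cut off by $\mathbf{1}_{\{\phi_i>\gamma-t\}}$ in Definition~\ref{def:MA-PSH}. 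Hence $\PMA(\phi_i)(\tX)=\deg(\gamma)-\sum_e[\tX](e)(s_e-a_e)$, the sum running over unbounded edges, and full mass forces $a_e=s_e$ everywhere. So $\phi_i\in\PC(\tX,\gamma)=\PCreg(\tX,\gamma)$ and $u$ is bounded. Linearity is then immediate from \eqref{equ:formula PMA} with $d=1$: $\int\psi\,\bigl(\PMA(\phi_1)-\PMA(\phi_2)\bigr)=\int u\,\PMA(\psi)$ for all $\psi\in\PA_b(\tX)$.

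Without the normalisation, uniqueness genuinely fails. Take $\tX=\R$ with weight $1$, $\gamma=2|x|$ (so $\deg(\gamma)=4$) and $\mu=2\delta_0$. For $\alpha\in\{-1,0,1\}$, set $\phi_\alpha=|x|+\alpha x$. Each $\phi_\alpha$ is the decreasing limit of $\max(\phi_\alpha,\gamma-j)$, so it lies in $\PSH(\tX,\gamma)$. All three satisfy $\PMA(\phi_\alpha)=2\delta_0$, yet no two differ by a constant.
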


This naturally leads to the question of whether the notion of polyhedrally smoothness can be extended so as to ensure both regularity and orthogonality in higher dimensions. Identifying classes of polyhedral spaces satisfying appropriate smoothness conditions thus remains an interesting problem for future investigation. 

\subsection{Relation to the non-archimedean Monge–Ampère equation} 

A non-archimedean analogue of complex pluripotential theory was developed by Boucksom, Favre, and Jonsson in \cite{BoucksomFavreJonsson}, building on earlier insights of Kontsevich and Tschinkel \cite{KT}; see also \cite{CLD, non-arch-stab, BurgosGilGublerJellKuennemann2021, BoucksomJonssonc} for further developments. This framework led to a non-archimedean version of the Monge--Ampère equation, for which existence and uniqueness of solutions were established in \cite{BoucksomFavreJonsson2015}; see also \cite{BGJM}.

The non-archimedean Monge--Ampère equation has become a fundamental tool in mirror symmetry, in particular in relation to the SYZ conjecture. The latter concerns degenerating families of Calabi--Yau varieties; we refer to \cite{StromingerYauZaslow} for its original formulation, to \cite{KontsevichSoibelman} for a non-archimedean interpretation, and to \cite{LiNA,LiSurvey} for a detailed discussion of the connections between the complex and non-archimedean settings.
In particular, let $X=(X_t)$ be a maximally degenerate family of Calabi--Yau varieties. Yang Li proved that the solution of a non-archimedean Monge--Ampère equation on the associated non-archimedean space $X^{\an}$ yields a metric version of the SYZ conjecture for $X_t$, provided that the solution is invariant under a non-archimedean retraction map \cite[Theorem 2.2]{LiNA}. In this way, a metric form of the SYZ conjecture is reduced to a non-archimedean problem. This perspective highlights the importance of the study of non-archimedean Monge--Ampère equations and motivates the search for alternative constructions of its solutions.

A first strategy in this direction was adopted in \cite{HultgrenJonssonMazzonMcCleerey2022,Li2024,AndreassonHultgren}, where it is shown that for certain classes of Calabi--Yau hypersurfaces the solution of the non-archimedean Monge--Ampère equation can be reconstructed from the solution of a Monge--Ampère-type equation on a canonical simplicial subset $\Sk(X)$ of $X^{\an}$, known as the essential skeleton. In a different direction, the approach developed in the present paper is based on the tropicalization of $X$, which carries the structure of a polyhedral space. We investigate the relation between solutions of a polyhedral Monge--Ampère equation on the tropicalization and solutions of a non-archimedean Monge--Ampère equation on $X^{\an}$. A precise formulation of this connection is given in Proposition \ref{prop:solMANA} for Calabi--Yau complete intersections in the sense of \cite{Gross2005}. Furthermore, in Example \ref{ex:elliptic} we analyze a case previously considered in \cite{Li2024} in which the author's method does not always apply, whereas the polyhedral framework we introduced allows one to establish the required invariance property.

Finally, we mention the work of Liu on totally degenerate abelian varieties. In \cite{Liu}, the non-archimedean Monge--Ampère equation is solved by reducing it to a real Monge–Ampère equation on a real torus, treated via Yau’s solution of the complex Monge--Ampère equation \cite{yau}, and then relating it to the non-archimedean setting.

\subsection{Future research directions}
From the perspective of applications to the SYZ conjecture, we plan to further investigate the relation between the polyhedral and the non-archimedean settings. In particular, we aim to characterize those polarized families of Calabi--Yau varieties whose tropicalizations satisfy the regularity and orthogonality properties introduced in this work.
In this direction, we mention the upcoming work \cite{APW1}, where the authors develop a theory of mixed differential calculus on abstract tropical varieties, combining both the Chow rings of the local fans and a continuous component. This allows them to define notions of semipositive forms and Monge--Ampère measures, which they can compare to the non-archimedean Monge--Ampère measures. In the sequel \cite{APW2} of this work, the authors study the tropical Monge--Ampère equation, and obtain Theorem \ref{th:comp-intro} and Theorem \ref{theo:varintro} independently. Moreover, they prove the regularity property under a strict positivity assumption on $\gamma$, as well as the uniqueness of the solution of the polyhedral Monge--Ampère equation, in the case where it exists.

We also note that in \cite{mihatsch} Mihatsch developed a tropical intersection theory on balanced polyhedral spaces extending both \cite{AllermannRau2010} and the theory of delta-forms in \cite{delta-forms}; see also \cite{BGK}. A systematic comparison between Mihatsch’s framework and the approach developed here could provide additional insight into the interaction between tropical intersection theory and polyhedral pluripotential theory, and will be the subject of future work.

Classes of $\PSH$ functions with growth conditions and finite energy play a central role in complex analysis, arithmetic geometry, and non-archimedean geometry. In the complex setting, such functions are crucial in the study of regularity properties for solutions of the complex Monge--Ampère equation (see \cite{DoVu,NGL}). In the non-archimedean setting, analogous classes have proved fundamental in the study of $K$-stability \cite{non-arch-stab}. In arithmetic intersection theory, $\PSH$ functions with finite energy and suitable growth conditions have been used to define heights of arithmetic varieties with respect to singular semipositive metrics (see \cite{BK, GY}). Motivated by these developments, we expect that the class $\mathcal{E}^1(\tX,\gamma)$ of polyhedral $\gamma$-$\PSH$ functions with finite energy will provide a natural framework for computing heights of arithmetic varieties endowed with singular semipositive metrics, particularly in situations with rich combinatorial structure where combinatorial techniques can be effectively exploited.

Finally, pluripotential theory on Bergman fans can be viewed as a new contribution to the geometric approach to matroid theory, which has seen substantial progress in recent years (see, e.g., \cite{AHK, matroids-lagrangian}). To the best of our knowledge, polyhedrally convex functions with growth conditions have not been systematically investigated in this context. We expect that the theory developed here will offer new tools to study refined combinatorial properties of matroids via pluripotential theory on their Bergman fans.

\subsection*{Acknowledgements} 
The authors would like to thank Omid Amini, S\'{e}bastien Boucksom, José Ignacio Burgos Gil, Walter Gubler, Mattias Jonsson, Martin Sombra, and Lyuhui Wu for inspiring discussions that contributed to the development of this work.

\section{Preliminaries on polyhedral spaces}

\subsection{Polyhedral complexes}

We recall some definitions from polyhedral and tropical geometry; see for instance \cite{MaclaganSturmfels2015,AllermannRau2010}. Let $N$ be a lattice in a finite dimensional real vector space. Denote by $M$ its dual, and $N_{R} \coloneqq N \otimes_{\Z} R$ for any ring $R$.

\begin{defi}[Rational polyhedron] \label{defi:polyhedral}
    A rational polyhedron in $N_\R$ is a subset $\sigma \subseteq N_\R$ defined by finitely many affine integral equalities and inequalities, i.e., 
    \begin{align*}
    \sigma = \big{\{} n \in N_\R \,| \; & f_1(n)+b_1=\ldots = f_r(n)+b_r=0, \\
    & f_{r+1}(n)+b_{r+1} \geq 0,\ldots, f_N(n) + b_N\geq 0 \big{\}}  
    \end{align*} 
    for some linear forms $f_1, \ldots, f_N \in M$ and $b_i \in \Z$. 
    
    We define 
    \begin{itemize}
        \item $N_{\sigma, \R}$ the linear space spanned by all $x-y$ for $x,y \in \sigma$;
        \item $N_\sigma = N_{\sigma, \R} \cap N$ the induced sublattice, and $M_\sigma$ its dual lattice;
        \item $V_{\sigma, \R}=x+N_{\sigma, \R}$, $x \in \sigma$, the smallest affine subspace of $N_\R$ containing $\sigma$;
        \item the relative interior of $\sigma$, denoted by $\relint(\sigma)$ is the interior of $\sigma$ as a subset of $V_{\sigma, \R}$;
        \item the dimension of $\sigma$ as the dimension of $N_{\sigma, \R}$.
    \end{itemize}
\end{defi}

\begin{defi} [Rational polyhedral complex] 
    A rational polyhedral complex is a finite collection $\Pi$ of rational polyhedra satisfying two conditions: 
    \begin{itemize}
        \item if $\sigma$ is in $\Pi$, then so is any face of $\sigma$; 
        \item if $\sigma$ and $\sigma'$ are in $\Pi$, then $\sigma \cap \sigma'$ is either empty or a face of both $\sigma$ and $\sigma'$. 
    \end{itemize}
     
    The polyhedra of $\Pi$ are called \emph{faces} and we write $\tau \prec \sigma$ whenever $\tau$ is a face of $\sigma$. We denote the set of all $k$-dimensional faces of $\Pi$ by $\Pi^{(k)}$. The union of all faces in $\Pi$ is denoted $|\Pi| \subseteq N_\R$ and called the \emph{support} of $\Pi$.
    
    The \emph{dimension} of $\Pi$ is defined as the maximum of the dimensions of the faces of $\Pi$, and $\Pi$ is \emph{pure dimensional} if every inclusion-maximal face in $\Pi$ has the same dimension. 
\end{defi}

\begin{defi}[Refinement] 
    A \emph{refinement} $\Pi'$ of a rational polyhedral complex $\Pi$ is a rational polyhedral complex such that 
    \begin{itemize}
        \item $|\Pi'|=|\Pi|$
        \item for every $\sigma' \in \Pi'$, there exists $\sigma \in \Pi$ such that $\sigma' \subseteq \sigma$;
    \end{itemize} 
    we write $\Pi' \geq \Pi$.
    Two rational polyhedral complexes are \emph{equivalent} if they admit a common refinement.
\end{defi}

\begin{defi}[Rational polyhedral space]
\label{def:poly} \hfill
\begin{itemize}
    \item Let $\tX \subseteq N_{\R}$ be a topological space. A rational polyhedral complex $\Pi$ is a \emph{rational polyhedral structure on} $\tX$ if $|\Pi| = \tX$. 
    \item A \emph{polyhedral space} $\tX \subseteq N_{\R}$ is a topological space together with an equivalence class of rational polyhedral structures on $\tX$.
    \item A polyhedral space $\tX$ is \emph{of dimension $d$} (respectively of pure dimension $d$) if some (and hence any) representative of the equivalence class of rational polyhedral structures on $\tX$ has dimension $d$ (respectively pure dimension $d$).
\end{itemize}    
\end{defi}
\begin{rem}
From now on, we will only consider rational polyhedral complexes, so that we will omit the word ``rational'' and simply refer to them as polyhedral complexes. Moreover, we will assume our polyhedral complexes are pure dimensional. 
\end{rem}

\begin{defi}[Restriction] 
    For any polyhedral complex $\Pi$ and any subset $S \subseteq N_\R$, the restriction of $\Pi$ to $S$ is the set of polyhedra
    $$
    \Pi|_S \coloneqq \{ \sigma \in \Pi \,|\, \sigma \cap S \neq \emptyset \}.$$
    Note that in general this is not a polyhedral complex, but it is if $S$ a polyhedron.
\end{defi}

Let $\Pi$ be a polyhedral structure on $\tX$, and denote by $\Pi_b$ the subcomplex of bounded faces of $\Pi$. By analogy with the non-archimedean case we write $\Sk(\Pi)\coloneqq \lvert \Pi_b \rvert \subseteq \tX$, and call it the combinatorial skeleton of $\Pi$ as in \cite[Appendix A]{GublerJellKuennemannEtAl}.

\begin{defi} [Simplicial polyhedral complex] 
\label{defi:regular}
    A polyhedron $\sigma \subseteq N_{\R}$ of dimension $k$ is \emph{simplicial} if
    $\sigma = C +\tau ,$
    where $C$ is an $l$-simplex and $\tau$ is a simplicial cone of dimension $(k-l)$. A polyhedral complex is simplicial if all its faces are simplicial. 
\end{defi}

This means that 
$\sigma = \conv(p_0,\ldots,p_l) + \Cone(v_{l+1},\ldots,v_k)$
while having dimension $k$, so that each $x \in \sigma$ has a \emph{unique} expression
\begin{equation}\label{eq:eqn-cone}
x = \sum_{i=0}^l \theta_i p_i + \sum_{j=l+1}^k \lambda_j v_j,
\end{equation}
where $\theta_i, \lambda_j \ge 0$ and $\sum_i \theta_i =1$.

\begin{lemma} \label{lem:regular}
Any polyhedral structure on $\tX$ admits a simplicial refinement.
\end{lemma}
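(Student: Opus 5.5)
Every polyhedron decomposes as a polytope plus its recession cone, so I would triangulate the bounded and unbounded parts separately while keeping the choices compatible across faces. I see two routes; I would pursue the first and keep the second as a fallback.

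\emph{Route via homogenization.} Embed $N_\R$ into $N_\R\times\R$ at height $1$. Send each face $\sigma\in\Pi$ to the closed cone $c(\sigma)=\overline{\R_{\ge0}(\sigma\times\{1\})}$. This cone is rational because $\sigma$ is rational, and its intersection with height $0$ is $\mathrm{rec}(\sigma)\times\{0\}$. The collection $\{c(\sigma)\}$, together with the recession cones $\mathrm{rec}(\sigma)\times\{0\}$ and all their faces, forms a rational fan $\Sigma$; this is the standard homogenization, and the only point to check is that two such cones meet in a common face. I then use the classical fact that every rational fan admits a simplicial refinement without new rays. To do this, order the rays and, proceeding by increasing dimension, pull each cone from its first ray (a pulling triangulation). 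Because the rule depends only on the global order, the refinement is compatible on shared faces. Next, intersect the refined fan with height $1$. A simplicial cone $\Cone(w_0,\dots,w_k)$ has some rays at height $>0$, which rescale to points $p_i$, and the remaining rays at height $0$, which are vectors $v_j$. Its slice is therefore exactly $\conv(p_i)+\Cone(v_j)$, with the $p_i$ affinely independent and the $v_j$ independent modulo the affine span, since the cone was simplicial. These slices form a polyhedral complex refining $\Pi$ with the same support, and each slice has the form required in Definition~\ref{defi:regular}. Rationality is preserved throughout, because the $p_i$ are rational (rays with rational generators, rescaled) and the $v_j$ are integral.

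\emph{Fallback, without homogenization.} First replace $\Pi$ by a refinement in which every face is a polytope plus a cone with a common cone structure. Then perform a pulling refinement directly on the faces $\sigma=P_\sigma+\mathrm{rec}(\sigma)$ by induction on dimension. Take a vertex $p$ of $\sigma$, minimal in a fixed global order. Cone off from $p$ the already-refined faces of $\sigma$ not containing $p$: each such piece is $\conv(q_0,\dots,q_l)+\Cone(v)$, and coning gives $\conv(p,q_0,\dots,q_l)+\Cone(v)$. This piece is simplicial whenever $p$ lies outside the affine span of the piece, which holds because $p$ lies outside the supporting face.

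The main obstacle is the compatibility and well-definedness step. Refinements of different faces must agree on common faces, which is why I would choose the global pulling order. I also need the pieces to cover $\sigma$, including its unbounded directions. Concretely, every $x\in\sigma$ must lie in some $\conv(p,y)$ with $y$ in a face not containing $p$, or in $p+\mathrm{rec}(\sigma)$ handled through faces at infinity. In the homogenized picture this reduces to the standard fact that pulling triangulations of cones cover the cone. That is why I favor the first route.
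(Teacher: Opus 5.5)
Your first route is the paper's own proof. You homogenize, subdivide the resulting cones simplicially without new rays (this is the content of \cite[Theorem 11.1.7]{CoxLittleSchenck2011}, which the paper cites), and slice at height one. Your description of the slices as $\conv(p_i)+\Cone(v_j)$ is correct.

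One step is false as stated: the closed cones $c(\sigma)$, together with the $\rec(\sigma)\times\{0\}$, need not form a fan. If $\sigma\cap\sigma'=\emptyset$, then $c(\sigma)\cap c(\sigma')=(\rec(\sigma)\cap\rec(\sigma'))\times\{0\}$, and this need not be a face of either cone. For example, in $\R^3$ the quadrant $\{z=0,\ x,y\ge 0\}$ and the disjoint cone $\{z=1,\ 0\le x\le y\}$ (with their faces) form a polyhedral complex. Yet $\Cone(e_2,e_1+e_2)$ is not a face of $\Cone(e_1,e_2)$. This is the Burgos--Sombra phenomenon \cite{BurgosSombra2010} that the paper recalls in its remark on recession pseudo-fans; the paper's own proof passes over it as well.

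The repair is short, and your pulling construction almost gives it.
\begin{itemize}
\item When $\sigma\cap\sigma'=\tau\neq\emptyset$, one has $\rec(\sigma)\cap\rec(\sigma')=\rec(\tau)$. Hence $c(\sigma)\cap c(\sigma')=c(\tau)$, a common face, so improper overlaps occur only at height $0$.
\item The pulling triangulation for a global order of rays is defined cone by cone and restricts to the pulling triangulation of every face. So the triangulations of $c(\sigma)$ and $c(\sigma')$ agree on $c(\tau)$.
\item Consequently the height-one slices meet in common faces, and no global fan is needed.
\end{itemize}
Alternatively, first refine $\Pi$ so that $\rec(\Pi)$ is a fan. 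Then the homogenization really is a fan, and \cite[Theorem 11.1.7]{CoxLittleSchenck2011} applies directly.

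Finally, pulling, like the cited theorem, needs pointed cones, and $c(\sigma)$ is pointed only if $\sigma$ contains no line. So if some face of $\Pi$ contains a line, first intersect $\Pi$ with a complete simplicial fan.
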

\begin{proof}
    Let $\Pi$ be a polyhedral structure on $\tX \subseteq N_{\R}$, and let $\Sigma$ be the fan in $N_{\R} \times \R_{\ge 0}$ whose cones are cones over $\tau \times \{1\}$ whenever $\tau$ is a face of $\Pi$. Let $\tilde{\Sigma}$ be a simplicial subdivision of $\Sigma$ (as fans, see e.\,g.\,\cite[Theorem 11.1.7]{CoxLittleSchenck2011}), then $\Pi' \coloneqq \tilde{\Sigma} \cap (N_{\R} \times \{1 \})$ is the required simplicial refinement.
\end{proof}

\begin{defi}[Retraction]\label{def:retraction}
    A simplicial cone has a canonical retraction onto its maximal bounded face, given explicitly by $r(x) = \sum_i \theta_i p_i$ using the expression in \eqref{eq:eqn-cone}. If $\Pi$ is a simplicial polyhedral structure on $\tX$, these glue naturally into a continuous retraction
    $$r_{\Pi} \colon \tX \to \Sk(\Pi).$$
\end{defi}

\begin{defi}[Recession cone and recession pseudo-fan]\label{def:recession} \hfill
\begin{enumerate}
\item
If $\sigma \subset N_{\mathbb{R}}$ is a polyhedron, its
\emph{recession cone} is
\[
\operatorname{rec}(\sigma)
 \;\coloneqq\;
 \{\, v \in N_{\mathbb{R}}
      \mid x + t v \in \sigma \text{ for all } x\in\sigma,\ t \ge 0 \,\}.
\]

\item
The \emph{recession pseudo-fan} of $\tX$ with respect to a polyhedral structure $\Pi$, denoted $\operatorname{rec}(\Pi)$, is the
collection of recession cones of all faces of $\Pi$:
\[
\operatorname{rec}(\Pi)
 \;\coloneqq\;
 \{\, \operatorname{rec}(\sigma) \mid \sigma \in \Pi \,\}.
\]
\end{enumerate}
\end{defi}

\begin{rem}
The recession pseudo-fan $\rec(\Pi)$ is not a polyhedral complex in general: it may fail to be closed under taking faces or intersections (see \cite[Proposition 3.15]{BurgosSombra2010}). However, one can always find a refinement $\Pi' \geq \Pi$, such that $\rec(\Pi')$ is a fan. 
\end{rem}

\subsection{Compactifications of polyhedral spaces}\label{subsec:compactification}

\begin{defi}\label{def:compactification} Let $\Pi$ be a polyhedral structure on $\tX$ such that the recession pseudo-fan $\rec(\Pi)$ is a fan, and let $\Sigma$ be a fan in $N_{\R}$ containing $\on{rec}(\Pi)$. The \emph{compactification $\overline{\tX}_\Pi$ of $\tX$ with respect to $\Pi$} is the closure of $\tX$ inside the tropical toric variety $N_{\Sigma}$. 
\end{defi}

We will prove that $\overline{\tX}_\Pi$ is indeed compact and independent of the choice of $\Sigma$.
We refer to \cite[Section 4.1]{BurgosPhilipponSombra} or \cite[Section 3]{Paynea} for the definition of the topological space $N_{\Sigma}$. In particular, we recall that $N_{\Sigma}$ admits a natural stratification indexed by cones of $\Sigma$ such that, for any $\gamma \in \Sigma$, the corresponding stratum $N(\gamma)$ is identified with the quotient vector space $N_{\R}/N_{\gamma, \R}$, equipped with the quotient topology.

\begin{lemma}\label{lem:closure}
Let $\Pi$, $\Sigma$ and $\overline{\tX}_\Pi$ as in Definition \ref{def:compactification}. 
Let $\sigma \subset \Pi$ be an unbounded face.
If the closure $\overline{\sigma} \subset N_{\Sigma}$ meets the stratum $N(\gamma)$ corresponding to a cone $\gamma \in \Sigma$, then
\[
\gamma \subseteq \rec(\sigma).
\]
\end{lemma}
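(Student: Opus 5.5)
We will reduce the statement to the usual description of limits in a tropical toric variety. Recall that for $\gamma \in \Sigma$ the open affine chart $N_{\gamma}$ of $N_\Sigma$ containing the stratum $N(\gamma)$ is $\operatorname{Hom}_{\mathrm{monoid}}(\gamma^\vee \cap M, \R\cup\{+\infty\})$ (with the sign conventions of \cite{Paynea}). A point $x \in N_\R \subseteq N_{\Sigma}$ corresponds to the monoid morphism $m \mapsto \langle m, x\rangle$. A point $p \in N(\gamma)$ is a morphism with $p(m) = +\infty$ exactly for $m \in \gamma^\vee\cap M$ with $m \notin \gamma^\perp$, and $p(m)<\infty$ for $m \in \gamma^\perp$. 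The topology is that of pointwise convergence in $\R\cup\{+\infty\}$.

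\textbf{Step 1 (sequence).} Assume $p \in \overline{\sigma}\cap N(\gamma)$. Since $N_\Sigma$ is first countable, there is a sequence $x_k \in \sigma$ with $x_k \to p$. Fix $m \in \gamma^\vee\cap M$ lying in the relative interior of $\gamma^\vee$, in the sense that $m$ vanishes on $\gamma$ only at $0$ when paired with $\gamma$. More generally, for every $m \in \gamma^\vee \setminus \gamma^\perp$ we get $\langle m, x_k\rangle \to +\infty$, while for $m \in \gamma^\perp$ the values $\langle m, x_k\rangle$ stay bounded. In particular $x_k$ is unbounded, and after passing to a subsequence we may assume $|x_k|\to\infty$ and $x_k/|x_k| \to v$.

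\textbf{Step 2 (the limiting direction lies in $\rec(\sigma)$).} This is the standard fact that limits of $x_k/|x_k|$ with $x_k \in \sigma$ lie in $\rec(\sigma)$. It follows from the inequality description of Definition \ref{defi:polyhedral}: dividing $f_i(x_k)+b_i\ge 0$ by $|x_k|$ gives $f_i(v)\ge 0$, and similarly the equalities give $f_i(v)=0$. However, a single direction $v$ only yields a ray, and we need all of $\gamma$. So the argument has to be refined.

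\textbf{Step 3 (main obstacle: getting all of $\gamma$).} Write $\sigma = P + \rec(\sigma)$ with $P$ a polytope, by Minkowski--Weyl, so that $x_k = p_k + w_k$ with $p_k$ bounded and $w_k \in \rec(\sigma)$. The conditions from Step 1 then transfer to $w_k$: we have $\langle m, w_k\rangle\to\infty$ for $m\in\gamma^\vee\setminus\gamma^\perp$, and $\langle m,w_k\rangle$ bounded for $m\in\gamma^\perp$.

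Now suppose $\gamma\not\subseteq \rec(\sigma)$. The key input is that $\rec(\Pi)$ is a subfan of $\Sigma$, so $\rec(\sigma)\in\Sigma$ and $\tau\coloneqq \gamma\cap\rec(\sigma)$ is a common face, proper in $\gamma$. Choose $m\in\gamma^\vee\cap M$ with $m^\perp\cap\gamma=\tau$. Choose also $m'\in \rec(\sigma)^\vee\cap M$ cutting out $\tau$ on $\rec(\sigma)$. Because $\gamma$ and $\rec(\sigma)$ are cones of a fan, separation gives a single $u\in M$ with $u\ge 0$ on $\gamma$, $u\le 0$ on $\rec(\sigma)$, and $u^\perp\cap\gamma = u^\perp\cap\rec(\sigma)=\tau$.

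Then $u\in\gamma^\vee\setminus\gamma^\perp$, so $\langle u,x_k\rangle\to+\infty$. On the other hand, $\langle u,x_k\rangle=\langle u,p_k\rangle+\langle u,w_k\rangle\le C$, since $p_k$ is bounded and $\langle u,w_k\rangle\le 0$. This contradiction shows $\gamma\subseteq\rec(\sigma)$.

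The delicate point is producing the separating $u$ with the right sign conventions, which is the usual separation lemma for two cones of a fan meeting in a common face.
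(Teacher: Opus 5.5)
Your proof is correct, but it takes a different route from the paper's. The paper writes $\sigma = C+\rec(\sigma)$ and quotes \cite[Lemma 3.9]{OssermanRabinoff2013}, which says that $\overline{\sigma}$ meets $N(\gamma)$ if and only if $\rec(\sigma)$ meets $\relint(\gamma)$. Since $\rec(\sigma)$ and $\gamma$ are cones of the same fan, $\gamma\cap\rec(\sigma)$ is then a face of $\gamma$ meeting $\relint(\gamma)$, hence equal to $\gamma$.

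You instead prove the required implication directly from the chart $\Hom(\gamma^\vee\cap M,\R\cup\{+\infty\})$. The separation lemma for the cones $\gamma,\rec(\sigma)\in\Sigma$ gives $u\in M$ with two properties. First, $u$ is nonnegative and not identically zero on $\gamma$, so $u\in\gamma^\vee\setminus\gamma^\perp$ and $\langle u,x_k\rangle\to+\infty$ along any sequence in $\sigma$ converging to a point of $N(\gamma)$. Second, $u$ is nonpositive on $\rec(\sigma)$, so it is bounded above on $\sigma=P+\rec(\sigma)$. These contradict each other.

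The fan hypothesis enters both arguments through the same fact: $\gamma\cap\rec(\sigma)$ is a common face of the two cones. The separation lemma simply repackages this as a single lattice functional. The paper's version is a one-line citation and gives the full ``if and only if'' description of which strata $\overline{\sigma}$ meets, which is more than this lemma needs. Yours is self-contained, uses only the definition of the topology on $N_\Sigma$, and makes the mechanism explicit: one functional is bounded above on $\sigma$ but forced to $+\infty$ near $N(\gamma)$.

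For a clean write-up, set aside the trivial case $\gamma=\{0\}$ first. You can also delete Step~2, the sentence about $m$ in the relative interior of $\gamma^\vee$, and the auxiliary functionals $m,m'$ in Step~3, since none of them is used in the final contradiction.
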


\begin{proof}
Write $\sigma = C + \tau$, where $C \subset N_{\mathbb{R}}$ is a bounded polytope and $\tau \coloneqq \rec(\sigma)$. By \cite[Lemma 3.9]{OssermanRabinoff2013}, $\overline{\sigma}$ intersects the stratum $N(\gamma)$ if and only if $\tau$ intersects the relative interior of $\gamma$. As $\tau$ and $\gamma$ are cones of the same fan $\Sigma$, this implies that $\gamma\subseteq \tau$.
\end{proof}

\begin{cor} \label{cor:compactification}
The space $\overline{\tX}_\Pi$ does not depend on the choice of $\Sigma$ in Definition \ref{def:compactification}. In particular, $\overline{\tX}_\Pi$ is compact.
\end{cor}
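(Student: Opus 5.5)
The plan is to describe the closure of each face explicitly and show that it depends only on $\rec(\sigma)$ and not on the ambient fan $\Sigma$. Fix two fans $\Sigma, \Sigma'$ containing $\rec(\Pi)$. Since $\rec(\Pi)$ is itself a fan, it contains every cone of $\rec(\Pi)$, so it suffices to compare an arbitrary $\Sigma \supseteq \rec(\Pi)$ with $\Sigma_0 \coloneqq \rec(\Pi)$. The inclusion of fans $\Sigma_0 \subseteq \Sigma$ induces an open embedding $N_{\Sigma_0} \hookrightarrow N_\Sigma$. This is the union of the strata $N(\gamma)$ for $\gamma \in \Sigma_0$, with the topology induced from $N_\Sigma$.

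Since $\tX$ is a finite union of faces, $\overline{\tX} = \bigcup_{\sigma \in \Pi} \overline{\sigma}$, and it is enough to treat each face $\sigma$ separately. By Lemma \ref{lem:closure}, the closure of $\sigma$ in $N_\Sigma$ meets only strata $N(\gamma)$ with $\gamma \subseteq \rec(\sigma)$. Because $\rec(\sigma) \in \Sigma$ and $\Sigma$ is a fan, such $\gamma$ are faces of $\rec(\sigma)$, hence lie in $\Sigma_0$. For bounded faces this is trivial: $\overline{\sigma} = \sigma$ is compact and sits in $N(0)$. Therefore the closure of $\sigma$ in $N_\Sigma$ is contained in the open subset $N_{\Sigma_0}$. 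A set contained in an open subspace has the same closure there as in the ambient space, so the closure of $\tX$ in $N_\Sigma$ coincides with its closure in $N_{\Sigma_0}$. This gives independence of $\Sigma$.

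For compactness, I will use the freedom just established to choose a convenient fan. Complete $\rec(\Pi)$ to a complete fan $\Sigma$; any fan can be extended to a complete one, for instance by \cite[Theorem III.2.8]{CoxLittleSchenck2011}-type completion results or directly by refining a fan covering $N_\R$. Then $N_\Sigma$ is a compact tropical toric variety (\cite[Section 4.1]{BurgosPhilipponSombra}, \cite{Paynea}). The closure $\overline{\tX}_\Pi$ is a closed subset of a compact space, hence compact.

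The main obstacle is the topological step: the open embedding $N_{\Sigma_0} \subseteq N_\Sigma$ with the subspace topology, and the claim that a closure contained in an open subset computes the same closure. The latter is elementary, since the closure of $A$ in an open $U$ equals $\overline{A} \cap U$. The former follows from the construction of $N_\Sigma$ by gluing the affine pieces $N_{\gamma}$ along open inclusions. A further point to check is that a completion of a rational fan exists; if citing it is awkward, one can alternatively show compactness of each $\overline{\sigma} = \overline{C + \tau}$ directly, as the image of the compact set $C \times \overline{\tau}$, with $\overline{\tau}$ compact in the affine toric piece $N_\tau$.
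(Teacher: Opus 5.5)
Your proof is correct and follows the paper's route. Lemma \ref{lem:closure} confines the closure to the strata $N(\gamma)$ with $\gamma \in \rec(\Pi)$, which gives independence of $\Sigma$; your open-embedding argument $N_{\rec(\Pi)} \subseteq N_\Sigma$ just makes this precise, including the topology. Compactness then comes from a complete fan containing $\rec(\Pi)$, for which the paper cites \cite[III, Theorem 2.8]{ewald} rather than Cox--Little--Schenck.

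One caution: your aside ``by refining a fan covering $N_\R$'' does not work as stated, since a refinement generally no longer contains the cones of $\rec(\Pi)$. Your other fallback, showing each $\overline{\sigma}$ is compact as the image of $C \times \overline{\tau}$, is fine.
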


\begin{proof}
Let $\Sigma$ be a fan containing $\rec(\Pi)$. By Lemma \ref{lem:closure}, the closure $\overline{\tX}_\Pi$ of $\tX$ in $N_{\Sigma}$ is the union of the closures of faces of $\tX$ inside the strata $N(\tau)$ for $\tau \in \rec(\Pi)$. This is in particular a subset of $\bigsqcup_{\tau \in \rec(\Pi)} N(\tau)$ and is independent of the choice of $\Sigma$. Moreover, by \cite[III, Theorem 2.8]{ewald}, there exists a complete fan $\Sigma'$ containing $\rec(\Pi)$. Hence the space $\overline{\tX}_\Pi$ is the closure of $\tX$ inside the compact space $N_{\Sigma'}$, and is thus compact.
\end{proof}

\begin{prop} \label{prop:disjoint}
Let $\Pi$, $\Sigma$ and $\overline{\tX}_\Pi$ as above. 
If $\sigma$ and $\sigma'$ are two disjoint unbounded faces in $\Pi$, then their closures in $N_{\Sigma}$ are disjoint.
\end{prop}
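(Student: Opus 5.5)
The plan is to reduce the statement to a combinatorial fact about recession cones and projections. The key input is a description of the boundary of a closed face: for an unbounded face $\sigma \in \Pi$ and a cone $\gamma \in \Sigma$ with $\gamma \subseteq \rec(\sigma)$, I expect
\[
\overline{\sigma} \cap N(\gamma) \;=\; \pi_\gamma(\sigma) \subseteq N_\R/N_{\gamma,\R},
\]
where $\pi_\gamma$ denotes the quotient map. Combined with Lemma \ref{lem:closure}, this says that $\overline{\sigma} = \bigsqcup_{\gamma \in \Sigma,\ \gamma \subseteq \rec(\sigma)} \pi_\gamma(\sigma)$, where the stratum $N(0)$ is $N_\R$ itself and contributes $\sigma$.

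I would justify the displayed equality by writing $\sigma = C + \tau$ with $C$ a bounded polytope and $\tau = \rec(\sigma)$. For the inclusion $\supseteq$: given $x \in \sigma$ and $v$ in the relative interior of $\gamma$, the points $x + tv$ lie in $\sigma$ (since $\gamma \subseteq \tau$) and converge in $N_\Sigma$ to $\pi_\gamma(x) \in N(\gamma)$ as $t \to \infty$. For $\subseteq$: a sequence in $\sigma$ converging in $N_\Sigma$ to a point of $N(\gamma)$ has images under $\pi_\gamma$ converging to that point. Moreover $\pi_\gamma(\sigma)$ is closed, since the image of a polyhedron under a linear map is a polyhedron. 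This is essentially the content of \cite[Lemma 3.9]{OssermanRabinoff2013} and the description of $N_\Sigma$ in \cite{Paynea}, so I would cite it rather than reprove it. This identification is the step I expect to require the most care, mainly in making the topology of $N_\Sigma$ explicit enough.

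Given this, suppose $p \in \overline{\sigma} \cap \overline{\sigma'}$.
\begin{itemize}
\item If $p \in N_\R$, then $p \in \sigma \cap \sigma'$ directly, contradicting disjointness.
\item Otherwise $p \in N(\gamma)$ for some nonzero $\gamma \in \Sigma$. By Lemma \ref{lem:closure}, $\gamma \subseteq \rec(\sigma) \cap \rec(\sigma')$, and by the identification above there are $x \in \sigma$ and $x' \in \sigma'$ with $x - x' \in N_{\gamma,\R}$.
\end{itemize}
Since $\gamma$ is a cone, its linear span is $N_{\gamma,\R} = \gamma - \gamma$, so we may write $x - x' = u - v$ with $u, v \in \gamma$. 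Then $x + v = x' + u$. Because $v \in \gamma \subseteq \rec(\sigma)$ we have $x + v \in \sigma$, and because $u \in \gamma \subseteq \rec(\sigma')$ we have $x' + u \in \sigma'$. This gives a common point of $\sigma$ and $\sigma'$, a contradiction. Hence the closures are disjoint.
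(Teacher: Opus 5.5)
Your proof is correct and uses the same key step as the paper: from $x - x' \in N_{\gamma,\R} = \gamma - \gamma$ with $\gamma \subseteq \rec(\sigma)\cap\rec(\sigma')$, you translate $x$ and $x'$ to a common point of $\sigma$ and $\sigma'$. The only difference is that you run the argument directly in an arbitrary stratum $N(\gamma)$ with $\gamma \subseteq \rec(\sigma)\cap\rec(\sigma')$, whereas the paper first reduces, rather tersely, to the case $\rec(\sigma)=\rec(\sigma')=\tau$ and then checks only the stratum $N(\tau)$; your version is, if anything, more complete.
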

\begin{proof}
It suffices to consider the case
$\operatorname{rec}(\sigma) = \operatorname{rec}(\sigma') = \tau.$
By the Lemma \ref{lem:closure}, $\overline{\sigma}$ and $\overline{\sigma}'$ meet only the
strata $N(\gamma)$ with $\gamma \subseteq \tau$.  It therefore suffices to
show that they do not meet in $N(\tau)$.

The stratum $N(\tau)$ is naturally identified with 
$N_{\mathbb{R}} /
N_{\tau, \R}$.  The image of
$\sigma = C + \tau$ in $N_{\mathbb{R}} / N_{\tau, \R}$ is the projection of
the bounded set $C$, hence a bounded polytope; similarly for $\sigma'$.
If the closures $\overline{\sigma}$ and $\overline{\sigma}'$ met in $N(\tau)$, then the projected images of $\sigma$ and $\sigma'$ in $N_{\mathbb{R}}/N_{\tau, \R}$ would intersect. Thus there exist points $c \in \sigma$ and $c' \in \sigma'$ such that $c - c' \in N_{\tau, \R}$.  Since $\tau - \tau = N_{\tau, \R}$, we may write $c - c' = t_1 - t_2$ for some $t_1, t_2 \in \tau$.  Rearranging, we obtain $c + t_2 = c' + t_1$, but $c+t_2 \in \sigma$ and $c'+t_1 \in \sigma'$, so $\sigma$ and $\sigma'$ intersect, which is a contradiction. Hence $\overline{\sigma}$ and $\overline{\sigma}'$ do not meet in $N(\tau)$.
\end{proof}

We denote by $\PA_b(\overline{\tX}_\Pi)$ the set of bounded piecewise affine functions on $\tX$ which extend continuously to $\overline{\tX}_\Pi$. We have the following density result.

\begin{prop}\label{prop:bounded-dense}
    Let $\Pi$ be a simplicial polyhedral structure on $\tX$, and $\overline{\tX}_\Pi$ the compactification of $\tX$ with respect to $\Pi$. The subset $\PA_b(\overline{\tX}_\Pi)$ is dense in $\mathcal{C}^0(\overline{\tX}_\Pi)$.
\end{prop}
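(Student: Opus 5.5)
The natural route is the Stone--Weierstrass theorem on the compact Hausdorff space $\overline{\tX}_\Pi$; compactness is Corollary~\ref{cor:compactification}, and Hausdorffness holds because $\overline{\tX}_\Pi$ is a subspace of the Hausdorff space $N_{\Sigma}$. The set $\PA_b(\overline{\tX}_\Pi)$ is not an algebra, since products of piecewise affine functions are not piecewise affine. It is, however, a vector space that contains the constants and is stable under $\max$ and $\min$. Indeed, the max of two $\PA$ functions is $\PA$ after a common refinement, and continuity of the extension to $\overline{\tX}_\Pi$ is preserved. So I would use the lattice version of Stone--Weierstrass (Kakutani--Krein): a sublattice of $\mathcal{C}^0(K)$ that contains the constants and separates points is dense. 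This reduces the statement to constructing enough functions in $\PA_b(\overline{\tX}_\Pi)$ to separate any two distinct points $x\neq y$ of $\overline{\tX}_\Pi$.

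To build such functions I would use the retraction from Definition~\ref{def:retraction} together with the barycentric-type coordinates \eqref{eq:eqn-cone}. On a simplicial face $\sigma=\conv(p_0,\dots,p_l)+\Cone(v_{l+1},\dots,v_k)$, the functions $\theta_i$ are affine and bounded. Gluing them over the faces containing a given vertex $p$ gives a ``hat'' function $\theta_p$ on $\tX$ that is $\PA$, bounded, and equal to $\theta_p\circ r_\Pi$. Since $r_\Pi$ should extend continuously to $\overline{\tX}_\Pi$, these hat functions should lie in $\PA_b(\overline{\tX}_\Pi)$. They separate points with different images under $r_\Pi$, but they are constant along the unbounded directions. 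For those directions I would use bounded truncations $\min(\lambda_j, c)$ of the cone coordinates for $c>0$, combined with the hat functions as $\min(\lambda_j,c\,\theta_p)$-type expressions so that the gluing across faces is consistent. Since $\lambda_j\to\infty$ exactly toward the boundary strata, these functions extend continuously to $\overline{\tX}_\Pi$.

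The main obstacle is separating points at infinity and checking continuous extension there. In the stratum $N(\tau)$ with $\tau=\rec(\sigma)$, a boundary point is recorded by the projection of $\sigma$ to $N_\R/N_{\tau,\R}$. In the coordinates \eqref{eq:eqn-cone}, it corresponds to a point of the face $\conv(p_i)+\Cone(v_j : v_j\notin\tau)$ with $\lambda_j=\infty$ for $v_j\in\tau$. I would therefore introduce functions $g=\min(\lambda_j,c)$ for $v_j\notin\tau$, glued by zero outside the star of the relevant cone. I would then verify, using Lemma~\ref{lem:closure} and Proposition~\ref{prop:disjoint}, that closures of different faces meet only as predicted and that $g$ extends continuously.

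Together with the hat functions, and with truncations $\min(\lambda_j,c)$ that distinguish finite from infinite values of $\lambda_j$, these functions separate:
\begin{itemize}
\item points lying in different strata;
\item points of the same stratum lying in closures of disjoint faces;
\item points with different finite coordinates.
\end{itemize}
Once point separation is established, the lattice Stone--Weierstrass theorem gives the density.
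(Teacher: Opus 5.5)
Your argument is sound in outline, but it takes a genuinely different route from the paper.

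\textbf{The paper's route.} The paper never works intrinsically on $\overline{\tX}_\Pi$. It fixes a complete simplicial fan $\Sigma$ containing $\rec(\Pi)$ and extends a given $f\in\mathcal{C}^0(\overline{\tX}_\Pi)$ to $N_\Sigma$ by Tietze. It then only has to show that $\PA_b(N_\Sigma)$ separates points of $N_\Sigma$, since restrictions to $\overline{\tX}_\Pi$ then lie in $\PA_b(\overline{\tX}_\Pi)$. The separating functions are $\psi(t_1)$, where $t_1$ is the coordinate along a ray $\rho_1$ on the cones containing it (zero on the others), and $\psi$ is a piecewise affine bump with compact support in $(0,+\infty)$. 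The support condition makes gluing across cones and extension to the boundary automatic. Points with vanishing first coordinate are handled by a translation.

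\textbf{Your route.} You exploit the simplicial structure of $\Pi$ directly. The hat functions $\theta_p$ and the truncations $\min(\lambda_v,c)$, $c\in\Q_{>0}$, recover all coordinates \eqref{eq:eqn-cone} of a point of $\overline{\tX}_\Pi$ (with $\lambda_v\in[0,+\infty]$), and these coordinates determine the point. So separation is transparent, and you need no ambient complete fan, no Tietze extension and no translation trick.

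\textbf{What each buys.} The price of your route is that you must control the topology of $\overline{\tX}_\Pi$ face by face, which the paper's reduction sidesteps. Concretely you need two facts:
\begin{enumerate}
\item $\overline{\sigma}\cong\conv(p_0,\dots,p_l)\times[0,+\infty]^{k-l}$ in the coordinates \eqref{eq:eqn-cone}, as in the proof of Lemma~\ref{lem:ext};
\item $\overline{\sigma}\cap\overline{\sigma'}=\overline{\sigma\cap\sigma'}$, so that extensions defined on each $\overline{\sigma}$ glue.
\end{enumerate}
Proposition~\ref{prop:disjoint}, which you cite, only covers disjoint faces. Its argument does give the general case once you use $\rec(\sigma\cap\sigma')=\rec(\sigma)\cap\rec(\sigma')$.

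\textbf{Minor points.} The device $\min(\lambda_j,c\,\theta_p)$ is unnecessary. Since $\rec(\Pi)$ is a fan, $\lambda_v$, set to zero on faces whose recession cone does not contain $v$, is already a well-defined continuous piecewise affine function on all of $\tX$. Also, in your description of boundary points, $\tau$ should range over the faces of $\rec(\sigma)$ rather than be $\rec(\sigma)$ itself.
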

\begin{proof}
Let $\Sigma$ be a complete simplicial fan containing $\rec(\Pi)$ as a subfan. We can reduce to the case where $\tX = N_{\R}$ and $\overline{\tX}_\Pi = N_{\Sigma}$. Indeed, let $\phi \in \mathcal{C}^0(\overline{\tX}_\Pi)$, then by the Tietze extension theorem there exists an extension $\phi' \in \mathcal{C}^0(N_{\Sigma})$ of $\phi$. If we can find a sequence $(\phi_k)_k \in \PA_b(N_{\Sigma})$ converging uniformly to $\phi'$, then restricting back to $\overline{\tX}_\Pi$ yields the claim for $\overline{\tX}_\Pi$. 

We now prove the result for $\overline{\tX}_\Pi = N_{\Sigma}$. By the max version of the Stone--Weierstrass theorem, it is enough to show that $\PA_b(N_{\Sigma})$ separates points. Let $x$ and $y$ be two distinct points of $N_\Sigma$. Assume that $x, y$ lie in the closure $\overline{\sigma}$ of the same maximal cone $\sigma$. Since the fan $\Sigma$ is simplicial, we may assume that $\sigma = \R_{\ge 0}^n$ and view $x, y$ as elements of $[0, + \infty]^n$. Since $x \neq y$, up to renumbering the coordinates and switching the roles, we may assume that $x_1 > y_1$. 

Assume first that $y_1 >0$, and let $\psi : [0, + \infty] \to \R$ be a piecewise affine function with compact support contained in $(0, +\infty)$ and $\psi(x_1) \neq \psi(y_1)$. For $t = (t_1,\ldots,t_n) \in \sigma$, we define $\phi(t) = \psi(t_1)$; since the domains of linearity of $\phi$ are of the form $[z, w] + \R_{\ge 0}^{n-1}$, with $z, w$ on the ray $\rho_1 \coloneqq \R_{\ge 0} e_1$, this extends continuously to $\overline{\sigma}$ through the same formula also when $t_i \in [0, \infty]$. Moreover, it is clear that $\phi(x) \neq \phi(y)$.
\\Let $\sigma'$ be a maximal cone of $\Sigma$. If $\sigma'$ doesn't contain $\rho_1$, we extend $\phi$ to $\sigma'$ by zero. Otherwise, $\sigma' = \R_{\ge 0} e_1 +\ldots+ \R_{\ge 0} e_n(\sigma')$ is the sum of its rays, where $e(\sigma') \coloneqq(e_1, e_2(\sigma'),\ldots, e_n(\sigma'))$ is a basis of $N$, and any element $x \in \sigma'$ has coordinates $t'= (t'_1,\ldots,t'_n)$ in this basis. We now set $\phi(x) = \psi(t'_1)$. This extends continuously to the closure of $\sigma'$, by the same argument as above. 
\\It remains to check that $\phi$ is well-defined on $N_\Sigma$, i.e. that for any pair of maximal cones $\sigma'$ and $\sigma''$, the extensions of function $\phi$ agree on the intersection $\tau \coloneqq \sigma' \cap \sigma''$. By simpliciality $\tau = \R_{\ge 0} v_1+\ldots+\R_{\ge 0} v_k$, where $\{ v_i \}$ is a subset of both $e(\sigma')$ and $e(\sigma'')$. If $\tau$ doesn't contain $e_1$ then $\phi \equiv 0$ on $\tau$ in any case, so we now assume $v_1= e_1$. Let $x \in \tau$, then we may write $x$ in coordinates as $x =t_1 e_1+\ldots+t_k v_k$. Since these $t_i$'s are also the coordinates of $x$ in the bases $e(\sigma')$, $e(\sigma'')$ respectively, we have $t'_1= t''_1$ and $\phi$ is well-defined. 

If $y_1 =0$, we reduce to the previous case as follows: let $a \in N_{\R}$ such that $a+y$ has non-zero first coordinate. The translation map $T_a : z \mapsto z+a$ clearly extends to a homeomorphism of $N_{\Sigma}$, and moreover if $\phi \in \PA_b(N_{\Sigma})$ then $(\phi \circ T_a) \in \PA_b(N_{\Sigma})$ as well. Using the case $y_1 \neq 0$, we get a function $\phi \in \PA_b(N_{\Sigma})$ with $\phi(a+y) \neq \phi(a+x)$, and then the function $\phi \circ T_a$ separates $x$ and $y$.

The case where $x$ and $y$ lie in the closures of two different cones is similar and left to the reader.
\end{proof}

\subsection{Cycles}

Let $\Pi$ be a polyhedral structure on a polyhedral space $\tX$.

\begin{defi}[Normal vectors] \label{def:normal vectors}
Let $\tau \prec \sigma$ be faces of $\Pi$ with $\dim(\tau)=\dim(\sigma)-1$. This implies that there exists $f \in M$ and $b \in \Q$ such that $f + b$ is zero on $\tau$, non-negative on $\sigma$ and not-identically zero on $\sigma$. 
A normal vector of $\sigma$ relative to $\tau$ is an element $$n_{\sigma/\tau} \in N_\sigma \subseteq N$$ such that $n_{\sigma/\tau}=v_{\sigma/\tau} - v$ is a primitive generator of $N_\sigma/N_\tau$ and $f(v_{\sigma/\tau}) > 0$, when we pick $v \in \tau$ and write $V_{\tau, \R}= v+ N_{\tau, \R}$, $V_{\sigma, \R}= v+ N_{\sigma, \R}$. 
\end{defi}

\begin{defi}[Weights]
A $k$-weight on $\Pi$ is a map 
$$
\omega \colon \Pi^{(k)} \to \Q
$$ 
and the number $\omega(\sigma)$ is called the weight of $\sigma \in \Pi^{(k)}$.
We say that 
\begin{itemize}
    \item $\omega$ is non-negative if $\omega(\sigma) \geq 0$ for all $\sigma \in \Pi^{(k)}$; write $\omega \geq 0$,
    \item $\omega$ is strictly positive if $\omega(\sigma) >0$ for all $\sigma \in \Pi^{(k)}$; write $\omega >0$. 
\end{itemize}
\end{defi}

\begin{defi}[Cycles]
A $k$-cycle on $\Pi$ is a $k$-weight $c \colon \Pi^{(k)} \to \Q$ such that for any $\tau \in \Pi^{(k-1)}$ the following \emph{balancing condition} holds:
\begin{equation} \label{eq:balancing}
    \sum_{\substack{\sigma \in \Pi^{(k)}\\ \tau \prec \sigma}} c(\sigma) n_{\sigma/\tau} = 0 \in N/N_\tau,
\end{equation}
equivalently this sum is an element in $N_\tau$.
\end{defi}

Denote by 
\begin{itemize}
    \item[-] $W_k(\Pi)$ the abelian group of $k$-weights on $\Pi$,
    \item[-] $Z_k(\Pi)$ the subgroup of $k$-cycles,
    \item[-] $W_k(\Pi)_{\geq 0}$,  $Z_k(\Pi)_{\geq 0}$,  $W_k(\Pi)_{> 0}$ and  $Z_k(\Pi)_{> 0}$ the subsets of non-negative and of strictly positive weights and cycles on $\Pi$.
\end{itemize}

\begin{defi}[Local cones]
\label{def:local cone}
Let $\tau \prec \sigma$ be faces of $\Pi$, the normal cone of $\sigma$ relative to $\tau$ is the polyhedral cone
$$C_{\sigma/\tau} \coloneqq \left\{ \lambda (x-y) \; | \;\lambda \ge 0, x \in \sigma, y \in \tau \right\} \subset N_{\sigma, \R};$$
it satisfies $N_{C_{\sigma/\tau}} = N_{\sigma}$ and hence $\dim(C_{\sigma/\tau}) = \dim (\sigma)$.

Let $x \in \tX$ and $\tau \in \Pi$ the unique face such that $x \in \relint(\tau)$. We define the \emph{star of $\Pi$ at $x$} as
$$\Star_x(\Pi) \coloneqq \{ C_{\sigma/\tau}\}_{\tau \prec \sigma }.$$ 
For a face $\delta$ of $\Pi$ we write $\Star_{\delta}(\Pi) \coloneqq \Star_x(\Pi)$ for any $x \in \relint(\delta)$. 
\end{defi}
The set $\Star_x(\Pi)$ has the structure of a polyhedral complex whose faces are (not necessarily strictly convex) cones; such structure is also known as \emph{generalized fan}; see \cite[Definition 6.2.2]{CoxLittleSchenck2011}.

\begin{defi}
Let $\tau \in \Pi$ and $x \in \relint(\tau)$. For any $k$-weight $\omega$ on $\Pi$, with $k \geq \dim(\tau)$, the $k$-weight $\omega_x$ on $\Star_x(\Pi)$ is
$$\omega_x(C_{\sigma/\tau}) \coloneqq \omega(\sigma).$$
\end{defi}

\begin{lemma} \label{lem: cycle iff local cycle}
The weight $\omega$ is a cycle if and only if $\omega_x$ is a cycle for all $x \in \tX$.
\end{lemma}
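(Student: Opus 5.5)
The plan is to compare the balancing condition for $\omega$ at a codimension-one face with the balancing condition for $\omega_x$ at the corresponding codimension-one cones of $\Star_x(\Pi)$. We first describe the faces of $\Star_x(\Pi)$. Let $x \in \relint(\tau)$. The cones of $\Star_x(\Pi)$ are the $C_{\sigma/\tau}$ with $\tau \prec \sigma$, and $\sigma \mapsto C_{\sigma/\tau}$ is an inclusion-preserving bijection onto $\Star_x(\Pi)$ with $\dim C_{\sigma/\tau} = \dim \sigma$. Its $(k-1)$-dimensional cones are therefore the $C_{\rho/\tau}$ with $\tau \prec \rho \in \Pi^{(k-1)}$, and the $k$-cones containing $C_{\rho/\tau}$ are exactly the $C_{\sigma/\tau}$ with $\rho \prec \sigma \in \Pi^{(k)}$.

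The key local computation is that normal vectors are unchanged: $n_{C_{\sigma/\tau}/C_{\rho/\tau}}$ may be chosen equal to $n_{\sigma/\rho}$. This holds because $N_{C_{\sigma/\tau}} = N_\sigma$ and $N_{C_{\rho/\tau}} = N_\rho$. Moreover, if $f+b$ vanishes on $\rho$ and is nonnegative on $\sigma$, then the linear form $f$ vanishes on $C_{\rho/\tau}$ and is nonnegative on $C_{\sigma/\tau}$, since for $z\in\sigma$ and $y\in\tau\subseteq\rho$ we have $f(z-y)=(f+b)(z)-(f+b)(y)\ge 0$. So the positivity requirement picks the same primitive generator of $N_\sigma/N_\rho$. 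Given this, the balancing sum of $\omega_x$ at $C_{\rho/\tau}$,
\[\sum_{\sigma \succ \rho} \omega(\sigma)\, n_{\sigma/\rho} \in N/N_\rho,\]
is literally the balancing sum of $\omega$ at $\rho$, using $N_{C_{\rho/\tau}}=N_\rho$.

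The two directions then follow. ($\Rightarrow$) If $\omega$ is a cycle, then every balancing condition of $\omega_x$ is one of those of $\omega$, so $\omega_x$ is a cycle for every $x$. ($\Leftarrow$) Given $\rho\in\Pi^{(k-1)}$, take $x\in\relint(\rho)$, so $\tau=\rho$. The cone $C_{\rho/\rho}=N_{\rho,\R}$ is a $(k-1)$-cone of $\Star_x(\Pi)$, and balancing of $\omega_x$ there gives balancing of $\omega$ at $\rho$.

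The main obstacle is making the normal-vector identification precise with the paper's affine convention in Definition~\ref{def:normal vectors}. The point is to check that the linear part $f$ witnesses the correct orientation on the cones. The rest is bookkeeping of the face correspondence in the generalized fan $\Star_x(\Pi)$.
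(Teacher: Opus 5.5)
Your proposal is correct and follows the same route as the paper. The paper also uses the weight-preserving bijection $\sigma\mapsto C_{\sigma/\tau}$ and the fact that normal vectors carry over because $N_{C_{\sigma/\tau}}=N_\sigma$. Your explicit orientation check via the linear part $f$, and your choice of $x\in\relint(\rho)$ for the converse direction, simply spell out details the paper leaves implicit.
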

\begin{proof}
For any $\ell \geq \dim(\tau)$, $\ell$-faces of $\Pi$ containing $\tau$ are in bijection with the $\ell$-faces of $\Star_x(\Pi)$ containing $C_{\tau/\tau} = N_{\tau, \R}$, and this bijection preserves $\ell$-weights by definition. It remains to see that it preserves normal vectors, i.e. if $\tau \prec \sigma' \prec \sigma$ with $\dim(\sigma') = \dim(\sigma) -1$ and $n_{\sigma/\sigma'} \in N_{\sigma}$ is a normal vector, then $n_{\sigma/\sigma'}$ is also a normal vector of $C_{\sigma/\tau}$ relative to $C_{\sigma'/\tau}$. This follows directly from the fact that $N_{C_{\sigma/\tau}} = N_{\sigma}$ and similarly for $\sigma'$. 
\end{proof}

\begin{defi} [Pullback along refinements]
    Let $\Pi'$ be a refinement of $\Pi$ and $\omega \in W_k(\Pi)$. The pullback of $\omega$ to $\Pi'$, still denoted by $\omega$, is the $k$-weight on $\Pi'$ defined 
    by
    $$
    \omega(\sigma') \coloneqq \begin{cases}
        \omega(\sigma) & \text{if there exists $\sigma \in \Pi^{(k)}$ such that $\sigma' \subseteq \sigma$} \\
        0 & \text{otherwise}.
    \end{cases}
    $$ 
\end{defi}
Moreover, if $c$ is a $k$-cycle on $\Pi$, then the pullback of $c$ to $\Pi'$ is a $k$-cycle on $\Pi'$ (see for instance \cite[Example 2.11(iv)]{GathmannKerberMarkwig2009}).

\begin{defi}
    The space of $k$-weights (respectively $k$-cycles) on $\tX$ is the direct limit
    $$
    W_k(\tX)= \varinjlim_{\Pi} W_k(\Pi) \quad (\text{respectively} \quad Z_k(\tX)= \varinjlim_{\Pi} Z_k(\Pi) )
    $$
    over the set of polyhedral structures on $\tX$ ordered by refinements, with pullback maps $W_k(\Pi) \to W_{k}(\Pi')$ (respectively for $Z_k$) for any refinement $\Pi' \geq \Pi$. 
    
    Similarly, 
    $$
    W_k(\tX)_{\geq 0}= \varinjlim_{\Pi} W_k(\Pi)_{\geq 0} \quad (\text{respectively} \quad Z_k(\tX)_{\geq 0}= \varinjlim_{\Pi} Z_k(\Pi)_{\geq 0} )
    $$
    are the spaces of non-negative $k$-weights (respectively $k$-cycles) on $\tX$.
\end{defi}
Note that, if $\tX$ is of dimension $d$, then $W_d(\tX)_{>0}$ and $Z_d(\tX)_{>0}$ are well-defined too, while for $k<d$ the notion of strictly positive $k$-weights or cycles is not stable under pullback along refinements. 

\begin{defi}[Balanced polyhedral space]\label{def:balanced-poly} Let $\tX \subseteq N_{\R}$ be a polyhedral space of pure dimension $d$. 
\begin{itemize}
    \item A \emph{balancing condition} on $\tX$ is a strictly positive $d$-cycle $[\tX]$ on $\tX$, i.e., for some (or equivalently, any) polyhedral complex $\Pi$ on which $[\tX]$ is defined we have $[\tX](\sigma) > 0$ for any $\sigma \in \Pi^{(d)}$. 
    \item A \emph{balanced polyhedral space} is a pair $(\tX, [\tX])$, where $\tX$ is a polyhedral complex of pure dimension and $[\tX]$ is a balancing condition on $\tX$ (which will be frequently omitted from notation).
\end{itemize}
\end{defi}

\subsection{Intersection product} \label{sec:intersection}

Let $\tX$ be a polyhedral space. 

\begin{defi} [Piecewise affine functions on $\Pi$]
Let $\Pi$ be a polyhedral structure on $\tX$. 
A continuous function $\phi \colon \tX \to \R$ is \emph{piecewise affine on $\Pi$} if for any $\sigma \in \Pi$ we have 
$$\phi_{|V_{\sigma, \R}}= \phi_{\sigma} + b_\sigma$$ where $\phi_{\sigma}\in M_\sigma$ is an integral linear function, and $b_\sigma \in \Q$. In particular, if $x \in \sigma$ and if we write $V_{\sigma, \R}= x + N_{\sigma, \R}$, then $b_\sigma= \phi_{\sigma}(0)$ and $\phi(z)=\phi_{\sigma}(z-x)+ b_{\sigma}$.

We denote by $\PA(\Pi)$ the set of piecewise affine functions on $\Pi.$
\end{defi}

\begin{defi} [Piecewise affine functions on $\tX$]
A continuous function $\phi \colon \tX \to \R$ is \emph{piecewise affine}
if it is piecewise affine on some polyhedral structure on $\tX$. The set of piecewise affine functions on $\tX$ is denote by $\PA(\tX)$. In other words, we have $$\PA(\tX) = \varinjlim_{\Pi}\PA(\Pi),$$
where the direct limit is over the set of all polyhedral structures on $\tX$ ordered by refinements, and with respect to the inclusion $\PA(\Pi) \subseteq \PA(\Pi')$ for any $\Pi' \geq \Pi$. 
\end{defi}

\begin{defi}
    Let $\phi \in \PA(\Pi)$, $\tau \in \Pi$, and $x \in \relint(\tau)$.
    We denote by $\phi_x$ the (conical) piecewise affine function on $\Star_x(\Pi)$ such that for any $\sigma \succ \tau$
    $$(\phi_x)_{| C_{\sigma/ \tau}} = \phi_{\sigma} -\phi_{\tau},$$
    where we write $V_{C_{\sigma/ \tau}, \R}= x+ N_{C_{\sigma/\tau}, \R}$, $V_{\sigma, \R}= x + N_{\sigma, \R}$, and $V_{\tau, \R}= x + N_{\tau, \R}$.
    
    We have $\phi_x(\lambda(z-y)) = \lambda(\phi_{\sigma}(z-x)-\phi_{\tau}(y-x))$ whenever $z \in \sigma$ and $y \in \tau$, and in particular $\phi_x=0$ on $C_{\tau/\tau}$.
\end{defi}

\begin{defi} \label{def:intproduct}
Let $\phi \in \PA(\Pi)$ and $c \in Z_k(\Pi)$. The intersection product $\phi \cdot c$ is the $(k-1)$-cycle on $\Pi$ defined by 
\begin{equation} \label{eq:intersection product}
(\phi \cdot c) (\tau) \coloneqq 
\sum_{ \sigma \in \Pi^{(k)}\,:\,\tau \prec \sigma} c (\sigma) \phi_{\sigma}(n_{\sigma/\tau}) 
- \phi_{\tau} \left( \sum_{ \sigma \in \Pi^{(k)}\,:\,\tau \prec \sigma} c (\sigma) n_{\sigma/\tau} \right)    
\end{equation}
where $\tau \in \Pi^{(k-1)}$ and $n_{\sigma/\tau}$ are normal vectors relative to $\tau$.
\end{defi}
\begin{rem}
Note that \eqref{eq:intersection product} is well-defined. Indeed, let $n'_{\sigma/\tau}$ be another choice of normal vectors. We have
{\allowdisplaybreaks
\begin{align*}
    \sum_{ \sigma \in \Pi^{(k)}\,:\,\tau \prec \sigma}& c (\sigma) \phi_{\sigma}(n_{\sigma/\tau}) 
    - \phi_{\tau} \left( \sum_{ \sigma \in \Pi^{(k)}\,:\,\tau \prec \sigma} c (\sigma) n_{\sigma/\tau} \right) 
    \\
    & - \sum_{ \sigma \in \Pi^{(k)}\,:\,\tau \prec \sigma} c (\sigma) \phi_{\sigma}(n'_{\sigma/\tau}) 
    + \phi_{\tau} \left( \sum_{ \sigma \in \Pi^{(k)}\,:\,\tau \prec \sigma} c (\sigma) n'_{\sigma/\tau} \right)  \\
    & = \sum_{ \sigma \in \Pi^{(k)}\,:\,\tau \prec \sigma} c (\sigma) \phi_{\sigma}( \underbrace{n_{\sigma/\tau}- n'_{\sigma/\tau}}_{ \in N_\tau}) 
    - \phi_{\tau} \left( \sum_{ \sigma \in \Pi^{(k)}\,:\,\tau \prec \sigma} c (\sigma) n_{\sigma/\tau} - c(\sigma) n'_{\sigma/\tau}\right)  \\
    & = \sum_{ \sigma \in \Pi^{(k)}\,:\,\tau \prec \sigma} c (\sigma) \phi_{\tau}(n_{\sigma/\tau}- n'_{\sigma/\tau}) 
    - \phi_{\tau} \left( \sum_{ \sigma \in \Pi^{(k)}\,:\,\tau \prec \sigma} c (\sigma) n_{\sigma/\tau} - c (\sigma) n'_{\sigma/\tau}\right) =0. 
\end{align*}
}
\end{rem}

\begin{lemma} \label{lem:pairing and pullback}
Let $\phi \in \PA(\Pi)$ and $c \in Z_k(\Pi)$. Let $\Pi'$ be a refinement of $\Pi$, denote by $c'$ and $(\phi \cdot c)'$ the pullbacks to $\Pi'$ of $c$ and $\phi \cdot c$ respectively. The equality 
$$
(\phi \cdot c)'= \phi \cdot  c'
$$
holds in $Z_{k-1}(\Pi')$.
\end{lemma}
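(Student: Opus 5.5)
We must show that for every $\tau' \in \Pi'^{(k-1)}$ the two numbers $(\phi\cdot c)'(\tau')$ and $(\phi \cdot c')(\tau')$ agree. Let $\delta \in \Pi$ be the smallest face of $\Pi$ containing $\tau'$, so that $\relint(\tau') \subseteq \relint(\delta)$; then $\dim \delta \geq k-1$. The argument splits into the cases $\dim\delta = k-1$ and $\dim \delta \geq k$.

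\emph{Case $\dim \delta = k-1$.} Here $\tau' \subseteq \delta =: \tau$ with $N_{\tau'} = N_\tau$, and $(\phi\cdot c)'(\tau') = (\phi\cdot c)(\tau)$ by the definition of pullback. The $k$-faces $\sigma' \in \Pi'$ containing $\tau'$ with $c'(\sigma')\neq 0$ are exactly those contained in some $k$-face $\sigma \succ \tau$ of $\Pi$. Near a point of $\relint(\tau')$ each such $\sigma$ contains exactly one such $\sigma'$ (the one on the same side of $\tau'$ in $V_{\sigma,\R}$). We then have $N_{\sigma'} = N_\sigma$ and $\phi_{\sigma'} = \phi_\sigma$, and any normal vector $n_{\sigma/\tau}$ is a valid choice of $n_{\sigma'/\tau'}$, since it is a primitive generator of the same quotient $N_\sigma/N_\tau$ pointing into the same side. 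The $k$-faces $\sigma'\succ\tau'$ not contained in a $k$-face of $\Pi$ have $c'(\sigma') = 0$ and do not contribute. Substituting into \eqref{eq:intersection product}, and using $\phi_{\tau'} = \phi_\tau$ (because $N_{\tau'}=N_\tau$ and $\phi$ is affine on $\tau$), we get the two sums term by term equal. Well-definedness of \eqref{eq:intersection product} under changes of normal vectors justifies this choice.

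\emph{Case $\dim \delta \geq k$.} Here $(\phi\cdot c)'(\tau') = 0$, and we must show that the right-hand side vanishes. Again only $k$-faces $\sigma' \succ \tau'$ lying inside a $k$-face of $\Pi$ contribute. If $\dim\delta > k$, no $k$-face of $\Pi$ contains $\relint(\tau')\subset\relint(\delta)$, so every term vanishes. If $\dim\delta = k$, the only $k$-face of $\Pi$ containing $\tau'$ is $\sigma:=\delta$, and $\tau'$ lies in its relative interior, a hyperplane in $V_{\sigma,\R}$. Exactly two faces $\sigma'_1,\sigma'_2 \subseteq \sigma$ of $\Pi'$ contain $\tau'$, on opposite sides, and both have weight $c(\sigma)$. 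Since $\phi$ is affine on all of $\sigma$, we get $\phi_{\sigma'_1}=\phi_{\sigma'_2}=\phi_\sigma$. We can choose $n_{\sigma'_2/\tau'} = -n_{\sigma'_1/\tau'}$ modulo $N_{\tau'}$, and by well-definedness we may take them exactly opposite. The first sum in \eqref{eq:intersection product} is then $c(\sigma)\phi_\sigma(n_1 - n_1) = 0$, and the second is $\phi_{\tau'}(0)$ applied linearly, which is also $0$. Hence $(\phi\cdot c')(\tau') = 0$.

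The main obstacle is the bookkeeping in the first case. One must justify that near $\relint(\tau')$ the refinement $\Pi'$ restricted to each $\sigma\succ\tau$ consists of a single $k$-face on the relevant side, and that the $\Pi'$-normal vectors can be chosen to coincide with the $\Pi$-normal vectors. I would handle this with the local cone description (Definition \ref{def:local cone}): $\Star_x(\Pi') = \Star_x(\Pi)$ as generalized fans, with $k$-cones intersected appropriately, for $x\in\relint(\tau')$. This mirrors the argument of Lemma \ref{lem: cycle iff local cycle}. Combined with the independence of the choice of normal vectors (the Remark following Definition \ref{def:intproduct}), this completes the proof.
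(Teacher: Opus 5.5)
Your proof is correct and follows the paper's argument essentially verbatim: the paper splits into the same three cases, according to whether the minimal face of $\Pi$ containing $\tau'$ has dimension $\geq k+1$, $=k$ or $=k-1$. It uses vanishing of the pulled-back weights in the first case, two opposite $k$-faces with $n_{\sigma''/\tau'}=-n_{\sigma'/\tau'}$ and equal slopes in the second, and a term-by-term identification of faces, weights, normal vectors and linear parts in the third, exactly as you do. Your local-cone justification of the bookkeeping makes explicit what the paper takes for granted.
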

\begin{proof}
    Let $\tau' \in \Pi'^{(k-1)}$ and recall that
    $$
    \phi \cdot c' \coloneqq 
    \sum_{\sigma' \in \Pi'^{(k)}\,:\, \sigma' \succ \tau'} c'(\sigma') \phi_{\sigma'}(n_{\sigma'/\tau'}) 
    - \phi_{\tau'} \left( \sum_{\sigma' \in \Pi'^{(k)}\,;\, \sigma' \succ \tau'} c'(\sigma') n_{\sigma'/\tau'}\right).
    $$
Let $\tau$ be the minimal face of $\Pi$ such that $\tau' \subseteq \tau$. If $\dim(\tau) \geq k+1$, then for any $\sigma' \succ \tau'$, the faces $\sigma$ such that $\sigma' \subseteq \sigma$ have dimension at least $k+1$ as they contain $\tau$. This implies that $(\phi \cdot c')(\tau')=0=(\phi \cdot c)'(\tau').$

If $\dim(\tau)=k$, then there are precisely two faces $\sigma', \sigma'' \in \Pi'^{(k)}$ containing $\tau'$. We have that $\sigma' \cap \sigma''=\tau'$, $c'(\sigma')=c'(\sigma'')$, $N_{\sigma'}=N_{\sigma''}$, $n_{\sigma'/\tau'}=- n_{\sigma''/\tau'}$, and $\phi_{\sigma'}=\phi_{\sigma''}$. This implies
    $$
    (\phi \cdot c')(\tau')= c'(\sigma') 
    \left(
    \phi_{\sigma'}(n_{\sigma'/\tau'}) - \phi_{\sigma'}(n_{\sigma'/\tau'}) - \phi_{\tau'}(n_{\sigma'/\tau'}- n_{\sigma'/\tau'}
    \right) =0 = (\phi \cdot c)'(\tau').
    $$

If $\dim(\tau)=k-1$, then we denote by $\sigma$ the minimal face of $\Pi$ such that $\sigma' \subseteq \sigma$, for any $\sigma' \in \Pi'^{(k)}$ with $\sigma' \succ \tau'$. As $c'(\sigma')=0$ unless $\dim(\sigma')=\dim(\sigma)=k$, we have
    \begin{align*}
    (\phi \cdot c')(\tau')
    & = \sum_{ \substack{\sigma' \succ \tau' \\ \dim(\sigma')=\dim(\sigma)=k}}
    c'(\sigma') \phi_{\sigma'}(n_{\sigma'/\tau'}) 
    - \phi_{\tau'} \left( \sum_{\substack{\sigma' \succ \tau' \\ \dim(\sigma')=\dim(\sigma)=k}} c'(\sigma') n_{\sigma'/\tau'}\right) \\
    & = \sum_{\sigma \in \Pi^{(k)}\,:\, \sigma \succ \tau} c(\sigma) \phi_{\sigma}(n_{\sigma/\tau}) 
    - \phi_{\tau} \left( \sum_{\sigma \in \Pi^{(k)}\,;\, \sigma \succ \tau} c(\sigma) n_{\sigma/\tau}\right) \\
    & = (\phi \cdot c)(\tau) 
    = (\phi \cdot c)'(\tau'),
    \end{align*}
    which concludes the proof.
\end{proof}

\begin{lemma} \label{lem:properties pairing}
Let $c \in Z_k(\Pi)$, $\phi, \psi \in \PA(\Pi)$, and $\lambda, \mu \in \Q$. We have
\begin{itemize}
    \item[(i)] (linearity) $(\lambda\phi + \mu\psi) \cdot c = \lambda (\phi \cdot c) + \mu(\psi \cdot c)$,
    \item[(ii)] (balancing condition) $\phi \cdot c \in Z_{k-1}(\Pi)$,
    \item[(iii)] (commutativity) $\psi \cdot (\phi \cdot c) = \phi \cdot (\psi \cdot c)$. 
\end{itemize}
\end{lemma}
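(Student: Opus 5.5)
Linearity (i) is immediate from the defining formula \eqref{eq:intersection product}. The map $\phi \mapsto (\phi_\sigma)_{\sigma \in \Pi}$ is linear. For $\tau \in \Pi^{(k-1)}$, the right-hand side of \eqref{eq:intersection product} is linear in the tuple $(\phi_\sigma)_{\sigma\succ\tau}$ together with $\phi_\tau$. Hence $(\lambda\phi+\mu\psi)\cdot c = \lambda(\phi\cdot c) + \mu(\psi\cdot c)$.

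For (ii) and (iii) I reduce to a local statement. Fix $\delta \in \Pi^{(k-2)}$, which is the only dimension relevant for balancing $\phi\cdot c$. Both assertions are statements about the faces containing $\delta$, with the linear parts $\phi_\sigma - \phi_\delta$ restricted to them. Following Lemma \ref{lem: cycle iff local cycle}, I pass to $\Star_\delta(\Pi)$ and replace $\phi$ by $\phi_x$ for $x\in\relint(\delta)$. The star is a generalized fan with lineality space $N_{\delta,\R}$, and on it $\phi_x$ is conical and vanishes on $N_{\delta,\R}$. The key compatibility is $(\phi\cdot c)_x = \phi_x \cdot c_x$. This holds because both sides use the same normal vectors (as in the proof of Lemma \ref{lem: cycle iff local cycle}), and because replacing each $\phi_\sigma$ by $\phi_\sigma-\phi_\delta$ does not change \eqref{eq:intersection product}: that formula is invariant under subtracting a global linear form, by the same computation as in the well-definedness remark. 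Next I quotient by $N_\delta$. Everything descends to a fan in $N/N_\delta$ whose maximal cones have dimension $2$ and whose rays come from $k-1$ faces $\tau\succ\delta$. So it suffices to treat a $2$-dimensional weighted fan $c$ and conical piecewise linear functions vanishing at the origin. This is the setting of Allermann--Rau \cite{AllermannRau2010}, and I will reprove their argument there.

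In this reduced setting, let the rays be $\tau$ with primitive generators $u_\tau$. After subtracting a linear form, $\phi$ is determined on each ray by $\phi(u_\tau)$. The weight $(\phi\cdot c)(\tau)$ equals $\sum_{\sigma\succ\tau} c(\sigma)\phi_\sigma(n_{\sigma/\tau}) - \phi_\tau(\sum_\sigma c(\sigma) n_{\sigma/\tau})$. Balancing at the origin requires $\sum_\tau (\phi\cdot c)(\tau)\,u_\tau = 0$.

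For (ii), I first use the freedom in Definition~\ref{def:intproduct} to pick $n_{\sigma/\tau}$ equal to the other ray generator $u_{\tau'}$ of the two-dimensional cone $\sigma=\tau+\tau'$, up to an element of $N_\tau$. There is a lattice index issue here: $u_{\tau'}$ is primitive in $N_\sigma/N_\tau$ only up to the index $[N_\sigma : \Z u_\tau+\Z u_{\tau'}]$. I will handle this by working in $N_\sigma\otimes\Q$ and rescaling, with weights rational. Then $\phi_\sigma(n_{\sigma/\tau})$ is expressed through $\phi(u_{\tau'})$, and $\sum_\tau (\phi\cdot c)(\tau)u_\tau$ becomes a double sum over pairs $(\tau,\sigma)$. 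Regrouping by $\tau'$ and applying the balancing of $c$ at each ray $\tau'$ makes this sum vanish. This rearrangement, including the correct treatment of the $\phi_\tau$ correction term and of lattice indices, is the main obstacle; I will write it carefully in coordinates on each $N_\sigma$.

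For (iii), the $(k-2)$-weight of $\psi\cdot(\phi\cdot c)$ at $\delta$ becomes, after the reduction, a bilinear expression $\sum_{\sigma} c(\sigma)\, B_\sigma(\phi,\psi)$ plus correction terms. With the same choice of normals, the essential term is $\sum_\sigma c(\sigma)\,\phi(u_\tau)\psi(u_{\tau'})/\det_\sigma$, summed symmetrically over the ordered pairs of rays of each $\sigma$. I will show that the remaining terms combine, using balancing, into an expression symmetric in $(\phi,\psi)$, which gives commutativity. The alternative is to reduce (iii) to (ii) via linearity: write $\psi$ locally as a combination of functions supported near single rays, and check the identity directly on these.
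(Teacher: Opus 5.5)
Your proposal is correct and follows the paper's proof. Linearity comes from \eqref{eq:intersection product}; then you localize via $(\phi\cdot c)_x=\phi_x\cdot c_x$ on $\Star_x(\Pi)$, quotient by the lineality space $N_{\delta,\R}$ to get an honest fan, and treat the fan case, which the paper simply quotes from \cite[Proposition 3.7]{AllermannRau2010}. Your plan to reprove that two-dimensional fan case by hand also works: let $m_\sigma=[N_\sigma:\Z u_\tau+\Z u_{\tau'}]$ and define $\gamma_\tau$ by $\sum_{\sigma\succ\tau}\frac{c(\sigma)}{m_\sigma}u_{\tau'(\sigma)}=\gamma_\tau u_\tau$ (this uses the balancing of $c$). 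Then $(\phi\cdot c)(\tau)=\sum_{\sigma\succ\tau}\frac{c(\sigma)}{m_\sigma}\phi(u_{\tau'(\sigma)})-\gamma_\tau\phi(u_\tau)$, your regrouping gives balancing, and $\sum_\tau(\phi\cdot c)(\tau)\psi(u_\tau)$ is visibly symmetric in $(\phi,\psi)$.
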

\begin{proof}
The linearity of the intersection pairing follows from the definition. For the other two properties, we reduce to the local fans. Indeed, given any $x \in \tX$, the equality
$(\phi \cdot c)_x = \phi_x \cdot c_x$
holds in $Z_{k-1}\left(\Star_x(\Pi)\right)$. The image of $\Star_x(\Pi)$ along the projection map $N_{\R} \to N_{\R}/(x+{N_{\tau,\R}})$ is a fan in the sense of \cite[Definition 2.2]{AllermannRau2010}. Then the statements follow from~\cite[Proposition 3.7]{AllermannRau2010} and Lemma \ref{lem: cycle iff local cycle}.
\end{proof}

It follows from Lemmas \ref{lem:pairing and pullback} and \ref{lem:properties pairing} that \eqref{eq:intersection product} induces intersection pairings 
\begin{align*}
  \PA(\tX) \times Z_k(\tX) & \to Z_{k-1}(\tX)
\end{align*}
where, for $\phi \in \PA(\Pi_1)$ and $c \in Z_k(\Pi_2)$, the pairing is given by $\phi \cdot c'$ for any common refinement $\Pi'$ of $\Pi_1$ and $\Pi_2$.
For any $\phi_1, \ldots, \phi_\ell \in \PA(\tX)$ and any cycle $c \in Z_k(\tX)$, we define inductively 
\begin{equation} \label{eq:notation phi^l}
\phi_1 \cdot \ldots \cdot \phi_\ell \cdot c \coloneqq \phi_1 \cdot \left(\ldots (\phi_\ell \cdot c)\right).   
\end{equation}
In particular, for $\phi=\phi_1= \ldots=\phi_\ell$, we simply write $\phi_1 \cdot \ldots \phi_\ell \cdot c= \phi^\ell \cdot c$.

\begin{rem}
Let $\Pi$ be a polyhedral structure on $\tX$ and $U$ an open subset of $\tX$. We can define the notions of weight and non-negative weight, cycle, pullback of cycles, piecewise affine function, and intersection product by replacing $\Pi$ with $\Pi|_U$ in the above definitions. We then denote
    $$
    W_k(U)= \varinjlim_{\Pi} W_k(\Pi|_U), \quad Z_k(U)= \varinjlim_{\Pi} Z_k(\Pi|_U), \quad
    \PA(U)= \varinjlim_{\Pi} \PA(\Pi|_U)
    $$
where again the limits are over the set of polyhedral structures on $\tX$ ordered by refinements.
\end{rem}

\begin{rem} [ (Relation to toric geometry)] \label{ex:fan}
Let $\tX = N_{\R}$, $\Pi = \Sigma$ be a complete simplicial fan with balancing condition $[\tX]$ constantly equal to $1$. Let $X_\Sigma$ be the corresponding complete toric variety. Let $\phi$ be a piecewise linear function on $\Sigma$, and $D$ the corresponding torus-invariant divisor. We claim that $$\phi^{k} \cdot [\tX] (\tau) = (D^{k}) \cdot V(\tau)$$ for $\tau \in \Sigma^{(d-k)}$, where $V(\tau)$ is the $k$-dimensional toric subvariety corresponding to $\tau$. 
     
We prove the claim by induction on $k$. The case $k=0$ is clear as $[\tX](\sigma)=1=X_\Sigma \cdot V(\sigma)$. Assume that $\phi^k \cdot [\tX] (\sigma) = (D^k) \cdot V(\sigma)$ whenever $\sigma$ has codimension $k$. Let $\tau \in \Sigma^{(d-k-1)}$, applying the formula from \cite[Section 5.1]{Fulton1993} to the $1$-cycle $(D^k) \cdot V(\tau)$, we get
\begin{equation} \label{eq:Fulton}
\sum_{v \in \Sigma^{(1)}} \big( D_v \cdot D^k \cdot V(\tau) \big) v =0,
\end{equation}
writing the $k$-codimensional cones containing $\tau$ as $\sigma_0,\ldots, \sigma_r$, with $\sigma_i = \tau + \R_{\ge 0} v_i$, we can take $n_{\sigma_i/\tau} =v_i$ so that we have
\begin{align*}
\phi^{k+1}\cdot [\tX] (\tau) 
& = \sum_{i=0}^r \big(\phi^k \cdot [\tX]\big)( \sigma_i) \phi_{\sigma}(v_i) -\phi_{\tau}\big(\sum_{i=0}^r (\phi^k \cdot [\tX])(\sigma_i) v_i\big) \\
& = \sum_{i=0}^r a_{v_i} (D^k \cdot V({\sigma_i})) - \phi_{\tau}\big(\sum_{i=0}^r (D^k \cdot V({\sigma_i})) v_i\big)
\end{align*}
by the induction hypothesis, where $D= \sum_{v \in \Sigma^{(1)}} a_v D_v$. 
Then \eqref{eq:Fulton} yields
$$\phi^{k+1}\cdot [\tX] (\tau)= \sum_{i=0}^r a_{v_i} (D^k \cdot V({\sigma_i})) + \sum_{v \in \tau \cap \Sigma^{(1)}} a_v(D_v \cdot D^k \cdot V(\tau))v,$$
and since $D^k \cdot V({\sigma_i}) = D^k \cdot D_i \cdot V(\tau)$, we get
$$\phi^{k+1}\cdot [\tX] (\tau) = \sum_{v \in \Sigma^{(1)}} a_v \big(D_v \cdot D^k \cdot V(\tau)\big) = D^{k+1} \cdot V(\tau),$$
which concludes the proof.

More generally, the Chow ring of $X_\Sigma$ is isomorphic to the ring of cycles on $\Sigma$, with product given by the so-called fan displacement rule. The intersection of powers of divisors with an irreducible toric subvariety yields precisely the statement above, see \cite[Theorem 3.1]{FS}.
\end{rem}

\section{Spaces of convex functions}
Let $\tX$ be a polyhedral space. In this section we introduce and study various notions of convex functions on $\tX$, building on \cite{BBS}.

\subsection{Polyhedral convexity}
\begin{defi} 
[PAPC functions] 
\label{def:PAPC}
A piecewise affine function $\phi$ on $\tX$ is  \emph{polyhedrally convex} (in short \emph{PAPC}) if
$$\phi \cdot c \geq 0$$
for any $c \in Z_k(U)_{\geq 0}$ an on open $U \subseteq \tX$ and any $k$.
The space of piecewise affine polyhedrally convex functions is denoted $\PAPC(\tX)$ and is endowed with the topology of pointwise convergence. 
\end{defi}
The property of being PAPC does not depend on the choice of a polyhedral structure on $\tX$, by Lemma \ref{lem:pairing and pullback}. If we want to specify the polyhedral complex $\Pi$, we write 
   \[
    \PAPC(\Pi) \coloneqq \PAPC(\tX) \cap \PA(\Pi).
  \]
  
\begin{defi}\label{def:poly-combi}
A \emph{convex combination in $\tX$} is a triple 
    $$
    ( x, (x_i)_{i \in I}, (\alpha_i)_{i \in I})
    $$
where $I$ is a finite set, $x_i \in \tX \cap N_\Q$, $\alpha_i \in [0,1]$ such that 
$$\sum_{i \in I} \alpha_i=1 \quad \text{ and }x=\sum_{i \in I}\alpha_i x_i.$$
A convex combination is \emph{polyhedral on $\Pi$} for a polyhedral structure $\Pi$ on $\tX$ if there are faces $\tau$ and $\sigma_i$ in $\Pi$ such that $x \in \tau$, $x_i \in \sigma_i$ and $\tau \preceq \sigma_i$; it is \emph{polyhedral on $\tX$} if it is polyhedral for some polyhedral structure $\Pi$ on $\tX$. We will abbreviate \emph{polyhedral convex combination} with $\PCC$.
\end{defi}

\begin{defi}[PC functions] \label{defn:PC}
    A function $\phi \colon \tX \to \R$ is \emph{polyhedrally convex} (in short \emph{PC}) if 
    $$
    \phi(x) \leq \sum_{i \in I} \alpha_i \phi(x_i)
    $$
    for any polyhedral convex combination $
    ( x, (x_i)_{i \in I}, (\alpha_i)_{i \in I})
    $ in $\tX$. The space of polyhedrally convex functions on $\tX$ is denoted $\PC(\tX)$ and is endowed with the topology of pointwise convergence. 
\end{defi}
Note that PC functions are automatically continuous on $\tX$ by \cite[Theorem~6.2]{BBS}; moreover, the restriction of any usual convex function in $N_\R$ to $\tX$ is PC.
The following lemma is analog to \cite[Proposition 5.7]{BBS}. We include a proof since in \emph{loc.\,cit.\,}the authors work with an eucledean structure, instead of an integral structure. 
\begin{lemma}
\label{lem:2 defn PAPC} 
\label{lem:pc}
The two definitions of PAPC functions are equivalent, i.e.,
    $$\PAPC(\tX) = \PA(\tX) \cap \PC(\tX).$$
\end{lemma}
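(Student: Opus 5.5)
The plan is to prove the two inclusions separately. In both directions the intersection product at a face $\tau$ is compared with an explicit polyhedral convex combination built near a point $x\in\relint(\tau)$, using the local formula $(\phi\cdot c)_x=\phi_x\cdot c_x$ from the proof of Lemma \ref{lem:properties pairing} and the fact that being a cycle is a local property (Lemma \ref{lem: cycle iff local cycle}).

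\emph{Inclusion $\PA(\tX)\cap\PC(\tX)\subseteq\PAPC(\tX)$.} Let $\phi\in\PA(\Pi)\cap\PC(\tX)$, let $U\subseteq\tX$ be open, and let $c\in Z_k(\Pi|_U)_{\geq 0}$, after refining $\Pi$ if necessary. Fix $\tau\in\Pi^{(k-1)}$ and a rational point $x\in\relint(\tau)\cap U$.

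First I adjust the normal vectors. The cone $C_{\sigma/\tau}$ is invariant under $N_{\tau,\R}$ and maps onto a ray of $N_\sigma/N_\tau$. Since \eqref{eq:intersection product} is independent of the choice of normal vectors, I may replace each $n_{\sigma/\tau}$ by $n_{\sigma/\tau}+t_\sigma$ with $t_\sigma\in N_\tau$, chosen so that $n_{\sigma/\tau}\in C_{\sigma/\tau}$. Then $x_\sigma\coloneqq x+\varepsilon n_{\sigma/\tau}\in\sigma$ for all small rational $\varepsilon>0$.

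Next I build the combination. Put $C=\sum_\sigma c(\sigma)$; if $C=0$ there is nothing to prove. By balancing, $w\coloneqq\sum_\sigma c(\sigma)n_{\sigma/\tau}$ lies in $N_\tau$, so $y\coloneqq x+\varepsilon w/C$ lies in $\tau$ for $\varepsilon$ small. The triple $(y,(x_\sigma),(c(\sigma)/C))$ is then a $\PCC$ on $\Pi$, with $\tau\preceq\sigma$ for every $\sigma$. Since $\phi$ is affine on $\sigma$ and on $\tau$, we have $\phi(x_\sigma)=\phi(x)+\varepsilon\phi_\sigma(n_{\sigma/\tau})$ and $\phi(y)=\phi(x)+\varepsilon\phi_\tau(w)/C$. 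Hence the inequality $\phi(y)\le\sum_\sigma (c(\sigma)/C)\,\phi(x_\sigma)$ is exactly $(\phi\cdot c)(\tau)\ge 0$.

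\emph{Inclusion $\PAPC(\tX)\subseteq\PC(\tX)$.} Let $\phi\in\PAPC(\Pi)$.

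Step 1: I show that $\phi|_\sigma$ is convex for every face $\sigma$. For a rational segment in $\relint(\sigma)$ and a small open $U$ around it, the line through the segment, truncated to $U$, is a nonnegative $1$-cycle on a refinement of $\Pi|_U$. It is trivially balanced at its interior points. Applying $\phi\cdot c\ge 0$ gives convexity of $\phi$ along rational lines in $\sigma$. Continuity then gives convexity on all of $\sigma$.

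Step 2: I reduce to the local picture. Take a $\PCC$ $(x,(x_i),(\alpha_i))$ with $x\in\tau\preceq\sigma_i\ni x_i$. Convexity on $\sigma_i$ gives
$$\phi(x_i)\ge\phi(x)+\varepsilon^{-1}\bigl(\phi(x+\varepsilon(x_i-x))-\phi(x)\bigr).$$
So it suffices to treat the points $x+\varepsilon(x_i-x)$. For $\varepsilon$ small these lie in a neighbourhood of $x$ on which $\phi=\phi(x)+\phi_x(\,\cdot\,-x)$ is conical.

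Step 3: I use a one-dimensional cycle. Refine $\Pi$ near $x$ so that $x$ is a vertex and each segment $[x,\,x+\varepsilon(x_i-x)]$ is a union of edges. Let $c$ be the $1$-weight giving each such edge the rational weight $\alpha_i\lambda_i$, where $x_i-x=\lambda_i n_i$ with $n_i$ primitive. The relation $\sum_i\alpha_i(x_i-x)=0$ is precisely the balancing of $c$ at $x$, and $c$ is balanced along the interiors of the segments. Truncated to a small open $U$, this makes $c$ a nonnegative $1$-cycle on $U$. The condition $(\phi\cdot c)(x)\ge 0$ then reads $\sum_i\alpha_i\lambda_i\phi_x(n_i)\ge 0$, which is $\sum_i\alpha_i\phi(x+\varepsilon(x_i-x))\ge\phi(x)$.

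I expect the main obstacle to be the bookkeeping in this last step rather than any conceptual issue. One has to check that the truncated segments, which may sit in non-maximal faces and share directions, form a legitimate nonnegative cycle on some refinement of $\Pi|_U$. Coinciding rays are handled by adding their weights. One also has to check that the combination stays polyhedral after the refinement, so that the choices in Definition \ref{def:intproduct} are compatible.
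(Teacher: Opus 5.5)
Your proof is correct and follows the paper's strategy in both directions. A polyhedral convex combination centred at a vertex $x$ becomes a nonnegative $1$-cycle at $x$ with weights $\alpha_i\lambda_i$; conversely, a nonnegative cycle at $\tau$ becomes a polyhedral convex combination of the points $x+\varepsilon n_{\sigma/\tau}$. The differences are in your favour. Absorbing $w\in N_\tau$ into the centre $y=x+\varepsilon w/C\in\tau$ replaces the paper's rescaling of $c$ and B\'ezout modification of the normal vectors by multiples of $v_\tau$. Your Steps 1--2, which shrink the $x_i$ towards $x$ using convexity on faces, handle segments $[x,x_i]$ that do not lie in a single face of the structure carrying $\phi$; the paper's choice of a refinement with $x_i\in\rho_i\succ x$ tacitly excludes that case. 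Both arguments also tacitly take the $\alpha_i$ rational, which is harmless by continuity of $\phi$ and density of rational points in the rational polytope $\{\alpha:\sum_i\alpha_i x_i\in\tau\}$.
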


\begin{proof}
We argue by double inclusion. Let $\phi \in \PAPC(\tX)$ and $
    ( x, (x_i)_{i \in I}, (\alpha_i)_{i \in I})
    $ a polyhedral convex combination in $\tX$. Let $\Pi$ be a polyhedral complex such that $\phi \in \PA(\Pi)$, and $\Pi'$ a refinement of $\Pi$ such that $x \in \Pi'^{(0)}$ and $x_i \in \rho_i\in \Pi'^{(1)}$ for some $\rho_i \succ x$. Let $n_{\rho/x}$ be normal vectors relative to $x$ for each $\rho \in \Pi'^{(1)}$ with $x \succ \rho$. Then there exist $\beta_i \in \Q_{\geq 0}$ such that $x_i - x = \beta_i n_{\rho_i/x}$. Consider the weight
    $$
    c(\rho) \coloneqq \begin{cases}
        \sum_{i \in I \,:\, x_i \in \rho} \alpha_i \beta_i & \text{ if there exists $i$ such that } x_i \in \rho\\
        0 & \text{ otherwise},
    \end{cases}
    $$ and an open $U \subseteq \tX$ such that $U \cap \Pi'^{(0)}=\{x\}$. Then $c$ is a non-negative cycle in $Z_1(\Pi'|_U)$ as 
    $$
    \sum_{\rho \in \Pi'^{(1)}|_U\,:\, x \prec \rho} c(\rho)n_{\rho/x}
    = \sum_{i \in I } \alpha_i \beta_i n_{\rho_i/x} 
    = \sum_{i \in I} \alpha_i(x_i - x)=0.
    $$
    By Definition \ref{def:PAPC} we have
    \begin{align*}
        0 \leq (\phi \cdot c)(x) 
        & = \sum_{\rho \in \Pi'^{(1)}|_U \,:\, x \succ \rho} c(\rho) \phi_\rho(n_{\rho/x})
        = \sum_{i \in I} \alpha_i \beta_i \phi_{\rho_i}(n_{\rho_i/x}) \\
        & = \sum_{i \in I} \alpha_i \phi_{\rho_i}(x_i-x) 
        = \sum_{i \in I} \alpha_i \phi(x_i)- \phi(x)
    \end{align*}  
    which implies that $\phi$ is polyhedrally convex in the sense of Definition \ref{defn:PC}.

    Let $\phi \in \PA(\tX) \cap \PC(\tX)$. Let $c \in Z_k(\Pi|_U)$ be a non-negative cycle on an open $U \subseteq \tX$. 
    Up to taking a refinement of $\Pi$ we can assume that $\phi \in \PA(\Pi)$. Let $\tau \in \Pi^{(k-1)}|_U$ and pick $x \in U \cap \relint(\tau) \cap N_\Q$. Let $n_{\sigma/\tau}$ be normal vectors relative to $\tau$ for each $\sigma \in \Pi^{(k)}|_U$ with $\tau \prec \sigma$. By \eqref{eq:balancing} we have $\sum_{\sigma \succ \tau} c(\sigma) n_{\sigma/\tau}= v_\tau \in N_\tau$. Up to rescaling $c$, we can assume that $\gcd(c(\sigma))_{\sigma \succ \tau} =1$ and write 
    $\gcd(c(\sigma))_{\sigma \succ \tau}= \sum_{\sigma \succ \tau} \alpha_\sigma c(\sigma)$ for some $\alpha_\sigma \in \Z$. We set 
    \begin{align*}
    \widetilde{n}_{\sigma/\tau} 
    & \coloneqq n_{\sigma/\tau} - \alpha_\sigma v_\tau \quad \text{ another normal vector of $\sigma$ relative to $\tau$,}\\
    x_\sigma
    & \coloneqq x+ \varepsilon \widetilde{n}_{\sigma/\tau} \in \sigma \cap U \quad \text{ for some $\varepsilon \in \Q_{>0}$ sufficiently small,}\\
    S 
    & \coloneqq \sum_{\sigma \prec \tau} c(\sigma) \geq 0.
    \end{align*}
    If $S=0$, then $(\phi \cdot c)(\tau)=0$, hence $\phi \cdot c$ has non-negative weight at $\tau$. If $S \neq 0$, then 
    $$
    \left(x, (x_\sigma)_{\sigma \succ \tau}, (\tfrac{c(\sigma)}{S})_{\sigma \succ \tau}\right)
    $$
    is a polyhedral convex combination as $\tfrac{c(\sigma)}{S} \in [0,1]$,  $\sum_{\sigma  \succ \tau} \tfrac{c(\sigma)}{S}=1$ and 
    $$
    \sum_{\sigma \succ \tau} \tfrac{c(\sigma)}{S}x_\sigma 
    = x + \tfrac{\varepsilon}{S} \sum_{\sigma \succ \tau} c(\sigma) \widetilde{n}_{\sigma/\tau}
    = x + \tfrac{\varepsilon}{S} 
    \left( 
    \sum_{\sigma \succ \tau} c(\sigma) n_{\sigma/\tau} - v_\tau \sum_{\sigma \succ \tau}\alpha_\sigma c(\sigma)
    \right) = x.
    $$
    By Definition \ref{defn:PC} we have
    \begin{align*}
        0 \leq 
        \tfrac{S}{\varepsilon} \left(
        \sum_{\sigma \succ \tau} \tfrac{c(\sigma)}{S} \phi(x_\sigma) - \phi(x)
        \right)
        = \sum_{\sigma \succ \tau} c(\sigma) \phi_\sigma( \widetilde{n}_{\sigma/\tau}) = (\phi \cdot c)(\tau)     
    \end{align*}
    which implies that $\phi \cdot c$ is a non-negative cycle in $Z_{k-1}(\Pi|_U)$, hence $\phi$ is polyhedrally convex in the sense of Definition \ref{def:PAPC}.
\end{proof}

We recall the definition of convex functions in \cite[\S 2.1]{AminiPiquerez2020}, and compare such notion with the polyhedral convexity defined above.
\begin{defi}[{\cite[§2.1]{AminiPiquerez2020}}] \label{def:convexAP}
A piecewise affine function $f$ on a polyhedral structure $\Pi$ on $\tX \subseteq N_\R$ is convex if, for any $\sigma \in \Pi$, there exists an affine function $l$ on $N_\R$ such that
\begin{equation} \label{eq:def convex AP}
\begin{cases}
    f=l & \text{ on } \sigma, \\
    f \geq l & \text{ on } U \setminus \sigma
\end{cases}
\end{equation}
for an open neighborhood $U$ of $\relint(\sigma)$ in $\tX$. 
\end{defi}
\begin{prop}\label{prop:convex-ap}
    Let $\Pi$ be a simplicial polyhedral structure on $\tX$. The space of functions in $\PA(\Pi)$ which are convex in the sense of Definition \ref{def:convexAP} coincides with the space $\PAPC(\Pi)$. 
\end{prop}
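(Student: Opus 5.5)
We prove both inclusions, using Lemma \ref{lem:2 defn PAPC} to identify $\PAPC(\Pi)$ with $\PA(\Pi)\cap\PC(\tX)$.

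\emph{Convex in the sense of Definition \ref{def:convexAP} implies PAPC.} Let $f\in\PA(\Pi)$ be convex, and let $(x,(x_i)_{i\in I},(\alpha_i)_{i\in I})$ be a polyhedral convex combination on some structure $\Pi'$, with $x\in\tau'\preceq\sigma'_i\ni x_i$. Since $f$ is PA on a common refinement, we may assume $\Pi'\ge\Pi$. Let $\sigma\in\Pi$ be the unique face with $x\in\relint(\sigma)$, and choose the affine $l$ and neighbourhood $U$ given by \eqref{eq:def convex AP}. Points of $\sigma'_i$ close to $x$ lie in faces of $\Pi$ containing $\sigma$. Replace $x_i$ by $x+t(x_i-x)$ for a small rational $t>0$; this gives again a polyhedral convex combination with the same barycenter $x$, and these new points lie in $U$. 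Then
\[
f(x)=l(x)=\sum_i\alpha_i l(x+t(x_i-x))\le\sum_i\alpha_i f(x+t(x_i-x)).
\]
Each $f$ is affine along the segment $[x,x_i]\subseteq\sigma'_i$, because $f$ is affine on $\sigma'_i$. So the right-hand side equals $(1-t)f(x)+t\sum_i\alpha_i f(x_i)$. Rearranging yields $f(x)\le\sum_i\alpha_i f(x_i)$, so $f$ is PC.

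\emph{PAPC implies convex.} Let $f\in\PAPC(\Pi)$ and fix $\sigma\in\Pi$. Work locally on $\Star_\sigma(\Pi)$: after subtracting the affine extension of $f_{|\sigma}$, we get the conical function $f_x$, which vanishes on $N_{\sigma,\R}$. We must find a linear form $m$ on $N_\R$ with $m=0$ on $N_{\sigma,\R}$ and $f_x\ge m$ on the star, with equality on $C_{\sigma/\sigma}$. Then $l:=f_{|\sigma}$ extended affinely, plus $m$, works near $\relint(\sigma)$, since the star describes $\tX$ near $x$.

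Simpliciality is used here. Modulo $N_{\sigma,\R}$, every cone $C_{\sigma'/\sigma}$ is simplicial and spanned by rays $\rho_1,\dots,\rho_r$, the images of the vertices or rays of $\sigma'$ not in $\sigma$. Each such ray has a primitive generator $u_j$. On each cone $f_x$ is linear, hence determined by the values $f_x(u_j)$. Thus it suffices to find $m$ with $m(u_j)\le f_x(u_j)$ for every ray $j$. Indeed, on a simplicial cone $\sum\lambda_ju_j$ with $\lambda_j\ge 0$ we then get $f_x=\sum\lambda_jf_x(u_j)\ge m$.

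This is a linear feasibility problem. By Farkas' lemma, infeasibility would give nonnegative $c_j$ with $\sum_j c_ju_j=0$ in $N_\R/N_{\sigma,\R}$ and $\sum_j c_jf_x(u_j)<0$. By perturbation and rationality we may take the $c_j$ rational. The relation $\sum_jc_ju_j\in N_{\sigma,\R}$ says that the weight $c$ on the rays, pulled back to the star, is a nonnegative $(\dim\sigma+1)$-cycle at $\sigma$. The quantity $(f\cdot c)(\sigma)$ equals $\sum_jc_jf_x(u_j)$, because $f_x$ vanishes on $N_{\sigma,\R}$. This cycle is supported on faces $\sigma\prec\sigma''$ of dimension $\dim\sigma+1$ and lives on a small open set $U$ around $\relint(\sigma)$. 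Equivalently, the rescaled points $x+\varepsilon u_j$ with weights $c_j/\sum c_j$ form a polyhedral convex combination, as in the first part of the proof of Lemma \ref{lem:2 defn PAPC}. Polyhedral convexity then gives $\sum_jc_jf_x(u_j)\ge 0$, a contradiction.

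The main obstacle is this Farkas step. One must check that the dual certificate really corresponds to a cycle, or to a polyhedral convex combination, on the rays of the star. One must also make the rationality and integrality of normal vectors work. This is where simpliciality is essential: it reduces the piecewise-linear comparison to the generators of the rays.
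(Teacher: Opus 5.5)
Your proposal is correct and is essentially the paper's proof. The paper likewise passes to the star modulo $N_{\sigma,\R}$, writes convexity at the vertex as membership in the cone $(\R_{\ge 0})^{\Sigma(1)}+g(M_\R)$, identifies its dual with the nonnegative $1$-cycles, and concludes by the bipolar theorem, of which your Farkas alternative is the polyhedral form. Your version makes automatic the closedness that the paper imports from Amini--Piquerez, and since a linear function on any pointed cone is controlled by its values on the rays, your argument does not actually depend on simpliciality, contrary to your closing remark.

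The one step to tighten is ``we may assume $\Pi'\ge\Pi$'': a $\PCC$ need not remain polyhedral on a common refinement. Instead, get $f(x+t(x_i-x))\le(1-t)f(x)+tf(x_i)$ from the fact that the local affine minorants of Definition \ref{def:convexAP} make $f$ convex along every segment contained in $\tX$. The paper makes the same reduction tacitly when it only tests $\PCC$s on $\Pi$.
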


\begin{proof}
Write $\mathcal{A}$ for the set of convex functions in the sense of Definition \ref{def:convexAP}, defined on $\Pi$. For any $\delta \in \Pi$, set
$$\mathcal{A}_{\delta} \coloneqq \{ \phi \in \PA(\Pi) \; | \;\exists \ell \in \Aff(N_\R), (\phi+\ell)_{| \delta} =0, (\phi+\ell)\ge 0 \; \text{near} \; \delta \}$$
$$\PAPC_{\delta} \coloneqq \left\{ \phi \in \PA(\Pi)\;  | 
\substack{\textit{\normalsize
$\;\forall \; \PCC (x, (x_i)_i, (\alpha_i)_i),\, x \in \relint(\delta), x_i \in \sigma_i, \delta \preceq \sigma_i \in \Pi$} 
\\ \textit{\normalsize{$\phi(x) \le \sum_i\alpha_i\phi(x_i)$}}
}
\right\}.$$ 
By definition $\mathcal{A} = \bigcap_{\delta} \mathcal{A}_{\delta}$ and $\PAPC(\Pi) = \bigcap_{\delta} \PAPC_{\delta}$, hence it is enough to prove $\mathcal{A}_{\delta} = \PAPC_{\delta}$. Since the conditions to belong to $\mathcal{A}_{\delta}$, or $\PAPC_{\delta}$ respectively, only depend on the restriction of a function to $\Star_{\delta}(\Pi)$, we may work on the corresponding (generalized) fan; after removing an affine function, we reduce to the case of $\PA$ functions that are identically zero on $\delta$; quotienting by $N_{\delta, \R}$, we reduce to the case of $\Pi = \Sigma$ a simplicial fan and $\delta = \{0 \}$ the vertex.

To prove the inclusion $\mathcal{A}_{\{0 \}}\subseteq \PAPC_{\{0 \}}$, let $(0,(x_i)_i,(\alpha_i)_i)$ be a PCC in $\Sigma$ and let $\phi \in \mathcal{A}_{\{0 \}}$. Then there exists a global affine function $\ell$ such that $0=(\phi+\ell)(0) \leq \sum_i \alpha_i (\phi+\ell)(x_i)$, which implies that $\phi \in  \PAPC_{\{0 \}}$.

Let $g\colon M_{\R} \to \R^{\Sigma(1)}$ be the map $\ell \mapsto (\ell(v_i))_{i \in \Sigma(1)}$, where $v_i$ is the primitive generator of the ray. Note that the dual map $p \colon (\R^{\Sigma(1)})^{\vee} \to N_{\R}$ is the map sending the $i$-th basis vector to $v_i$. We have that $\mathcal{A}_{\{0\}} = (\R_{\ge 0})^{\Sigma(1)} + g(M_{\R})$
and writing $\sigma= (\R_{\ge 0})^{\Sigma(1)}$ we get that $\mathcal{A}_{\{0\}}^{\vee} = \sigma^{\vee} \cap \Ker(p)$. Thus, an element of $\mathcal{A}_{\{0\}}^{\vee}$ is a collection of non-negative numbers attached to the rays of $\Sigma$ which satisfy the balancing condition (because it lies in $\Ker(p)$), i.e. a non-negative $1$-cycle on $\Sigma$. It follows that an element of $\mathcal{A}_{\{0\}}^{\vee}$ pairs in a non-negative way with $\PAPC(\Sigma)$, whence $\mathcal{A}_{\{0\}}^{\vee} \subseteq \PAPC(\Sigma)^{\vee}$. By the bipolar theorem (see \cite{Rockafellar}) we infer
$$\PAPC(\Sigma) \subset \overline{\mathcal{A}_{\{0\}}},$$
where $\overline{\mathcal{A}_{\{0\}}}$ is the topological closure of $\mathcal{A}_{\{0\}}$ in $\R^{\Sigma(1)}$. Since $\mathcal{A}_{\{0\}}$ is closed (it is the closure of the open convex cone appearing in \cite[Remark 4.8]{AminiPiquerez2020}), this concludes the proof.
\end{proof}

\subsection{Strict convexity}

\begin{defi}[PAPC$^+$ functions] \label{def:strict conv}
Let $\Pi$ be a polyhedral structure on $\tX$. A piecewise affine function $\phi$ on $\Pi$ is \emph{strictly convex with respect to $\Pi$} (in short \emph{$\PAPC^+(\Pi)$}) if we have the strict inequality
$$\phi(x) < \sum_{i \in I} \alpha_i \phi(x_i)$$
for any PCC $(x, (x_i)_{i \in I}, (\alpha_i)_{i \in I})$ on $\Pi$ with $x \in \relint(\tau)$, $x_i \in \relint(\sigma_i)$, and $\tau \prec \sigma_i$ a strict subface for at least two distinct indices $i$.

The space of strictly convex $\PAPC$ functions on $\Pi$ is denoted $\PAPC^+(\Pi)$ and is endowed with the topology of pointwise convergence. 
\end{defi}

\begin{rem} \label{rem:PAPC+}
Equivalently, as it can be seen from the proof of Lemma \ref{lem:2 defn PAPC}, a function $\phi$ is in $\PAPC^+(\Pi)$ if and only if, for any open subset $U \subset \tX$ and any strictly positive cycle $c$ defined on $\Pi_{|U}$, the intersection $(\phi \cdot c)$ is strictly positive.    
\end{rem}

We recall the following definition from \cite[Section 2.1]{AminiPiquerez2020}.
\begin{defi} A polyhedral complex is called \emph{quasi-projective} if it admits a $\PAPC^+$ function.
\end{defi}
Not every polyhedral complex $\Pi$ is quasi-projective, not even a complete fan (see Example in \cite[Section 3.4]{Fulton1993}). On the other hand, given a polyhedral space $\tX$, one can always find a quasi-projective polyhedral structure $\Pi$ on $\tX$. More precisely,
\begin{prop}\label{prop:quasi-proj}
    For any polyhedral structure $\Pi$ on $\tX$, there exists a simplicial quasi-projective refinement $\Pi' \geq \Pi$.
\end{prop}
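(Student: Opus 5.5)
The plan is to reduce to the fan case, exactly as in the proof of Lemma~\ref{lem:regular}. Let $\Sigma$ be the fan in $N_\R \times \R_{\ge 0}$ consisting of the cones over $\tau \times \{1\}$ for $\tau \in \Pi$, together with the recession cones $\rec(\tau)\times\{0\}$, which make it closed under faces. By Lemma~\ref{lem:regular}, or rather its fan version, we may first assume that $\Pi$ is simplicial. The task is then to find a refinement $\tilde\Sigma$ of $\Sigma$, with $|\tilde\Sigma|=|\Sigma|$, that carries a strictly convex conical piecewise linear function. Slicing at height $1$ gives $\Pi' \coloneqq \tilde\Sigma \cap (N_\R\times\{1\})$, and the restriction of that function to height $1$ is the candidate $\PAPC^+(\Pi')$ function.

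To build $\tilde\Sigma$, I would complete $\Sigma$ to a complete fan $\Sigma_1$ in $N_\R\times\R$ (this is possible by \cite[III, Theorem 2.8]{ewald}, as in Corollary~\ref{cor:compactification}). Next I would pass to a projective simplicial refinement $\Sigma_2 \geq \Sigma_1$; every complete fan admits such a refinement, for instance by toric Chow's lemma / a sequence of stellar subdivisions (see \cite[Theorem 11.1.9]{CoxLittleSchenck2011} together with projective resolutions). The complete projective simplicial fan $\Sigma_2$ carries a strictly convex support function $h$ in the sense of toric geometry. That is, on each maximal cone there is a linear form $m_\sigma$ with $h = m_\sigma$ on $\sigma$ and $h > m_\sigma$ outside $\sigma$. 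Since $\Sigma_2$ refines $\Sigma_1 \supseteq \Sigma$, the subcomplex $\tilde\Sigma$ of cones of $\Sigma_2$ contained in $|\Sigma|$ is a refinement of $\Sigma$. We then set $\phi(x)\coloneqq h(x,1)$ on $\tX$.

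It remains to show that $\phi \in \PAPC^+(\Pi')$. By Remark~\ref{rem:PAPC+} and the local reduction used in Proposition~\ref{prop:convex-ap}, it suffices to check the following at each face $\delta\in\Pi'$: after subtracting the affine function $m_\sigma(\cdot,1)$ for a cone $\sigma$ whose slice contains $\delta$ (more precisely, a linear form agreeing with $h$ on the cone over $\delta$, which exists because $h$ is strictly convex on the complete fan), the function vanishes on $\delta$ and is strictly positive on the star minus $\delta$. For a PCC $(x,(x_i),(\alpha_i))$ with $x\in\relint(\tau)$ and at least two of the $x_i$ in strictly larger faces, the inequality $\phi(x)\le\sum\alpha_i\phi(x_i)$ then holds termwise. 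It is strict as soon as some $x_i$ lies outside the linear region of the chosen $m$. The main obstacle is exactly this strictness: if all $x_i$ lay in a single maximal cone on which $h$ is linear, we would get equality. This cannot happen under the hypothesis, though. Each $x_i$ lies in the relative interior of some $\sigma_i\succ\tau$, and if all of them were in one cone $C$ of $\tilde\Sigma$ containing the cone over $\tau$, then, because $x$ is a convex combination lying in $\relint(\tau)$ and $\Pi'$ is simplicial, the coefficients of the $x_i$ in the directions of $C$ transverse to $\tau$ must cancel. By uniqueness of the simplicial coordinates \eqref{eq:eqn-cone} this forces every such transverse coefficient to be zero, i.e.\ $x_i\in\tau$, which contradicts the assumption that $\tau\prec\sigma_i$ is strict for at least two indices. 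This argument requires choosing $m$ as the linear form of $h$ on the cone over $\tau$, and using strict convexity of $h$ relative to that smaller cone (valid since $h>m_\sigma$ off $\sigma$ for every maximal $\sigma$ containing it). I expect the careful verification of this step, in particular the simplicial coordinate argument in the presence of unbounded faces (the recession directions at height $0$), to be the delicate part.
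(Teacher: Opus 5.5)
Your route is different from the paper's, which gives no construction of its own and simply invokes \cite[Theorem 4.1]{AminiPiquerez2020}. You instead reduce to standard toric geometry. You homogenize, complete the fan, pass to a projective simplicial refinement (toric Chow lemma plus star subdivisions), and restrict an ample support function $h$ to the subfan $\tilde\Sigma$ lying over $\tX$. This is sound once one step is repaired (see below), and it gives more than is needed. The function $\phi=h(\cdot,1)$ is the restriction to $\tX$ of a convex function on all of $N_\R$, with a global supporting affine function at every face of $\Pi'$, so polyhedral convexity of $\phi$ is immediate. The cost is reliance on the fan completion theorem and the toric Chow lemma, whereas the paper delegates the whole construction to the cited theorem.

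There is one step that fails as written. The collection of closed cones over $\tau\times\{1\}$ together with the $\rec(\tau)\times\{0\}$ is a fan only if the recession pseudo-fan $\rec(\Pi)$ is a fan. Two such closures meet along $(\rec(\tau)\cap\rec(\tau'))\times\{0\}$, which need not be a face of either. For example, in $\R^3$ take the disjoint faces $\tau=\Cone(e_1,e_2)$ and $\tau'=e_3+\Cone(e_1+e_2,e_2-e_1)$. Their recession cones meet in $\Cone(e_2,e_1+e_2)$, which is a face of neither (compare the remark after Definition~\ref{def:recession} and \cite{BurgosSombra2010}). So before homogenizing you must refine $\Pi$ so that $\rec(\Pi)$ is a fan and all faces are pointed; only then does Ewald's completion theorem apply.

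The strictness step is easier than you expect and needs no coordinate computation. For $\tau\in\Pi'$, let $C\in\tilde\Sigma$ be the cone with $C\cap\{t=1\}=\tau\times\{1\}$. Let $m_C$ be the average of the $m_\sigma$ over the maximal cones $\sigma\supseteq C$ of $\Sigma_2$. Since $\Sigma_2$ is complete, $\bigcap_{\sigma\supseteq C}\sigma=C$, so $h\ge m_C$ with equality exactly on $C$. Hence $\ell\coloneqq m_C(\cdot,1)$ satisfies $\phi\ge\ell$ on $\tX$ with equality exactly on $\tau$. It follows that $\sum_i\alpha_i\phi(x_i)-\phi(x)=\sum_i\alpha_i(\phi-\ell)(x_i)>0$ as soon as some $x_i\notin\tau$ carries a positive weight $\alpha_i$. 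If you instead keep a single maximal cone $\sigma$, note that $\sigma\cap|\tilde\Sigma|$ may consist of several cones of $\tilde\Sigma$, not one as you wrote. The conclusion still holds because $C$ is a face of $\sigma$, which forces all $x_i$ into $\tau$.
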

\begin{proof}
This follows from \cite[Theorem 4.1]{AminiPiquerez2020}.
\end{proof}

In \cite[\S 2.1]{AminiPiquerez2020}, a piecewise affine convex function on $\Pi$ is said strictly convex if the inequalities in Definition \ref{def:convexAP} are strict.

\begin{cor} \label{cor:strictconvex-ap}
Let $\Pi$ be a simplicial polyhedral complex. The set of functions in $\PA(\Pi)$ which are strictly convex in the sense of \cite{AminiPiquerez2020} coincides with $\PAPC^+(\Pi)$.
\end{cor}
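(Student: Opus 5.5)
The plan is to follow the proof of Proposition~\ref{prop:convex-ap}, this time tracking strict inequalities. Let $\mathcal{A}^+$ denote the functions in $\PA(\Pi)$ that are strictly convex in the sense of \cite{AminiPiquerez2020}. For each face $\delta \in \Pi$, introduce local versions $\mathcal{A}^+_\delta$ and $\PAPC^+_\delta$. Here $\mathcal{A}^+_\delta$ requires an affine $\ell$ with $(\phi+\ell)|_\delta = 0$ and $\phi+\ell > 0$ on $U\setminus\delta$ near $\relint(\delta)$. $\PAPC^+_\delta$ requires the strict PCC inequality for combinations with $x\in\relint(\delta)$, $x_i \in \relint(\sigma_i)$, and $\delta \prec \sigma_i$ strict for at least two indices. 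Both global conditions are intersections over $\delta$ of these local conditions, so it suffices to prove $\mathcal{A}^+_\delta = \PAPC^+_\delta \cap \PAPC(\Pi)$. The non-strict part is already handled by Proposition~\ref{prop:convex-ap}.

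As in that proof, I would reduce to the star of $\delta$. Subtracting an affine function and quotienting by $N_{\delta,\R}$ brings us to a simplicial fan $\Sigma$ with $\delta = \{0\}$ and $\phi$ linear on each cone. Simpliciality survives the quotient, since the star of a simplicial complex at a face is again a simplicial fan. Under this reduction, a PCC of the required type becomes a PCC centered at $0$ with at least two $x_i \neq 0$. Points with $x_i \in \delta$ contribute nothing after the translation, so we may discard them and renormalize.

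For $\mathcal{A}^+_{\{0\}} \subseteq \PAPC^+_{\{0\}}$: we have $\phi + \ell \ge 0$ everywhere on the fan and $>0$ off $0$, because on a fan positivity near $0$ propagates by homogeneity. Given a PCC $0 = \sum\alpha_i x_i$ with some $x_i \neq 0$ and $\alpha_i > 0$, we get $\sum \alpha_i(\phi+\ell)(x_i) > 0 = (\phi+\ell)(0)$. Since $\ell$ is linear and $\sum\alpha_i x_i = 0$, this is the strict inequality. The case where some $\alpha_i = 0$ needs a small remark: I would interpret the definition with positive weights on the points lying in strictly larger faces.

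The converse is the main step. Write $\phi$ through its values $a_i = \phi(v_i)$ on the primitive ray generators. $\phi \in \mathcal{A}^+_{\{0\}}$ means $a + g(\ell) \in (\R_{>0})^{\Sigma(1)}$ for some $\ell \in M_\R$, i.e.\ $a$ lies in the open cone $\R_{>0}^{\Sigma(1)} + g(M_\R)$. I would use the strict form of the duality (Gordan/Stiemke-type alternative). Either $a + g(M_\R)$ meets the open orthant, or there is a nonzero $c \in \R_{\ge0}^{\Sigma(1)} \cap \Ker(p)$ with $\langle c, a\rangle \le 0$. Such a $c$ is a nonzero non-negative $1$-cycle on $\Sigma$ with $\phi\cdot c = \sum c_i a_i \le 0$. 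The cycle can be taken rational, because $\Ker(p)$ and the orthant are rational and the failing set is a rational polyhedral cone. Balancing forces at least two rays in its support, since a single nonzero $c_i v_i$ cannot vanish. Rescaling gives a PCC $0 = \sum \frac{c_i}{S} \varepsilon v_i$ with at least two nonzero points, and $\phi$ violates the strict inequality on it. This contradicts $\phi \in \PAPC^+_{\{0\}}$.

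The expected obstacle is making this theorem of the alternative precise, including the rationality of $c$ and the support condition. I would get it by applying a linear-programming duality to the rational cone $\R_{\ge0}^{\Sigma(1)} + g(M_\R)$, whose interior is exactly the open cone above.
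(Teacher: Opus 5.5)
Your proof is correct, but it takes a different route from the paper. The paper argues in two lines: it asserts that $\PAPC^+(\Pi)$ and the Amini--Piquerez strictly convex functions are the interiors, inside the finite-dimensional space $\PA(\Pi)$, of the closed cones $\PAPC(\Pi)$ and $\mathcal{A}$, and then concludes by Proposition~\ref{prop:convex-ap}. You instead rerun that proposition's local duality argument with strict inequalities, replacing the bipolar theorem by a Gordan--Stiemke alternative on the star of each face.

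The paper's version is shorter and needs no new computation, but it leaves the real content implicit. Identifying the Amini--Piquerez strictly convex set with the interior of $\mathcal{A}$ is elementary, since the interior of $\R_{\ge 0}^{\Sigma(1)}+g(M_\R)$ is $\R_{>0}^{\Sigma(1)}+g(M_\R)$. The inclusion $\PAPC^+(\Pi)\subseteq \mathrm{int}\,\PAPC(\Pi)$ is different: given Proposition~\ref{prop:convex-ap}, it is essentially the corollary itself. It requires that strict positivity against every non-negative local $1$-cycle yields a strictly separating affine function, which is precisely your alternative (equivalently, finite generation of the cone of non-negative local cycles). Your version is self-contained. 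It also makes explicit that Definition~\ref{def:strict conv} must be read with positive weights on the points in strictly larger faces; under the literal reading, zero weights would make $\PAPC^+(\Pi)$ empty.

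Two small things to tidy. First, the local statement you need is $\mathcal{A}^+_\delta=\PAPC^+_\delta$. Intersecting with the global $\PAPC(\Pi)$ does not make sense for a local condition, and your converse never uses it. Second, in the converse, say how the combination built from $c$ on the quotient fan lifts to a genuine PCC on $\Pi$ centered at a rational point of $\relint(\delta)$. You can either add a correction point in $\relint(\delta)$ absorbing $\sum_i c_i n_{\sigma_i/\delta}\in N_{\delta}\otimes\Q$, or adjust the normal vectors as in the proof of Lemma~\ref{lem:2 defn PAPC}. This is where rationality of $c$ is used. It holds because you can subtract an affine function with integral linear part, so $a$ is integral and the failing cone is rational polyhedral.
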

\begin{proof}
We denote by $\ca A$ the set of convex functions in the sense of Definition \ref{def:convexAP}. The set $\PAPC^+(\Pi)$, and respectively the set of strictly convex functions in the sense of \cite{AminiPiquerez2020}, are the interiors of the closed convex cones $\PAPC(\Pi)$, respectively $\mathcal{A}$, inside $\PA(\Pi)$. Then the claim follows from Proposition \ref{prop:convex-ap}.
\end{proof}

\begin{lemma} \label{lem:PA plus PAPC+}
Let $\Pi$ be a quasi-projective polyhedral structure on $\tX$, and let $\phi \in \PAPC^+(\Pi)$. For any $f \in \PA(\Pi)$, there exists $m>0$ such that $m\,\phi + f \in \PAPC(\tX)$.
\end{lemma}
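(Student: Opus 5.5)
The plan is to reduce polyhedral convexity of $m\phi+f$ to finitely many local inequalities, and to use strict convexity of $\phi$ to dominate the contributions of $f$ in each of them. Since $\phi,f\in\PA(\Pi)$, so is $m\phi+f$. By Proposition~\ref{prop:convex-ap}, together with Lemma~\ref{lem:regular} and Lemma~\ref{lem:pairing and pullback} to pass to a simplicial refinement if needed, or directly by the local description used in that proof, membership in $\PAPC(\Pi)$ is equivalent to a condition on each star $\Star_\delta(\Pi)$. There are only finitely many faces $\delta\in\Pi$, so it suffices to find, for each $\delta$, a constant $m_\delta$ that works on $\Star_\delta(\Pi)$, and then to take $m=\max_\delta m_\delta$. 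Monotonicity in $m$ holds because $\PAPC(\Pi)$ is a convex cone containing $\phi$: if $m_\delta\phi+f$ is convex near $\delta$, then so is $m\phi+f=(m_\delta\phi+f)+(m-m_\delta)\phi$.

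Next we localize. Fix $\delta$. As in the proof of Proposition~\ref{prop:convex-ap}, subtract the affine functions $\phi_\delta$ and $f_\delta$, which does not affect convexity, and quotient by $N_{\delta,\R}$. This reduces the problem to a generalized fan $\Sigma=\Star_\delta(\Pi)$ with conical functions $\phi_x$ and $f_x$ vanishing on the vertex. Convexity at the vertex then means $(m\phi_x+f_x)\cdot c\ge 0$ for every non-negative $1$-cycle $c$ on $\Sigma$, that is, for every balanced weighting of the rays. The non-negative $1$-cycles on $\Sigma$ form a rational polyhedral cone $K\subset\R_{\ge0}^{\Sigma(1)}$, cut out by the balancing equations inside the orthant. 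Both $c\mapsto\phi_x\cdot c$ and $c\mapsto f_x\cdot c$ are linear on $K$.

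The key step is a compactness argument. Normalize $K$ to the compact slice $K_1=\{c\in K:\sum_\rho c(\rho)=1\}$. By Remark~\ref{rem:PAPC+}, equivalently Definition~\ref{def:strict conv} applied to the convex combination built from $c$ as in the proof of Lemma~\ref{lem:2 defn PAPC}, we get $\phi_x\cdot c>0$ for every nonzero $c\in K$. The point needing care is that such a $c$ is supported on at least two rays not contained in $\delta$; in the quotient fan, a balanced non-negative $1$-cycle on a single ray must vanish, so this holds. By compactness of $K_1$ and continuity, $\phi_x\cdot c\ge\varepsilon>0$ on $K_1$, while $|f_x\cdot c|\le C$ there. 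Hence $m_\delta=C/\varepsilon$ gives $(m\phi_x+f_x)\cdot c\ge0$ on $K_1$, and so on all of $K$.

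The main obstacle is matching the definitions. Definition~\ref{def:PAPC} asks for $\phi\cdot c\ge0$ for cycles of every dimension $k$ on open sets $U$, not only $1$-cycles. We handle this through Lemma~\ref{lem:2 defn PAPC}: it is enough to verify the PCC inequality for combinations centered at $x\in\relint(\delta)$ with points in faces $\sigma_i\succeq\delta$. Such a combination is exactly a non-negative $1$-cycle on $\Star_\delta(\Pi)$ after projecting the points to rays. The projection to rays is legitimate since $\Pi$ is simplicial, or after a simplicial refinement on which $\phi$ stays strictly convex. I would double-check this last assertion, and otherwise argue directly with the finitely many rays of the local cones $C_{\sigma/\delta}$ and the cones of PCC data, where compactness again applies.
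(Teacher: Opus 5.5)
Your argument is correct, but it takes the dual route to the paper's proof.

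\textbf{How the two routes differ.} The paper argues on the function side. Via Corollary~\ref{cor:strictconvex-ap} it picks, for each face $\delta$, an affine $l$ with $\phi=l$ on $\delta$ and $\phi-l\ge c\,\dist(\cdot,\delta)$ near $\delta$. It then bounds $f-l'\ge -c'\,\dist(\cdot,\delta)$ and concludes from Proposition~\ref{prop:convex-ap} once $m>c'/c$. You instead test $m\phi+f$ against the cone $K$ of non-negative weightings on the $(\dim\delta+1)$-faces through $\delta$ that are balanced at $\delta$. You get $m_\delta=C/\varepsilon$ from a min/max over a compact slice of $K$. The two arguments are linked by the Farkas/bipolar duality used in the proof of Proposition~\ref{prop:convex-ap}.

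\textbf{What each route buys.} The paper's version is shorter once the Amini--Piquerez characterization is available. However, it silently needs $\Pi$ simplicial, since both cited results assume it. Your version stays with the intersection-theoretic definitions and, stated correctly, needs no simpliciality at all.

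\textbf{Two points to tighten.}

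First, the fallback ``after a simplicial refinement on which $\phi$ stays strictly convex'' is false. Strict convexity with respect to $\Pi$ is not inherited by refinements, because $\phi$ is affine across any new face lying inside a face of $\Pi$. For example, $|x|+|y|$ fails it after adding the ray $\R_{\ge0}(1,1)$. The fallback is also unnecessary. Modulo $N_{\delta,\R}$, each $C_{\sigma/\delta}$ is a pointed cone whose extremal rays are the images of the $(\dim\delta+1)$-faces $\tau$ with $\delta\prec\tau\preceq\sigma$. So each $\bar x_i$ is a non-negative combination of those generators. Linearity of $g$ on $C_{\sigma/\delta}$ then turns a PCC into a non-negative $1$-cycle, and uniqueness of the decomposition is never needed. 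Note also that a PCC on a refinement of $\Pi$ is a PCC on $\Pi$ centred at $x\in\relint(\delta_x)$ with $x_i\in\sigma_i\succeq\delta_x$. This is what lets the proof of Lemma~\ref{lem:2 defn PAPC} reduce the $k$-cycle condition on arbitrary open sets to your local test on $\Pi$ itself.

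Second, Remark~\ref{rem:PAPC+} only concerns strictly positive cycles, so it does not give $\phi\cdot c>0$ for an arbitrary nonzero $c\in K$. That positivity has to come from your direct construction via Definition~\ref{def:strict conv}, and your observation that a balanced non-negative weighting cannot live on a single ray is exactly the needed input. This yields positivity only for rational $c$, since convex combinations use points of $N_\Q$. You pass to all nonzero $c\in K$ because $K$ is pointed and generated by finitely many rational vectors. For the same reason you can drop compactness altogether and simply check the finitely many extremal generators of $K$.
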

\begin{proof}
Let $\delta$ be a face of $\Pi$. By Corollary \ref{cor:strictconvex-ap}, there exists an affine function $l \in \Aff(N_\R)$ such that $\phi=l$ on $\delta$ and $\phi > l$ on $U \setminus \delta$, for an open neighborhood $U$ of $\relint(\delta)$ in $\tX$. This implies that there exists $c>0$ such that, for any $x$ in $\tX$, we have
$$
\phi(x)-l(x) \geq c\, \dist(x, \delta).
$$
Let $l'$ be an affine function on $N_\R$ such that $f_{|\delta}=l'$. Then $f=l'$ on $\delta$ and there exists $c'>0$ such that 
$$
f(x)-l'(x) \geq -c'\, \dist(x, \delta)
$$
for any $x \in U \setminus \delta$.
It follows that $m\,\phi+f-(m\,l +l') \geq (mc-c')\dist(x,\delta)$, thus for $m>\tfrac{c'}{c}$ the function $m\,\phi+f$ is polyhedrally convex by Proposition \ref{prop:convex-ap}.
\end{proof}

\subsection{Convex functions with growth condition}
Let $\gamma$ be a piecewise affine function on $\tX$.
\begin{defi}\label{def:poly-growth}
The space of \emph{$\gamma$-bounded PA/PC/PAPC} functions are defined as
\begin{align*}
&\PA(\tX, \gamma) \coloneqq \{ \phi \in \PA(\tX) | \,\phi = \gamma + O(1)\}\\
&\PC(\tX, \gamma) \coloneqq \{ \phi \in \PC(\tX) | \,\phi = \gamma + O(1)\}\\
&\PAPC(\tX, \gamma) \coloneqq \PA(\tX, \gamma) \cap \PC(\tX, \gamma)
\end{align*}
\end{defi}
The non-emptiness of the space $\PAPC(\tX,\gamma)$ does not imply that the function $\gamma$ itself is polyhedrally convex. Nevertheless, whenever this space is non-empty, there exists a polyhedrally convex function $\gamma' \in \PAPC(\tX)$ such that $\PAPC(\tX,\gamma)=\PAPC(\tX,\gamma')$.

\begin{lemma} \label{lem:decr}
Let $\gamma \in \PA(\Pi)$, $\phi \in \PC(\tX, \gamma)$, and write $\psi = \phi - \gamma$. The inequality
$$\psi \le \psi \circ r_{\Pi'}$$
holds for any simplicial refinement $\Pi'\geq \Pi$. Here, the map $r_{\Pi'} \colon \tX \to \on{Sk}(\Pi')$ denotes the retraction map from Definition \ref{def:retraction}.
\end{lemma}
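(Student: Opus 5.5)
The idea is to reduce the statement to a one-variable fact: a convex function on a half-line that is bounded above must be nonincreasing. We apply this along the ray in a face of $\Pi'$ that starts at $r_{\Pi'}(x)$, passes through $x$, and goes off in a recession direction.

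Fix $x \in \tX$ and a face $\sigma \in \Pi'$ containing $x$. Since $\Pi'$ is simplicial, $\sigma = \conv(p_0,\ldots,p_l) + \Cone(v_{l+1},\ldots,v_k)$, and \eqref{eq:eqn-cone} writes $x$ uniquely as
\[
x = \sum_{i} \theta_i p_i + \sum_j \lambda_j v_j .
\]
By Definition \ref{def:retraction}, $r_{\Pi'}(x) = \sum_i \theta_i p_i$. Set $w \coloneqq \sum_j \lambda_j v_j \in \rec(\sigma)$ and $y_t \coloneqq r_{\Pi'}(x) + t w$ for $t \ge 0$. Then $y_t \in \sigma$ for all $t \ge 0$, with $y_0 = r_{\Pi'}(x)$ and $y_1 = x$. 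It therefore suffices to show that $g(t) \coloneqq \psi(y_t)$ satisfies $g(1) \le g(0)$.

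\emph{Convexity of $g$.} Since $\Pi' \ge \Pi$ and $\gamma \in \PA(\Pi)$, the face $\sigma$ lies in a face of $\Pi$, so $\gamma$ is affine on $\sigma$. Next, $\phi$ is convex on $\sigma$. Indeed, any convex combination $(z,(z_i),(\alpha_i))$ with $z$ and all $z_i$ in $\sigma$ is polyhedral on $\Pi'$: take $\tau = \sigma_i = \sigma$ in Definition \ref{def:poly-combi}. This gives $\phi(z) \le \sum_i \alpha_i \phi(z_i)$ at rational points. Because $\phi$ is continuous (\cite[Theorem~6.2]{BBS}) and $\sigma$ is rational, so that its rational points are dense, the inequality extends to all points of $\sigma$. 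Hence $g = \phi(y_t) - \gamma(y_t)$ is a convex function minus an affine one, so it is convex on $[0,\infty)$. It is also bounded, since $\phi = \gamma + O(1)$.

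\emph{Monotonicity.} Suppose $0 \le t_1 < t_2$ and $g(t_1) < g(t_2)$. For $t > t_2$, convexity gives
\[
g(t) \ge g(t_2) + (t-t_2)\,\frac{g(t_2)-g(t_1)}{t_2-t_1} \longrightarrow +\infty,
\]
which contradicts boundedness. So $g$ is nonincreasing, and in particular $\psi(x) = g(1) \le g(0) = \psi(r_{\Pi'}(x))$.

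No step is a serious obstacle. The only points needing care are that convexity of $\phi$ holds on each face of $\Pi'$ and not just on faces of $\Pi$, and the passage from rational to real points. Both follow from the PCC definition together with the continuity of $\PC$ functions, as above.
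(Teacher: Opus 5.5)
Your proof is correct and takes essentially the same route as the paper. Both use that $\psi$ is convex on each face of $\Pi'$, since $\gamma$ is affine there, and both use that a convex function bounded above on the translated cone $r_{\Pi'}(x)+\rec(\sigma)$ is maximized at its apex; you additionally spell out the one-variable monotonicity argument along the ray and the convexity of $\phi$ on faces of $\Pi'$, which the paper takes for granted.
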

\begin{proof} 
Let $\Pi'$ be a simplicial refinement $\Pi' \geq \Pi$. Let $\sigma \in \Pi'$ be an unbounded face of dimension $d$, which we write as
$$\sigma =\conv(p_0,\ldots,p_l) + \Cone(v_{l+1},\ldots,v_d)$$
since $\Pi'$ is simplicial. Since $\gamma$ is affine on $\sigma$, the function $\psi$ is convex on $\sigma$ in the usual sense, and bounded from above since $\phi \in \PC(\tX,\gamma)$. As a result, if $y \in \conv(p_0,\ldots,p_l)$, the restriction of $\psi$ to $y + \Cone(v_{l+1},\ldots,v_d)$ is a convex function on a polyhedral cone which is bounded from above, hence attains its maximum at the vertex $y$. This yields precisely $\psi(x) \le \psi(r_{\Pi'}(x))$ for any $x \in \sigma$ such that $y= r_{\Pi'}(x)$, and varying $y$ concludes the proof.
\end{proof}

\begin{lemma} 
\label{lem:ext} 
Let $\Pi$ be a simplicial polyhedral structure on $\tX$ such that $\gamma \in \PA(\Pi)$, and let $\overline{\tX}_\Pi$ be the compactification of $\tX$ with respect to $\Pi$.
Let $\phi \in \PC(\tX)$ and write $\psi \coloneqq \phi - \gamma$. Then $\phi \in \PC(\tX, \gamma)$ if and only if $\psi$ extends continuously to $\overline{\tX}_\Pi$. 

Moreover, if $\tau \in \Pi$ is an unbounded face, $\psi$ extends to the closure of $\tau$ in $\overline{\tX}_\Pi$ by 
$$\psi(t) = \inf \{ \psi(t') |\, t' \in \tau, t' \le t \}$$ 
where $\le$ is the coordinate-wise partial order on unbounded coordinates of $\tau$.
\end{lemma}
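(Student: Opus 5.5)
Both directions are proved face by face on the simplicial structure $\Pi$, using Proposition \ref{prop:disjoint} and Lemma \ref{lem:closure} to glue. The closure $\overline{\tX}_\Pi$ is the union of the closures $\overline{\tau}$ of the faces $\tau\in\Pi$. A function on $\tX$ extends continuously to $\overline{\tX}_\Pi$ as soon as its restriction to each $\tau$ extends continuously to $\overline{\tau}$ and these extensions agree on overlaps. By Lemma \ref{lem:closure}, the overlaps at infinity come from common faces, so it suffices to work on a single unbounded simplicial face
\[
\tau=\conv(p_0,\ldots,p_l)+\Cone(v_{l+1},\ldots,v_k),
\]
with coordinates $(\theta,\lambda)$ as in \eqref{eq:eqn-cone}. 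In these coordinates $\overline{\tau}$ is the product of the simplex with $[0,+\infty]^{k-l}$, where a point with some $\lambda_j=+\infty$ lies in the stratum $N(\gamma')$, $\gamma'$ being the cone spanned by those $v_j$.

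\emph{Easy direction.} If $\psi$ extends continuously to the compact space $\overline{\tX}_\Pi$ (Corollary \ref{cor:compactification}), then $\psi$ is bounded. Hence $\phi=\gamma+O(1)$, i.e. $\phi\in\PC(\tX,\gamma)$.

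\emph{Main direction.} Assume $\psi$ is bounded, and fix an unbounded face $\tau$. Since $\gamma$ is affine on $\tau$, the restriction $\psi|_\tau$ is convex and bounded. The argument of Lemma \ref{lem:decr} gives more than stated there: for fixed $\theta$ and fixed values of all but one $\lambda_j$, the function $\lambda_j\mapsto\psi$ is convex and bounded above on $[0,\infty)$, hence non-increasing. So $\psi|_\tau$ is coordinatewise non-increasing in the unbounded coordinates. This is exactly why the infimum formula of the statement reduces to a limit. For a point $t\in\overline{\tau}$ with $\lambda_j=+\infty$ for $j\in J$, I define $\psi(t)$ as the limit of $\psi$ as $\lambda_j\to\infty$ for $j\in J$, with the other coordinates fixed. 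This limit exists by monotonicity and boundedness, and it equals
\[
\inf\{\psi(t') \mid t'\in\tau,\ t'\le t\}.
\]

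\emph{Main obstacle: continuity of this extension on $\overline{\tau}$.} Pointwise monotone limits of convex functions need not be jointly continuous, so I would argue as follows.
\begin{itemize}
\item The extended function is convex in the finite coordinates $(\theta,\lambda_{j\notin J})$ on each stratum, and convex functions are continuous on relatively open sets. Boundary points of the simplex are handled by passing to faces of $\tau$ and using continuity of $\phi$ on the closed face.
\item For joint continuity as $\lambda_J\to\infty$, I would use a Dini-type argument. On a compact set of finite coordinates, the functions $\psi(\cdot,\lambda_J)$ are continuous and monotone in $\lambda_J$, and they converge pointwise to a continuous limit. Dini's theorem then makes the convergence uniform.
\item The monotone net has to be handled one coordinate at a time, with the coordinates in $J$ dominated by their minimum. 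Combined with the uniform continuity of the convex functions in the finite coordinates, this gives continuity at $t$.
\end{itemize}
Finally, compatibility between faces holds because faces sharing a stratum intersect in a common face (Proposition \ref{prop:disjoint}), and on that common face the limits are computed by the same formula.
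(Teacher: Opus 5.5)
\textbf{There is a genuine gap, at the step you flag as the main obstacle.} Your strategy is the paper's: coordinatewise monotonicity of $\psi$ in the unbounded coordinates, extension by the infimum, a Dini argument one unbounded coordinate at a time, and gluing via Proposition \ref{prop:disjoint}.

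Dini needs the limit function on each stratum at infinity to be continuous \emph{up to the boundary} of its domain of finite coordinates. These are the points where some finite $\lambda_j$ vanishes or $\theta$ lies on the boundary of the simplex. Convexity only gives continuity on the relative interior. ``Passing to faces of $\tau$'' only shows that the extension restricted to the closure of a smaller face is continuous. It says nothing about values approached from the interior of the stratum.

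This cannot be fixed using only convexity and boundedness of $\psi$ on $\tau$. Take $\tX=\R_{\ge0}^2$ with its standard simplicial structure and $\gamma=0$, and set
\[ \psi(t_1,t_2)=\sup_{n\ge1}\max\bigl(0,\,1-nt_1-t_2/n^2\bigr). \]
This function is continuous, convex, with values in $[0,1]$, hence lies in $\PC(\tX,0)$. Yet $\psi(0,t_2)=1$ for all $t_2$, while $\psi(t_1,t_2)=0$ whenever $t_1>0$ and $t_2\ge t_1^{-2}$. So the limit on the stratum $\{t_2=+\infty\}$ is $\mathbf{1}_{\{t_1=0\}}$. The sequences $(0,k)$ and $(1/k,k^2)$ show that $\psi$ has no continuous extension to $[0,+\infty]^2$. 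Thus, for an arbitrary polyhedral space, the implication ``$\phi\in\PC(\tX,\gamma)\Rightarrow\psi$ extends continuously'' actually fails.

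What is missing is a modulus of continuity of $\psi$ near the boundary faces of $\tau$ that is uniform in the unbounded coordinates. It has to come from the $\PC$ inequality across faces, not from convexity on $\tau$ alone. Two settings where such an estimate is available:
\begin{itemize}
\item For $\tX=N_\R$, $\phi$ is convex and $\phi\le\gamma+C$ bounds its subgradients. So $\psi$ is globally Lipschitz and your Dini argument goes through.
\item In a balanced space, a codimension-one face $\rho\prec\tau$ lies in at least two maximal faces. Apply the $\PC$ inequality at points of $\rho$, with normal vectors summing to zero as in the proof of Lemma \ref{lem:2 defn PAPC}. Using that the slopes of $\gamma$ are bounded and the slopes of $\psi$ into the other faces are bounded above (non-positive along recession directions), one bounds from below the slope of $\psi$ into $\tau$, uniformly far out along $\rho$. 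Estimates of this kind are what make the stratum limits continuous.
\end{itemize}
The paper's proof has the same weak spot. It asserts that $\varphi'$ is continuous ``as pointwise limit of such functions'', which the example above shows is not automatic.
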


\begin{proof} 
The implication $\Leftarrow$ is clear as the space $\overline{\tX}_\Pi$ is compact, so we prove the reverse implication.

Let $\tau \in \Pi$ be an unbounded face of $\tX$, then $\psi \colon \tau \to \R$ is a bounded convex function. Since by Proposition \ref{prop:disjoint} the closures of disjoint unbounded faces of $\Pi$ are still disjoint in $\overline{\tX}_\Pi$, it is enough to prove that $\psi$ extends continuously to the closure of $\tau$ in $\overline{\tX}_\Pi$.
By simpliciality, we have $\tau =\delta + \sigma$ for a compact polyhedron $\delta \subset N_{\R}$, and 
$$\sigma \coloneqq \rec(\tau)=\{ \sum_{i =1}^\ell t_i v_i | t_i \ge 0 \}$$ for $\ell$ linearly independent vectors $v_i \in N_{\R}$. The Minskowski sum $\delta + \sigma$ is homeomorphic to the product of the two polyhedra, thus writing $\overline{\sigma} = \sigma \otimes_{\R_{\ge0}} [0, + \infty]$, the closure $\overline{\tau}$ of $\tau$ in $\overline{\tX}_{\Pi}$ is homeomorphic to $\delta \times \overline{\sigma}$, and $\overline{\sigma} \simeq [0,+\infty]^\ell$ via the variables $(t_i)$. We define a partial order on $\overline{\tau}$ by saying that $t' \le t$ if $t_i' \le t_i$ for all $i =1,\ldots,\ell$.

Since a bounded convex function on $\R_{\ge0}$ is decreasing, $\psi$ is decreasing which respect to each variable $t_i$ on $\tau$. Then for $t \in \overline{\tau}$, we define the extension of $\psi$ using the following formula:
\begin{equation} \label{equ: extension}
    \psi(t) = \inf \{ \psi(t') | t' \in \tau, t' \le t \}.
\end{equation}
We prove by induction on $d \leq \ell$ that, for every $0 \leq k \leq d$ and every continuous convex bounded function on $\delta \times \R_{\ge 0}^d$, its extension to $\delta \times [0, + \infty]^d$ defined as in \eqref{equ: extension} is continuous on $ \delta \times [0, + \infty]^k \times \R_{\ge 0}^{d-k}$. The case $k=d=\ell$ applied to the function $\psi$ proves the continuity of its extension to $\overline{\tau}$.

If $d=0$, then $k=0$ as well and every continuous convex function on $\delta$ is clearly continuous on $\delta$.
Assume that the statement holds for $d-1$; we prove by induction on $k$ that it holds for $d$. If $k=0$, every continuous convex function on $\delta \times \R^d_{\geq 0}$ is clearly continuous on $\delta \times \R^d_{\geq 0}$. Assume by induction hypothesis that the statement holds for $k < d$, we are going to prove it for $k+1$. Let $\varphi$ be a continuous convex bounded function on $\delta \times \R_{\geq0}^d$, and denote by $\varphi$ also its extension to $\delta \times [0,+\infty]^d$ as in \eqref{equ: extension}. Define the function 
\begin{align*}
    \varphi' \colon \delta \times \R_{\ge 0}^k \times \R_{\ge 0}^{d-k-1} & \to \R
    \\(x, (t_1,\ldots, t_k),  (t_{k+2},\ldots,t_d)) & \mapsto \varphi(x,t_1,\ldots,+ \infty,t_{k+2},\ldots t_d). 
\end{align*}
This is clearly a continuous convex bounded function on $\R_{\ge 0}^{d-1}$ (as pointwise limit of such functions), and its extension (still denoted by $\varphi'$) to $\delta \times [0, + \infty]^{d-1}$ via the formula \eqref{equ: extension} is then continuous by induction hypothesis. Moreover the extension of $\varphi'$ coincides with the extension of $\varphi$ restricted to $ \{ t_{k+1}= \infty \}$. For $y \in \R_{\geq 0}$, define 
$$\varphi_y (x, t_1,\ldots, t_k, t_{k+2},\ldots, t_d) = \varphi(x, t_1,\ldots,y,\ldots,t_d)$$ 
for $x \in \delta$, $(t_1,\ldots,t_k) \in [0, +\infty]^k$ and $(t_{k+2},\ldots,t_d) \in \R_{\ge 0}^{d-k-1}$. The functions $\varphi_y$ are continuous by induction hypothesis, and moreover decrease pointwise to $\varphi'$ as $y \rightarrow \infty$. Since $\varphi'$ is also continuous on $\delta \times[0, + \infty]^{d-1} $, Dini's lemma shows that convergence is uniform, so that the $\varphi_y$ and $\varphi'$ glue together to a continuous function on $ \delta \times [0, + \infty]^{k+1} \times \R_{\ge 0}^{d-k-1}$, which is none other than $\varphi$. This concludes the induction step and the proof of the lemma.
\end{proof}

\begin{lemma}
\label{lem:decreasing cv uniform cv}
    If a sequence $(\phi_j)_j$ of $\PC(\tX,\gamma)$ functions decreases pointwise to a function $\phi$ in $\PC(\tX,\gamma)$, then it converges uniformly.
\end{lemma}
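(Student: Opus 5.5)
We reduce the statement to Dini's lemma on the compact space $\overline{\tX}_\Pi$. By Lemma \ref{lem:regular} and the remark following Definition \ref{def:recession}, fix a simplicial polyhedral structure $\Pi$ on $\tX$ with $\gamma \in \PA(\Pi)$ and $\rec(\Pi)$ a fan, so that the compactification $\overline{\tX}_\Pi$ is defined; it is compact by Corollary \ref{cor:compactification}. Set $\psi_j \coloneqq \phi_j - \gamma$ and $\psi \coloneqq \phi - \gamma$. Uniform convergence of $\phi_j$ to $\phi$ on $\tX$ is the same as uniform convergence of $\psi_j$ to $\psi$ on $\tX$, which in turn follows from uniform convergence on $\overline{\tX}_\Pi$ of the continuous extensions.

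First, by Lemma \ref{lem:ext}, each $\psi_j$ and $\psi$ extend continuously to $\overline{\tX}_\Pi$, since all of $\phi_j$ and $\phi$ lie in $\PC(\tX,\gamma)$; denote the extensions by the same letters. Second, we check that the extended sequence $(\psi_j)$ is still decreasing and converges pointwise to $\psi$ on all of $\overline{\tX}_\Pi$. Monotonicity passes to the boundary by continuity and density of $\tX$ in $\overline{\tX}_\Pi$. For the pointwise limit at a boundary point $t \in \overline{\tau}$, for $\tau$ an unbounded face, we use the explicit formula in Lemma \ref{lem:ext}: $\psi_j(t) = \inf\{\psi_j(t') \mid t' \in \tau,\ t' \le t\}$. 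Then
\[
\lim_j \psi_j(t) = \inf_j \inf_{t' \le t} \psi_j(t') = \inf_{t'\le t} \inf_j \psi_j(t') = \inf_{t' \le t} \psi(t') = \psi(t),
\]
where we interchange two infima, which is always allowed.

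Finally, Dini's lemma applies: a decreasing sequence of continuous functions on the compact space $\overline{\tX}_\Pi$ converging pointwise to a continuous function converges uniformly. Restricting to $\tX$ gives $\sup_{\tX}|\phi_j - \phi| = \sup_{\tX} |\psi_j - \psi| \to 0$.

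The only delicate point is the identification of the pointwise limit on the boundary strata. Monotone limits of continuous functions need not be continuous, so we need the limit to be exactly the continuous extension of $\psi$. The infimum formula in Lemma \ref{lem:ext} handles this cleanly by the interchange of infima. If one prefers to avoid it, an alternative is to argue by continuity that the boundary values are limits along $\tau$ and use monotonicity in each unbounded coordinate.
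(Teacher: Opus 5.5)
Your proof is correct and follows the paper's route: you extend $\phi_j-\gamma$ and $\phi-\gamma$ continuously to the compact space $\overline{\tX}_\Pi$ via Lemma \ref{lem:ext}, identify the pointwise limit at boundary points by interchanging two infima in the explicit description of the extension, and conclude with Dini's lemma. The only difference is cosmetic: the paper takes the infimum along a sequence $(x_m)$ in a single cone with increasing coordinates, while you take it over all $t' \le t$ directly.
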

\begin{proof}
Let $\Pi$ be a simplicial polyhedral structure on $\tX$ such that $\gamma \in \PA(\Pi)$. By Lemma \ref{lem:ext}, $(\phi-\gamma)$ and the $(\phi_j-\gamma)$ admit a continuous extension to $\overline{\tX}_{\Pi}$, which we denote by $\psi$ and $\psi_j$ respectively. By compactness of $\overline{\tX}_{\Pi}$ and Dini's lemma, it is enough to show that the $\psi_j$ decrease to $\psi$ pointwise on $\overline{\tX}_{\Pi}$. Let $\overline{x} \in \overline{\tX}_\Pi$, and $(x_m)_m$ a sequence in $\tX$ converging to $\overline{x}$, contained in a single cone and whose coordinates are all increasing. We have
$$
\psi(\overline{x})= \inf_m \psi(x_m)= \inf_m \inf_j \psi_j (x_m) =  \inf_j \inf_m \psi_j (x_m) = \inf_j \psi_j (\overline{x}) 
$$
by the explicit description of the extension provided in Lemma \ref{lem:ext}, and the proof is complete.
\end{proof}

\begin{lemma} 
\label{lem:sup}
    Let $(\phi_{\alpha})_{\alpha \in A}$ be a non-empty family in $\PC(\tX, \gamma)$, such that $(\phi_{\alpha} - \gamma)_{\alpha}$ is uniformly bounded from above. Then $\phi(x) \coloneqq \sup_{\alpha \in A} \phi_{\alpha}(x)$ defines an element $\phi \in \PC(\tX, \gamma)$.
\end{lemma}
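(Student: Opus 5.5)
We need three things: $\phi$ is finite, $\phi$ is polyhedrally convex, and $\phi=\gamma+O(1)$.

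\emph{Finiteness and the growth condition.} By hypothesis there is $C$ with $\phi_\alpha-\gamma\le C$ for all $\alpha$, so $\phi\le\gamma+C$ and $\phi$ is real-valued everywhere. Fix any $\alpha_0\in A$ (the family is non-empty). Since $\phi_{\alpha_0}\in\PC(\tX,\gamma)$, there is $C'$ with $\phi_{\alpha_0}\ge\gamma-C'$. Hence $\gamma-C'\le\phi\le\gamma+C$, that is, $\phi=\gamma+O(1)$.

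\emph{Polyhedral convexity.} Let $(x,(x_i)_{i\in I},(\alpha_i)_{i\in I})$ be a polyhedral convex combination in $\tX$. For every $\alpha$ we have
\[
\phi_\alpha(x)\le\sum_i\alpha_i\phi_\alpha(x_i)\le\sum_i\alpha_i\phi(x_i),
\]
because the $\alpha_i$ are non-negative. Taking the supremum over $\alpha$ gives $\phi(x)\le\sum_i\alpha_i\phi(x_i)$. So $\phi$ satisfies Definition~\ref{defn:PC}.

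\emph{Continuity.} Definition~\ref{defn:PC} does not require continuity, and by the remark after it, PC functions are automatically continuous by \cite[Theorem~6.2]{BBS}. So $\phi$ is continuous and $\phi\in\PC(\tX,\gamma)$.

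The only step needing care is continuity. If one insists on a direct argument, one option is to note that $\phi$ is convex and finite on each face, hence continuous on relative interiors, and then handle boundary points by the local equicontinuity of bounded convex families. Relying on the cited result avoids this, so no step is a serious obstacle.
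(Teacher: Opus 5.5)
Your proof is correct. The growth bounds $\gamma-C'\le\phi\le\gamma+C$ are obtained exactly as in the paper; the two arguments differ in how polyhedral convexity of $\phi$ is established.

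The paper follows the non-archimedean template. It replaces the family by the increasing net of maxima over finite subsets of $A$. It then invokes \cite[Theorem~6.24]{BBS}, a convergence result for nets of PC functions, to conclude that the pointwise limit $\phi$ is PC.

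You instead observe that Definition~\ref{defn:PC} consists only of inequalities $\phi(x)\le\sum_i\alpha_i\phi(x_i)$ with non-negative coefficients. These pass directly to a pointwise supremum once the uniform upper bound guarantees that $\phi$ is finite. You then get continuity from the automatic continuity of PC functions recorded after Definition~\ref{defn:PC}. Note that the paper's reduction implicitly uses that finite maxima of PC functions are PC, which is exactly your argument in the finite case.

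Your route is shorter and more elementary. It also makes clear that no limiting procedure is needed here, because polyhedral convexity is cut out by pointwise inequalities. The paper's route packages the PC property and the regularity of the limit into a single citation, and it would carry over to classes not defined by such inequalities. In this setting, however, that machinery is unnecessary. The alternative direct continuity argument you sketch at the end is also not needed and can be omitted.
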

\begin{proof}
    As in the proof of \cite[Theorem 7.11]{BoucksomFavreJonsson}, considering the net of finite subsets of $A$ ordered by inclusion, we may assume that $A$ is a directed set and $(\phi_{\alpha})_{\alpha \in A}$ is an increasing net, so that by \cite[Theorem 6.24]{BBS} the function $\phi$ is PC. Moreover $\phi - \gamma$ is bounded from above by assumption, and $\phi-\gamma \geq \phi_{\alpha}-\gamma \geq -C$ for any choice of $\alpha$, hence $\phi \in \PC(\tX, \gamma)$.
    \end{proof}

\subsection{PSH functions}
Let $\gamma$ be a piecewise affine function on $\tX$. 

\begin{defi}[PSH functions]\label{def:psh}
A function $\phi \colon \tX \to \R$ is $\gamma$-PSH functions if there exists a decreasing sequence $(\phi_j)_j$ in $\PAPC(\tX, \gamma)$ converging pointwise to $\phi$. 

The space of $\gamma$-PSH functions is denoted $\PSH(\tX,\gamma)$ and is endowed with the topology of pointwise convergence, which agrees with the topology of uniform convergence on compact sets by \cite[Theorem 6.24]{BBS}.
\end{defi}

\begin{prop}\label{prop:gamma-psh} Let $\phi \in \PSH(\tX, \gamma)$. The following properties hold:
\begin{enumerate}[label=(\roman*)]
    \item $\phi$ is continuous and polyhedrally convex,
    \item $\phi \leq \gamma + O(1)$,
    \item there exists a decreasing sequence $(\phi_j)_j$ in $\PAPC(\tX, \gamma)$ converging locally uniformly to $\phi$,
    \item for any compact $K \subset \tX$, $\phi|_K$ is the uniform limit on $K$ of (the restriction of) a sequence of functions in $\PAPC(\tX, \gamma)$. 
\end{enumerate}
\end{prop}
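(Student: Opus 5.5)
We take a decreasing sequence $(\phi_j)_j$ in $\PAPC(\tX,\gamma)$ converging pointwise to $\phi$, as in Definition~\ref{def:psh}, and treat the four items in the order (ii), (i), (iii), (iv).

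Item (ii) is immediate. Since the sequence decreases, $\phi \le \phi_1$, and $\phi_1 = \gamma + O(1)$ because $\phi_1 \in \PAPC(\tX,\gamma)$. Hence $\phi \le \gamma + C$ for some constant $C$.

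For item (i), polyhedral convexity passes to pointwise limits. By Lemma~\ref{lem:2 defn PAPC}, each $\phi_j$ is $\PC$, so for every polyhedral convex combination $(x,(x_i),(\alpha_i))$ we have $\phi_j(x)\le \sum_i\alpha_i\phi_j(x_i)$. Letting $j\to\infty$ gives the same inequality for $\phi$, which is finite-valued since $\phi\colon\tX\to\R$. So $\phi\in\PC(\tX)$. Continuity then follows from the fact, recalled after Definition~\ref{defn:PC}, that $\PC$ functions are continuous by \cite[Theorem~6.2]{BBS}.

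Item (iii) is the main point, and the idea is to upgrade pointwise convergence to local uniform convergence with Dini's lemma. On any compact $K\subset\tX$, the $\phi_j$ are continuous, they decrease, and their pointwise limit $\phi$ is continuous by (i). Dini's lemma therefore gives uniform convergence on $K$, so the original sequence already converges locally uniformly. The only subtle input is the continuity of the limit, which is why (i) comes before (iii). Alternatively, one can cite the statement in Definition~\ref{def:psh} that pointwise and locally uniform topologies agree on such functions \cite[Theorem 6.24]{BBS}. Note that Lemma~\ref{lem:decreasing cv uniform cv} does not apply directly, since we do not know a priori that $\phi$ lies in $\PC(\tX,\gamma)$: the lower bound $\phi \ge \gamma - O(1)$ may fail.

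Item (iv) follows at once from (iii): for a compact $K$, the restrictions $\phi_j|_K$ are restrictions of functions in $\PAPC(\tX,\gamma)$ and converge uniformly to $\phi|_K$.
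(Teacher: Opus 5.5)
Your proof is correct and follows essentially the same route as the paper: (ii) comes from $\phi\le\phi_1\le\gamma+C$, (i) from passing the PC inequality to the pointwise limit, and (iii) and (iv) from Dini's lemma applied on compact subsets. You are in fact slightly more explicit than the paper on (i), rightly pointing out that continuity of the limit does not follow from monotone convergence alone but needs the continuity of PC functions from \cite[Theorem~6.2]{BBS}, which is also what Dini's lemma requires.
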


\begin{proof}
Let $\gamma \in \PA(\Pi)$, $\phi \in \PSH(\tX, \gamma)$, and  $(\phi_j)_j$ a decreasing sequence in $\PAPC(\tX, \gamma)$ converging pointwise to $\phi$. It follows immediately that $\phi$ is continuous and $\PC$. Moreover, for any simplicial refinement $\Pi' \geq \Pi$, we have
$$
\phi-\gamma \leq \phi_j - \gamma \leq C
$$
by definition of $\PAPC(\tX,\gamma)$, which implies $(ii)$.

Let $x \in \tX$ and $U$ a neighborhood of $x$ contained in a compact subset $K$ of $\tX$; respectively, let $K'$ be any compact subset of $\tX$. Applying Dini's lemma to the restriction of $(\phi_j)_j$ to $K$, respectively $K'$, we conclude $(iii)$ and $(iv)$. 
\end{proof}

\begin{prop}\label{def:PC-reg} 
Let $\phi$ be a continuous function on $\tX$. Then the following conditions are equivalent: 
\begin{enumerate}
        \item $\phi \in \PC(\tX, \gamma) \cap \PSH(\tX,\gamma)$;
        \item $\phi$ is the uniform limit on $\tX$ of a sequence of functions in $\PAPC(\tX, \gamma)$. One can moreover take the sequence to be decreasing. 
    \end{enumerate}
\end{prop}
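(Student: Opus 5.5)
We prove the two implications separately; (2)$\Rightarrow$(1) is formal, and (1)$\Rightarrow$(2) reduces to Lemma~\ref{lem:decreasing cv uniform cv}.

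\emph{(2) $\Rightarrow$ (1).} Let $(\phi_j)_j$ be a sequence in $\PAPC(\tX,\gamma)$ converging uniformly to $\phi$ on $\tX$. Since each $\phi_j$ is $\PC$ and the $\PC$ inequalities are preserved under pointwise limits, $\phi$ is $\PC$. Fix $j_0$ with $\sup_{\tX}|\phi-\phi_{j_0}|\le 1$. Since $\phi_{j_0}=\gamma+O(1)$, we get $\phi=\gamma+O(1)$, so $\phi\in\PC(\tX,\gamma)$.

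It remains to show $\phi\in\PSH(\tX,\gamma)$, i.e.\ to produce a \emph{decreasing} approximating sequence. Pass to a subsequence with $\varepsilon_j\coloneqq\sup_{\tX}|\phi_j-\phi|\le 2^{-j}$ and set
\[
\tilde\phi_j\coloneqq\phi_j+2\cdot 2^{-j}.
\]
Adding a constant preserves $\PAPC(\tX,\gamma)$. Monotonicity follows from
\[
\tilde\phi_{j+1}\le \phi+2^{-j-1}+2^{-j}\le \phi_j+2^{-j}+2^{-j-1}+2^{-j}\le\tilde\phi_j+2^{-j-1},
\]
which is not yet quite monotone. To fix this, replace the shift $2\cdot2^{-j}$ by $3\cdot2^{-j}$. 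Then
\[
\tilde\phi_{j+1}\le\phi_j+2^{-j}+2^{-j-1}+3\cdot2^{-j-1}=\phi_j+3\cdot 2^{-j}=\tilde\phi_j.
\]
Hence $(\tilde\phi_j)_j$ is a decreasing sequence in $\PAPC(\tX,\gamma)$ converging uniformly to $\phi$. This shows $\phi\in\PSH(\tX,\gamma)$, and it also gives the "moreover" clause of (2) once the uniform limit is known.

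\emph{(1) $\Rightarrow$ (2).} Since $\phi\in\PSH(\tX,\gamma)$, Definition~\ref{def:psh} provides a decreasing sequence $(\phi_j)_j$ in $\PAPC(\tX,\gamma)\subseteq\PC(\tX,\gamma)$ converging pointwise to $\phi$. By (1) the limit $\phi$ also lies in $\PC(\tX,\gamma)$. Lemma~\ref{lem:decreasing cv uniform cv} then shows that the convergence is uniform on $\tX$, and the sequence is decreasing as required.

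The only substantive input is this last step. Through Lemma~\ref{lem:decreasing cv uniform cv}, it rests on extending $\phi-\gamma$ and $\phi_j-\gamma$ continuously to the compactification $\overline{\tX}_\Pi$ (Lemma~\ref{lem:ext}) and applying Dini's lemma there. The hypothesis $\phi=\gamma+O(1)$ is exactly what makes this extension available, and this is why (1) must require $\phi\in\PC(\tX,\gamma)$ and not merely $\phi\in\PSH(\tX,\gamma)$.
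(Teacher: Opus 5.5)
Your proof is correct and follows the paper's approach. For (1)$\Rightarrow$(2), the paper reruns exactly the argument of Lemma~\ref{lem:decreasing cv uniform cv}: continuous extension of $\phi-\gamma$ to $\overline{\tX}_\Pi$ via Lemma~\ref{lem:ext}, then Dini. For (2)$\Rightarrow$(1) it checks $\phi=\gamma+O(1)$ as you do, while your shift by the rational constants $3\cdot 2^{-j}$ usefully makes explicit the membership $\phi\in\PSH(\tX,\gamma)$ and the ``moreover'' clause, which the paper's proof leaves implicit.
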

\begin{defi}[PC-regularizable functions]
  The space of $\PC$-regularizable functions on $\tX$ is the set of continuous functions on $\tX$ satisfying one of the equivalent conditions (1) or (2) above. We denote it by $\PCreg(\tX, \gamma)$, and endow this space with the topology of uniform convergence.
 \end{defi}

\begin{proof}
    $(1) \implies (2)$. Assume $\phi \in \PC(\tX, \gamma) \cap \PSH(\tX,\gamma)$ and let $(\phi_j)_j$ be a decreasing sequence in $\PAPC(\tX, \gamma)$ converging pointwise to $\phi$. By Lemma \ref{lem:ext}, both $\psi \coloneqq \phi-\gamma$ and $\psi_j \coloneqq \phi_j - \gamma$ extend continuously to the closure of $\tX$ in $N_\Sigma$, for a fan $\Sigma$ containing $\rec(\Pi)$. We denote by $\overline{\psi}$, respectively $\overline{\psi}_j$ those continuous extensions. We claim that if $x$ lies on the boundary on $\tX$, then $(\overline{\psi}_j(x))_j$ decreases to $\overline{\psi}(x)$. Indeed, it follows from the proof of Lemma \ref{lem:ext} that there exists a sequence $(x_l)_l$ in $\tX$ converging to $x$, and such that $\overline{\psi}(x) = \inf_l \psi(x_l)$ and similarly for $\overline{\psi}_j$. Since $\psi(x_l) = \inf_j \psi_j(x_l)$, we infer $\overline{\psi}(x) = \inf_{(l, j)} \psi_j(x_l) = \inf_j \overline{\psi}_j(x)$ and the claim follows.
    \\As a result, by Dini's lemma $(\overline{\psi}_j)_j$ converges uniformly to $\overline{\psi}$ on the compactification of $\tX$, thus $(\phi_j)_j$ converges uniformly to $\phi$ on $\tX$.

    $(2) \implies (1)$. Let $(\phi_j)_j$ be a sequence in $\PAPC(\tX, \gamma)$ converging uniformly to $\phi$ on $\tX$. For an index $j$ sufficiently large, we have
    $$
    |\phi - \gamma| \leq |\phi-\phi_j| + |\phi_j - \gamma| \leq \varepsilon + C \leq C',
    $$
    which implies that $\phi \in \PC(\tX, \gamma)$.
\end{proof}

\subsection{Uniform bound on $C^{0,1}$-norm} \label{sec:equi} 
The goal of this section is to prove the following proposition, which will be crucial for establishing the compactness of $\PSH(\tX,\gamma)/\R$ in Theorem \ref{thm:comp}. Our argument is inspired by the strategy developed in \cite[\S 6]{BoucksomFavreJonsson}.

\begin{prop} \label{prop:equi PAPC}
Let $\Pi$ be a simplicial quasi-projective polyhedral structure on $\tX$ such that $\gamma \in \PA(\Pi)$.
There exists a constant $C>0$ such that for any $\phi \in \PAPC(\tX, \gamma)$ and any face $\sigma$ of $\Sk(\Pi)$, the $C^{0,1}$-norm of $\psi- \sup_{\tX}\psi$ on $\sigma$ is bounded by $C$, where $\psi \coloneqq \phi-\gamma$.
\end{prop}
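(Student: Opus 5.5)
Throughout, write $\psi=\phi-\gamma$ and normalise $\sup_{\tX}\psi=0$, which is harmless because adding a constant changes neither $\PAPC(\tX,\gamma)$ nor the $C^{0,1}$-norm. The plan has three steps. First, locate the supremum on a vertex of $\Sk(\Pi)$. Second, prove a uniform lower bound for $\psi$ on the vertices of $\Sk(\Pi)$ by propagating along bounded edges. Third, convert vertex bounds into slope bounds, and hence into a Lipschitz bound on every bounded face. All constants will depend only on $\Pi$, $[\tX]$ and $\gamma$.

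\emph{Step 1: the supremum sits on a vertex.} By Lemma \ref{lem:decr} applied to $\Pi'=\Pi$, we have $\psi\le\psi\circ r_\Pi$, so $\sup_{\tX}\psi=\sup_{\Sk(\Pi)}\psi$. On each bounded face $\sigma$ the function $\gamma$ is affine and $\phi$ is convex, so $\psi$ is convex on the polytope $\sigma$. Its maximum over $\sigma$ is therefore attained at a vertex. Hence $\psi(v_0)=0$ for some $v_0\in\Pi^{(0)}$, and $\psi\le 0$ everywhere.

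\emph{Step 2: a local slope estimate at each vertex.} This is where quasi-projectivity enters. Fix $h\in\PAPC^+(\Pi)$. By Remark \ref{rem:PAPC+}, applied repeatedly on a small open neighbourhood $U_v$ of each vertex $v$, the cycle $c_v:=h^{d-1}\cdot[\tX]$ restricted to $U_v$ is a strictly positive $1$-cycle on the star of $\Pi$ at $v$. Since there are finitely many vertices, $\min_{v,\rho} c_v(\rho)=:c_0>0$, where $\rho$ runs over the edges at $v$.

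Now let $s_\rho$ denote the slope of $\phi$ at $v$ in the direction $n_{\rho/v}$. Because $\phi$ is $\PAPC$, we get $\sum_\rho c_v(\rho)s_\rho=(\phi\cdot c_v)(v)\ge 0$. Next we bound each $s_\rho$ from above.
\begin{itemize}
\item For a bounded edge $\rho=[v,w]$, convexity of $\phi$ on $\rho$ gives $s_\rho\,\ell_\rho\le\phi(w)-\phi(v)$. Since $\psi(w)\le 0$, this yields $s_\rho\,\ell_\rho\le\gamma(w)-\gamma(v)-\psi(v)$.
\item For an unbounded edge, $\psi$ is convex and bounded above along the ray, hence non-increasing. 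So $s_\rho$ is at most the slope of $\gamma$.
\end{itemize}
Thus $s_\rho\le A(1+|\psi(v)|)$ for every edge $\rho$ at $v$. The balancing-type inequality $\sum_\rho c_v(\rho)s_\rho\ge 0$ then bounds each slope from below: $s_\rho\ge -B(1+|\psi(v)|)$. Here $A,B$ depend only on $\Pi$, $\gamma$, $c_0$ and the norms of the normal vectors.

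\emph{Step 3: propagation and the Lipschitz bound.} For a bounded edge $[v,w]$, convexity gives $\phi(w)\ge\phi(v)+s_\rho\ell_\rho$, hence
\[
|\psi(w)|\le D\,(1+|\psi(v)|).
\]
Starting from $\psi(v_0)=0$ and walking along bounded edges, each of the finitely many vertices is reached in at most $|\Pi^{(0)}|$ steps. This gives a uniform bound $|\psi|\le M$ on all vertices, and by convexity on bounded faces, on all of $\Sk(\Pi)$. Feeding $|\psi(v)|\le M$ back into Step 2 shows that all edge slopes of $\phi$ at all vertices are uniformly bounded. On a bounded simplex $\sigma$, a convex function whose one-sided directional derivatives along the edges are bounded at every vertex is Lipschitz with a controlled constant: directional derivatives of a convex function are monotone along segments, and any direction in $\sigma$ is a combination of edge directions with bounded coefficients because $\sigma$ is fixed. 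The same then holds for $\psi$, since $\gamma$ is affine on $\sigma$. Together with the sup bound this gives the $C^{0,1}$ estimate.

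\emph{Main obstacle.} I expect the difficulty to lie in the propagation of Step 3. It needs $\Sk(\Pi)$ to be connected, or at least every connected component of $\Sk(\Pi)$ to contain a point where the normalisation pins $\psi$ down. If $\Sk(\Pi)$ is disconnected, I would try to connect components through unbounded faces: on an unbounded face $\delta+\rec(\tau)$, Lemma \ref{lem:decr} controls $\psi$ through the retraction, and Lemma \ref{lem:ext} gives its extension to the compactification. Failing that, I would assume $\tX$ connected and check that this forces $\Sk(\Pi)$ to be connected for simplicial $\Pi$. A secondary point to verify carefully is that the cycle $h^{d-1}\cdot[\tX]$, restricted near $v$, is strictly positive on \emph{every} edge at $v$, including unbounded ones.
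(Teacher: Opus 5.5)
There is a genuine gap in Step 3, in passing from vertices to bounded faces of dimension $\ge 2$. Your Steps 1--2 and the propagation along bounded edges are essentially the paper's base case (a strictly positive $1$-cycle, $0\le \phi\cdot(r^*c)$ at vertices, then a walk through $\Sk(\Pi)$). They do give the bound on faces of dimension $\le 1$, and hence the full statement when $d=1$.

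For $\dim\sigma\ge 2$, convexity of $\psi$ on a bounded simplex $\sigma$ only bounds $\psi$ from \emph{above} by its vertex values. Bounded vertex values together with bounded edge slopes at the vertices say nothing about the interior. On a triangle with barycentric coordinates $\lambda_0,\lambda_1,\lambda_2$, the function $-N\min(\lambda_0,\lambda_1,\lambda_2)$ is convex and vanishes identically on all three edges. So all its vertex values and edge slopes are $0$, yet it equals $-N/3$ at the barycenter and has inward directional derivatives of order $N$ at boundary points. Your monotonicity remark runs the wrong way: by sublinearity of the directional derivative at a vertex, edge slopes bound derivatives in interior directions only from above. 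To bound the derivative along a chord through an interior point you need \emph{lower} bounds on one-sided inward derivatives at arbitrary boundary points of $\sigma$, not just at vertices and along edges. So neither the $C^0$ bound nor the Lipschitz bound on $\sigma$ follows.

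The missing ingredient is to apply the $\PAPC$ condition at non-vertex points, with cycles of higher dimension, via an induction on $\dim\sigma$; this is what the paper does. Take a facet $\tau$ of $\sigma$ with opposite vertex $w$, and a point $v\in\relint(\tau)$.
\begin{enumerate}
\item Refine $\Pi$ to $\Pi^\varepsilon$ so that a small rescaled copy $\tau^\varepsilon\ni v$ of $\tau$ is a face.
\item Pull back a strictly positive $(k+1)$-cycle $c$ (Lemma \ref{lem:equ_positive}).
\item Show that $0\le \phi\cdot(r^*c)(\tau')\le(\gamma+\psi_{\Pi^\varepsilon})\cdot(t^*c)(\tau^\varepsilon)$ for $\tau'\subseteq\tau^\varepsilon$. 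This uses convexity on the adjacent bounded faces and Lemma \ref{lem:decr} on the unbounded ones.
\item Expand the right-hand side in terms of the derivatives $D_v\psi(v_l)$ and $D_v\psi(v_j)$. Combined with the induction hypothesis on $\tau$ and the upper bound $D_v\psi(w)\le\psi(w)-\psi(v)$, this yields $D_v\psi(w)\ge -C$ with $C$ independent of $\phi$.
\end{enumerate}
Only then does \cite[Proposition A.1]{BoucksomFavreJonsson} turn these inward-derivative bounds into a Lipschitz (hence $C^0$) bound on $\sigma$.

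On your connectedness worry: the statement is false for disconnected $\tX$, since you can shift $\phi$ by different constants on different components. The paper tacitly uses connectedness of $\Sk(\Pi)$ in its vertex labelling. For connected $\tX$ this holds, because $\Sk(\Pi)=r_\Pi(\tX)$ is a continuous image of $\tX$.
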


We begin by recalling the definition of the $C^{0,1}$-norm, as well as the notion of directional derivative that will be used later. Fix a norm $\lVert \cdot \rVert$ on $N_{\mathbb{R}}$. Let $\varphi$ be a function on $\tX$ that is convex on a face $\sigma \in \Pi$. For any $v,w \in \sigma$, we denote
$$
D_v \varphi(w)= \frac{d}{dt} \biggr\rvert_{t = 0^+}  \varphi(v+t(w-v))
$$
the directional derivative of $\varphi$ at $v$ along $w-v$, which exists by convexity of $\varphi$. The Lipschitz constant of $\varphi$ on $\sigma$ is defined by
$$
\Lip_\sigma(\varphi) \coloneqq \sup_{v \neq v' \in \sigma} \frac{|\varphi(v)- \varphi(v')|}{\lVert v-v'\rVert} \in [0, +\infty]
$$
if $\dim(\sigma)>0$, otherwise by $\Lip_\sigma(\varphi)=0$. The $C^{0,1}$-norm of $\varphi$ on $\sigma$ is then 
$$
\lVert \varphi \rVert_{C^{0,1}(\sigma)} \coloneqq \sup_{\sigma} |\varphi| + \Lip_\sigma(\varphi).
$$

\begin{lemma} \label{lem:equ_positive}
There exists a strictly positive $k$-cycle on $\Pi$, for all $k \le d$.
\end{lemma}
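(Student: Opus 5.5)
We argue by descending induction on $k$, using the strictly convex function provided by quasi-projectivity and the intersection product of Section~\ref{sec:intersection}. The base case is $k=d$. Here we recall that $\tX$ is assumed balanced, so $[\tX]$ is a strictly positive $d$-cycle on some polyhedral structure. Pulling it back along a common refinement, and using that strict positivity of $d$-cycles is stable under pullback, we may assume it is defined on $\Pi$. If we do not want to use the balancing condition of $\tX$, the case $k=d$ must instead be taken as part of the hypotheses of the section, namely that $(\tX,[\tX])$ is balanced with $[\tX]$ defined on $\Pi$, possibly after refining.

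For the induction step, let $\phi\in\PAPC^+(\Pi)$, which exists because $\Pi$ is quasi-projective. Suppose $c\in Z_{k}(\Pi)$ is strictly positive, with $k\ge 1$. By Lemma~\ref{lem:properties pairing}(ii), $\phi\cdot c$ is a $(k-1)$-cycle on $\Pi$. By Remark~\ref{rem:PAPC+}, applied with $U=\tX$, it is strictly positive. So $\phi\cdot c$ is the desired strictly positive $(k-1)$-cycle, and iterating gives $\phi^{d-k}\cdot[\tX]$ for every $k\le d$.

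The main point to check is Remark~\ref{rem:PAPC+} in exactly this situation, namely that $(\phi\cdot c)(\tau)>0$ for every $\tau\in\Pi^{(k-1)}$. I would verify it directly, following the second half of the proof of Lemma~\ref{lem:2 defn PAPC}. Pick a rational point $x\in\relint(\tau)$ and set $x_\sigma=x+\varepsilon\widetilde n_{\sigma/\tau}$. Then $(x,(x_\sigma),(c(\sigma)/S))$ is a polyhedral convex combination in which every $x_\sigma$ lies in the relative interior of a $k$-face strictly containing $\tau$.

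There are at least two such faces with positive weight. Indeed, if only one face $\sigma$ contained $\tau$, balancing would force $c(\sigma)n_{\sigma/\tau}\in N_\tau$, which is impossible since $n_{\sigma/\tau}\notin N_\tau$ and $c(\sigma)>0$. Hence the strict inequality in Definition~\ref{def:strict conv} applies, giving $(\phi\cdot c)(\tau)=\tfrac{S}{\varepsilon}\bigl(\sum_\sigma \tfrac{c(\sigma)}{S}\phi(x_\sigma)-\phi(x)\bigr)>0$.

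One subtle case remains: if no $k$-face contains $\tau$, the weight would be $0$. This cannot happen because $\Pi$ is pure dimensional, so every face is contained in a $d$-face and hence, by the structure of polyhedra, in a $k$-face for each $k$ between $\dim\tau$ and $d$.
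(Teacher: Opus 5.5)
Your argument is correct and is essentially the paper's proof: the paper takes $\alpha\in\PAPC^+(\Pi)$ and asserts that $\alpha^{d-k}\cdot[\tX]$ is strictly positive, which is exactly what your descending induction produces, and your unpacking of Remark~\ref{rem:PAPC+} (at least two $k$-faces through $\tau$ by balancing, then the strict polyhedral-convexity inequality) fills in the step the paper leaves implicit. One small correction: pulling $[\tX]$ back to a common refinement places it on that refinement, not on $\Pi$; the correct justification is that a $d$-cycle on a refinement descends to $\Pi$, because balancing across $(d-1)$-faces lying in the interior of a maximal face of $\Pi$ forces the weights to be constant on that face.
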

\begin{proof}
By assumption $\Pi$ is quasi-projective, hence there exists $\alpha \in \PAPC^+(\Pi)$. Then the $k$-cycle $c =\alpha^{d-k} \cdot [\tX] $ is strictly positive.
\end{proof}

\begin{proof}[Proof of Proposition \ref{prop:equi PAPC}]
Let $\phi \in \PAPC(\tX, \gamma ) \cap \PA(\Pi')$ for some simplicial refinement $\Pi'$ of $\Pi$, and write $\psi=\phi-\gamma$. Let $\sigma \in \Sk(\Pi)$. We prove the proposition by induction on the dimension of $\sigma$. 

\subsubsection*{Base of induction} Assume $\dim(\sigma)=0$.

\begin{lemma} \label{lem: equi bound vertices}
There exists $C>0$ independent of $\psi$ such that
        $$\sup_{v \in \Pi^{(0)}} |\psi(v)-\sup_{\tX}\psi| \le C.$$
\end{lemma}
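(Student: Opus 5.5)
We bound the oscillation of $\psi$ over the finitely many vertices of $\Pi$ using only two ingredients: the upper bound $\psi\le\psi\circ r_{\Pi}$ from Lemma \ref{lem:decr}, and polyhedral convexity of $\phi$ tested against the strictly positive $1$-cycle $c=\alpha^{d-1}\cdot[\tX]$ of Lemma \ref{lem:equ_positive}. The argument has three steps.

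\emph{Step 1 (the supremum is attained on the skeleton).} By Lemma \ref{lem:decr} applied to $\Pi$ itself, which is simplicial and satisfies $\gamma\in\PA(\Pi)$, we have $\sup_{\tX}\psi=\sup_{\Sk(\Pi)}\psi$. On each bounded face $\sigma$ of $\Pi$, $\gamma$ is affine and $\phi$ is convex, so $\psi$ is convex on $\sigma$. Hence its maximum on $\sigma$ is attained at a vertex. Therefore $\sup_{\tX}\psi=\max_{v\in\Pi^{(0)}}\psi(v)$. It then suffices to show $|\psi(v)-\psi(w)|\le C$ for all vertices $v,w$ of $\Pi$.

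\emph{Step 2 (a local inequality at each vertex).} Fix $v\in\Pi^{(0)}$. The cycle $c$ is a strictly positive $1$-cycle on $\Pi$, so the edges $\rho\succ v$ carry weights $c(\rho)>0$ satisfying $\sum_{\rho\succ v}c(\rho)n_{\rho/v}=0$. Choose $\varepsilon>0$ uniform in $v$ such that $v+\varepsilon n_{\rho/v}\in\rho$ for every edge $\rho$. Then
\[
\Bigl(v,(v+\varepsilon n_{\rho/v})_{\rho\succ v},\bigl(c(\rho)/S_v\bigr)_{\rho\succ v}\Bigr),\qquad S_v=\sum_{\rho\succ v}c(\rho),
\]
is a polyhedral convex combination. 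Since $\phi$ is $\PC$,
\[
\psi(v)\le\sum_{\rho\succ v}\frac{c(\rho)}{S_v}\,\psi(v+\varepsilon n_{\rho/v})+A,
\]
where $A$ bounds the defect of $\gamma$ on this combination. The constant $A$ depends only on $\gamma$ and $\Pi$; equivalently, it is $-(\gamma\cdot c)(v)\,\varepsilon/S_v$.

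We now bound the right-hand side by $\sup_{\tX}\psi$ on all edges but one. For a bounded edge $\rho=[v,w]$, convexity of $\psi$ on $\rho$ gives $\psi(v+\varepsilon n_{\rho/v})\le(1-t)\psi(v)+t\psi(w)$ with $t=t_\rho\in(0,1]$ fixed. For an unbounded edge, Lemma \ref{lem:decr} gives $\psi(v+\varepsilon n_{\rho/v})\le\psi(v)$. Isolating one bounded edge $[v,w]$ and using $\psi\le\sup_{\tX}\psi=:s$ elsewhere yields
\[
\psi(v)\;\ge\; s-\frac{C_1}{\theta}\bigl(s-\psi(w)\bigr)-C_2
\]
for constants $C_1,C_2,\theta>0$ depending only on $\Pi,\gamma,c$. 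We will have to rearrange this carefully.

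\emph{Step 3 (propagation along the $1$-skeleton).} Equivalently, the inequality says: if $\psi(v)$ is far below $s$, then for every bounded edge $[v,w]$ the value $\psi(w)$ is far below $s$ too. Quantitatively, $s-\psi(w)\ge \theta\,(s-\psi(v)-C_2)/C_1$. This gap propagates to every vertex in the same connected component of $\Sk(\Pi)$. Each connected component of $\tX$ retracts onto a component of $\Sk(\Pi)$, so by Step 1 each component contains a vertex where $\psi=s$. This is a contradiction unless $s-\psi(v)$ is bounded by a constant depending only on the graph. Taking the maximum over the finitely many vertices and components gives $C$.

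\emph{Main obstacle.} Getting the direction of the inequality right in Step 2 is the delicate point. The convex-combination inequality bounds $\psi(v)$ from above by its neighbours, whereas we need to bound $\psi(v)$ from below by the values at neighbouring vertices. The fix is to reverse roles: apply the combination at the neighbour $w$ and isolate the edge towards $v$. Strict positivity of $c(\rho)$ makes that edge's weight $\ge\theta>0$. The unbounded edges at $w$ are controlled by $s$ through Step 1, and the bounded edges by convexity on the edge, with $\psi\le s$ at all vertices. This gives $s-C\le\ldots$, i.e. $\psi$ near $s$ at $w$ forces $\psi$ near $s$ at $v$. Starting from a vertex where $\psi=s$ and propagating through the finite connected graph then bounds all vertices.
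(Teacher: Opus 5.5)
Your argument is essentially the paper's proof. Both reduce to vertices via Lemma~\ref{lem:decr} and convexity on bounded faces. Both prove at each vertex $v$ the discrete inequality $\sum_{e=[v,w]}\frac{c(e)}{\ell(e)}\bigl(\psi(w)-\psi(v)\bigr)\ge-(\gamma\cdot c)(v)$ for the strictly positive $1$-cycle $c$. Both then propagate from a maximizing vertex along bounded edges, using $\min_e c(e)/\ell(e)>0$. The only difference is how you get the vertex inequality: you use an explicit polyhedral convex combination weighted by $c$ (the construction in the proof of Lemma~\ref{lem:pc}), while the paper compares $\phi\cdot r^*c$ with $(\gamma+\psi_{\Pi})\cdot c$. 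Your version uses only that $\phi$ is $\PC$, so it holds on all of $\PC(\tX,\gamma)$.

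Two points need fixing.

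\emph{Direction in Step 2.} The display at the end of Step 2 does not follow from the combination at $v$. That combination bounds $\psi(v)$ from above and gives $s-\psi(w)\le K\bigl(s-\psi(v)\bigr)+K'$: closeness to $s$ at $v$ forces closeness at every neighbour $w$. This is exactly how the paper applies its inequality at $v_i$ to control $\psi(v_{j+1})$. Your display is still true: it is the same inequality with $v$ and $w$ exchanged, i.e.\ the combination at $w$, as in your last paragraph. Write Step 2 that way and Step 3 goes through. (The additive constant is $+\varepsilon(\gamma\cdot c)(v)/S_v$; the sign slip is harmless.)

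\emph{Connectedness.} The claim that every connected component contains a vertex with $\psi=s$ is false, since Step 1 yields only one such vertex. In fact the lemma fails for disconnected $\tX$: replacing $\phi$ by $\phi-n$ on one component keeps it in $\PAPC(\tX,\gamma)$, while the vertex oscillation of $\psi$ grows like $n$. You must assume $\tX$ connected, as the paper tacitly does when it labels $v_1,\dots,v_M$ with each $v_j$ adjacent to an earlier vertex. Then $\Sk(\Pi)=r_{\Pi}(\tX)$ and its $1$-skeleton are connected, and the propagation closes.
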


\begin{proof}
Fix a strictly positive $1$-cycle $c$ on $\Pi$, whose existence is provided by Lemma \ref{lem:equ_positive}. Since $\psi-\sup_{\tX}\psi$ is invariant by adding a constant to $\psi$, we assume that $\sup_{\tX}\psi=0$. By Lemma \ref{lem:decr} we have $\sup_{\tX}\psi=\sup_{\Sk(\Pi)}\psi$. Moreover, since $\psi$ is continuous and convex on each face on $\Sk(\Pi)$, we even have $\sup_{\tX}\psi= \sup_{v \in \Pi^{(0)}} \psi(v)$.

We denote by $r : \Pi' \to \Pi$ the refinement, and define $\psi_{\Pi} \in \PA_b(\Pi)$ to be the unique bounded piecewise affine function on $\Pi$ whose values at the vertices of $\Pi$ agree with those of $\psi$.
For any $v \in \Pi^{(0)} \subset \Pi'^{(0)}$ we have
        \begin{align*}
            r^* ( (\gamma + \psi_{\Pi}) \cdot c) (v)
            & = ( (\gamma + \psi_{\Pi}) \cdot c)  (v) 
            = \sum_{\substack{e \succ v\\ e \in \Pi^{(1)}}} c(e) (\gamma + \psi_{\Pi})_e (n_{e/v}),\\
            \phi \cdot (r^*c) (v) 
            & = \sum_{\substack{\tau' \succ v\\ \tau' \in \Pi'^{(1)}}} (r^*c) (\tau') \phi_{\tau'} (n_{\tau'/v})
            = \sum_{\substack{
            e \succ v\\ e \in \Pi^{(1)}, \tau' \subseteq e
            }} c(e) \phi_{\tau'} (n_{\tau'/v}).
        \end{align*}
        As $\phi$ is convex on the edges of $\Pi$, we have $\phi_{\tau'} (n_{\tau'/v}) \leq (\gamma + \psi_{\Pi})_e (n_{e/v})$ for $\tau' \subseteq e$, hence
        $$
        0
        \leq \phi \cdot (r^*c) 
        \leq (\gamma + \psi_{\Pi}) \cdot c 
        $$
        in $Z_0(\tX)$, where the first inequality holds since $ \phi \in \PAPC(\tX)$. 
        For any $v \in \Pi^{(0)}$, we infer that
        \begin{equation}
        \label{equ:bound on vertices}
        -C
        \leq -(\gamma \cdot c) (v) 
        \leq (\psi_{\Pi} \cdot c) (v)
        = \sum_{\substack{e \succ v\\e = [v, w] \in \Pi^{(1)}}} \frac{c(e)}{\ell(e)}(\psi(w)-\psi(v))
        \end{equation}
        for some constant $C>0$ independent of $\psi$, where $\ell(e)$ is the length of $e$, i.e. the unique positive rational such that $(w-v) = \ell(e) n_{e/v}$. Note that the sum in \eqref{equ:bound on vertices} is over the bounded edges adjacent to $v$, as $\psi$ is constant along unbounded edges of $\Pi$.
        
    Let $M\coloneqq |\Pi^{(0)}|$, $\Pi^{(1)}_b \coloneqq \Pi^{(1)} \cap \Sk(\Pi)$, and $M_1\coloneqq |\Pi^{(1)}_b|$. We label the vertices of $\Sk(\Pi)$ by $v_1, \ldots, v_M$ such that $\sup_{j} \psi(v_j) = \psi(v_1) = 0$, and for any $j \geq 2$ there exists $i < j $ with $[v_i, v_j] \in \Pi^{(1)}_b$. We prove by induction on $j \le M$ that there exists a constant $C_j >0$ independent of $\psi$ such that $\psi(v_i)\geq -C_j$ for all $i \leq j$.
    
    If $j=1$, we have $\psi(v_1)=0$ by our choice of labelling. Now assume that there exists $C_j >0$ independent of $\psi$ such that $\psi(v_1), \ldots, \psi(v_j) \geq -C_j$. Consider $v_{j+1}$ and $i<j+1$ such that $[v_i, v_{j+1}] \in \Pi^{(1)}_b$. 
    We have
    \begin{alignat*}{2}
        -C 
        & \leq \sum_{\substack{e \succ v_i\\e = [v_i, w]}} \frac{c(e)}{\ell(e)}(\psi(w)-\psi(v_i))
        && \text{by \eqref{equ:bound on vertices} for $v=v_i$}\\
        & \leq \sum_{\substack{e \succ v_i\\e = [v_i, w]}} \frac{c(e)}{\ell(e)}(\psi(w)+C_j)
        && \text{by induction hypothesis}\\
        & \leq \big(\min_{e \in \Pi^{(1)}_b} \tfrac{c(e)}{\ell(e)} \big)\,
        \psi(v_{j+1}) +  M_1 \big(\max_{e \in \Pi^{(1)}_b} \tfrac{c(e)}{\ell(e)}\big) \, C_j.
        && 
    \end{alignat*}
   This implies that $\psi(v_{j+1}) \geq -C_{j+1}$ for some constant $C_{j+1} \geq C_j > 0$ independent of $\psi$, and concludes the proof of the lemma.
\end{proof}

\subsubsection*{Induction step} 
Assume that Proposition \ref{prop:equi PAPC} holds for all faces of $\Sk(\Pi)$ of dimension $k$, for some $k\geq 0$. Let $\sigma \in \Sk(\Pi)$ be a face of dimension $k+1$. 

\begin{lemma} 
\label{lem: equi bound derivatives}
There exists $C>0$ independent of $\psi$ such that
        $$
        |D_v \psi(w)| \leq C
        $$
        for any vertex $w \in \sigma$ and any point $v \in \relint(\tau) \cap N_\Q$, where $\tau \in \Pi$ and $\sigma= \conv (\tau, w )$, such that $\psi|_\tau$ is affine near $v$.  
\end{lemma}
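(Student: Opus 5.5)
We argue by combining an upper bound coming from convexity along the segment with a lower bound coming from polyhedral convexity tested against a positive cycle, exactly as in the base case (Lemma \ref{lem: equi bound vertices}).

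\emph{Upper bound.} Since $\gamma$ is affine on $\sigma\in\Pi$, the function $\psi$ is convex on $\sigma$. Hence for $t\in[0,1]$ the function $t\mapsto\psi(v+t(w-v))$ is convex, which gives
\[
D_v\psi(w)\le \psi(w)-\psi(v).
\]
By Lemma \ref{lem: equi bound vertices} we have $|\psi(w)-\sup_{\tX}\psi|\le C$. By the induction hypothesis applied to $\tau$ (a face of $\Sk(\Pi)$ of dimension $k$), we also have $|\psi(v)-\sup_{\tX}\psi|\le C$. Both bounds are independent of $\psi$, so $D_v\psi(w)\le 2C$.

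\emph{Lower bound.} We use polyhedral convexity at $v$ in the transversal directions to $\tau$, and proceed in three steps.
\begin{enumerate}
\item Fix, via Lemma \ref{lem:equ_positive}, a strictly positive $(k+1)$-cycle $c$ on $\Pi$. Restrict it to a small neighbourhood $U$ of $v$, on which $\psi|_\tau$ (hence $\phi|_\tau$) is affine. Then $\tau$ is a codimension-one face of the cycle near $v$, and the faces containing $\tau$ are the $(k+1)$-faces $\sigma'=\conv(\tau,w')$ with $w'$ a vertex (if $\sigma'$ is bounded), or $\sigma'=\tau+\R_{\ge0}u$ (if it is unbounded).
\item Refine $\Pi$ so that $\phi$ is affine near $v$ on each such $\sigma'$; the refinement can be chosen to contain a face $\tau'\ni v$ inside $\tau$. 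Polyhedral convexity (Definition \ref{def:PAPC}) then gives $(\phi\cdot c)(\tau')\ge0$.
\item Unwind formula \eqref{eq:intersection product}, choosing normal vectors $n_{\sigma'/\tau}$ pointing from $v$ towards $w'$. Writing $\phi=\gamma+\psi$, this yields
\[
0\le (\gamma\cdot c)(\tau')+\sum_{\sigma'\succ\tau} c(\sigma')\,\ell_{\sigma'}^{-1}\, D_v\psi(w') - D_v\psi\text{-term along }\tau .
\]
\end{enumerate}
In this inequality, $\ell_{\sigma'}$ is the lattice length of $w'-v$ modulo $N_\tau$. The term along $\tau$ involves only derivatives of $\psi|_\tau$ in directions of $N_\tau$, which are bounded by $C$ by the induction hypothesis (the Lipschitz bound on $\tau$). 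The quantity $(\gamma\cdot c)(\tau')$ takes finitely many values, since $\gamma\in\PA(\Pi)$. For unbounded $\sigma'$, the derivative of $\psi$ in the recession direction is $\le 0$ by Lemma \ref{lem:decr}, because a bounded convex function is decreasing along rays.

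We now isolate the term for $\sigma'=\sigma$ and bound all other terms $D_v\psi(w')$ from above, using the upper bound for bounded faces and $\le 0$ for unbounded ones. Since $c(\sigma)>0$, this gives $D_v\psi(w)\ge -C'$.

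\emph{Main obstacle.} The delicate point is making the lattice lengths $\ell_{\sigma'}$ and the normal-vector bookkeeping uniform in $v$ and in the refinement $\Pi'$, since $v$ varies over $\relint(\tau)$. We handle this by noting that the direction $w'-v$ always projects to a fixed multiple of the primitive generator of $N_{\sigma'}/N_\tau$. Concretely, the projection of $w'-v$ is independent of $v$ modulo $N_\tau$, so the coefficient $c(\sigma')/\ell_{\sigma'}$ depends only on $\sigma'$ and $\tau$. All constants then come from finitely many data of $\Pi$, as required.
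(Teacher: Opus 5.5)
Your proof is correct. It reaches essentially the same inequality as the paper,
\[
0\le(\gamma\cdot c)(\tau)+\sum_{\sigma'=\conv(\tau,w')}\frac{c(\sigma')}{\ell_{\sigma'}}\,D_v\psi(w')+\bigl(\text{recession terms}\bigr)-\bigl(\text{a derivative of }\psi|_{\tau}\text{ at }v\bigr),
\]
and the lower bound then follows by isolating the $\sigma$-term exactly as you do. In the paper the recession terms vanish because the comparison function is bounded; in yours they are $\le 0$.

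Your route is more direct than the paper's. The paper does not read off slopes at a cell through $v$. Instead it builds the BFJ-type refinement $\Pi^\varepsilon$ by scaling the star of $\tau$ around $v$, together with the bounded interpolant $\psi_{\Pi^\varepsilon}$. It then proves Claim \ref{claim equic}, which bounds $\psi\cdot(r^*c)(\tau')$ by $\psi_{\Pi^\varepsilon}\cdot(t^*c)(\tau^\varepsilon)$: convexity handles the bounded faces, and Lemma \ref{lem:decr} with the retraction handles the unbounded ones. Only after that does it expand the right-hand side in terms of $D_v\psi(v_l)$ and $D_v\psi(v_j)$.

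You skip this auxiliary structure entirely, which also removes the need for $v\in N_\Q$. The paper's detour has one advantage: it only ever uses affineness of $\psi$ along the segments $[v,v_l^\varepsilon]$ and on $\tau^\varepsilon$, plus convexity inequalities.

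The step you should repair is step 2, because refining $\Pi$ cannot make $\phi$ affine near $v$. Let $\sigma''$ be the cell of $\Pi'$ inside $\sigma'$ containing $\tau'\ni v$. What you actually need is that the slope $\psi_{\sigma''}(n_{\sigma'/\tau})$ is at most the one-sided derivative of $\psi$ at $v$ in the direction $n_{\sigma'/\tau}$; for your choice of normal vector, that derivative is $D_v\psi(w')/\ell_{\sigma'}$. Either of the following gives it:
\begin{itemize}
\item The affine extension of $\psi|_{\sigma''}$ is a supporting function of the convex function $\psi$ on $\sigma'$ at $v$. This gives exactly the direction of inequality you need.
\item A convex piecewise affine function on $\sigma'$ that is affine on the facet $\tau$ near $v\in\relint(\tau)$ is already affine near $v$ on $\sigma'$. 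Indeed, its tangent function at $v$ is sublinear on a half-space and linear on the boundary hyperplane, hence linear.
\end{itemize}
You should also say explicitly that, with your choice of normal vectors, the balancing vector $\sum_{\sigma'}c(\sigma')\,n_{\sigma'/\tau}\in N_{\tau,\R}$ depends affinely on $v$. It is therefore bounded on the compact face $\tau$, and this is what makes the term along $\tau$ uniformly bounded through the Lipschitz bound from the induction hypothesis.
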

\begin{proof}
    Let $\tau$ be a $k$-dimensional face of $\sigma$ in $\Pi$, let $w$ be the unique vertex of $\sigma \setminus \tau$, and let $v \in \relint(\tau) \cap N_\Q$ such that $\psi|_\tau$ is affine near $v$.
    The convexity of $\psi$ on $\sigma$ and the induction assumption on the boundedness of $\sup_{\partial \sigma}|\psi- \sup_{\tX}\psi|$ imply that
    \begin{equation} \label{equ:equic_upper}
      D_v \psi(w) \leq \psi(w) - \psi(v) \leq C_\sigma  
    \end{equation}
    for some constant $C_\sigma>0$ independent of $\phi$. We now prove a lower bound for $D_v \psi(w)$; we can assume that $D_v \psi(w)\leq 0$.
    
    Let $\{v_j\}_{j \in J}$ be the set of vertices of $\tau$, and $\{v_l\}_{l \in L}$ the set of vertices of $\Sk(\Pi)$ such that $ \sigma_l \coloneqq \conv( \tau, v_l ) \in \Pi^{(k+1)} \cap \Sk(\Pi)$. Given $\varepsilon \in (0,1) \cap \Q$ and any $i \in J \cup L$, set $v_i^\varepsilon= \varepsilon v_i + (1-\varepsilon)v$. By \cite[\S 6.2]{BoucksomFavreJonsson} there exists a refinement $\Pi^\varepsilon$ of $\Pi$ such that
    \begin{itemize}
        \item the simplex $\sigma_l^\varepsilon \coloneqq \conv(v_l^\varepsilon, \{v_j^\varepsilon\}_{j \in J} ) \in \Pi^\varepsilon$ for any $l \in L$ (namely $\sigma_l^\varepsilon$ is obtained by scaling $\sigma_l$ by a factor $\varepsilon$), and these are precisely the bounded faces in $\Pi^\varepsilon$ of dimension $k+1$ containing $\tau^\varepsilon \coloneqq \langle \{v_j^\varepsilon\}_{j \in J} \rangle$; in particular, $\tau^\varepsilon \subset \tau$ contains $v$ in its relative interior,
        \item $\Pi^\varepsilon$ is a simplicial polyhedral structure on $\tX$.
    \end{itemize}
    Moreover, for $\varepsilon$ sufficiently small, we can assume that $\psi$ is affine on $\tau^\varepsilon$ and on the segments $[v,v_l^\varepsilon]$ for any $l \in L$. We furthermore denote by $\psi_{\Pi^\varepsilon} \in \PA_b(\Pi^\varepsilon)$ the unique bounded piecewise affine function on $\Pi^\varepsilon$ whose values at the vertices of $\Pi^\varepsilon$ agree with those of $\psi$. By possibly going to a higher refinement of $\Pi'$, we can assume that $\Pi' \geq \Pi^\varepsilon$, and denote $r \colon \Pi' \xrightarrow{t'} \Pi^\varepsilon \xrightarrow{t} \Pi$.
    
    \begin{claim} \label{claim equic} 
    Let $c\in Z_{k+1}(\Pi)_{> 0}$ as in Lemma \ref{lem:equ_positive}. For any $\tau' \in \Pi'^{(k)}$ with $\tau' \subseteq \tau^\varepsilon$, we have  
    \begin{equation*} 
    \psi \cdot (r^* c) \,(\tau')
    \leq \psi_{\Pi^\varepsilon} \cdot (t^* c) \,(\tau^\varepsilon)
    \end{equation*}
    \end{claim}
    
    \begin{proof}
    Recall that $\sigma_l^\varepsilon$ for $l \in L$ are the bounded faces in $\Pi^\varepsilon$ of dimension $k+1$ containing $\tau^\varepsilon$, and denote by $\sigma_h$ for $h \in H$ the unbounded faces of  $\Pi^\varepsilon$ of dimension $k+1$ containing $\tau^\varepsilon$. 
    As in the proof of Lemma \ref{lem:2 defn PAPC}, we can choose normal vectors relative to $\tau^\varepsilon$ such that 
    \begin{equation}
    \label{eq:claim equi}
    \sum_{l \in L} (t^*c)(\sigma_l^\varepsilon)n_{\sigma_l^\varepsilon/\tau^\varepsilon} + \sum_{h \in H} (t^*c)(\sigma_h)n_{\sigma_h/\tau^\varepsilon}=0.
    \end{equation} 
    It follows that
    $$
    \big( \psi_{\Pi^\varepsilon} \cdot t^*c \big) \,(\tau^\varepsilon) 
    = \sum_{l \in L} (t^*c)(\sigma_l^\varepsilon) (\psi_{\Pi^\varepsilon})_{\sigma_l^\varepsilon}(n_{\sigma_l^\varepsilon/\tau^\varepsilon}) + \sum_{h \in H} (t^*c)(\sigma_h) (\psi_{\Pi^\varepsilon})_{\sigma_h}(n_{\sigma_h/\tau^\varepsilon}).
    $$

    Let $\tau' \subseteq \tau^\varepsilon$ be a face of $\Pi'$ of dimension $k$. 
    We have that
    \begin{itemize}
        \item for any $l \in L$, there is a unique face $\sigma' \succ \tau'$ of $\Pi'$ of dimension $k+1$ such that $\sigma' \subseteq \sigma_l^\varepsilon$; in particular, we have $(r^*c)(\sigma')=(t^*c)(\sigma_l^\varepsilon)$ and we can take $n_{\sigma'/\tau'}=n_{\sigma_l^\varepsilon/\tau^\varepsilon}$;
        \item for any $h \in H$, there is a unique face $\sigma' \succ \tau'$ of $\Pi'$ of dimension $k+1$ such that $\sigma' \subseteq \sigma_h$; in particular, we have $(r^*c)(\sigma')=(t^*c)(\sigma_h)$ and we can take $n_{\sigma'/\tau'}=n_{\sigma_h/\tau^\varepsilon}$;
        \item for any other face $\sigma' \succ \tau'$ of $\Pi'$ of dimension $k+1$, we have that $(r^*c)(\sigma')=0$ as $\sigma'$ is not contained in a $(k+1)$-dimensional face of $\Pi^\varepsilon$; we choose any $n_{\sigma'/\tau'}$.
    \end{itemize}
    Together with \eqref{eq:claim equi}, this implies
    $$
    \psi \cdot (r^*c)\, (\tau')=
    \sum_{l \in L} (t^*c)(\sigma_l^\varepsilon)\psi_{\sigma'}(n_{\sigma_l^\varepsilon/\tau^\varepsilon})
    + \sum_{h \in H} (t^*c)(\sigma_h) \psi_{\sigma'}(n_{\sigma_h/\tau^\varepsilon})
    $$ where $\sigma' \subseteq \sigma_l^\varepsilon$ (resp. $\sigma_h$) is a uniquely determined $(k+1)$-dimensional face of $\Pi'$ containing $\tau'$. We now compare $\psi_{\sigma'}(n_{\sigma_i^\varepsilon/\tau^\varepsilon})$ with $(\psi_{\Pi^\varepsilon})_{\sigma_i^\varepsilon}(n_{\sigma_i^\varepsilon/\tau^\varepsilon})$ for any $i \in L \cup H$.
    Let $\lambda \in \Q_{>0}$ be sufficiently small that $\overline{v}_l \coloneqq v+ \lambda n_{\sigma_l^\varepsilon/\tau^\varepsilon} \in \sigma_l^\varepsilon$ for any $l \in L$ (here we are using that $v \in \relint(\tau^\varepsilon)$), and $\overline{v}_h \coloneqq v+ \lambda n_{\sigma_h/\tau^\varepsilon} \in \sigma_h$ for any $h \in H$.
    \\For $l \in L$ 
    \begin{alignat*}{2}
    \psi_{\sigma'}(\lambda n_{\sigma_l^\varepsilon/\tau^\varepsilon})
    & \leq \psi(\overline{v}_l) - \psi(v) 
    \leq \psi_{\Pi^\varepsilon}(\overline{v}_l) - \psi(v)
    &&  \quad \text{ by convexity of $\psi$ on $\sigma_l^\varepsilon$}\\
    & = \psi_{\Pi^\varepsilon}(\overline{v}_l) - \psi_{\Pi^\varepsilon}(v)
    && \quad \text{ as $\psi_{|\tau^\varepsilon}$ is affine}\\
    & = (\psi_{\Pi^\varepsilon})_{\sigma_l^\varepsilon}(\lambda n_{\sigma_l^\varepsilon/\tau^\varepsilon})
    && \quad \text{ as $\psi_{\Pi^\varepsilon}$ is affine on $\Pi^\varepsilon$.}
    \end{alignat*}
    For $h \in H$ 
    \begin{alignat*}{2}
    \psi_{\sigma'}(\lambda n_{\sigma_h/\tau^\varepsilon})
    & \leq \psi(\overline{v}_h) - \psi(v) 
    && \quad \text{ by convexity of $\psi$ on $\sigma_h$
    }\\
    & \leq (\psi \circ r_{\Pi^\varepsilon})(\overline{v}_h) - \psi(v) 
    && \quad \text{ by Lemma \ref{lem:decr}}\\
    & = \psi_{\Pi^\varepsilon} (r_{\Pi^\varepsilon}(\overline{v}_h))- \psi_{\Pi^\varepsilon}(v) 
    && \quad \text{ as $r_{\Pi^\varepsilon}(\overline{v}_h) \in \tau^\varepsilon$ and $\psi_{|\tau^\varepsilon}$ is affine}\\ 
    & = \psi_{\Pi^\varepsilon} (\overline{v}_h)- \psi_{\Pi^\varepsilon}(v) 
    && \quad \text{ by construction of $\psi_{\Pi^\varepsilon}$}\\
    & = (\psi_{\Pi^\varepsilon})_{\sigma_h}(\lambda n_{\sigma_h/\tau^\varepsilon})
    && \quad \text{ as $\psi_{\Pi^\varepsilon}$ is affine on $\Pi^\varepsilon$.}
    \end{alignat*}
    We conclude that $\psi_{\sigma'}(n_{\sigma_i^\varepsilon/\tau^\varepsilon})\leq (\psi_{\Pi^\varepsilon})_{\sigma_i^\varepsilon}(n_{\sigma_i^\varepsilon/\tau^\varepsilon})$ for any $i \in L\cup H$, hence the proof of the claim.
    \end{proof}
    
    As $c$ is a non-negative cycle and $\phi \in \PAPC(\tX)$, Claim \ref{claim equic} implies that
    \begin{equation} 
    \label{eq: lem equicont 0}
    0 
    \leq \phi \cdot (r^* c) \,(\tau')
    \leq (\gamma + \psi_{\Pi^\varepsilon}) \cdot (t^* c) \,(\tau^\varepsilon)
    \end{equation}
    for any $\tau' \in \Pi'^{(k)}$ with $\tau' \subseteq \tau^\varepsilon$. 
    In order to compute $\psi_{\Pi^\varepsilon} \cdot (t^* c) \,(\tau^\varepsilon)$, we choose normal vectors $\overline{n}_{\overline{\sigma}/\tau^\varepsilon}$ for any $(k+1)$-dimensional face $\overline{\sigma}$ of $\Pi^\varepsilon$ containing $\tau^\varepsilon$ such that, if $\overline{\sigma}=\sigma_l^\varepsilon$, we have
    $$
    v_l - v = \mu_l \,\overline{n}_{\sigma_l^\varepsilon/\tau^\varepsilon} = \tfrac{1}{\varepsilon}(v_l^\varepsilon - v)
    $$
    for some $\mu_l \in \Q_{>0}$. 
    By the balancing condition for the cycle $t^* c$ in $\Pi^\varepsilon$, we have $\sum_{\overline{\sigma}} (t^* c) (\overline{\sigma})\overline{n}_{\overline{\sigma}/\tau^\varepsilon} \in N_\tau$ which we can write in a unique way as
    $$
    \sum_{\overline{\sigma}} (t^* c)(\overline{\sigma})\,\overline{n}_{\overline{\sigma}/\tau^\varepsilon}= \sum_{j \in J'} \beta_j (v_j^\varepsilon - v) 
    $$ for some $\beta_j \in \Q_{> 0}$ and $J'\subsetneq J$. We compute the following intersection product at $\tau^\varepsilon$:
    \begin{align*} 
    \psi_{\Pi^\varepsilon} \cdot (t^* c)(\overline{\sigma}) \,(\tau^\varepsilon) 
    & = \sum_{l \in L} c(\sigma_l) (\psi_{\Pi^\varepsilon})_{\sigma^\varepsilon_l}(\overline{n}_{\sigma^\varepsilon_l/\tau^\varepsilon}) - (\psi_{\Pi^\varepsilon})_{\tau^\varepsilon}
    \big( \sum_{\overline{\sigma}} (t^* c)(\overline{\sigma}) \,\overline{n}_{\overline{\sigma}/\tau^\varepsilon} \big) 
    \\
    & = \sum_{l \in L} c(\sigma_l) \frac{\psi_{\Pi^\varepsilon}(v_l^\varepsilon) - \psi_{\Pi^\varepsilon}(v)}{\varepsilon \mu_l} 
    - (\psi_{\Pi^\varepsilon})_{\tau^\varepsilon}
    \big( \sum_{j \in J'} \beta_j (v_j^\varepsilon - v) \big) \\
    & = \sum_{l \in L} \frac{c(\sigma_l)}{\varepsilon \mu_l} D_v \psi(v_l)
    - \sum_{j \in J'} \beta_j \big( \psi_{\Pi^\varepsilon} (v_j^\varepsilon)- \psi_{\Pi^\varepsilon}(v) \big) \\
    & = \sum_{l \in L} \frac{c(\sigma_l)}{\varepsilon \mu_l} D_v \psi(v_l)
    - \sum_{j \in J'} \beta_j D_v \psi(v_j) 
    \end{align*}
    where the last equalities hold as $\psi$ is affine on the segments $[v,v_l^\varepsilon]$ and on $\tau^\varepsilon$.
    We obtain
    \begin{align} 
    \label{eq: lem equicont 2}
    \psi_{\Pi^\varepsilon} \cdot (t^* c) \,(\tau^\varepsilon) 
    \leq \big(\min_{l \in L}\tfrac{c(\sigma_l)}{\varepsilon \mu_l}\big)\, D_v\psi(w) + |L| \big(\max_{l \in L}\tfrac{c(\sigma_l)}{\varepsilon \mu_l}\big)\, C_\sigma - \sum_{j \in J'} \beta_j D_v \psi(v_j)
    \end{align}
    where $C_\sigma>0$ is the upper bound on $D_v \psi(v_l)$ for $v$ and $v_l \in \sigma_l$ in \eqref{equ:equic_upper}.
    
    Combining \eqref{eq: lem equicont 0} and \eqref{eq: lem equicont 2}, we obtain
    \begin{equation} \label{eq: lem equicont 3}
    \big(\min_{l \in L}\tfrac{c(\sigma_l)}{\varepsilon \mu_l}\big)\, D_v\psi(w) 
    \geq 
    - |L|\big(\max_{l \in L}\tfrac{c(\sigma_l)}{\varepsilon \mu_l}\big)\, C_\sigma
    + \sum_{j \in J'} \beta_j D_v \psi(v_j) 
    - \gamma \cdot (t^*c) \,(\tau^\varepsilon).
    \end{equation} 
    The coefficients $\beta_j$ depend on $v$, but they can be extended to a continuous function $\tau \to \R_{\geq 0}^{|J|}$ which then attains a finite maximum. By induction hypothesis we have $|D_v \psi(v_j)| \leq \diam(\tau)\Lip_\tau(\psi) \leq C_\tau$ for some constant $C_\tau>0$ independent of $\psi$, and by Lemma \ref{lem:equ_positive} $c$ is a strictly positive cycle, thus \eqref{eq: lem equicont 3} implies that 
    $$
    D_v \psi(w) \geq -C
    $$ for some constant $C>0$ independent of $\psi$, and concludes the proof of Lemma \ref{lem: equi bound derivatives}.
\end{proof}

We conclude the induction step, hence the proof of Proposition \ref{prop:equi PAPC}, using \cite[Proposition A.1]{BoucksomFavreJonsson} and the argument after the statement of Proposition 6.3 in \emph{loc. cit.}
\end{proof}

\subsection{Compactness}

\begin{prop} \label{prop:equi PSH}
Let $K \subset \tX$ be a compact set.
        \begin{enumerate}
        \item There exists $C>0$ such that for any $\psi = \phi-\gamma$ with $\phi \in \PSH(\tX, \gamma)$ we have $$\sup_K |\psi - \sup_\tX \psi| \leq C.$$
        
        \item The family $(\psi|_K)$ ranging over $\psi = \phi-\gamma$ with $\phi \in \PSH(\tX, \gamma)$ is uniformly Lipschitz on $K$.

        \item The family $(\phi|_K)$ ranging over $\phi \in \PSH(\tX, \gamma)$ is (uniformly) equicontinuous on $K$.
    \end{enumerate}
\end{prop}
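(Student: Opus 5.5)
We deduce all three statements from Proposition \ref{prop:equi PAPC} by approximation, after reducing from an arbitrary compact $K$ to the skeleton. Fix a simplicial quasi-projective structure $\Pi$ with $\gamma \in \PA(\Pi)$, which exists by Proposition \ref{prop:quasi-proj}. Enlarging $K$ if needed, we may assume $K \supseteq \Sk(\Pi)$. We may also assume that $K$ is a finite union of compact pieces $\sigma \cap \{ \text{unbounded coordinates} \le R\}$, over faces $\sigma \in \Pi$.

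\textbf{Step 1: the PAPC case.} Let $\phi \in \PAPC(\tX,\gamma)$ and $\psi=\phi-\gamma$. Proposition \ref{prop:equi PAPC} gives a uniform $C^{0,1}$ bound for $\psi - \sup_\tX \psi$ on every face of $\Sk(\Pi)$. To pass to an unbounded face $\sigma = C+\Cone(v_{l+1},\ldots,v_k)$, we use Lemma \ref{lem:decr}. It gives $\psi \le \psi\circ r_\Pi \le \sup_\tX \psi$, so on $\sigma$ the function $\psi - \sup_\tX\psi$ is bounded above by $0$. For the lower bound, note that $\psi$ is convex on $\sigma$ and bounded above, hence nonincreasing along each ray direction $v_j$. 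Its directional derivative along $v_j$ at a point $y+\sum\lambda_j v_j$ is therefore $\le 0$. Convexity along $v_j$ shows this derivative is at least its value at the base point $y\in C$. That value is in turn bounded below using the PAPC inequality at faces of $C$, exactly as in Lemma \ref{lem: equi bound derivatives}, where the unbounded faces $\sigma_h$ already appear. As a fallback, we can bound it more crudely via convexity on the face $\sigma$: a uniform Lipschitz bound on the bounded face $C$, together with boundedness above along rays, controls slopes on $K\cap\sigma$. The argument is as follows. On the compact region $K \cap \sigma$, which lies inside $\{\lambda_j \le R\}$, we consider the slightly larger region $\{\lambda_j \le R+1\}$. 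A convex function on this region, bounded above by $0$ and with values bounded below on the base $C$, has slopes on the inner region bounded by (oscillation)/(distance to the boundary). It therefore suffices to get a lower bound for $\psi$ on $\{\lambda\le R+1\}$. This follows by convexity along each ray: $\psi(y+\lambda v)\ge \psi(y) + \lambda D_y\psi(y+v)$, with $D_y\psi(y+v)$ bounded below. This last lower bound on the ray derivatives at the skeleton is the main obstacle. We would obtain it by running the argument of Lemma \ref{lem: equi bound derivatives} with $\tau$ a bounded face and $w$ replaced by the ray direction, using the strictly positive cycle from Lemma \ref{lem:equ_positive}.

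\textbf{Step 2: passage to PSH.} Let $\phi\in\PSH(\tX,\gamma)$, and let $\phi_j\in\PAPC(\tX,\gamma)$ decrease to $\phi$. By Proposition \ref{prop:gamma-psh} the convergence is uniform on $K \cup \Sk(\Pi)$. By Lemma \ref{lem:decr} we have $\sup_\tX \psi_j = \sup_{\Sk(\Pi)}\psi_j$, and the same holds for $\psi$, since the inequality $\psi\le\psi\circ r_\Pi$ passes to the limit. Hence $\sup_\tX\psi_j \to \sup_\tX\psi$. The bound of Step 1, namely $\sup_K|\psi_j-\sup\psi_j|\le C$, then passes to the limit, which gives (1). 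The uniform Lipschitz bound on $K$ is stable under pointwise limits, which gives (2). Finally, $\gamma$ is piecewise affine and hence Lipschitz on the compact set $K$, so $\phi = \psi + \gamma$ is uniformly Lipschitz on $K$ and therefore uniformly equicontinuous, which is (3).
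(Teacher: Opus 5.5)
\textbf{There is a genuine gap in Step 1, on the unbounded faces.} Your Step 2 is fine.

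On an unbounded face $\sigma$ you need a lower bound for $\psi-\sup_{\tX}\psi$ on $K\cap\sigma$, and hence the Lipschitz bound there. You reduce this to a uniform lower bound on the derivatives $D_y\psi(y+v)$, at points $y$ of the bounded part, along all recession directions $v$. You flag this as ``the main obstacle'' and never prove it.

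The fallback does not help, because it is circular: it needs the same bound. In addition, the estimate ``slope $\le$ oscillation$/$distance to the boundary'' degenerates near the relative boundary of $\sigma$, and $K\cap\sigma$ reaches that boundary (it contains the base $C$).

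The routes you indicate do not supply the bound as stated:
\begin{itemize}
\item Lemma~\ref{lem: equi bound derivatives} only bounds derivatives towards vertices of \emph{bounded} faces. In its proof, the slopes of $\psi$ along the unbounded faces $\sigma_h$ are only bounded \emph{from above}, via Lemma~\ref{lem:decr}, by those of $\psi_{\Pi^\varepsilon}$, which vanish along rays. They then drop out, so no lower bound on ray slopes comes out; you would have to redo the argument isolating those terms.
\item For faces with several rays, your claim that the $v_j$-derivative at $y+\sum_i\lambda_i v_i$ dominates its value at $y$ is false: these two points do not lie on a common $v_j$-line.
\item Lower bounds along the generators $v_j$ do not propagate to $\sum_j\lambda_j v_j$. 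The map $v\mapsto D_y\psi(y+v)$ is sublinear, which transfers only upper bounds.
\end{itemize}

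\textbf{The missing idea} is that you need not fix $\Pi$ independently of $K$; this is what the paper uses. Proposition~\ref{prop:equi PAPC} holds for \emph{every} simplicial quasi-projective structure on which $\gamma$ is defined, with a constant depending on that structure.
\begin{itemize}
\item Refine a structure carrying $\gamma$ by the hyperplanes bounding a large lattice cube containing $K$. Then $K$ lies in the union of bounded faces.
\item Pass to a simplicial quasi-projective refinement $\Pi'$ by Proposition~\ref{prop:quasi-proj}. Refinements of bounded faces are bounded, so $K\subseteq\Sk(\Pi')$.
\item Proposition~\ref{prop:equi PAPC} on $\Sk(\Pi')$ then gives (1) and (2) for $\PAPC(\tX,\gamma)$ directly, with no analysis on unbounded faces at all.
\end{itemize}
Your Step 2 then finishes the proof.
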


\begin{proof}
Let $K \subset \tX$ be a compact subset of $\tX$. Let $\Pi$ be a polyhedral structure on $\tX$ such that its bounded part contains $K$, and $\gamma$ is defined on $\Pi$. By Proposition \ref{prop:quasi-proj}, there exists a simplicial quasi-projective refinement $\Pi'$ of $\Pi$; in particular, we have $K \subseteq \Sk(\Pi')$. The first two items now follow from Proposition \ref{prop:equi PAPC} applied to $\Sk(\Pi')$ and to a sequence in $\PAPC(\tX,\gamma)$ decreasing pointwise to $\phi$, hence uniformly on $K$. The third item follows from the first two.
\end{proof}

\begin{theorem} \label{thm:comp}
Endow $\PSH(\tX,\gamma)$ with the topology of uniform convergence on compact subsets. Then the map $\phi \mapsto \sup_{\tX} (\phi-\gamma)$ is continuous and proper on $\PSH(\tX,\gamma)$.
    \\ As a consequence, the space $\PSH(\tX, \gamma)/\R$ is compact.
\end{theorem}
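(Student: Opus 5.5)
We split the statement into continuity of $S(\phi)\coloneqq\sup_{\tX}(\phi-\gamma)$, properness of $S$, and the compactness consequence.

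\emph{Continuity.} Fix a simplicial quasi-projective polyhedral structure $\Pi$ with $\gamma\in\PA(\Pi)$ (Proposition \ref{prop:quasi-proj}). For $\phi\in\PSH(\tX,\gamma)$ with approximating decreasing sequence $\phi_j\in\PAPC(\tX,\gamma)$, Lemma \ref{lem:decr} gives $\phi_j-\gamma\le(\phi_j-\gamma)\circ r_{\Pi}$. Passing to the pointwise limit yields $\psi\le\psi\circ r_\Pi$ for $\psi=\phi-\gamma$. Hence $S(\phi)=\sup_{\Sk(\Pi)}\psi$, and $\Sk(\Pi)$ is compact. If $\phi_k\to\phi$ uniformly on compacts, then $\psi_k\to\psi$ uniformly on $\Sk(\Pi)$, since $\gamma$ is fixed. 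The sup over a compact set is continuous for uniform convergence, so $S(\phi_k)\to S(\phi)$.

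\emph{Properness.} Let $\mathcal K\subset\PSH(\tX,\gamma)$ be a set on which $|S|\le A$. We show $\mathcal K$ is relatively compact in the compact-open topology and that its limit points stay in $\PSH(\tX,\gamma)$. Exhaust $\tX$ by compacts $K_m$. By Proposition \ref{prop:equi PSH}(1), $\sup_{K_m}|\psi|\le C_m+A$, and by Proposition \ref{prop:equi PSH}(3), the family $\{\phi|_{K_m}\}$ is equicontinuous. Arzelà--Ascoli together with a diagonal argument then shows that every sequence in $\mathcal K$ has a subsequence converging locally uniformly to a continuous $\phi$. It remains to check that $\phi\in\PSH(\tX,\gamma)$, which is the main obstacle: $\PSH$ is defined by decreasing limits of $\PAPC$ functions, and a locally uniform limit need not obviously be of this form.

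\emph{Closedness of $\PSH$.} We follow \cite[\S 7]{BoucksomFavreJonsson}. Given $\phi_k\to\phi$ locally uniformly, the limit $\phi$ is $\PC$ as a pointwise limit, and $\phi-\gamma\le A$. Pass to a subsequence with $\sup_{K_m}|\phi_k-\phi|\le 2^{-k}$ for $k\ge m$, and set
\[
\Phi_j\coloneqq\sup_{k\ge j}\phi_k .
\]
These functions are decreasing in $j$, satisfy $\Phi_j-\gamma\le A$, and lie in $\PC(\tX,\gamma)$ by Lemma \ref{lem:sup}; they converge to $\phi$ pointwise. We must show each $\Phi_j$ is a decreasing limit of $\PAPC(\tX,\gamma)$ functions, with the decreasing limit preserved. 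First, $\max(\phi_k,\phi_{k+1},\dots,\phi_{k'})$ is in $\PSH$, since finite maxima of $\PAPC$ functions are $\PAPC$, so we can take maxima of approximating sequences. The increasing limit as $k'\to\infty$ is handled by the standard Choquet-type trick: replace $\Phi_j$ by $\sup_{k\ge j}(\phi_k^{(k)}+2^{-k})$, where $\phi_k^{(k)}$ are $\PAPC$ approximants of $\phi_k$ within $2^{-k}$ on $K_k$. We then check that this yields a decreasing sequence of $\PAPC$ functions (after truncating to finite maxima, using that $\psi\le\psi\circ r_\Pi$ controls the behaviour off compact sets) converging to $\phi$.

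If this approximation becomes too delicate, an alternative is to invoke directly the closedness of $\PSH(\tX,\gamma)$ under uniform convergence on compacts, proved via envelopes, if that is available later in the paper. Properness then follows because $S$ is continuous, so the limit satisfies $|S(\phi)|\le A$ as well.

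\emph{Consequence.} Since $S(\phi+c)=S(\phi)+c$, the map $\phi\mapsto\phi-S(\phi)$ identifies $\PSH(\tX,\gamma)/\R$ with the closed set $S^{-1}(0)$. This set is compact by properness, giving the compactness of $\PSH(\tX,\gamma)/\R$.
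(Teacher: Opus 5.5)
Your continuity argument (the supremum is attained on $\Sk(\Pi)$ by Lemma~\ref{lem:decr}) matches the paper's proof. So do the extraction via Proposition~\ref{prop:equi PSH}, Arzel\`a--Ascoli and a diagonal argument, and the identification of $\PSH(\tX,\gamma)/\R$ with $S^{-1}(0)$. The paper stops after the Arzel\`a--Ascoli extraction and never checks that the locally uniform limit lies in $\PSH(\tX,\gamma)$. You are right that properness needs this, but your sketch does not establish it.

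\textbf{Why the closedness sketch fails.}
First, Lemma~\ref{lem:sup} is stated for families in $\PC(\tX,\gamma)$, i.e.\ with $\phi_\alpha-\gamma$ bounded on both sides. $\gamma$-PSH functions need not satisfy this, so you only get $\Phi_j\in\PC(\tX)$ with $\Phi_j\le\gamma+A$.

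The real problem is that $\PSH(\tX,\gamma)$ is defined by decreasing approximation, and nothing you write produces decreasing $\PAPC(\tX,\gamma)$ approximants of $\Phi_j$ or of $\phi$.
\begin{itemize}
\item $\sup_{k\ge j}(\phi_k^{(k)}+2^{-k})$ is an infinite supremum, so it is not piecewise affine.
\item Its finite truncations increase with the truncation level, so they do not give a sequence decreasing in $j$.
\item Any sequence decreasing to $\phi$ must dominate $\phi$ on all of $\tX$. Locally uniform convergence gives no control outside the $K_m$.
\item Choquet's lemma only replaces a supremum by a countable one; it does not produce such approximants.
\end{itemize}
The inequality $\psi\le\psi\circ r_\Pi$ does not fill the gap either, since it is only a one-sided bound. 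The natural way to use it is to replace $\phi_k$ by $\gamma+(\phi_k-\gamma)\circ r_\Pi$, which would turn local into global uniform convergence. But this can destroy polyhedral convexity.

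For instance, take $\tX=\R^2$ and $q=(1,1)$, with the simplicial structure $\Pi$ whose $2$-cells are $[0,q]+\R_{\ge0}e_1$, $[0,q]+\R_{\ge0}e_2$, $q+\Cone(e_1,e_2)$ and the three remaining quadrants. Let $\gamma=\max(x,y,x+y-1,x-y,y-x,-x-y)$, which is affine on each cell, and $\phi=\max(\gamma-4,-x-y)\in\PAPC(\tX,\gamma)$. On the skeleton $[0,q]$ one has $\phi-\gamma=-3s$ at $(s,s)$. Hence $\gamma+(\phi-\gamma)\circ r_\Pi$ equals $x-3y$ on $[0,q]+\R_{\ge0}e_1$ and $x-y$ on $\Cone(e_1,-e_2)$. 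It is therefore not convex across $\R_{>0}e_1$.

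\textbf{What would close the gap.}
Within the paper, the tool is Corollary~\ref{cor:envpsh}. If $(\tX,\gamma)$ has the regularity property, then $\PSH(\tX,\gamma)=\{\phi\in\PC(\tX):\phi\le\gamma+O(1)\}$. This set is clearly closed under locally uniform limits with $S$ uniformly bounded above, because $\PC$ is defined by finitely-many-point inequalities. That is presumably the fallback you have in mind. However, Theorem~\ref{thm:comp} is stated without this hypothesis, and regularity is not known in general (Remark~\ref{rem:regularity}).

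So as written, your closedness step is a genuine gap. The paper's proof passes over it in silence rather than resolving it. You should either supply an independent argument that locally uniform limits stay $\gamma$-PSH, or state the properness and compactness under the regularity property.
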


\begin{proof}
Let $(\phi_j)_j$ be a sequence in $\PSH(\tX,\gamma)$, converging to $\phi$ on every compact subset of $\tX$. Let $\Pi'$ be a simplicial polyhedral complex on $\tX$, and write $K\coloneqq \lvert \Sk(\Pi') \rvert$. By the inequality $(\phi_j-\gamma) \le (\phi_j-\gamma) \circ r_{\Pi'}$, the supremum of $(\phi_j-\gamma)$ is attained on $K $ for all $j$, and by uniform convergence on $K$ they converge to $\sup_K (\phi-\gamma) = \sup_{\tX} (\phi-\gamma)$. This proves continuity.
\\Now let $(\phi_j)_j$ be a sequence in $\PSH(\tX,\gamma)$ with $(\sup_{\tX} (\phi_j - \gamma))_j$ bounded. By considering a increasing sequence $(K_l)_{l \ge 0}$ of compact subsets such that $\cup_{l \ge 0} K_l = \tX$ and a standard diagonal extraction argument, it is enough to prove that given a compact subset $K \subset \tX$, the sequence $(\phi_j)_j$ has a subsequence converging uniformly on $K$. This is now a direct consequence of Proposition \ref{prop:equi PSH} and Arzela-Ascoli's theorem, which concludes the proof.
\end{proof}

\section{The polyhedral Monge--Ampère operator}
In this section, we first introduce \emph{polyhedral Monge--Amp\`ere measures} associated with piecewise affine functions on a balanced polyhedral space $\tX$. We then extend the Monge--Amp\`ere operator to functions in $\PCreg(\tX,\gamma)$, study its main properties in this setting, and finally further extend it to the space $\PSH(\tX,\gamma)$. We begin by recalling the construction of the Monge--Amp\`ere measure in the real setting.

\subsection{The real Monge--Ampère measure}
Let $\phi \colon U \to \R$ be a convex function defined on an open subset $U \subset N_{\R}$, and let $d\ell$ be the Lebesgue measure on the dual real vector space $M_{\R} \coloneqq \Hom(N, \R)$.
For $x_0 \in U$, the \emph{subgradient of $\phi$ at} $x_0$ is defined as
$$ \nabla \phi (x_0) \coloneqq \left\{ p \in M_{\R} \;| \; \phi(x_0) + \langle p , x-x_0 \rangle \le \phi(x) \; \forall x \in U \right\}.$$
Geometrically, this is the set of covectors cutting out affine hyperplanes in $N_{\R}$ that meet the graph $\Gamma$ of $\phi$ at $x_0$, and are below $\Gamma$ on all of $U$. More generally, if $E \subset U$ is a Borel set, we set $$\nabla \phi(E) = \cup_{x_0 \in E} \nabla \phi(x_0).$$

\begin{defi}[{\cite{gutierrez}}]
Let $\phi \colon U \to \R$ be a convex function defined on an open subset $U \subset N_{\R}$.
The real Monge--Ampère measure of $\phi$ is defined as
$$ \RMA (\phi) (E) = d\ell(\nabla \phi(E)),$$
for any Borel set $E \subset U$.
\end{defi}
If $\phi$ has $\mathcal{C}^2$-regularity, then this is nothing but the density measure $\det(\nabla^2 \phi) d\ell^{\vee}$.

We will now give an explicit formula for the real Monge--Ampère measure of a convex piecewise affine function $\phi$ on $N_{\R}$; it is a standard fact \cite[Corollary 19.1.2]{Rockafellar} that there exists a finite family of affine forms $(f_i)_{i \in I}$ on $N_{\R}$ such that $\phi = \max_{i \in I} f_i$.

\begin{defi}
Let $(f_i)_{i \in I}$ be a finite family of affine forms on $N_{\R}$, and write $f_i = \ell_i +c_i$, with $\ell_i \in M_{\R}$ and $c_i \in \R$. We say $(f_i)_{i \in I}$ has \emph{positive volume} if for any $i \in I$, the family $(\ell_j -\ell_i)_{j \neq i}$ contains a basis of $M_{\R}$.
\\Equivalently, this means that the convex polytope $P= \conv(\ell_i | i \in I) \subset M_{\R}$ has positive volume.
\end{defi}
The condition of having positive volume is called \emph{very separating} in \cite[Définition 1.7.4]{CLD}. 
\begin{prop} \label{prop: real MA}
Let $(f_i)_{i \in I}$ be a finite family of affine forms on $N_{\R}$, and set $\phi = \max (f_i)_{i \in I}$. For $x \in N_{\R}$, set $I(x) = \left\{ i \;  | \;\phi(x) = f_i(x) \right\}$, and 
$$S \coloneqq \left\{ x \in N_{\R} \;| \;(f_i)_{i \in I(x)} \; \text{has positive volume} \right\}.$$
Then $S$ is finite, and the real Monge-Ampère measure is given by
$$\RMA(\phi) = \sum_{x \in S} \lambda_x \delta_x,$$
where $\lambda_x = \Vol (P_x)$, with $P_x = \conv( \ell_i | i \in I(x)) \subset M_{\R}$ and $\Vol$ is the Lebesgue measure on $M_{\R}$ normalized by the integral lattice $M$.
\end{prop}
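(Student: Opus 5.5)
We compute the subgradient image of $\phi=\max_{i\in I} f_i$ point by point. The standard description for a maximum of finitely many affine functions is
$$\nabla\phi(x)=\conv(\ell_i \mid i\in I(x))=P_x .$$
We verify this directly. The inclusion $\supseteq$ holds because each $f_i$ with $i\in I(x)$ is an affine minorant of $\phi$ touching it at $x$, and the set of such slopes is convex. For $\subseteq$, take $p\in\nabla\phi(x)$. Since $\phi$ coincides near $x$ with $\max_{i\in I(x)} f_i$, the inequality $\phi(x)+\langle p,y-x\rangle\le\phi(y)$ for $y$ near $x$ gives
$$\langle p,v\rangle\le \max_{i\in I(x)}\langle \ell_i,v\rangle \quad\text{for all } v\in N_\R .$$
This says that the support function of $\{p\}$ is dominated by that of $P_x$, so $p\in P_x$ by Hahn--Banach (separation). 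Consequently $\RMA(\phi)(\{x\})=\Vol(P_x)$. Moreover $\Vol(P_x)>0$ exactly when $P_x$ is full-dimensional, i.e. when $(f_i)_{i\in I(x)}$ has positive volume, i.e. when $x\in S$. (Here $\Vol$ is normalised by $M$, and $d\ell$ is taken with that normalisation.)

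Next we show that $S$ is finite. The sets $I(x)$ range over finitely many subsets $J\subseteq I$. For fixed $J$, the locus $\{x\mid I(x)=J\}$ lies in the affine subspace $\{f_i=f_j \ \forall i,j\in J\}$. If $(\ell_j-\ell_i)_{j\in J}$ spans $M_\R$, this subspace is cut out by equations whose linear parts span $M_\R$, so it is at most a point. Hence $S$ has at most $2^{|I|}$ elements.

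It remains to show that $\RMA(\phi)$ puts no mass outside $S$. The space $N_\R$ is the finite union of the cells $C_J=\{x\mid I(x)=J\}$, and on each cell $\nabla\phi(x)=P_J\coloneqq\conv(\ell_i\mid i\in J)$ is constant. For a Borel set $E$ we therefore have
$$\nabla\phi(E)=\bigcup_{J:\,E\cap C_J\neq\emptyset} P_J .$$
For $J$ without positive volume, $P_J$ lies in a proper affine subspace of $M_\R$ and so has Lebesgue measure zero. Hence
$$d\ell(\nabla\phi(E))=d\ell\Big(\bigcup_{x\in E\cap S}P_x\Big).$$
Finally, the polytopes $P_x$ and $P_y$ for distinct $x,y\in S$ overlap in a null set. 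If $p\in \nabla\phi(x)\cap\nabla\phi(y)$, then the affine function $\phi(x)+\langle p,\cdot-x\rangle$ supports $\phi$ at both points, so $\phi$ is affine with slope $p$ on the segment $[x,y]$. Since each $p\in\nabla\phi(x)$ with $x\neq y$ then satisfies a nontrivial linear constraint $\langle p,y-x\rangle=\phi(y)-\phi(x)$, the overlap lies in a hyperplane and has measure zero. Additivity then gives $\RMA(\phi)=\sum_{x\in S}\Vol(P_x)\delta_x$.

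The main obstacle is this last step: making rigorous that the subgradient images of distinct points overlap only in measure zero, and justifying the constancy of $\nabla\phi$ on the cells $C_J$ by the local argument of the first paragraph. The rest is bookkeeping.
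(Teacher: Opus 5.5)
Your proof is correct and follows the same route as the paper, namely identifying $\nabla\phi(x)$ with $P_x$. The paper cites \cite[Proposition 1.7.8]{CLD} for the finiteness of $S$ and leaves the measure-theoretic bookkeeping implicit, whereas you prove finiteness directly by linear algebra and spell out that degenerate cells and overlaps of the $P_x$ are Lebesgue-null. The step you flag as the main obstacle is already settled by your own argument: your computation of $\nabla\phi(x)$ holds at every $x$, which gives constancy on each cell $C_J$, and the two subgradient inequalities at $x$ and $y$ directly force $\langle p,y-x\rangle=\phi(y)-\phi(x)$.
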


\begin{proof} The first part of the statement is proved in \cite[Proposition 1.7.8]{CLD}. The second part follows from the equality $\left(\nabla \phi\right) (x) = P_x$ for $x \in S$, which follows easily from the definition.
\end{proof} 

\subsection{Polyhedral Monge--Ampère for $\PA(\tX)$}
Let $\tX \subseteq N_{\R}$ be a balanced polyhedral space of dimension $d$. The polyhedral Monge--Ampère measure of a piecewise affine function on $\tX$ is a discrete measure supported on the vertices of a polyhedral structure, and is defined as follows.
\begin{defi}\label{def:MA-PA}
    Let $\phi \in \PA(\tX)$, and let $\Pi$ be a polyhedral structure on $\tX$ on which $\phi$ and $[\tX]$ are defined. We write
    $$\lambda_i \coloneqq (\phi^d \cdot [\tX]) (v_i)$$
    for $v_i \in \Pi^{(0)}$. The \emph{polyhedral Monge-Ampère measure of $\phi$} is the finite atomic measure
    $$\MA_{\poly}(\phi) \coloneqq \sum_{v_i \in \Pi^{(0)}} \lambda_i \delta_{v_i}.$$
\end{defi}
It follows from Lemma \ref{lem:pairing and pullback} that this is indeed independent of $\Pi$.
\begin{defi} \label{def:degree}
Let $c \in Z_0(\tX)$ be a $0$-cycle. 
The \emph{degree} of $c$ is the number
$$\deg(c) = \sum_{v \in \Pi^{(0)}} c(v)$$
whenever $c \in Z_0(\Pi)$. For $\phi \in \PA(\tX)$, the \emph{degree} of $\phi$ is the rational number
    $$\deg(\phi) \coloneqq \int_{\tX} \MA_{\poly}(\phi) = \deg(\phi^d \cdot [\tX]).$$ 

\end{defi}
\begin{lemma}\label{lem:degree}
    For any $\phi, \psi \in \PA(\tX)$ such that $(\phi-\psi)$ is a bounded function on $\tX$, we have $\deg(\phi) = \deg(\psi).$ In particular, if $\phi \in \PA(\tX, \gamma)$ for some $\gamma \in \PA(\tX)$, then $\deg(\phi) = \deg(\gamma)$.
\end{lemma}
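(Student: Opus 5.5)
The plan is to reduce to a telescoping identity and then show that the intersection product with a bounded $\PA$ function has degree zero. Fix a polyhedral structure $\Pi$ on which $\phi$, $\psi$ and $[\tX]$ are all defined; we may take it simplicial by Lemma \ref{lem:regular}. Set $f \coloneqq \phi-\psi$, which is piecewise affine on $\Pi$ and bounded. By linearity and commutativity of the intersection pairing (Lemma \ref{lem:properties pairing}) we have
\[
\phi^d\cdot[\tX]-\psi^d\cdot[\tX]=\sum_{j=0}^{d-1} f\cdot\bigl(\phi^{j}\cdot\psi^{d-1-j}\cdot[\tX]\bigr).
\]
It therefore suffices to prove the following claim: for any bounded $f\in\PA(\Pi)$ and any $1$-cycle $c\in Z_1(\Pi)$, one has $\deg(f\cdot c)=0$. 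Each term above is of this form, with $c=\phi^{j}\cdot\psi^{d-1-j}\cdot[\tX]$.

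To prove the claim, write $\deg(f\cdot c)=\sum_{v\in\Pi^{(0)}}(f\cdot c)(v)$. At a vertex $v$ the linear part $f_v$ vanishes, since $N_v=0$, so by Definition \ref{def:intproduct}
\[
(f\cdot c)(v)=\sum_{e\succ v} c(e)\, f_e(n_{e/v}).
\]
We swap the order of summation and sum over edges $e\in\Pi^{(1)}$. There are two cases.

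If $e=[v,w]$ is bounded, then $n_{e/w}=-n_{e/v}$, so the two contributions $c(e)f_e(n_{e/v})$ and $c(e)f_e(n_{e/w})$ cancel.

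If $e=v+\R_{\ge0}u$ is unbounded, then $f$ is affine on $e$ and bounded there. Hence its slope $f_e(n_{e/v})$ must vanish, so the contribution is again zero.

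Summing over all edges gives $\deg(f\cdot c)=0$. Note that the balancing condition on $c$ is not even needed for this step. Applying the claim to each term of the telescoping sum yields $\deg(\phi)=\deg(\psi)$.

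For the second assertion, if $\phi\in\PA(\tX,\gamma)$ then $\phi-\gamma$ is bounded by definition, so the first part applies directly.

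The only mildly delicate point is the bookkeeping: one must check that the edge-by-edge cancellation is compatible with pulling back to a common refinement. This follows from Lemma \ref{lem:pairing and pullback}, which shows that degrees of $0$-cycles are independent of the chosen structure. Everything else is formal.
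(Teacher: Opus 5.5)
Your proof is correct and is essentially the paper's argument. It uses the same telescoping identity $\phi^d\cdot[\tX]-\psi^d\cdot[\tX]=(\phi-\psi)\cdot\sum_{j=0}^{d-1}\phi^j\cdot\psi^{d-1-j}\cdot[\tX]$ from multilinearity and commutativity, followed by the same Allermann--Rau-style edge cancellation showing $\deg(f\cdot c)=0$ for bounded $f$: opposite normal vectors cancel on bounded edges, and slopes vanish on unbounded ones.

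The refinement bookkeeping you flag at the end is not actually needed, since you compute everything on one fixed $\Pi$ and structure-independence of $\deg$ is already built into the definitions.
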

\begin{proof}
Let $\Pi$ be a polyhedral complex on $\tX$ on which $\phi$ and $\psi$ are defined. Let 
$$c = (\sum_{j=0}^{d-1} \phi^j \cdot \psi^{d-j-1} \cdot [\tX] ) \in Z_1(\Pi),$$ then by multilinearity and commutativity we have the following equality of $0$-cycles:
$$(\phi- \psi) \cdot c = (\phi^d \cdot [\tX]) - (\psi^d \cdot [\tX]).$$
We are thus left to prove the following: if $\eta \in \PA_b(\Pi)$ and $c \in Z_1(\Pi)$, then $\deg(\eta \cdot c) =0$, for which we argue as in \cite[Lemma 8.3]{AllermannRau2010}.
The quantity $\deg(\eta \cdot c)$ is the sum over vertices $v$ of $\Pi$ of the sum of outgoing slopes of $\eta$ at $v$, weighted by $c$; each bounded edge appears twice, once for each of its endpoints which have opposite normal vectors, so that the two corresponding slopes cancel out. Since $\eta$ has slope zero along each unbounded edge - being a bounded affine function - we get $\deg(\eta \cdot c) =0$. 
\end{proof}

One can define the mixed Monge--Ampère operator of tuples of piecewise affine functions on $\tX$.

\begin{defi} \label{def:PMA PA}
Let $\phi_1,\ldots,\phi_d$ be a $d$-tuple in $\PA(\tX)$. The \emph{mixed polyhedral Monge--Ampère measure of $(\phi_1,\ldots,\phi_d)$} is the finite atomic measure on $\tX$
$$
\PMA(\phi_1,\ldots,\phi_d) \coloneqq \sum_{v_i \in \Pi^{(0)}} \phi_1 \cdot (\phi_2 \cdot ( \ldots ( \phi_d \cdot [\tX]))) \, \delta_{v_i},
$$
for any polyhedral structure $\Pi$ where the functions $\phi_i$ and $[\tX]$ are defined.
The \emph{mixed degree} of $(\phi_1, \dotsc, \phi_d)$ is the rational number 
 \[
    \deg\left(\phi_1, \ldots, \phi_d\right) \coloneqq \int_{\tX}\MA_{\poly}(\phi_1,\ldots, \phi_d).
    \]
\end{defi}

\begin{prop}\label{prop/def}
The map
$$
(\phi_1,\ldots,\phi_d) \to \PMA(\phi_1,\ldots,\phi_d) 
$$
from the space of $d$-tuples of piecewise affine functions on $\tX$ to the space of finite atomic measures on $\tX$, satisfies the following properties:
\begin{enumerate}
    \item symmetry:
    $$
    \PMA(\phi_1, \ldots, \phi_i, \ldots,\phi_k, \ldots,\phi_d)=\PMA(\phi_1, \ldots, \phi_k, \ldots,\phi_i, \ldots,\phi_d)
    $$
    \item invariance with respect to additive constants $c_i \in \Q$:
    $$
    \PMA(\phi_1 + c_1, \ldots,\phi_d+c_d)= \PMA(\phi_1 ,\ldots,\phi_d) 
    $$
    \item linearity in each argument: for $\lambda,\mu \in \Q$,
    \begin{align*}
    \PMA(\lambda \phi_0 + & \mu \phi_1, \phi_2, \ldots, \phi_d)
    \\ & = \lambda\,\PMA(\phi_0 , \phi_2, \ldots,\phi_d) + \mu \,\PMA( \phi_1, \phi_2, \ldots,\phi_d)
    \end{align*}
    \item for $k \in \N$ and any $\lambda_1, \ldots, \lambda_k \in \Q$,
    \[
    \MA_{\poly}\left(\lambda_1\phi_1+ \ldots + \lambda_k\phi_k\right) = \sum_{i_1, \ldots, i_d = 1}^k\lambda_{i_1}\ldots \lambda_{i_d}\MA_{\poly}\left(\phi_{i_1},\ldots,\phi_{i_d} \right) 
    \]
    where the left-hand side is defined as in Definition \ref{def:MA-PA}.
    \end{enumerate}
\end{prop}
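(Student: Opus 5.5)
The plan is to reduce all four properties to Lemma \ref{lem:properties pairing} and Lemma \ref{lem:pairing and pullback}, working on a single polyhedral structure. Given a $d$-tuple $\phi_1,\ldots,\phi_d$ (and $\phi_0$ in (3)), I will fix a common refinement $\Pi$ on which all the $\phi_i$ and $[\tX]$ are defined. Lemma \ref{lem:pairing and pullback} shows that the atomic measure does not depend on this choice. After that, every property becomes an identity of $0$-cycles in $Z_0(\Pi)$, which can be checked vertex by vertex.

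For \emph{symmetry}, I will use commutativity, Lemma \ref{lem:properties pairing}(iii). It says $\psi\cdot(\phi\cdot c)=\phi\cdot(\psi\cdot c)$ for any cycle $c$, so any two adjacent factors in $\phi_1\cdot(\phi_2\cdot(\ldots(\phi_d\cdot[\tX])))$ can be swapped. The rule applies at every stage because each partial product $\phi_{j+1}\cdot(\ldots\cdot[\tX])$ is again a cycle by Lemma \ref{lem:properties pairing}(ii). Adjacent transpositions generate the symmetric group, so an arbitrary transposition $i\leftrightarrow k$ follows. For \emph{linearity}, Lemma \ref{lem:properties pairing}(i) gives linearity in the outermost argument directly. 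Symmetry then moves any slot to the front, and the bracketed cycle being acted on is the same on both sides.

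For \emph{invariance under constants}, it suffices by linearity and symmetry to show that $c_1\cdot c'=0$ for a constant function $c_1$ and any cycle $c'\in Z_k(\Pi)$. A constant function has $(c_1)_\sigma=0\in M_\sigma$ for every face, so both terms in \eqref{eq:intersection product} vanish. Note that the constant $c_1\in\Q$ is piecewise affine on $\Pi$ in the paper's sense, since $b_\sigma\in\Q$. Then $\PMA(\phi_1+c_1,\ldots)=\PMA(\phi_1,\ldots)+\PMA(c_1,\ldots)$, and the second term is zero. Doing this one slot at a time handles all the $c_i$.

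Finally, (4) is the multinomial expansion. The left-hand side is $(\sum_i\lambda_i\phi_i)^d\cdot[\tX]$ by Definition \ref{def:MA-PA}. Expanding the $d$ factors one at a time using linearity (3), each applied to a cycle, gives $\sum_{i_1,\ldots,i_d}\lambda_{i_1}\cdots\lambda_{i_d}\,\phi_{i_1}\cdot(\ldots(\phi_{i_d}\cdot[\tX]))$. This is exactly the right-hand side.

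There is no real obstacle here. The only points needing care are that the iterated products stay cycles, which is what allows (iii) to be applied inductively, and that all quantities are computed on a common refinement.
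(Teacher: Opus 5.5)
Your proposal is correct and follows the paper's argument: symmetry from the commutativity in Lemma \ref{lem:properties pairing}(iii), linearity and the multinomial expansion (4) from the linearity in Lemma \ref{lem:properties pairing}(i), and invariance under constants from formula \eqref{eq:intersection product}; your remarks that partial products remain cycles and that everything is computed on a common refinement supply details the paper leaves implicit. The only cosmetic difference is in (2): the paper reads invariance directly off the fact that $(\phi+c)_\sigma=\phi_\sigma$ in \eqref{eq:intersection product}, whereas you route it through linearity and the vanishing of the pairing of a constant with a cycle, which amounts to the same observation.
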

\begin{proof}
$(1)$ follows from the commutativity of the intersection pairing, while $(3)$ and $(4)$ follow from its linearity, see Lemma \ref{lem:properties pairing}; $(2)$ follows from the definition of intersection pairing, see Equation \ref{eq:intersection product}.
\end{proof}

\begin{lemma}\label{lem:integral-formulae}
   Let $\phi_0, \ldots, \phi_d$ be a collection of functions in $\PA(\tX)$. Let $i, k \in \{0, \ldots, d\}$ such that $\phi_i$ and $\phi_k$ are bounded. We have the following equality: 
    \[
    \int_{\tX} \phi_i\MA_{\poly}(\phi_0,\ldots, \phi_{i-1},\phi_{i+1},\ldots, \phi_d) = \int_{\tX} \phi_k\MA_{\poly}(\phi_0,\ldots, \phi_{k-1},\phi_{k+1},\ldots, \phi_d).
    \]
As a consequence, if $\psi \in \PA(\tX,\gamma)$ and $\phi_1,\ldots,\phi_d \in \PA(\tX,\gamma)$, then the following \emph{integration by parts formula} holds
\begin{align*}
   \int_{\tX} (\psi-\gamma) & \MA_{\poly} (\phi_1,\phi_2,\ldots,\phi_d) 
   \\ & = \int_{\tX} (\phi_1-\gamma) \MA_{\poly}(\psi,\phi_2,\ldots,\phi_d) + \int_{\tX} (\psi-\phi_1) \MA_{\poly} (\gamma,\phi_2,\ldots,\phi_d), 
\end{align*}
and similar formulae for other indices $i\neq 1$.
\end{lemma}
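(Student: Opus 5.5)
The first identity is a symmetry statement for a "pairing of $d+1$ functions". I will reduce it to the key fact already used in the proof of Lemma \ref{lem:degree}: if $\eta \in \PA(\tX)$ is bounded and $c \in Z_1(\tX)$, then $\deg(\eta \cdot c)=0$.

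Fix a polyhedral structure $\Pi$ on which all the $\phi_j$ and $[\tX]$ are defined. For any bounded $\eta \in \PA(\Pi)$ and any $0$-cycle $z$ on $\Pi$, we have $\int_{\tX} \eta \, z = \sum_{v} \eta(v) z(v)$. By Definition \ref{def:PMA PA}, the left-hand side of the claimed identity is $\int \phi_i \cdot (\prod_{j\neq i}\phi_j)\cdot[\tX]$. By commutativity (Lemma \ref{lem:properties pairing}(iii)) it suffices to treat two indices at a time. Set $c \coloneqq \prod_{j\neq i,k}\phi_j \cdot [\tX] \in Z_1(\Pi)$, and let $f=\phi_i$, $g=\phi_k$ be bounded. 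We must show
\[
\sum_{v\in\Pi^{(0)}} f(v)\,(g\cdot c)(v)=\sum_{v\in\Pi^{(0)}} g(v)\,(f\cdot c)(v).
\]

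The first approach is to prove this directly, edge by edge. Expand $(g\cdot c)(v)$ as a sum over edges $e\ni v$ of $c(e)$ times the outgoing slope of $g$ along $e$, minus the correction term $g_v(\cdot)$, which vanishes at a vertex. Unbounded edges contribute zero because $g$ is bounded, so it has slope zero there. For a bounded edge $e=[v,w]$ of lattice length $\ell(e)$, the contribution to the left-hand side is
\[
\frac{c(e)}{\ell(e)}\bigl(f(v)(g(w)-g(v))+f(w)(g(v)-g(w))\bigr)=-\frac{c(e)}{\ell(e)}(f(w)-f(v))(g(w)-g(v)),
\]
which is symmetric in $f$ and $g$. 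Summing over edges gives the identity.

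An alternative route is cleaner but gives the same result. Observe that $\int f\,(g\cdot c)=\deg(f\cdot g\cdot c)+$ something, or equivalently use the product-rule style identity. I expect the edge-by-edge computation to be the main (though mild) point, in particular checking that the normal-vector bookkeeping makes the slope equal to $(g(w)-g(v))/\ell(e)$, exactly as in \eqref{equ:bound on vertices}.

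For the integration by parts formula, write $\psi-\gamma$ and $\phi_1-\gamma$, which are bounded PA functions. First apply linearity of $\PMA$ in the first slot (Proposition \ref{prop/def}), using $\phi_1=(\phi_1-\gamma)+\gamma$:
\[
\int(\psi-\gamma)\PMA(\phi_1,\ldots)=\int(\psi-\gamma)\PMA(\phi_1-\gamma,\phi_2,\ldots)+\int(\psi-\gamma)\PMA(\gamma,\phi_2,\ldots).
\]
Next apply the first part with the two bounded functions $\psi-\gamma$ and $\phi_1-\gamma$. This turns the first term into $\int(\phi_1-\gamma)\PMA(\psi-\gamma,\phi_2,\ldots)$. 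Expanding that by linearity gives $\int(\phi_1-\gamma)\PMA(\psi,\ldots)-\int(\phi_1-\gamma)\PMA(\gamma,\ldots)$. Combining this with the remaining term $\int(\psi-\gamma)\PMA(\gamma,\ldots)$ yields $\int(\psi-\phi_1)\PMA(\gamma,\phi_2,\ldots)$, which is the stated formula. The other indices follow in the same way by symmetry.
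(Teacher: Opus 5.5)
Your edge-by-edge argument is the paper's proof: commutativity reduces the claim to two bounded functions, unbounded edges contribute nothing, the correction term vanishes at vertices, and each bounded edge contributes $-\frac{c(e)}{\ell(e)}(f(w)-f(v))(g(w)-g(v))$, which is symmetric in $f$ and $g$; the integration by parts formula then follows by the same linearity manipulation the paper uses. The opening plan to reduce to Lemma \ref{lem:degree} and the sketched ``alternative route'' play no role and can be dropped, and the latter is not well-formed as written, since $f\cdot g\cdot c$ would be a cycle of dimension $-1$.
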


\begin{proof}
Let $\Pi$ be a polyhedral structure where the functions $\phi_i$ and $[\tX]$ are defined. By the symmetry of $\PMA$, we can assume $i=0$ and $k=1$. For simplicity, we write $\phi_0=f$, $\phi_1=g$, and $\phi_2 \cdot (\ldots(\phi_d \cdot [\tX])\ldots)=\sum_{\rho \in \Pi^{(1)}} a_\rho \rho$. If $\rho$ is an unbounded face, then $f_\rho=g_\rho=0$ since by assumption $f$ and $g$ are bounded functions. If instead $\rho$ is bounded, we write $\rho=\langle v, w \rangle$ and $w-v= \lambda_\rho n_{\rho/v}$ for some $\lambda_\rho \in \Q$. As $f$ and $g$ are defined on $\Pi$, we have $$g_\rho(n_{\rho/v})=\frac{g(w)-g(v)}{\lambda_{\rho}}$$ and similarly for $f$.
By the definition on $\PMA$, we have
\begin{align}\label{eq:MAgraph}
\begin{split}
\int_{\tX} f \,\PMA(g,\phi_2,\ldots,\phi_d) & = \sum_{v \in \Pi^{(0)}} f(v) \sum_{\rho \succ v} a_\rho g_\rho(n_{\rho/v})
\\ & = \sum_{v \in \Pi^{(0)}} f(v) \sum_{w \in \Pi^{(0)} \,: \langle v,w \rangle \in \Pi^{(1)}} \frac{a_{\langle v,w \rangle}}{\lambda_{\langle v,w \rangle}}(g(w)-g(v))
\\ & = \sum_{\rho = \langle v,w \rangle \in \Pi^{(1)}} \frac{a_{\rho}}{\lambda_{\rho}}(f(v)-f(w))(g(w)-g(v)).
\end{split}
\end{align}
As this is symmetric with respect to the exchange of $f$ and $g$, the required equality holds.

By linearity of $\PMA$ (Proposition \ref{prop/def}) and applying the above equality, we obtain the integration by parts formula:
{\allowdisplaybreaks
\begin{align*}
   \int_{\tX} & (\psi-\gamma) \MA_{\poly} (\phi_1,\phi_2,\ldots,\phi_d) 
   \\ & = \int_{\tX} (\psi-\gamma) \MA_{\poly} (\phi_1-\gamma,\phi_2,\ldots,\phi_d)  + \int_{\tX} (\psi-\gamma) \MA_{\poly} (\gamma,\phi_2,\ldots,\phi_d) 
   \\ & = \int_{\tX} (\phi_1-\gamma)  \MA_{\poly} (\psi-\gamma,\phi_2,\ldots,\phi_d)  + \int_{\tX} (\psi-\gamma) \MA_{\poly} (\gamma,\phi_2,\ldots,\phi_d) 
   \\ & = \int_{\tX} (\phi_1-\gamma)  \MA_{\poly} (\psi,\phi_2,\ldots,\phi_d) - \int_{\tX} (\phi_1-\gamma) \MA_{\poly} (\gamma,\phi_2,\ldots,\phi_d)  
   \\ & \hspace{180pt} + \int_{\tX} (\psi-\gamma)  \MA_{\poly} (\gamma,\phi_2,\ldots,\phi_d) 
   \\ & = \int_{\tX} (\phi_1-\gamma)  \MA_{\poly} (\psi,\phi_2,\ldots,\phi_d)  + \int_{\tX} (\psi-\phi_1)  \MA_{\poly} (\gamma,\phi_2,\ldots,\phi_d).
\end{align*}
}
\end{proof}
\begin{lemma} \label{lem:nega} 
   Let $\phi \in \PAPC(\tX)$. The bilinear form
   $$ (f, g) \mapsto \int_{\tX} f\, \PMA(g, \phi,\ldots, \phi)$$
   is symmetric and semi-negative on $\PA_b(\tX)$.
\end{lemma}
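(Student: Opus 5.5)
Symmetry is already contained in Lemma~\ref{lem:integral-formulae}: $f$ and $g$ are bounded, so we may exchange them with the remaining entries equal to $\phi$. It remains to prove semi-negativity, i.e. $\int_{\tX} f\,\PMA(f,\phi,\ldots,\phi)\le 0$ for $f\in\PA_b(\tX)$. The plan is to use the explicit edge formula \eqref{eq:MAgraph} from the proof of Lemma~\ref{lem:integral-formulae}.

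Fix a polyhedral structure $\Pi$ on which $f$, $\phi$ and $[\tX]$ are defined. Put
$$\phi^{d-1}\cdot[\tX]=\sum_{\rho\in\Pi^{(1)}}a_\rho\,\rho .$$
Since $\phi\in\PAPC(\tX)$ and $[\tX]$ is a strictly positive (in particular non-negative) $d$-cycle, Definition~\ref{def:PAPC} applied repeatedly shows that $\phi^{k}\cdot[\tX]$ is a non-negative cycle for every $k$. Hence $a_\rho\ge 0$ for all $\rho$. This uses that each intermediate product is again a non-negative cycle, defined on all of $\tX$ with $U=\tX$, so the definition applies at every step.

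Now apply \eqref{eq:MAgraph} with $g=f$. The unbounded edges contribute nothing, because a bounded piecewise affine function has zero slope along them. We obtain
$$\int_{\tX} f\,\PMA(f,\phi,\ldots,\phi)=\sum_{\rho=\langle v,w\rangle\in\Pi^{(1)}_b}\frac{a_\rho}{\lambda_\rho}\,(f(v)-f(w))(f(w)-f(v)) = -\sum_{\rho}\frac{a_\rho}{\lambda_\rho}\,(f(w)-f(v))^2 .$$
Here $\lambda_\rho>0$ is the lattice length of $\rho$, defined by $w-v=\lambda_\rho n_{\rho/v}$ with $n_{\rho/v}$ pointing from $v$ into $\rho$. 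Every term is non-positive, which gives semi-negativity.

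The main point needing care is the bookkeeping in \eqref{eq:MAgraph}. Each bounded edge appears once from each endpoint, so one must check the sign conventions: $\lambda_\rho$ is the same positive number for both orientations, and the factor produced is exactly $-(f(w)-f(v))^2$, not twice that or with an inconsistent sign. Non-negativity of the $a_\rho$ follows directly from the PAPC property, so there is no obstacle there.
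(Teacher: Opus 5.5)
Your proof is correct and follows the paper's argument essentially verbatim: symmetry from Lemma~\ref{lem:integral-formulae}, $a_\rho\ge 0$ for the coefficients of $\phi^{d-1}\cdot[\tX]$ from polyhedral convexity, and the edge formula \eqref{eq:MAgraph} with $g=f$, which gives $-\sum_\rho \frac{a_\rho}{\lambda_\rho}(f(v)-f(w))^2\le 0$. Your extra remarks only make explicit what the paper leaves implicit, namely iterating Definition~\ref{def:PAPC} because each $\phi^k\cdot[\tX]$ is again a non-negative cycle, and noting that $\lambda_\rho>0$ does not depend on the orientation of the edge.
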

\begin{proof}
The symmetry is a direct consequence of Lemma \ref{lem:integral-formulae}. To prove semi-negativity, let $f$ be bounded and let $\Pi$ be a polyhedral structure on which both $f$ and $\phi$ are defined, and write $(\phi^{d-1} \cdot [\tX]) = \sum_{\rho \in \Pi^{(1)}} a_{\rho} \rho$. Note that each $a_{\rho} \ge 0$ since $\phi$ is polyhedrally convex.
\\As in the previous proof, write bounded edges $\rho$ of $\Pi$ as $\rho = \langle v, w \rangle$ and $w-v =\lambda_{\rho} n_{\rho/v}$. By Equation \ref{eq:MAgraph}, we then have 
$$ \int_{\tX} f\, \PMA(f, \phi,\ldots, \phi) = \sum_{\rho = \langle v,w \rangle \in \Pi^{(1)}} - \frac{a_{\rho}}{\lambda_{\rho}}(f(v)-f(w))^2,$$
which proves non-positivity. 
\end{proof}

\begin{prop}\label{prop:MA-comp} 
Let $\Pi$ be a polyhedral structure on $\tX$. Let $\phi \in \PAPC(\Pi')$ be a piecewise affine, polyhedrally convex function, defined on a refinement $\Pi'$ of $\Pi$. For any top-dimensional face $\sigma \in \Pi^{(d)}$, we have the equality of measures
$$ \mathbf{1}_{\relint(\sigma)} \MA_{\poly}(\phi) = c \;  d! \; \RMA(\phi_{| \relint(\sigma)})$$
where $c=[\tX](\sigma)$.
\end{prop}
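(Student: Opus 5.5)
Both sides are finite atomic measures supported on points of $\relint(\sigma)$, so the plan is to compare them vertex by vertex. Replace $\Pi'$ by a common refinement (still called $\Pi'$) on which $\phi$ and $[\tX]$ are defined; by Lemma \ref{lem:pairing and pullback} this does not change $\MA_{\poly}(\phi)$. The faces of $\Pi'$ contained in $\sigma$ then form a polyhedral subdivision of $\sigma$, every $d$-dimensional face $\sigma'\subseteq\sigma$ has weight $[\tX](\sigma')=c$, and $\phi$ restricted to $\relint(\sigma)$ is a convex piecewise affine function on an open subset of the affine space $V_{\sigma,\R}$. By Proposition \ref{prop: real MA}, applied locally (convex piecewise affine functions on a neighbourhood of a point are locally a maximum of finitely many affine forms), $\RMA(\phi_{|\relint(\sigma)})$ is also atomic. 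Its atoms are at the points where the active affine pieces have positive volume, and these are among the vertices of $\Pi'$ lying in $\relint(\sigma)$.

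Fix such a vertex $v\in\Pi'^{(0)}$ with $v\in\relint(\sigma)$. Both sides depend only on the germ of $\phi$ at $v$, so we pass to the star. By the compatibility $(\phi\cdot c)_x=\phi_x\cdot c_x$ used in Lemma \ref{lem:properties pairing}, we get $(\phi^d\cdot[\tX])(v)=(\phi_v^d\cdot[\tX]_v)(0)$. Here $\Star_v(\Pi')$ is a complete fan $\Sigma$ in $N_{\sigma,\R}\cong\R^d$ with constant weight $c$, and $\phi_v$ is a convex conical piecewise linear function on $\Sigma$. Linearity reduces us to $c=1$. On the real side, $\nabla\phi(v)=\nabla\phi_v(0)=P_v$, the polytope $\conv(\ell_i)$ of the slopes of $\phi_v$ on the maximal cones. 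So the claim reduces to the identity
$$(\phi_v^d\cdot[N_{\sigma,\R}])(0)=d!\,\Vol(P_v)$$
for a convex piecewise linear function on a complete fan, where the balancing condition is the constant weight $1$.

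To prove this identity, refine $\Sigma$ to be simplicial; this is harmless by Lemma \ref{lem:pairing and pullback}. We then use Remark \ref{ex:fan}: with $D$ the torus-invariant divisor of $\phi_v$ on the complete toric variety $X_\Sigma$, we have $(\phi_v^d\cdot[\tX])(0)=D^d$. When $\phi_v$ is convex, $D$ is nef (up to the paper's sign convention relating $\phi_v$ and $D$), and the classical formula \cite[Section 5.3]{Fulton1993} gives $D^d=d!\,\Vol(P_D)$. Here $P_D$ is the polytope of $D$, which is exactly the convex hull of the slopes $\ell_i$ (or its negative, which has the same volume). The lattice normalizations agree because the integral structure on $M_\sigma$ is the one used in both $\Vol$ and the intersection product.

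The main obstacle is this last step, for two reasons. First, Remark \ref{ex:fan} and Fulton's formula are stated for integral data, whereas $\phi$ may only have rational values $b_\sigma$. Second, the sign and polytope conventions must be matched. To handle integrality, scale by an integer $m$: both sides are homogeneous of degree $d$ (by Proposition \ref{prop/def}(4) and by scaling of volume), and the local slopes $\phi_\sigma\in M_\sigma$ are already integral. As a fallback, if the toric dictionary becomes messy, we can argue directly by induction on $d$. One computes the intersection product at $0$ as $\sum_\rho (\phi_v^{d-1}\cdot[\tX])(\rho)\,\phi_\rho(n_\rho)$ over the rays, and recognizes each term as the height times the facet volume in the pyramid decomposition of $P_v$. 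This uses the inductive hypothesis on the quotient fans $\Star_\rho$.
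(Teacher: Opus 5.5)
Your proposal is correct and follows essentially the same route as the paper: localize at a vertex $v$ of $\Pi'$ in $\relint(\sigma)$, view $\phi_v$ on the complete local fan as a nef toric divisor $D$, get $(\phi^d\cdot[\tX])(v)=c\,(D^d)$ from Remark \ref{ex:fan}, and then show $(D^d)=d!\,\Vol(P)$. The only difference is in that last identity: you cite Fulton's formula, while the paper rederives it on the spot by counting lattice points of $kP$ and applying asymptotic Riemann--Roch, which is exactly the argument behind Fulton's formula.
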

Note that the function $\phi_{| \relint(\sigma)}$ is convex in the usual sense by \cite[Proposition 5.10]{BBS}.
\begin{proof}
By definition of $\MA_{\poly}$ and by Proposition \ref{prop: real MA} both measures are atomic, supported on the finite set $\Pi'_0 \cap \relint(\sigma)$. As a result we choose $v \in (\Pi')^{(0)} \cap \relint(\sigma)$, we may work with the local fan of $\Pi'$ at $v,$ which is the germ of a complete fan $\Sigma$ in $N_{\sigma} \otimes \R$, and $\phi$ corresponds to a toric Cartier divisor $D$ on the corresponding toric variety $Y_{\Sigma}$.

We infer from Example \ref{ex:fan} that $(\phi^d \cdot [\tX]) (v) =c \,(D^d)$ or equivalently, $\MA_{\poly}(\phi)(v) =c \,(D^d) \delta_v$.
Since $\phi$ is convex by assumption, the divisor $D$ is nef on $Y_{\Sigma}$. If $P$ is the Newton polytope of $\phi$, we have $\MA_{\R}(\phi) = \Vol(P) \delta_v$ near $v$ by Proposition \ref{prop: real MA}. The integral points of $kP$ parametrize monomial sections of $\mathcal{O}(kD)$ for all $k$, hence 
$$h^0(kD) = \#(kP \cap M) = \Vol(kP) + O(k^{d-1}),$$ while by asymptotic Riemann--Roch we have also
$$h^0(kD) = \frac{(D^d)}{d!} k^d + O(k^{d-1}).$$ These yield $\Vol(P) = \frac{(D^d)}{d!}$, hence $\MA_{\poly}(\phi)= c\; d! \,\RMA(\phi)$ in a neighbourhood of $v$, as desired.
\end{proof}

\subsection{Polyhedral Monge--Ampère for $\PCreg(\tX, \gamma)$}
In this section we extend the Monge--Ampère operator from piecewise affine functions to $\PCreg(\tX, \gamma)$ following the strategy of \cite{BedfordTaylor1982}, also adopted in \cite[Section 3]{BoucksomFavreJonsson2015} in the non-archimedean setting.

Let $\tX \subset N_{\R}$ be a balanced polyhedral space of dimension $d$. Let $\gamma$ be a piecewise affine function on a simplicial polyhedral structure $\Pi$, and let $\overline{\tX}_\Pi$ be the compactification of $\tX$ with respect to $\Pi$. Recall that a \emph{Radon measure on $\overline{\tX}_\Pi$} is a continuous linear functional on the space $\mathcal{C}^0(\overline{\tX}_\Pi)$ of continuous, real-valued functions on $\overline{\tX}_\Pi$ with respect to the sup norm. The set of Radon measures on $\overline{\tX}_\Pi$ is denoted by $\mathcal{M}\left(\overline{\tX}_\Pi\right)$, and is endowed with the topology of weak convergence of Radon measures. 

\begin{theorem} \label{thm:PMA PCreg}
    The operator
    $$(\phi_1,\ldots,\phi_d) \longmapsto \MA_{\poly}(\phi_1,\ldots,\phi_d),$$
    defined on $\PAPC(\tX, \gamma)^d$ in Definition \ref{def:PMA PA}, and taking values in 
    \[
    \left\{ \mu \in \mathcal{M}\left(\overline{\tX}_\Pi\right) \; |  \;\int_{\overline{\tX}_\Pi} \mu = \deg(\gamma) \right \},
    \]
    admits a unique continuous extension to $\PCreg(\tX,\gamma)^d$, satisfying the properties of
\begin{enumerate}
    \item symmetry:
    $$
    \PMA(\phi_1, \ldots, \phi_i, \ldots,\phi_k, \ldots,\phi_d)=\PMA(\phi_1, \ldots, \phi_k, \ldots,\phi_i, \ldots,\phi_d)
    $$
    \item invariance with respect to additive constants $c_i \in \R$:
    $$
    \PMA(\phi_1 + c_1, \ldots,\phi_d+c_d)= \PMA(\phi_1 ,\ldots,\phi_d) 
    $$
    \item linearity in each argument: for $t \in [0,1]$
    \begin{align*}
    \PMA(t\phi_0 + & (1-t) \phi_1, \phi_2, \ldots, \phi_d)
    \\ & = t\, \PMA(\phi_0 , \phi_2, \ldots,\phi_d) + (1-t) \PMA( \phi_1, \phi_2, \ldots,\phi_d)
    \end{align*}
    \item positivity: if $\psi \in \mathcal{C}^0(\overline{\tX}_\Pi)$ is non-negative, then $$\int_{\overline{\tX}_\Pi} \psi\, \PMA(\phi_1,\ldots,\phi_d) \geq 0$$
    \item the integration by parts formula: if $\psi, \phi_1,\ldots, \phi_d \in \PCreg(\tX,\gamma)$, then
    \begin{align*}
   \int_{\overline{\tX}_\Pi} & (\psi-\gamma) \MA_{\poly} (\phi_1,\phi_2,\ldots,\phi_d) 
   \\ & = \int_{\overline{\tX}_\Pi} (\phi_1-\gamma) \MA_{\poly}(\psi,\phi_2,\ldots,\phi_d) + \int_{\overline{\tX}_\Pi} (\psi-\phi_1) \MA_{\poly} (\gamma,\phi_2,\ldots,\phi_d)
   \end{align*}

    \end{enumerate}
\end{theorem}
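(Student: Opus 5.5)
The plan follows Bedford--Taylor as adapted in \cite[\S 3]{BoucksomFavreJonsson2015}. We first prove a uniform continuity estimate on $\PAPC(\tX,\gamma)^d$ and then extend by density.

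\emph{Step 1: the key estimate.} For $\phi_1,\ldots,\phi_d,\phi'_1\in\PAPC(\tX,\gamma)$ we aim to show
\[
\Big|\int_{\tX} f\,\PMA(\phi_1,\phi_2,\ldots,\phi_d)-\int_{\tX} f\,\PMA(\phi'_1,\phi_2,\ldots,\phi_d)\Big|\le C(f)\,\sup_{\tX}|\phi_1-\phi'_1|
\]
for every $f\in\PA_b(\overline{\tX}_\Pi)$. Here $C(f)$ depends only on $f$ and $\gamma$. The difference $u=\phi_1-\phi'_1$ is bounded and piecewise affine, so we may apply the symmetry of Lemma~\ref{lem:integral-formulae} to $u$ and $f$. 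This rewrites the difference as $\int u\,\PMA(f,\phi_2,\ldots,\phi_d)$.

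\emph{Step 2: controlling the total variation.} Fix $f$. By Lemma~\ref{prop:quasi-proj} and Lemma~\ref{lem:PA plus PAPC+}, after replacing $f$ by a uniform approximation if needed, we write $f=A\alpha-(A\alpha-f)$. Here $\alpha\in\PAPC^+$ is defined on a quasi-projective structure, and both $A\alpha+ \gamma$-type terms are $\PAPC$ with the same growth. The aim is to express $f$ as a difference $g_1-g_2$ of $\PAPC(\tX,\gamma')$ functions for some fixed $\gamma'$. Then $\PMA(f,\phi_2,\ldots)=\PMA(g_1,\ldots)-\PMA(g_2,\ldots)$ is a difference of positive measures. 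By multilinearity and Lemma~\ref{lem:degree}, the total masses of these measures are mixed degrees depending only on $\gamma,\gamma'$. This gives $C(f)$.

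The main obstacle is Step 2, namely producing such a decomposition for a dense class of test functions with growth uniform in $\phi_i$. I would first handle bounded $f$ that vanish at infinity by taking $\gamma'=\gamma+f$ corrected by $A\alpha$ on the skeleton. Density of $\PA_b(\overline{\tX}_\Pi)$ (Proposition~\ref{prop:bounded-dense}), together with the fact that the measures have fixed total mass $\deg(\gamma)$ and are positive, then reduces to this class.

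\emph{Step 3: extension and properties.} Given $\phi_i\in\PCreg(\tX,\gamma)$, choose $\PAPC(\tX,\gamma)$ sequences converging uniformly to them, using Proposition~\ref{def:PC-reg}. Applying Step 1 repeatedly, one variable at a time and using symmetry, shows that $\int f\,\PMA(\phi_1^j,\ldots,\phi_d^j)$ is Cauchy for every $f\in\PA_b$. The measures are positive by Lemma~\ref{lem:2 defn PAPC} (convexity gives nonnegative intersection numbers) and have mass $\deg(\gamma)$. By density they therefore converge weakly on the compact space $\overline{\tX}_\Pi$, and the limit is independent of the chosen sequences. Uniqueness follows because $\PAPC$ is dense. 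Continuity along decreasing sequences follows from Lemma~\ref{lem:decreasing cv uniform cv}. Properties (1)--(4) pass to the limit from Proposition~\ref{prop/def} and positivity. For (5), the integrands $\psi-\gamma$ and $\phi_1-\gamma$ extend continuously to $\overline{\tX}_\Pi$ by Lemma~\ref{lem:ext}, and they converge uniformly there. Combined with weak convergence and uniformly bounded mass, this lets us pass to the limit in Lemma~\ref{lem:integral-formulae}.
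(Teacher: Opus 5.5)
Your argument is correct, but it takes a different route from the paper.

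\textbf{The paper's route.} The paper follows Bedford--Taylor and inducts on the number $k$ of slots filled by functions of $\PCreg(\tX,\gamma)$. It \emph{defines} $\int\psi\,\PMA(\phi_1,\ldots,\phi_{k+1},\phi'_{k+2},\ldots,\phi'_d)$ for $\psi\in\PA_b(\tX)$ by imposing the integration by parts formula \eqref{equ:formula PMA}. It then checks linearity, positivity and mass, and extends via Proposition \ref{prop:bounded-dense}. Through the induction it carries only continuity \emph{along decreasing sequences} of $(\psi,\phi_1,\ldots)\mapsto\int(\psi-\gamma)\PMA(\ldots)$, proved by a $\liminf$/$\limsup$ argument using Lemma \ref{lem:decreasing cv uniform cv}. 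Continuity for uniform convergence is recovered at the end by making uniformly convergent sequences decreasing after adding constants.

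\textbf{Your route.} You instead prove an a priori Lipschitz bound of Chern--Levine--Nirenberg type on $\PAPC(\tX,\gamma)^d$, as in the standard treatment of uniform limits of semipositive model metrics. Symmetry moves the bounded difference $\phi_1-\phi'_1$ onto the test function. The measure $\PMA(f,\phi_2,\ldots,\phi_d)=\PMA(A\alpha,\phi_2,\ldots,\phi_d)-\PMA(A\alpha-f,\phi_2,\ldots,\phi_d)$ then has total variation at most $2A\deg(\alpha,\gamma,\ldots,\gamma)$, and you extend by uniform density.

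\textbf{Comparison.} Your route gives continuity for the uniform topology directly, with a quantitative modulus. It also shows explicitly how to control slots occupied by non-convex piecewise affine functions, such as $\psi+\gamma$ in \eqref{equ:formula PMA}. The paper's induction hypothesis, stated for $\phi'_j\in\PAPC(\tX,\gamma)$, leaves that point implicit. The price is that you need strictly convex functions on a quasi-projective refinement (Proposition \ref{prop:quasi-proj}, Lemma \ref{lem:PA plus PAPC+}), which the paper's proof never uses.

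\textbf{Clean-ups.} The following points tighten the write-up; none is a gap.

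\emph{The decomposition needs no approximation.} What you call the main obstacle is not one. Lemma \ref{lem:PA plus PAPC+} applied to $-f$, on a quasi-projective refinement of a structure carrying $f$, gives $A\alpha-f\in\PAPC(\tX)$ for every bounded piecewise affine $f$. So there is no need to approximate $f$ or to restrict to $f$ vanishing at infinity, and the sentence about $\gamma'=\gamma+f$ ``corrected on the skeleton'' should be dropped.

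\emph{State the mixed degree invariance.} The uniformity in $\phi_2,\ldots,\phi_d$ does not come from the decomposition. It comes from the invariance $\deg(g,\phi_2,\ldots,\phi_d)=\deg(g,\gamma,\ldots,\gamma)$ for $\phi_j\in\PA(\tX,\gamma)$. Lemma \ref{lem:degree} only states the case of equal arguments, so record this mixed version. It follows from $\deg(\eta\cdot c)=0$ for bounded $\eta$ (the proof of Lemma \ref{lem:degree}) together with commutativity (Lemma \ref{lem:properties pairing}). The same fact shows that the measures on $\PAPC(\tX,\gamma)^d$ have mass $\deg(\gamma)$.

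\emph{Pass the estimate to the limit.} Say explicitly that your estimate passes to the limit, so the extension is itself Lipschitz in each slot against every fixed $f\in\PA_b(\overline{\tX}_\Pi)$. This is what gives continuity of the extension, and hence uniqueness.
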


Before moving to the proof, let us mention that it will be shown in Proposition \ref{prop:suppMA} that if $\gamma$ is positive at infinity (Definition \ref{def:posinf}), then $\MA_{\poly}(\phi_1,\ldots,\phi_d)$ is always supported on $\tX$, despite being defined \emph{a priori} as a measure on $\overline{\tX}_\Pi$; as a result, in this case the Monge--Ampère measure defined above will not depend on the choice of compactification.
\begin{proof}
Fix $0 \leq k \leq d$ and $\phi'_{k+1}, \ldots, \phi'_d$ functions in $\PAPC(\tX, \gamma)$. Consider the following assertion:  any $k$-tuple $\phi_1,\ldots, \phi_k$ in $\PCreg(\tX, \gamma)$ defines a Radon measure $\PMA(\phi_1,\ldots,\phi_k,\phi'_{k+1},\ldots,\phi'_d)$ of mass $\deg(\gamma)$ such that
\begin{enumerate}[label=(\roman*)]
    \item if $\phi_1,\ldots,\phi_k$ are in $\PAPC(\tX,\gamma)$, then $\PMA(\phi_1,\ldots,\phi_k,\phi'_{k+1},\ldots,\phi'_d)$ coincides with the mixed polyhedral Monge--Ampère operator in Definition \ref{def:PMA PA};
    \item the map
    $$ (\psi, \phi_1, \ldots, \phi_k) \to \int_{\overline{\tX}_\Pi} (\psi-\gamma)\PMA(\phi_1,\ldots,\phi_k, \phi'_{k+1},\ldots,\phi'_d)$$
    is continuous along decreasing sequences in $\PCreg(\tX,\gamma).$ 
\end{enumerate}

For $k=0$, the assertion holds as $\PMA(\phi'_1,\ldots,\phi'_d)$ is the finite atomic measure as defined in Definition \ref{def:PMA PA}. Assuming the assertion holds for $k$, we now prove it for $k+1$. Given $(\phi_1,\ldots,\phi_{k+1}) \in \PCreg(\tX,\gamma)^{k+1},$ we define $\PMA(\phi_1,\ldots,\phi_{k+1},\phi'_{k+2},\ldots,\phi'_d)$ by forcing the integration by parts formula, for every $\psi \in \PA_b(\tX)$:
\begin{align} \label{equ:formula PMA}
\begin{split}
    \int_{\overline{\tX}_\Pi} \psi \,\PMA & (\phi_1,\ldots,\phi_{k+1},\phi'_{k+2},\ldots,\phi'_d) 
    \\ & \coloneqq 
    \int_{\overline{\tX}_\Pi} (\phi_{k+1}-\gamma) \PMA(\phi_1,\ldots,\phi_k,\psi + \gamma,\phi'_{k+2},\ldots,\phi'_d) 
    \\ & +
    \int_{\overline{\tX}_\Pi} (\psi + \gamma - \phi_{k+1}) \PMA(\phi_1,\ldots,\phi_k,\gamma,\phi'_{k+2},\ldots,\phi'_d).
\end{split}
\end{align}
Note that the right-hand side is well-defined and continuous along decreasing sequences as a function of $(\phi_1, \ldots,\phi_{k+1})$, by the hypothesis that the assertion holds for $k$. By Lemma \ref{lem:integral-formulae}, Equation \ref{equ:formula PMA} coincides with Definition \ref{def:PMA PA} when $\phi_1,\ldots,\phi_{k+1}$ are in $\PAPC(\tX,\gamma)$.

For the total mass, let $\psi \equiv 1$ and compute
{\allowdisplaybreaks
\begin{align*}
   \int_{\overline{\tX}_\Pi} 1 \PMA & (\phi_1,\ldots,\phi_{k+1},\phi'_{k+2},\ldots,\phi'_d) 
    \\ & =
    \int_{\overline{\tX}_\Pi} (\phi_{k+1}-\gamma) \PMA(\phi_1,\ldots,\phi_k,1 + \gamma,\phi'_{k+2},\ldots,\phi'_d) 
    \\ & +
    \int_{\overline{\tX}_\Pi} (1 + \gamma - \phi_{k+1}) \PMA(\phi_1,\ldots,\phi_k,\gamma,\phi'_{k+2},\ldots,\phi'_d)
    \\ & =
    \int_{\overline{\tX}_\Pi} (\phi_{k+1}-\gamma) \PMA(\phi_1,\ldots,\phi_k, \gamma,\phi'_{k+2},\ldots,\phi'_d) 
    \\ & +
    \int_{\overline{\tX}_\Pi} \PMA(\phi_1,\ldots,\phi_k,\gamma,\phi'_{k+2},\ldots,\phi'_d) 
    \\ & +
    \int_{\overline{\tX}_\Pi} (\gamma - \phi_{k+1}) \PMA(\phi_1,\ldots,\phi_k,\gamma,\phi'_{k+2},\ldots,\phi'_d)
    \\ & = \deg(\gamma)
\end{align*}
}
since $\PMA(\phi_1,\ldots,\phi_k,\gamma,\phi'_{k+2},\ldots,\phi'_d) $ is a Radon measure of mass $\deg(\gamma)$ by hypothesis.

We now check that the map 
$$ \psi \to \int_{\overline{\tX}_\Pi} \psi \,\PMA (\phi_1,\ldots,\phi_{k+1},\phi'_{k+2},\ldots,\phi'_d) $$ is a linear and positive functional on $\PA_b(\tX)$. Let $\psi_1$ and $\psi_2$ in $\PA_b(\tX)$. We have
{\allowdisplaybreaks
\begin{align*}
   \int_{\overline{\tX}_\Pi} (\psi_1 + \psi_2) \PMA & (\phi_1,\ldots,\phi_{k+1},\phi'_{k+2},\ldots,\phi'_d) 
    \\ & =
    \int_{\overline{\tX}_\Pi} (\phi_{k+1}-\gamma) \PMA(\phi_1,\ldots,\phi_k,\psi_1 + \psi_2 + \gamma,\phi'_{k+2},\ldots,\phi'_d) 
    \\ & +
    \int_{\overline{\tX}_\Pi} (\psi_1 + \psi_2 + \gamma - \phi_{k+1}) \PMA(\phi_1,\ldots,\phi_k,\gamma,\phi'_{k+2},\ldots,\phi'_d)
    \\ & + \int_{\overline{\tX}_\Pi} (\phi_{k+1}-\gamma) \PMA(\phi_1,\ldots,\phi_k,\gamma,\phi'_{k+2},\ldots,\phi'_d) 
    \\ & + \int_{\overline{\tX}_\Pi} (\gamma - \phi_{k+1}) \PMA(\phi_1,\ldots,\phi_k,\gamma,\phi'_{k+2},\ldots,\phi'_d) 
    \\ & = \int_{\overline{\tX}_\Pi} (\phi_{k+1}-\gamma) \PMA(\phi_1,\ldots,\phi_k,\psi_1 + \gamma,\phi'_{k+2},\ldots,\phi'_d) 
    \\& + \int_{\overline{\tX}_\Pi} (\phi_{k+1}-\gamma) \PMA(\phi_1,\ldots,\phi_k,\psi_2 + \gamma,\phi'_{k+2},\ldots,\phi'_d) 
    \\ & + \int_{\overline{\tX}_\Pi} (\psi_1 + \gamma - \phi_{k+1}) \PMA(\phi_1,\ldots,\phi_k,\gamma,\phi'_{k+2},\ldots,\phi'_d)
    \\ & + \int_{\overline{\tX}_\Pi} (\psi_2 + \gamma - \phi_{k+1}) \PMA(\phi_1,\ldots,\phi_k,\gamma,\phi'_{k+2},\ldots,\phi'_d)
    \\ & = \int_{\overline{\tX}_\Pi} \psi_1\, \PMA  (\phi_1,\ldots,\phi_{k+1},\phi'_{k+2},\ldots,\phi'_d) 
    \\& + \int_{\overline{\tX}_\Pi} \psi_2 \,\PMA  (\phi_1,\ldots,\phi_{k+1},\phi'_{k+2},\ldots,\phi'_d) 
\end{align*}
}
which shows the linearity. If $\psi \geq 0$ and $\phi_1,\ldots,\phi_{k+1}$ are in $\PAPC(\tX,\gamma)$, then $\int \psi \,\PMA (\phi_1,\ldots,\phi_{k+1},\phi'_{k+2},\ldots,\phi'_d)$ is non-negative by Proposition \ref{prop/def} and Definition~\ref{def:PAPC}. Since $\int \psi \,\PMA (\phi_1,\ldots,\phi_{k+1},\phi'_{k+2},\ldots,\phi'_d)$ is continuous along decreasing nets as a function of $(\phi_1,\ldots,\phi_{k+1})$, and since for any function in $\PCreg(\tX, \gamma)$ there is a decreasing sequence of functions in $\PAPC(\tX,\gamma)$ uniform converging to it, it follows that the functional is positive on $\PA_b(\tX)$.
\\By Proposition \ref{prop:bounded-dense}, the bounded $\PA$ functions are dense in $\mathcal{C}^0(\overline{\tX}_\Pi)$, hence we may define the positive measure $\PMA (\phi_1,\ldots,\phi_{k+1},\phi'_{k+2},\ldots,\phi'_d)$ on $\overline{\tX}_\Pi$; it has total mass $\deg(\gamma)$ and is continuous along decreasing sequences as a function of $(\phi_1,\ldots,\phi_{k+1})$.

It remains to show that the map
$$
(\psi,\phi_1,\ldots,\phi_{k+1}) \to \int_{\overline{\tX}_\Pi}  (\psi-\gamma) \,\PMA (\phi_1,\ldots,\phi_{k+1},\phi'_{k+2},\ldots,\phi'_d)
$$
is continuous along decreasing sequences in $\PCreg(\tX,\gamma)$. Let $(\psi^j)_j$ and $(\phi^j_i)_j$, for $i=1,\ldots,k+1$, be decreasing sequences of functions in $\PCreg(\tX,\gamma)$ converging respectively to $\psi$ and $\phi_i$. For simplicity, we write
$$
\mu^j \coloneqq \PMA(\phi^j_1,\ldots,\phi^j_{k+1},\phi'_{k+2},\ldots,\phi'_d),
$$
which we know that converges weakly to $\mu \coloneqq \PMA(\phi_1,\ldots,\phi_{k+1},\phi'_{k+2},\ldots,\phi'_d)$. To prove the required continuity, we show the following two inequalities 
$$
\limsup_j \int_{\overline{\tX}_\Pi}  (\psi_j-\gamma) \mu^j \leq \int_{\overline{\tX}_\Pi}  (\psi-\gamma)\mu \leq \liminf_j \int_{\overline{\tX}_\Pi}  (\psi_j-\gamma) \mu^j.
$$
On one side, for any $j$, we have 
$$
\int_{\overline{\tX}_\Pi}  (\psi_j - \gamma) \mu^j \geq \int_{\overline{\tX}_\Pi}  (\psi - \gamma) \mu^j,
$$
hence, passing to the $\liminf$, we get
$$
\liminf_{j} \int_{\overline{\tX}_\Pi}  (\psi_j - \gamma) \mu^j \geq \liminf_j \int_{\overline{\tX}_\Pi}  (\psi - \gamma) \mu^j = \int_{\overline{\tX}_\Pi}  (\psi - \gamma) \mu,
$$
where the last equality holds by the weak convergence of measures. 
For the other inequality, we extend $\psi^j - \gamma$ to $\overline{\tX}_\Pi$ using Lemma \ref{lem:ext} and similarly for $\psi-\gamma$, we still use the same notation for the extensions. 
By Lemma \ref{lem:decreasing cv uniform cv}, the decreasing convergence of $(\psi^j - \gamma)_j$ to $\psi - \gamma$ on $\overline{\tX}_\Pi$ implies its uniform convergence, thus for any $\varepsilon >0$ there exists $j_\varepsilon$ such that we have
$$
|(\psi^j - \gamma) - (\psi-\gamma)| < \varepsilon
$$
for any $j \geq j_\varepsilon$. This implies that
$$
\limsup_j \int_{\overline{\tX}_\Pi}  (\psi^j-\gamma)\mu^j 
\leq \limsup_j \int_{\overline{\tX}_\Pi}  (\psi-\gamma)\mu^j + \varepsilon \limsup_j \int_{\overline{\tX}_\Pi}  \mu^j 
= \int_{\overline{\tX}_\Pi}  (\psi-\gamma)\mu + \varepsilon \int_{\overline{\tX}_\Pi}  \mu
$$
where the last equality holds by the weak convergence of measures. 
When $\varepsilon$ tends to $0$, we obtain 
$$
\limsup_j \int_{\overline{\tX}_\Pi}  (\psi^j-\gamma)\mu^j 
\leq  \int_{\overline{\tX}_\Pi}  (\psi-\gamma)\mu
$$
which concludes the proof of the assertion for $k+1$. 

By induction, the assertion now holds for any $k$. In particular, for $k=d$, it shows that there exists an extension of $\PMA$ to functions in $\PCreg(\tX, \gamma)$. By construction and by density of bounded piecewise affine functions in $\mathcal{C}^0(\overline{\tX}_\Pi)$, property $(5)$ (integration by parts formula) automatically holds true.

The continuity of the operator $\PMA$ from $\PCreg(\tX, \gamma)^d$ to the space of Radon measures follows from the continuity along decreasing sequences, using the fact that uniform convergence of a sequence of functions on a compact space can be made decreasing up to extraction and adding a sequence of constants. The uniqueness of the extension follows from its continuity and from Definition \ref{def:PC-reg}, stating that every function in $\PCreg(\tX, \gamma)$ is a decreasing uniform limit of functions in $\PAPC(\tX,\gamma)$.

Finally, properties $(1)$-$(4)$ hold for functions $(\phi_1,\ldots,\phi_d)$, in $\PAPC(\tX,\gamma)$, for constants $t,c_i \in \Q$, and for piecewise affine functions $\psi$, by Proposition \ref{prop/def} and Lemma \ref{lem:integral-formulae}. As any function in $\PCreg(\tX,\gamma)$ and any real constant is respectively the decreasing limit of functions in $\PAPC(\tX,\gamma)$ and of rational constants, and by the density of bounded piecewise affine functions in $\mathcal{C}^0(\overline{\tX}_{\Pi})$, the properties $(1)$-$(4)$ hold true in general.
\end{proof}

For a single function $\phi \in \PCreg(\tX,\gamma)$, we set $\MA_{\poly}(\phi) = \PMA(\phi, \dots, \phi)$.

\subsection{Properties of $\PMA$}\label{sec:propr MA} 

\begin{defi}\label{def:posinf}
Let $\Pi$ be a polyhedral structure on $\tX$ and $\gamma \in \PA(\Pi)$. We say that $\gamma$ is \emph{positive at infinity} if for any face $\sigma \in \Pi$ and non-zero $v \in \rec(\sigma)$, the slope of $\gamma_{\sigma}$ in the direction $v$ is strictly positive.
\end{defi}
This condition is equivalent to $\lvert \gamma \rvert$ going to $+ \infty$ at infinity on $\tX$, so that it is independent of the choice of $\Pi$ where $\gamma$ is defined.
\begin{prop} \label{prop:suppMA}
    For any $\phi \in \PCreg(\tX,\gamma)$ and $\psi \in \PSH(\tX,\gamma)$, the function $(\psi-\gamma)$ is integrable with respect to $\MA_{\poly}(\phi)$. 
    \\As a consequence, if $\gamma$ is positive at infinity, then $\MA_{\poly}(\phi)$ does not charge mass on the boundary of $\overline{\tX}_\Pi$.
\end{prop}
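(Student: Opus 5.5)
The plan is to bound $\int(\psi-\gamma)\,\MA_{\poly}(\phi)$ from below, after approximating $\psi$ by piecewise affine functions, by moving all the unboundedness of $\psi-\gamma$ onto the finite atomic measure $\MA_{\poly}(\gamma)$, supported on a fixed finite set of vertices of $\tX$. There Proposition~\ref{prop:equi PSH} gives uniform control.

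\emph{Reductions.} By Proposition~\ref{prop:gamma-psh}(ii), $\psi-\gamma\le C$, so only a lower bound is at stake. Adding a constant to $\psi$, we may assume $\sup_{\tX}(\psi-\gamma)=0$. As remarked after Definition~\ref{def:poly-growth}, if $\PAPC(\tX,\gamma)\neq\emptyset$ we may replace $\gamma$ by some $\gamma'\in\PAPC(\tX,\gamma)$ without changing the classes of functions; $\psi-\gamma$ changes only by a bounded function. So we assume $\gamma\in\PAPC(\tX,\gamma)$. All mixed measures $\MA_{\poly}(\gamma^a,\phi^b,\psi_j^c)$ are then positive by Theorem~\ref{thm:PMA PCreg}(4), and $\MA_{\poly}(\gamma)$ is a finite atomic measure on $\Pi^{(0)}\subset\tX$. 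Choose a sequence $\psi_j\in\PAPC(\tX,\gamma)\subset\PCreg(\tX,\gamma)$ decreasing to $\psi$. The functions $\psi_j-\gamma$ are bounded and extend continuously to $\overline{\tX}_\Pi$ by Lemma~\ref{lem:ext}, and they decrease to the (lower semicontinuous) extension of $\psi-\gamma$. By monotone convergence it suffices to show $\int(\psi_j-\gamma)\,\MA_{\poly}(\phi)\ge -C$ with $C$ independent of $j$.

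\emph{Key step: iterated integration by parts.} Apply Theorem~\ref{thm:PMA PCreg}(5):
\[
\int(\psi_j-\gamma)\,\MA_{\poly}(\phi,\dots,\phi)=\int(\phi-\gamma)\,\MA_{\poly}(\psi_j,\phi,\dots,\phi)+\int(\psi_j-\phi)\,\MA_{\poly}(\gamma,\phi,\dots,\phi).
\]
The first term is at least $-\sup|\phi-\gamma|\deg(\gamma)$, by positivity and total mass $\deg(\gamma)$. In the second term write $\psi_j-\phi=(\psi_j-\gamma)-(\phi-\gamma)$. This gives $\int(\psi_j-\gamma)\,\MA_{\poly}(\gamma,\phi,\dots,\phi)$ up to a bounded error. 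Repeat the same manipulation on the next slot containing $\phi$. After $d$ steps we get
\[
\int(\psi_j-\gamma)\,\MA_{\poly}(\phi)\ge \int(\psi_j-\gamma)\,\MA_{\poly}(\gamma)-C_d\,\sup|\phi-\gamma|\,\deg(\gamma),
\]
with $C_d$ depending only on $d$. By symmetry (Theorem~\ref{thm:PMA PCreg}(1)), the integration by parts formula can be applied in any slot.

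\emph{Conclusion.} $\MA_{\poly}(\gamma)=\sum_v\lambda_v\delta_v$ is supported on the finite set $K=\Pi^{(0)}$, with total mass $\deg(\gamma)$. Proposition~\ref{prop:equi PSH}(1), applied to $\psi_j$ with $K$ and the normalization $\sup(\psi_j-\gamma)\approx 0$ (the suprema converge by Theorem~\ref{thm:comp}), gives $\psi_j-\gamma\ge -C'$ on $K$ uniformly in $j$. This proves integrability.

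For the consequence, let $\gamma$ be positive at infinity and take $\psi=\gamma$. Then $\gamma\in\PSH(\tX,\gamma)$ after the reduction, up to a bounded function. Since $\gamma$ is positive at infinity, $|\gamma|\to+\infty$ toward $\partial\overline{\tX}_\Pi$. Dually, take $\psi$ to be a PAPC function with $\psi-\gamma\to-\infty$ at the boundary, for instance $\psi=2\gamma$ rescaled to fit in the class, or $\psi=\max(\gamma,2\gamma)$-type constructions. Then integrability of a function equal to $-\infty$ on $\partial\overline{\tX}_\Pi$ forces $\MA_{\poly}(\phi)(\partial\overline{\tX}_\Pi)=0$.

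\emph{Expected obstacles.} The main obstacle is producing such a $\psi\in\PSH(\tX,\gamma)$ with $\psi-\gamma\to-\infty$ at infinity. I would try to obtain it as a decreasing limit of $\max(\gamma-k,(1+\epsilon)\gamma)$-type functions, or via a suitable convex combination, checking that it belongs to $\PSH(\tX,\gamma)$. A second point to verify is the reduction to $\gamma$ PAPC, which requires $\PAPC(\tX,\gamma)\neq\emptyset$. This is implicit, since $\PCreg(\tX,\gamma)$ is assumed nonempty.
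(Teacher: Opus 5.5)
Your argument for integrability is essentially the paper's, and it is correct. You iterate the integration by parts formula of Theorem~\ref{thm:PMA PCreg}(5) to replace $\MA_{\poly}(\phi)$ by the atomic measure $\MA_{\poly}(\gamma)$, at a cost of order $d\sup|\phi-\gamma|\deg(\gamma)$. You then bound $\psi_j-\gamma$ from below on the finite support of $\MA_{\poly}(\gamma)$ via Proposition~\ref{prop:equi PSH}, and pass to $\psi$ by monotone convergence. Your reduction to $\gamma$ PAPC only makes explicit the positivity of $\MA_{\poly}(\gamma,\phi,\dots,\phi)$ that the paper uses tacitly. One small correction: a decreasing limit of continuous functions is upper, not lower, semicontinuous. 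This is harmless here.

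The consequence, however, is not proved, because none of your test functions works. Taking $\psi=\gamma$ makes $\psi-\gamma$ bounded, which says nothing about the boundary. Your other candidates $2\gamma$, $\max(\gamma,2\gamma)$ and $\max(\gamma-k,(1+\epsilon)\gamma)$ go the wrong way. Since $\gamma\to+\infty$, they satisfy $\psi-\gamma\to+\infty$, so they violate $\psi\le\gamma+O(1)$ and are not in $\PSH(\tX,\gamma)$ by Proposition~\ref{prop:gamma-psh}(ii). No element of $\PAPC(\tX,\gamma)$ can have $\psi-\gamma\to-\infty$ either.

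You need a function growing strictly slower than $\gamma$, and $\psi=0$ works; this is the paper's argument. After your reduction $\gamma$ is PC, so $\psi_j:=\max(\gamma-j,0)$ is PA and PC. Moreover $\psi_j-\gamma=\max(-j,-\gamma)$ is bounded because $\gamma$ is bounded below, so $\psi_j\in\PAPC(\tX,\gamma)$. These functions decrease to $0$, hence $0\in\PSH(\tX,\gamma)$. On $\overline{\tX}_\Pi$ the continuous extension of $\psi_j-\gamma$ equals $-j$ on $\overline{\tX}_\Pi\setminus\tX$, where $\gamma=+\infty$. So the limit $-\gamma$ equals $-\infty$ exactly on the boundary. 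The integrability of $-\gamma$ from the first part then forces $\MA_{\poly}(\phi)(\overline{\tX}_\Pi\setminus\tX)=0$.
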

\begin{proof}
Set $\mu \coloneqq \MA_{\poly}(\phi)$. We normalize the functions by a constant so that 
$$\sup_{\tX} (\phi-\gamma)=\sup_{\tX} (\psi-\gamma) =0.$$ First assume that $\psi$ lies in $\PAPC(\tX,\gamma)$. We claim that $ \int  (\gamma-\psi) \mu \le M$, where $M$ does not depend on $\psi$. Indeed, integration by parts (item (5) in Theorem \ref{thm:PMA PCreg}) yields 
$$ \int_{\overline{\tX}_\Pi} (\gamma-\psi) \MA_{\poly}(\phi,\ldots,\phi)=  \int_{\overline{\tX}_\Pi} (\gamma-\phi) \MA_{\poly}(\psi, \phi,\ldots,\phi) + \int_{\overline{\tX}_\Pi} (\phi-\psi) \MA_{\poly}(\gamma,\phi,\ldots,\phi).$$ 
The first term is bounded by $ \sup \lvert \phi-\gamma \rvert \cdot \deg(\gamma)$, while we handle the second one by writing $(\phi-\psi) = (\phi-\gamma)+(\gamma-\psi)$. The integral of $(\phi-\gamma)$ is again bounded by  $\sup \lvert \phi-\gamma \rvert \cdot \deg(\gamma)$, while to the integral of $(\gamma-\psi)$ we apply again the integration by parts. Iterating this, the final outcome is
$$ \int_{\overline{\tX}_\Pi} (\gamma-\psi) \PMA(\phi) \leq 2d\,\sup \lvert \phi-\gamma \rvert \cdot \deg(\gamma)+\int_{\overline{\tX}_\Pi} (\gamma-\psi) \PMA(\gamma).
$$
Now $\MA_{\poly}(\gamma)$ is supported on a compact set $K$, and by Theorem \ref{thm:comp} there exists a constant $C=C(K)$ such that any $\psi \in \PAPC(\tX,\gamma)$ satisfies $\sup_K \lvert \psi-\gamma \rvert \le C$. This yields $$\int_{\overline{\tX}_\Pi} (\gamma-\psi) \MA_{\poly}(\gamma) \le C \deg(\gamma)$$ and proves the claim. In the general case, let $(\psi_j)_j$ be a sequence in $\PAPC(\tX,\gamma)$ decreasing pointwise to $\psi$, and satisfying $\sup_{\tX} (\psi_j - \gamma) = 0$. By monotone convergence,
$\int_{\overline{\tX}_\Pi} (\gamma-\psi_j) \mu$ increases to $\int_{\overline{\tX}_\Pi} (\gamma-\psi) \mu$, so that $\int_{\overline{\tX}_\Pi} (\gamma-\psi) \mu \le M$ as well, and is thus finite.

The second claim comes from the fact that the zero function lies in $\PSH(\tX,\gamma)$ when $\gamma$ is positive at infinity. Indeed, $0 = \lim_j \max \{ \gamma-j, 0 \}$ pointwise, and $\max \{ \gamma-j, 0 \} \in \PAPC(\tX,\gamma)$ since $\gamma$ goes to $+ \infty$ at infinity. This implies that $- \gamma$ is $\mu$-integrable, whence the locus $\overline{\tX}_\Pi \setminus \tX = \{ \gamma = + \infty \}$ has measure zero. 
\end{proof}

\begin{prop}[Locality] \label{prop:locreg}
    For any $\phi, \psi \in \PCreg(\tX,\gamma)$, we have
    $$\mathbf{1}_{\{\phi >\psi\}} \MA_{\poly}(\max( \phi, \psi)) = \mathbf{1}_{\{\phi >\psi\}} \MA_{\poly}(\phi).$$
\end{prop}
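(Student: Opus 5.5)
We follow the classical Bedford--Taylor strategy, as in \cite[Section 3]{BoucksomFavreJonsson2015}: first prove the statement for piecewise affine functions, then pass to the limit along decreasing sequences using the continuity statement of Theorem \ref{thm:PMA PCreg}.

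\emph{Step 1: the $\PAPC$ case.} Assume first that $\phi,\psi \in \PAPC(\tX,\gamma)$. Choose a polyhedral structure $\Pi'$ on which $\phi$, $\psi$, $\max(\phi,\psi)$ and $[\tX]$ are all defined. The measure $\MA_{\poly}(\cdot)$ is atomic, supported on $\Pi'^{(0)}$. Its weight at a vertex $v$ is $(f^d\cdot[\tX])(v)$, and this depends only on the local function $f_v$ on $\Star_v(\Pi')$; indeed, the intersection product is computed locally, as used in the proof of Lemma \ref{lem:properties pairing}. Now take $v$ with $\phi(v)>\psi(v)$. By continuity $\max(\phi,\psi)=\phi$ on a neighbourhood of $v$, so the local functions at $v$ coincide. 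Hence the weights at $v$ agree. Since $\{\phi>\psi\}$ is open, this gives the identity for $\PAPC$ functions.

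\emph{Step 2: approximation.} For general $\phi,\psi\in\PCreg(\tX,\gamma)$, use Definition \ref{def:PC-reg} to pick sequences $\phi_j,\psi_j\in\PAPC(\tX,\gamma)$ decreasing uniformly to $\phi$ and $\psi$. Then $\max(\phi_j,\psi_j)\in\PAPC(\tX,\gamma)$ decreases uniformly to $\max(\phi,\psi)$. By the continuity in Theorem \ref{thm:PMA PCreg}, the measures $\MA_{\poly}(\phi_j)$ and $\MA_{\poly}(\max(\phi_j,\psi_j))$ converge weakly on $\overline{\tX}_\Pi$ to $\MA_{\poly}(\phi)$ and $\MA_{\poly}(\max(\phi,\psi))$ respectively.

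\emph{Step 3: the open-set trick (main obstacle).} The difficulty is that $\mathbf 1_{\{\phi>\psi\}}$ is not continuous, and the sets $\{\phi_j>\psi_j\}$ move with $j$. We avoid this as follows. Fix $\varepsilon>0$ and let $U_\varepsilon=\{\phi>\psi+\varepsilon\}$, which is open. By uniform convergence, $\phi_j>\psi_j$ on $U_\varepsilon$ for $j\gg1$. Step 1 then shows that the measures $\MA_{\poly}(\max(\phi_j,\psi_j))$ and $\MA_{\poly}(\phi_j)$ agree on $U_\varepsilon$. Now take any continuous $g$ with compact support in $U_\varepsilon$. Its integrals against the two measures coincide for $j\gg1$, and passing to the weak limit gives $\int g\,\MA_{\poly}(\max(\phi,\psi))=\int g\,\MA_{\poly}(\phi)$. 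Hence the two limit measures agree on $U_\varepsilon$. Finally, let $\varepsilon\to0$: since $\{\phi>\psi\}=\bigcup_\varepsilon U_\varepsilon$ is an increasing union of open sets, the two Radon measures restricted to it coincide. This concludes the proof.

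Two technical points need checking. First, the weak convergence takes place on $\overline{\tX}_\Pi$, but the test functions $g$ have compact support in the open subset $U_\varepsilon\subset\tX$. Such $g$ extend by zero to continuous functions on $\overline{\tX}_\Pi$, so this causes no problem. Second, one must confirm that $\max$ of two $\PAPC(\tX,\gamma)$ functions stays in $\PAPC(\tX,\gamma)$. This follows from Lemma \ref{lem:pc}, because a maximum of $\PC$ functions is $\PC$, a maximum of $\PA$ functions is $\PA$, and the growth condition $\phi=\gamma+O(1)$ is preserved by taking maxima.
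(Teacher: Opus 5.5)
Your proof is correct and follows essentially the same route as the paper: first a local computation at vertices in the $\PAPC$ case, then uniform approximation combined with testing against continuous functions compactly supported in the open set $\{\phi>\psi\}$. The only difference is organizational: you approximate $\phi$ and $\psi$ simultaneously and exhaust $\{\phi>\psi\}$ by the sets $U_\varepsilon=\{\phi>\psi+\varepsilon\}$, whereas the paper first approximates $\psi$ from below with $\phi\in\PAPC(\tX,\gamma)$ fixed, and then repeats the argument for $\phi$.
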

\begin{proof}
We argue in three steps, increasing the generality of the statement each time. We denote by $\overline{\tX}$ any compactification of $\tX$ as in Definition \ref{def:compactification}.

\textbf{Step 1:} assume that $\phi$, $\psi \in \PAPC(\tX,\gamma)$. Let $\Pi$ be a polyhedral structure on $\tX$ on which both $\phi$ and $\psi$ are defined, and pick a vertex $v \in \Pi^{(0)}$ with $\phi(v) >\psi(v)$. This implies that $\phi > \psi$ on a small neigbourhood $U$ of $v$. Let $\Pi'$ be a refinement of $\Pi$ such that $U \supseteq \Star_{\Pi'}(v)$. Since now $\phi > \psi$ on $\Star_{\Pi'}(v)$, the value $\MA_{\poly}( \max(\phi, \psi))(v)=(\max(\phi, \psi)) \cdot [\tX]^d \cdot v $ computed as an intersection product as in Definition \ref{def:intproduct}, using the subdivision $\Pi'$, clearly depends only on the restriction of $\max(\phi, \psi)$ to $\Star_{\Pi'}(v)$, and thus agrees with $\MA_{\poly}(\phi)(v)$.

\textbf{Step 2:} assume that $\phi \in \PAPC(\tX,\gamma)$ and $\psi \in \PCreg(\tX,\gamma)$. Let $\eps>0$, and let $\psi' \in \PAPC(\tX,\gamma)$ such that $\psi' \le \psi \le \psi' + \eps$ on $\tX$. 
Let $g$ be a continuous, compactly supported function on the open set $\{ \phi > \psi \}$. Write
\begin{align*}
\big\lvert \int_{\overline{\tX}} g \MA_{\poly}(\max(\phi, \psi)) - & \int_{\overline{\tX}} g \MA_{\poly}(\phi) \big\lvert \\ 
& \le \big\lvert \int_{\overline{\tX}} g \MA_{\poly}(\max(\phi, \psi))- \int_{\overline{\tX}} g \MA_{\poly}(\max(\phi, \psi')) \big\rvert \\
& \quad + \big\lvert \int_{\overline{\tX}} g \MA_{\poly}(\max(\phi, \psi')) - \int_{\overline{\tX}} g \MA_{\poly}(\phi) \big\lvert.
\end{align*}
Since $\psi' \le \psi$, we have that $\psi' < \phi$ on the support of $g$, so that Step 1 implies that the second summand of the right hand side vanishes; the first summand goes to zero when $\eps \rightarrow 0$ by continuity of $\PMA$ with respect to uniform convergence. This shows that $\MA_{\poly} (\max(\phi, \psi))$ agrees with $\MA_{\poly}(\phi)$ on $\{ \phi >\psi \}$, and concludes Step 2.

\textbf{Step 3:} let $\phi, \psi \in \PCreg(\tX,\gamma)$. Use $\phi' \in \PAPC(\tX,\gamma)$ such that $\phi' \le \phi \le \phi'+ \eps$ and argue exactly as in Step 2 to conclude.
\end{proof}

\begin{cor} \label{cor:locality}
Let $\phi, \psi \in \PCreg(\tX,\gamma)$ such that $\phi = \psi$ on an open subset $U \subset \tX$. Then the measures $\MA_{\poly}(\phi)$ and $\MA_{\poly}(\psi)$ agree on $U$.
\end{cor}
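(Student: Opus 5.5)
The plan is to deduce the corollary from the locality statement in Proposition~\ref{prop:locreg} by comparing both measures with that of a maximum of the two functions shifted by a small constant. Since Monge--Ampère measures are Radon measures, it is enough to show that $\int g\,\MA_{\poly}(\phi) = \int g\,\MA_{\poly}(\psi)$ for every continuous function $g$ with compact support in $U$. So I fix such a $g$ with support $K \subset U$, a compact set on which $\phi = \psi$.

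For $\varepsilon > 0$, consider the function $\max(\phi, \psi - \varepsilon)$. It lies in $\PCreg(\tX,\gamma)$, because that class is stable under maxima and under adding constants: if $\phi_j \to \phi$ and $\psi_j \to \psi$ uniformly with $\phi_j, \psi_j \in \PAPC(\tX,\gamma)$, then $\max(\phi_j,\psi_j-\varepsilon) \in \PAPC(\tX,\gamma)$ and these converge uniformly to it.

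On the open set $\{\phi > \psi - \varepsilon\}$, which contains $U$, Proposition~\ref{prop:locreg} gives
\[
\mathbf{1}_{\{\phi > \psi-\varepsilon\}}\MA_{\poly}(\max(\phi,\psi-\varepsilon)) = \mathbf{1}_{\{\phi > \psi-\varepsilon\}}\MA_{\poly}(\phi).
\]
Symmetrically, on $\{\psi - \varepsilon > \phi - 2\varepsilon\}$, which is all of $\tX$ near $K$, the function $\max(\phi-2\varepsilon,\psi-\varepsilon)$ has the same Monge--Ampère measure as $\psi-\varepsilon$. By invariance under additive constants (Theorem~\ref{thm:PMA PCreg}(2)), that measure is $\MA_{\poly}(\psi)$.

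A cleaner route avoids comparing two different maxima. Set $\chi_\varepsilon := \max(\phi, \psi - \varepsilon)$ and $\chi'_\varepsilon := \max(\phi - \varepsilon, \psi)$. On the open set $\{|\phi - \psi| < \varepsilon\} \supset U$ we have:
\begin{itemize}
\item $\chi_\varepsilon = \phi$, and $\phi > \psi - \varepsilon$ holds there, so locality gives $\MA_{\poly}(\chi_\varepsilon) = \MA_{\poly}(\phi)$ on that set;
\item $\chi'_\varepsilon = \psi$, so likewise $\MA_{\poly}(\chi'_\varepsilon) = \MA_{\poly}(\psi)$ there.
\end{itemize}
Moreover $\chi_\varepsilon$ and $\chi'_\varepsilon$ differ from each other by at most $\varepsilon$ in sup norm, and $\chi_\varepsilon \to \max(\phi,\psi)$ and $\chi'_\varepsilon \to \max(\phi,\psi)$ uniformly as $\varepsilon \to 0$.

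By the continuity of $\MA_{\poly}$ under uniform convergence (Theorem~\ref{thm:PMA PCreg}),
\[
\int g\,\MA_{\poly}(\chi_\varepsilon) - \int g\,\MA_{\poly}(\chi'_\varepsilon) \to 0.
\]
Since $K \subset U \subset \{|\phi-\psi|<\varepsilon\}$, this difference equals $\int g\,\MA_{\poly}(\phi) - \int g\,\MA_{\poly}(\psi)$ for every $\varepsilon > 0$. The difference is therefore independent of $\varepsilon$ and tends to zero, so it vanishes, which proves the corollary.

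The only delicate point is checking that the relevant open sets contain $\operatorname{supp} g$, and that Proposition~\ref{prop:locreg} can be applied to the strict inequalities with constants shifted. Both are immediate because $\phi = \psi$ on $U$.
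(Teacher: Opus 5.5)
Your proof is correct and is essentially the paper's argument. The paper applies Proposition~\ref{prop:locreg} to $\phi+\varepsilon$ and $\psi$ to get $\MA_{\poly}(\max(\phi+\varepsilon,\psi))=\MA_{\poly}(\phi)$ on $U$. It then lets $\varepsilon\to 0$, using continuity under uniform convergence, to obtain $\MA_{\poly}(\max(\phi,\psi))=\MA_{\poly}(\phi)$ on $U$, and exchanges $\phi$ and $\psi$. Your comparison of $\chi_\varepsilon$ and $\chi'_\varepsilon$, whose measures both converge to $\MA_{\poly}(\max(\phi,\psi))$, is the same argument packaged as a single difference, and the first, abandoned paragraph can simply be dropped.
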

\begin{proof}
The proof follows \cite[Corollary 5.2]{BoucksomFavreJonsson2015}. Let $\eps>0$. Applying the previous proposition to $\phi+\eps$ and $\psi$ yields $\MA_{\poly}( \max(\phi + \eps, \psi))= \MA_{\poly}(\phi)$ on $U$, and letting $\eps \rightarrow 0$ yields $\MA_{\poly}( \max(\phi , \psi))= \MA_{\poly}(\phi)$ on $U$. Exchanging the roles of $\phi$ and $\psi$ concludes.
\end{proof}

\begin{cor}[Comparison principle] \label{cor:comparison}
For any $\phi, \psi \in \PCreg(\tX, \gamma)$, we have
$$\int_{\{\phi < \psi \}} \PMA(\psi) \le \int_{\{\phi < \psi \}} \PMA(\phi).$$
\end{cor}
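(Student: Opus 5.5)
The argument follows the standard route of \cite[Corollary 5.3]{BoucksomFavreJonsson2015}: we use Proposition \ref{prop:locreg} (locality) and compare total masses, which are all equal to $\deg(\gamma)$ by Theorem \ref{thm:PMA PCreg}. Fix $\eps>0$ and set $\phi_\eps \coloneqq \max(\phi+\eps, \psi)$. Then $\phi_\eps \in \PCreg(\tX,\gamma)$, since $\PCreg(\tX,\gamma)$ is stable under maxima and adding constants: take maxima of decreasing $\PAPC$ approximants, which remain $\gamma+O(1)$. We split the space into the open sets $\{\phi+\eps>\psi\}$ and $\{\psi>\phi+\eps\}$ and the closed set $\{\psi=\phi+\eps\}$.

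By Proposition \ref{prop:locreg} applied to the pair $(\psi,\phi+\eps)$, we have $\MA_{\poly}(\phi_\eps)=\MA_{\poly}(\psi)$ on $\{\psi>\phi+\eps\}$. Applied to $(\phi+\eps,\psi)$, together with invariance under constants, it gives $\MA_{\poly}(\phi_\eps)=\MA_{\poly}(\phi)$ on $\{\phi+\eps>\psi\}$. Since all three measures have mass $\deg(\gamma)$,
\begin{align*}
\int_{\{\psi \ge \phi+\eps\}} \MA_{\poly}(\phi_\eps) &= \deg(\gamma) - \int_{\{\phi+\eps>\psi\}} \MA_{\poly}(\phi) \\
&= \int_{\{\psi\ge\phi+\eps\}} \MA_{\poly}(\phi).
\end{align*}
Because $\MA_{\poly}(\phi_\eps)\ge 0$ and agrees with $\MA_{\poly}(\psi)$ on $\{\psi>\phi+\eps\}$, we get
$$\int_{\{\psi>\phi+\eps\}}\MA_{\poly}(\psi)\le \int_{\{\psi\ge \phi+\eps\}}\MA_{\poly}(\phi)\le \int_{\{\phi<\psi\}}\MA_{\poly}(\phi).$$

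Finally we let $\eps\to 0$ along a decreasing sequence. The sets $\{\psi>\phi+\eps\}$ increase to $\{\phi<\psi\}$, so monotone convergence on the left gives the claim.

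The main point requiring care concerns where these measures live. They are defined on $\overline{\tX}_\Pi$, and $\phi,\psi$ are only defined on $\tX$, so the level sets must be interpreted in $\tX$. If $\gamma$ is positive at infinity, Proposition \ref{prop:suppMA} shows there is no mass on the boundary. In general, one instead extends $\phi-\psi=(\phi-\gamma)-(\psi-\gamma)$ continuously to $\overline{\tX}_\Pi$ via Lemma \ref{lem:ext} and runs the same argument there. This requires checking that the locality statement holds on open subsets of $\overline{\tX}_\Pi$; its proof uses only uniform approximation and continuity of $\PMA$, so it should go through verbatim.
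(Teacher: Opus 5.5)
Your proof is correct and is essentially the paper's argument. The paper works with $\max(\phi,\psi-\varepsilon)$, which differs from your $\max(\phi+\varepsilon,\psi)$ only by the constant $\varepsilon$, and likewise combines locality (Proposition \ref{prop:locreg}) on the two open sets with the total mass $\deg(\gamma)$ before letting $\varepsilon\to 0$; your remark about possible mass on $\overline{\tX}_\Pi\setminus\tX$ addresses a point the paper's proof leaves implicit, which is harmless when $\gamma$ is positive at infinity by Proposition \ref{prop:suppMA}.
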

\begin{proof}
The proof follows \cite[Corollary 5.3]{BoucksomFavreJonsson2015}. Let $\overline{\tX}$ be any compactification of $\tX$ as in Definition \ref{def:compactification}. Again, by the locality property in Proposition \ref{prop:locreg} we have that for all $\varepsilon > 0$ 
\begin{align*}
\deg(\gamma) &= \int_{\overline{\tX}} \MA_{\poly}\left(\max(\phi, \psi - \varepsilon ) \right) \\ &\geq  \int_{\{\phi < \psi - \varepsilon \}} \MA_{\poly}\left(\max (\phi, \psi - \varepsilon )\right) + \int_{\{\phi > \psi - \varepsilon \}} \MA_{\poly}\left(\max (\phi, \psi - \varepsilon )\right) \\
& = \int_{\{\phi < \psi - \varepsilon\}} \MA_{\poly}(\psi - \varepsilon ) + \int_{\{\phi > \psi - \varepsilon\}} \MA_{\poly}(\phi )\\
& = \int_{\{\phi < \psi - \varepsilon\}} \MA_{\poly}(\psi ) + \deg(\gamma) - \int_{\{\phi \leq \psi - \varepsilon\}} \MA_{\poly}(\phi).
\end{align*}
The result then follows by letting $\varepsilon \to 0$.
\end{proof}

We conclude this section with the following result, comparing the polyhedral Monge-Ampère measure with the usual real Monge-Ampère measure on top-dimensional faces:
\begin{prop} \label{prop:comp MA}
    Let $\phi \in \PCreg(\tX,\gamma)$ and $\sigma$ a top-dimensional face of $\Pi$. Then the equality of measures
    $$ \mathbf{1}_{\relint(\sigma)} \MA_{\poly}(\phi) = d! [\tX](\sigma) \; \RMA(\phi_{| \relint(\sigma)})$$
    holds.
\end{prop}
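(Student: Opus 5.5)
The plan is to reduce to the piecewise affine case, Proposition \ref{prop:MA-comp}, by approximation, and then pass to the limit using the continuity of both operators. Since $\phi \in \PCreg(\tX,\gamma)$, Proposition \ref{def:PC-reg} gives a decreasing sequence $(\phi_j)_j$ in $\PAPC(\tX,\gamma)$ converging uniformly to $\phi$ on $\tX$. Each $\phi_j$ is defined on some refinement $\Pi_j$ of $\Pi$. Proposition \ref{prop:MA-comp} applies to each $\phi_j$ and the fixed top-dimensional face $\sigma$, giving
\[
\mathbf{1}_{\relint(\sigma)} \MA_{\poly}(\phi_j) = d!\,[\tX](\sigma)\, \RMA(\phi_{j|\relint(\sigma)})
\]
as measures on the open set $\relint(\sigma)$.

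Next I pass to the limit on each side, testing against functions $g \in \mathcal{C}_c(\relint(\sigma))$.

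\textbf{Left-hand side.} By Theorem \ref{thm:PMA PCreg}, $\MA_{\poly}(\phi_j) \to \MA_{\poly}(\phi)$ weakly as Radon measures on $\overline{\tX}_\Pi$. The function $g$, extended by zero, is continuous on $\overline{\tX}_\Pi$, because $\relint(\sigma)$ is open in $\tX$ (as $\sigma$ is top-dimensional and $\tX$ is pure) and $\supp g$ is a compact subset of it. Hence $\int g\,\MA_{\poly}(\phi_j) \to \int g\,\MA_{\poly}(\phi)$.

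\textbf{Right-hand side.} The restrictions $\phi_{j|\relint(\sigma)}$ are convex functions on an open subset of the affine space $V_{\sigma,\R}$, by \cite[Proposition 5.10]{BBS}. They converge locally uniformly to $\phi_{|\relint(\sigma)}$. I then invoke the classical weak continuity of the real Monge--Ampère measure under locally uniform convergence of convex functions (see \cite{gutierrez}), which gives $\int g\,\RMA(\phi_j) \to \int g\,\RMA(\phi)$.

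Equating the two limits yields the claimed identity of Radon measures on $\relint(\sigma)$. Two points need care here. First, the Lebesgue measure on $M_{\sigma,\R}$ must be normalized by the lattice $M_\sigma$, so that the normalization matches the one in Proposition \ref{prop:MA-comp}. Second, $\Pi$ is the structure on which $[\tX]$ is defined, so $[\tX](\sigma)$ is a well-defined constant, and it is unchanged on all faces of refinements contained in $\sigma$.

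I expect the main obstacle to be the weak continuity of $\RMA$. If citing it is not acceptable, I would prove it directly. The upper semicontinuity on compact sets, $\limsup_j \RMA(\phi_j)(K) \le \RMA(\phi)(K)$, follows because subgradients of $\phi_j$ at points of $K$ accumulate only on subgradients of $\phi$ at points of $K$. The lower semicontinuity on open sets uses that the set of covectors lying in the subgradient of $\phi$ at two distinct points is Lebesgue-null. Together with local boundedness of the masses, which follows from local Lipschitz bounds, these give weak convergence. A minor point to check as well is that the left-hand measure is genuinely restricted to the open face: $\mathbf{1}_{\relint(\sigma)}\MA_{\poly}(\phi)$ is determined by its pairing with $\mathcal{C}_c(\relint(\sigma))$, so no boundary mass can interfere.
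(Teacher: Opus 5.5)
Your proposal is correct and is essentially the paper's proof. You approximate $\phi$ uniformly by functions in $\PAPC(\tX,\gamma)$, apply Proposition~\ref{prop:MA-comp} to each approximant, and pass to the limit using the continuity of $\MA_{\poly}$ from Theorem~\ref{thm:PMA PCreg} together with the weak continuity of the real Monge--Ampère measure under uniform convergence (for the latter the paper cites \cite[Proposition 2.6]{Figalli} rather than \cite{gutierrez}).
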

\begin{proof}
This follows from Proposition \ref{prop:MA-comp} applied to a sequence $(\phi_j)_j$ in $\PAPC(\gamma)$ converging uniformly to $\phi$, together with the weak continuity of the classical real Monge--Ampère measure with respect to uniform convergence \cite[Proposition 2.6]{Figalli}.
\end{proof}

\subsection{Polyhedral Monge--Ampère for $\PSH(\tX, \gamma)$}
We further extend the Monge--Ampère operator to $\PSH(\tX, \gamma)$. We assume $\gamma$ to be positive at infinity.

For any $\phi \in \PSH(\tX, \gamma)$ and any real number $t$, set $\phi^{\langle t \rangle} \coloneqq \max(\phi, \gamma-t)$. It follows from Proposition/Definition \ref{def:PC-reg} that $\phi^{\langle t \rangle}$ is an element of $\PCreg(\tX, \gamma)$.

\begin{defi}\label{def:MA-PSH}
The \emph{polyhedral Monge--Ampère measure of} $\phi \in \PSH(\tX, \gamma)$ is defined as follows. Let $g$ be a continuous, non-negative bounded function on $\tX$. Then we set 
$$\int_{\tX} g \MA_{\poly}(\phi) \coloneqq \lim_{t \to +\infty}\int_{\tX} \mathbf{1}_{\{\phi>\gamma-t\}}\, g \MA_{\poly}\left(\phi^{\langle t \rangle}\right),$$
and extend this functional to continuous bounded functions by linearity. This defines $\PMA(\phi)$ as a positive Radon measure on $\tX$.
\end{defi}
The above limit exists as the right hand side is increasing with respect to $t$: indeed, for any $s > t$, we have $\phi^{\langle t \rangle}=\max(\phi^{\langle s \rangle},\gamma-t)$ and $\{\phi > \gamma-t\}=\{\phi^{\langle s \rangle} > \gamma-t\}$, hence
\begin{alignat*}{2}
\int_{\tX} \mathbf{1}_{\{\phi>\gamma-t\}} g \MA_{\poly}(\phi^{\langle s \rangle}) 
& = \int_{\tX} \mathbf{1}_{\{\phi^{\langle s \rangle}>\gamma-t\}} g \MA_{\poly}(\phi^{\langle s \rangle}) &&\\
& = \int_{\tX} \mathbf{1}_{\{\phi^{\langle s \rangle}>\gamma-t\}} g \MA_{\poly}(\phi^{\langle t \rangle}) && \quad \text{by Proposition \ref{prop:locreg}}\\
& = \int_{\tX} \mathbf{1}_{\{\phi>\gamma-t\}} g \MA_{\poly}(\phi^{\langle t \rangle}),   &&
\end{alignat*}
which yields 
\[
\int_{\tX} \mathbf{1}_{\{\phi>\gamma-s\}} g \MA_{\poly}(\phi^{\langle s \rangle}) \ge \int_{\tX} \mathbf{1}_{\{\phi>\gamma-t\}} g \MA_{\poly}(\phi^{\langle t \rangle}).
\]
Moreover, this quantity is uniformly bounded by $\lVert g \rVert_{L^{\infty}} \deg(\gamma)$, and hence the limit is finite.
\begin{rem} \label{rem:MApsh}
For all $t \in \R$, the equality
$$\mathbf{1}_{\{ \phi > \gamma-t\} } \PMA(\phi^{\langle t \rangle}) = \mathbf{1}_{ \{\phi > \gamma-t\} } \PMA(\phi)$$
holds. Indeed, for any $g$ a continuous bounded function on $\tX$, it holds that 
$$\int_{ \{ \phi > \gamma -t \}} g \,\PMA(\phi) 
= \lim_{s \rightarrow \infty} \int_{\{ \phi > \gamma -t \}} g \,\PMA(\phi^{\langle s \rangle}) 
= \int_{ \{ \phi > \gamma -t \}} g \,\PMA(\phi^{\langle t \rangle}),$$
where the last equality follows from locality (Proposition \ref{prop:locreg}) applied $\phi^{\langle s \rangle}$ and $\phi^{\langle t \rangle}$ for $s \ge t$.
\end{rem}

\section{Energy and capacity}

Let $\tX$ be a balanced polyhedral space of dimension $d$ and let $\gamma$ be a PAPC function 
that is positive at infinity.
The goal of this section is to introduce the energy function $E$ on $\PSH(\tX, \gamma)$ (Definition \ref{defi:energy}), as well as the class of finite energy functions (Definition \ref{def:psh-finite-energy}). We then show various properties of the Monge--Ampère measures of functions with finite energy, and introduce a notion of capacity (Definition \ref{defi:capa}) that will be used in Section \ref{sec:variational app}.
\subsection{Energy for $\PCreg$}

\begin{defi} \label{defi:energy}
    Let $\phi \in \PCreg(\tX,\gamma)$. We define its energy as
    \[
    E(\phi) \coloneqq \frac{1}{d+1} \sum_{j=0}^d \int_{\tX}(\phi - \gamma)\MA_{\poly}(\underbrace{\phi, \dots, \phi}_{j\text{ times}},\underbrace{\gamma,\ldots, \gamma}_{(d-j) \text{ times}}).
    \]
\end{defi}

\begin{lemma} \label{lem:der energy}
    Let $\psi, \phi \in \PAPC(\tX,\gamma)$ and set $\phi_t \coloneqq (1-t) \phi+t \psi$ for $t \in [0,1]$. Then
    \begin{equation}\label{eq:der-energy}
    \frac{d}{dt}  E(\phi_t) = \int_{\tX} (\psi-\phi) \PMA(\phi_t).
    \end{equation}
\end{lemma}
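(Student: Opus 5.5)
Since $\phi,\psi\in\PAPC(\tX,\gamma)$, all functions $\phi_t$ lie in $\PA(\tX,\gamma)$, and we may fix a single polyhedral structure $\Pi$ on which $\phi,\psi,\gamma$ and $[\tX]$ are defined. Then every quantity appearing in $E(\phi_t)$ is a finite sum over the vertices of $\Pi$. By the multilinearity in Proposition~\ref{prop/def}, each such term is a polynomial in $t$. The plan is therefore purely algebraic. We write $\phi_t=\phi+tu$ with $u\coloneqq\psi-\phi\in\PA_b(\tX)$ bounded. We differentiate the polynomial expression using multilinearity and symmetry. Then we use the symmetry of Lemma~\ref{lem:integral-formulae} (valid since $u$ and $\phi_t-\gamma$ are bounded) to move $u$ into the integrand.

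First, expanding by multilinearity and symmetry, and writing $\MA_j(\phi_t)\coloneqq\PMA(\phi_t^{(j)},\gamma^{(d-j)})$, we get
\[
\frac{d}{dt}\int (\phi_t-\gamma)\,\MA_j(\phi_t)=\int u\,\MA_j(\phi_t)+j\int(\phi_t-\gamma)\,\PMA(u,\phi_t^{(j-1)},\gamma^{(d-j)}).
\]
Since $u$ and $\phi_t-\gamma$ are bounded $\PA$ functions, Lemma~\ref{lem:integral-formulae} lets us swap them. The second term thus becomes $j\int u\,\PMA(\phi_t-\gamma,\phi_t^{(j-1)},\gamma^{(d-j)})$. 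By linearity this equals $j\int u\,\MA_j(\phi_t)-j\int u\,\MA_{j-1}(\phi_t)$.

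Hence the derivative of the $j$-th term is $(j+1)\int u\,\MA_j-j\int u\,\MA_{j-1}$. Summing over $j=0,\dots,d$ gives a telescoping sum equal to $(d+1)\int u\,\MA_d(\phi_t)$. Dividing by $d+1$ yields $\int(\psi-\phi)\,\PMA(\phi_t)$, which is \eqref{eq:der-energy}.

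The only delicate point is the legitimacy of the manipulations. Theorem~\ref{thm:PMA PCreg} defines $\PMA$ on tuples in $\PCreg$, whereas we need multilinearity for arbitrary $\PA$ inputs such as $u$ and $\phi_t-\gamma$. The cleanest route is to work entirely with Definition~\ref{def:PMA PA}, which is multilinear on all of $\PA(\tX)^d$, and note that it agrees with the extended operator on $\PAPC(\tX,\gamma)$, since each $\phi_t$ lies there by convexity. One must also check that the integrals over $\tX$ coincide with finite vertex sums, so that no boundary contributions appear. I expect this bookkeeping, together with the exact form of the swap lemma with bounded functions, to be the main place requiring care; the computation itself is routine.
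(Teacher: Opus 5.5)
Your proof is correct and follows essentially the same route as the paper: treat $E(\phi_t)$ as a polynomial in $t$, differentiate by multilinearity, use Lemma~\ref{lem:integral-formulae} with the bounded functions $\psi-\phi$ and $\phi_t-\gamma$ to rewrite $j\int(\phi_t-\gamma)\,\PMA(\psi-\phi,\phi_t^{(j-1)},\gamma^{(d-j)})$ as $j\int(\psi-\phi)\,\PMA(\phi_t^{(j)},\gamma^{(d-j)})-j\int(\psi-\phi)\,\PMA(\phi_t^{(j-1)},\gamma^{(d-j+1)})$, and then telescope. Your single swap followed by linearity in the first slot is just a slightly more direct version of the paper's ``split the first argument and apply the lemma twice''.
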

\begin{proof}
Set $E(t)\coloneqq E(\phi_t)$; it is a polynomial of degree at most $(d+1)$ by multilinearity of $\PMA$. We will compute its derivative $E'(t)$. We first have
$$\frac{d}{dt} \MA_{\poly}(\underbrace{\phi_t, \dots, \phi_t}_{j\text{ times}},\underbrace{\gamma,\ldots, \gamma}_{(d-j) \text{ times}})=j \MA_{\poly}(\psi- \phi, \underbrace{\phi_t, \dots, \phi_t}_{(j-1)\text{ times}},\underbrace{\gamma,\ldots, \gamma}_{(d-j) \text{ times}}) .$$
As a consequence, we get
\begin{align*}
    E'(t) 
    & = \frac{1}{d+1} \sum_{j=0}^d \int_{\tX}(\psi - \phi) \MA_{\poly}(\underbrace{\phi_t, \dots, \phi_t}_{j\text{ times}},\underbrace{\gamma,\ldots, \gamma}_{(d-j) \text{ times}})\\
    & +\sum_{j=1}^{d} \frac{j}{d+1} \int_{\tX} (\phi_t -\gamma) \MA_{\poly}(\psi-\phi, \underbrace{\phi_t, \dots, \phi_t}_{j-1\text{ times}},\underbrace{\gamma,\ldots, \gamma}_{(d-j) \text{ times}}).
\end{align*}
Splitting the integral over the first argument in $\PMA$ and using Lemma \ref{lem:integral-formulae} twice, we have
\begin{align*}
\int_{\tX} (\phi_t -\gamma)   \MA_{\poly}(\psi-\phi, \underbrace{\phi_t, \dots, \phi_t}_{j-1\text{ times}},\underbrace{\gamma,\ldots, \gamma}_{(d-j) \text{ times}}) 
& = \int_{\tX}(\psi - \phi) \MA_{\poly}(\underbrace{\phi_t, \dots, \phi_t}_{j\text{ times}},\underbrace{\gamma,\ldots, \gamma}_{(d-j) \text{ times}}) \\
& - \int_{\tX} (\psi-\phi) \MA_{\poly}(\underbrace{\phi_t, \dots, \phi_t}_{j-1\text{ times}},\underbrace{\gamma,\ldots, \gamma}_{(d-j+1) \text{ times}}),
\end{align*}
and plugging this in the formula above for $E'(t)$ gives the result.
\end{proof}

\begin{prop}  \label{prop:energy2}
    Let $\phi, \psi \in \PCreg(\tX,\gamma)$. We have
$$E(\phi)-E(\psi) = \frac{1}{d+1} \sum_{j=0}^{d} \int_{\tX} (\phi-\psi) \MA_{\poly}(\underbrace{\phi, \dots, \phi}_{j\text{ times}},\underbrace{\psi,\ldots, \psi}_{(d-j) \text{ times}}).$$
\end{prop}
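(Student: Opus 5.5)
We prove the cocycle formula first for $\phi,\psi \in \PAPC(\tX,\gamma)$ and then pass to $\PCreg(\tX,\gamma)$ by approximation.

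\textbf{Step 1: the piecewise affine case.} Let $\phi,\psi\in\PAPC(\tX,\gamma)$ and set $\phi_t=(1-t)\psi+t\phi$, which lies in $\PAPC(\tX,\gamma)$ by convexity. Lemma \ref{lem:der energy}, applied with the roles of the two functions swapped, gives
\[
E(\phi)-E(\psi)=\int_0^1 \int_{\tX}(\phi-\psi)\,\PMA(\phi_t)\,dt .
\]
Here $\phi-\psi$ is bounded and $\PMA(\phi_t)$ is a finite atomic measure. By multilinearity and symmetry (Proposition \ref{prop/def}(4)), we expand
\[
\PMA(\phi_t)=\sum_{j=0}^d \binom{d}{j} t^j(1-t)^{d-j}\,\PMA(\phi^{j},\psi^{d-j}),
\]
where $\PMA(\phi^j,\psi^{d-j})$ denotes $\phi$ repeated $j$ times and $\psi$ repeated $d-j$ times. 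Since $\int_0^1\binom{d}{j}t^j(1-t)^{d-j}\,dt=\frac{1}{d+1}$, this yields the claimed formula.

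There is a subtlety: multilinearity in Proposition \ref{prop/def} is stated for rational coefficients. Both sides of the $t$-expansion are polynomial in $t$ because the intersection product is multilinear, so equality for rational $t$ implies equality for all real $t$. Alternatively, the whole computation can be carried out at the level of polynomials.

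\textbf{Step 2: approximation.} For general $\phi,\psi\in\PCreg(\tX,\gamma)$, use Proposition/Definition \ref{def:PC-reg} to pick decreasing sequences $\phi_k\searrow\phi$ and $\psi_k\searrow\psi$ in $\PAPC(\tX,\gamma)$ that converge uniformly on $\tX$. We then need both sides of the identity to be continuous along such sequences.

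For the left side, $E$ is a sum of terms $\int(\phi-\gamma)\PMA(\phi^{j},\gamma^{d-j})$. Property (ii) in the inductive proof of Theorem \ref{thm:PMA PCreg} gives continuity of $(\chi,\phi_1,\dots,\phi_d)\mapsto\int(\chi-\gamma)\PMA(\phi_1,\ldots,\phi_d)$ along decreasing sequences in $\PCreg(\tX,\gamma)$ (taking $k=d$). Applying this with $\chi=\phi_k$ gives $E(\phi_k)\to E(\phi)$, and likewise $E(\psi_k)\to E(\psi)$.

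For the right side, write $\phi-\psi=(\phi-\gamma)-(\psi-\gamma)$ and apply the same continuity to each piece, with the mixed measures $\PMA(\phi_k^{j},\psi_k^{d-j})$.

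\textbf{Main obstacle.} The delicate point is justifying this continuity, since the integrands $\phi-\gamma$ are functions on the compactification. Two facts handle it. First, Lemma \ref{lem:ext} together with Lemma \ref{lem:decreasing cv uniform cv} makes $\phi_k-\gamma\to\phi-\gamma$ uniform on $\overline{\tX}_\Pi$. Second, the measures have fixed total mass $\deg(\gamma)$ and converge weakly. Combining these (exactly as in the $\limsup/\liminf$ argument at the end of the proof of Theorem \ref{thm:PMA PCreg}) gives convergence of each integral.

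Finally, the integrals over $\tX$ and over $\overline{\tX}_\Pi$ agree because the measures put no mass on the boundary, by Proposition \ref{prop:suppMA} (recall $\gamma$ is positive at infinity). Passing to the limit in the identity for $(\phi_k,\psi_k)$ concludes the proof.
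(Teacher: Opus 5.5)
Your proof is correct. Step 2 is the same reduction the paper makes: it also uses continuity along decreasing sequences, from the inductive assertion (ii) in the proof of Theorem \ref{thm:PMA PCreg}, to reduce to $\phi,\psi\in\PAPC(\tX,\gamma)$.

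The difference is in the piecewise affine case. The paper does not integrate the derivative. Instead it introduces the energy relative to an arbitrary reference, $E_\psi(\phi)=\frac{1}{d+1}\sum_j\int_{\tX}(\phi-\psi)\PMA(\phi^j,\psi^{d-j})$ in your notation. It applies Lemma \ref{lem:der energy} with $\psi$ as the reference function in place of $\gamma$. This shows that $E_\gamma(\phi_t)$ and $E_\psi(\phi_t)$ have the same $t$-derivative along the segment, so they differ by a constant. Evaluating at the endpoints and using $E_\psi(\psi)=0$ gives $E_\psi(\phi)=E_\gamma(\phi)-E_\gamma(\psi)$.

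Your route uses Lemma \ref{lem:der energy} only as stated, with the fixed reference $\gamma$. In exchange you need the binomial expansion of $\PMA(\phi_t)$ and the Beta integrals $\int_0^1\binom{d}{j}t^j(1-t)^{d-j}\,dt=\frac{1}{d+1}$. The payoff is that it shows exactly where the uniform weights $\frac{1}{d+1}$ come from. The paper's cocycle trick needs no computation, but it relies on the derivative formula holding for every admissible reference function.

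One small point in your Step 2. Proposition \ref{prop:suppMA} is stated only for $\PMA(\chi)=\PMA(\chi,\ldots,\chi)$. You also need that the mixed measures $\PMA(\phi^j,\psi^{d-j})$ and $\PMA(\phi^j,\gamma^{d-j})$ put no mass on $\overline{\tX}_\Pi\setminus\tX$. This follows from properties (1), (3), (4) of Theorem \ref{thm:PMA PCreg}. Expand $\PMA\bigl(\tfrac12(\phi+\psi)\bigr)=2^{-d}\sum_j\binom{d}{j}\PMA(\phi^j,\psi^{d-j})$; since every term is a positive measure, each mixed measure is dominated by a multiple of $\PMA\bigl(\tfrac12(\phi+\psi)\bigr)$. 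Proposition \ref{prop:suppMA} applies to that measure, because $\tfrac12(\phi+\psi)\in\PCreg(\tX,\gamma)$. The paper itself does not address this boundary issue, so your attention to it is worthwhile once this fix is added.
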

\begin{proof}
It follows from the proof of Theorem \ref{thm:PMA PCreg} that the map
$$\phi \mapsto \int_{\tX}(\phi - \gamma)\MA_{\poly}(\phi, \dots, \phi,\gamma,\ldots, \gamma)$$ 
is continuous along decreasing sequences, and similary with $\psi$ instead of $\gamma$. It follows that both sides of the equality are continuous with respect to decreasing convergence of $\PCreg$ functions, and we may thus assume that $\phi, \psi \in \PAPC(\tX,\gamma)$. For any $\phi, \psi \in \PAPC(\tX,\gamma)$, set
$$E_{\psi} (\phi) = \frac{1}{d+1} \sum_{j=0}^d \int_{\tX}(\phi - \psi)\MA_{\poly}(\underbrace{\phi, \dots, \phi}_{j\text{ times}},\underbrace{\psi,\ldots, \psi}_{(d-j) \text{ times}}).$$
Applying Lemma \ref{lem:der energy} to $\gamma = \psi$, we see that $E_{\gamma}(\phi_t)$ and $E_{\psi}(\phi_t)$ have the same derivative. The difference is thus constant in $t$, which grants
$$E_{\psi}(\phi) -E_\gamma(\phi) = E_{\psi}(\psi)-E_{\gamma}(\psi).$$
Since $E_{\psi}(\psi) =0$, this yields
$E_{\psi}(\phi) = E_{\gamma}(\phi)-E_{\gamma}(\psi),$
as was to be shown.
\end{proof}

\begin{prop} \label{prop:energy}
The energy functional $E \colon \PCreg(\tX, \gamma) \to \R$ is non-decreasing, concave, continuous for the topology of uniform convergence on $\PCreg(\tX, \gamma)$, and satisfies $E(\phi +c) = E(\phi) +c \deg(\gamma)$ for $c \in \R$. 
\end{prop}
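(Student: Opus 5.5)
The plan is to derive all four properties from the cocycle formula of Proposition \ref{prop:energy2}, combined with positivity, the total mass $\deg(\gamma)$ of the mixed measures, and the concavity of $E$ along affine paths in $\PAPC(\tX,\gamma)$, which comes from Lemma \ref{lem:der energy}.

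\emph{Translation.} Since $\PMA$ is invariant under additive constants (Theorem \ref{thm:PMA PCreg}(2)), every mixed measure in Definition \ref{defi:energy} is unchanged when $\phi$ is replaced by $\phi+c$. Each of the $d+1$ integrals therefore increases by $c\deg(\gamma)$, and dividing by $d+1$ gives $E(\phi+c)=E(\phi)+c\deg(\gamma)$.

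\emph{Monotonicity and continuity.} Take $\phi\ge\psi$ in $\PCreg(\tX,\gamma)$. In Proposition \ref{prop:energy2} the integrand $\phi-\psi$ is bounded (both functions are $\gamma+O(1)$) and non-negative, and each mixed measure is positive. Hence $E(\phi)\ge E(\psi)$. For continuity, the same formula gives
\[
|E(\phi)-E(\psi)|\le \sup_{\tX}|\phi-\psi|\cdot\deg(\gamma),
\]
because every mixed measure has total mass $\deg(\gamma)$. So $E$ is Lipschitz for the sup norm, and in particular continuous for uniform convergence.

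\emph{Concavity.} First take $\phi,\psi\in\PAPC(\tX,\gamma)$ and set $\phi_t=(1-t)\phi+t\psi$, which stays in $\PAPC(\tX,\gamma)$. By Lemma \ref{lem:der energy}, $E'(t)=\int(\psi-\phi)\PMA(\phi_t)$. Differentiating again with multilinearity (Proposition \ref{prop/def}) gives
\[
E''(t)=d\int_{\tX}(\psi-\phi)\,\PMA(\psi-\phi,\phi_t,\ldots,\phi_t).
\]
Here $f=\psi-\phi$ lies in $\PA_b(\tX)$ and $\phi_t\in\PAPC(\tX)$, so Lemma \ref{lem:nega} gives $E''(t)\le 0$. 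Thus $t\mapsto E(\phi_t)$ is concave on $[0,1]$, and in particular $E(\phi_t)\ge(1-t)E(\phi)+tE(\psi)$.

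For general $\phi,\psi\in\PCreg(\tX,\gamma)$, choose sequences in $\PAPC(\tX,\gamma)$ converging uniformly to them (Proposition \ref{def:PC-reg}). Their convex combinations then converge uniformly to $\phi_t$, and the continuity proved above passes the inequality to the limit.

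The main obstacle is justifying the second-derivative computation in the piecewise affine case. $E(t)$ is a polynomial, so the differentiation itself is routine. What needs care is that Lemma \ref{lem:nega} requires all other entries of $\PMA$ to be the same $\PAPC$ function $\phi_t$, and that $\psi-\phi$ is bounded and piecewise affine. Both hold here.
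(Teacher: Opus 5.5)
Your proof is correct. For the translation formula and for concavity (second derivative plus Lemma \ref{lem:nega} on $\PAPC(\tX,\gamma)$, then approximation) it coincides with the paper's argument.

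Where you differ is monotonicity and continuity. The paper proves monotonicity only on $\PAPC(\tX,\gamma)$, from Lemma \ref{lem:der energy} and positivity. It proves continuity separately: it uses the weak convergence of the mixed measures $\PMA(\phi_n,\ldots,\phi_n,\gamma,\ldots,\gamma)$ under uniform convergence together with the boundedness of $\phi-\gamma$. It then transfers monotonicity and concavity to $\PCreg(\tX,\gamma)$ by density.

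You instead invoke the cocycle formula of Proposition \ref{prop:energy2}. That result is stated and proved before this one and does not use it, so there is no circularity. From it you get monotonicity directly on $\PCreg(\tX,\gamma)$, together with the quantitative bound $|E(\phi)-E(\psi)|\le \deg(\gamma)\sup_{\tX}|\phi-\psi|$. This bound is also immediate from monotonicity and translation equivariance, via $\psi-c\le\phi\le\psi+c$.

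Your route is shorter and gives Lipschitz continuity rather than mere continuity. The cost is that the approximation work is hidden inside Proposition \ref{prop:energy2}, whose proof relies on continuity along decreasing sequences from Theorem \ref{thm:PMA PCreg}. The paper's route bypasses the cocycle formula and needs only the weak continuity of the mixed operator under uniform convergence.
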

\begin{proof}
The equality $E(\phi +c) = E(\phi) +c \deg(\gamma)$ follows from the invariance of $\PMA$ with respect to additive constants (see (2) in Theorem \ref{thm:PMA PCreg}) and that the total mass of $\PMA(\cdot)$ is $\deg(\gamma)$.

We prove that the energy functional is non-decreasing and concave on $\PAPC(\tX,\gamma)$.
Monotonicity follows from Lemma \ref{lem:der energy}, and (4) in Theorem \ref{thm:PMA PCreg}. For concavity, let $\phi$ and $\psi$ in $\PAPC(\tX,\gamma)$ and let $\phi_t = (1-t)\phi + t\psi$, we have to show that $t \mapsto E(\phi_t)$ is a concave function. Differentiating \eqref{eq:der-energy}, we obtain
\[
\frac{d^2}{dt^2}E(\phi_t) = d \int_{\tX} (\psi - \phi)\MA_{\poly}(\psi - \phi, \phi_t, \dotsc, \phi_t).
\]
As a result, the concavity of $E$ follows from the semi-negativity statement in Lemma \ref{lem:nega}. 

For the continuity, let $(\phi_n)_n$ be a sequence in $\PCreg(\tX, \gamma)$ converging to $\phi$ uniformly. Recall that for any $j$ the measure 
$$\mu_j(n) \coloneqq \PMA(\underbrace{\phi_n,\ldots, \phi_n}_{j\text{ times}},\underbrace{\gamma,\ldots, \gamma}_{(d-j) \text{ times}})$$ converges weakly to $\mu_j \coloneqq  \PMA(\underbrace{\phi, \ldots, \phi}_{j\text{ times}},\underbrace{\gamma,\ldots, \gamma}_{(d-j) \text{ times}})$. We have that
\begin{align*}
\bigg\lvert \int_{\tX} (\phi-\gamma) \mu_j -\int_{\tX} (\phi_n-\gamma) \mu_j(n) \bigg\rvert 
& = \bigg\lvert \int_{\tX} (\phi-\gamma) (\mu_j-\mu_j(n)) + \int_{\tX} (\phi-\phi_n) \mu_j(n) \bigg\rvert \\
& \le \lvert \int_{\tX} (\phi-\gamma) (\mu_j-\mu_j(n))  \rvert+ \deg(\gamma) \sup_{\tX} \lvert \phi-\phi_n \rvert
\end{align*}
and the left hand side goes to zero since $(\phi-\gamma)$ is bounded. Summing up over $j$ now yields the continuity of the energy functional. By continuity, we conclude that the energy functional is non-decreasing and concave also on $\PCreg(\tX,\gamma)$.
\end{proof} 

\subsection{Energy for $\PSH$}

\begin{defi} \label{defi:energy2}
    Let $\phi \in \PSH(\tX,\gamma)$. We define:
    $$E(\phi) \coloneqq \inf \left\{ E(\psi) \; | \; \psi \in \PAPC(\tX,\gamma), \psi \ge \phi \right\}. $$
\end{defi}

\begin{lemma} \label{lem:energy}
    The definition above is consistent with Definition \ref{defi:energy} for functions in $\PCreg(\tX, \gamma)$. Moreover, the functional $E \colon \PSH(\tX,\gamma) \to \R \cup \{-\infty\}$ satisfies $E(\phi+c) = E(\phi) +c \deg(\gamma)$ for $c \in \R$, and is non-decreasing and continuous along decreasing sequences.
\end{lemma}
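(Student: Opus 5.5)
The proof splits into four parts, in this order: consistency with Definition \ref{defi:energy}, equivariance under constants, monotonicity, and continuity along decreasing sequences.

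\emph{Consistency.} Let $\phi \in \PCreg(\tX,\gamma)$ and write $E^{\mathrm{reg}}$ for the energy of Definition \ref{defi:energy}. By Proposition \ref{prop:energy}, $E^{\mathrm{reg}}$ is non-decreasing, so every $\psi \in \PAPC(\tX,\gamma)$ with $\psi \ge \phi$ satisfies $E(\psi) \ge E^{\mathrm{reg}}(\phi)$; hence the infimum is at least $E^{\mathrm{reg}}(\phi)$. For the reverse inequality, Proposition/Definition \ref{def:PC-reg} gives a decreasing sequence $\phi_j \in \PAPC(\tX,\gamma)$ converging uniformly to $\phi$, with $\phi_j \ge \phi$. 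Continuity of $E^{\mathrm{reg}}$ for uniform convergence (Proposition \ref{prop:energy}) then gives $E(\phi_j) \to E^{\mathrm{reg}}(\phi)$, so the two definitions agree.

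\emph{Constants and monotonicity.} The map $\psi \mapsto \psi + c$ is a bijection of $\PAPC(\tX,\gamma)$ which preserves the constraint, in the sense that $\psi \ge \phi$ if and only if $\psi + c \ge \phi + c$. Combined with $E(\psi+c) = E(\psi) + c\deg(\gamma)$ from Proposition \ref{prop:energy}, this gives $E(\phi+c) = E(\phi) + c\deg(\gamma)$, with the convention $-\infty + c\deg(\gamma) = -\infty$. Monotonicity is immediate: if $\phi \le \phi'$, then every competitor $\psi \ge \phi'$ also satisfies $\psi \ge \phi$, so the infimum defining $E(\phi)$ runs over a larger set and $E(\phi) \le E(\phi')$.

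\emph{Continuity along decreasing sequences.} This is the main step. Let $\phi_j \searrow \phi$ in $\PSH(\tX,\gamma)$. Monotonicity gives $E(\phi) \le \lim_j E(\phi_j)$, where the limit exists because the sequence $E(\phi_j)$ is non-increasing. For the reverse inequality, I first reduce to $\PAPC$ approximants. Choose, for each $j$, a decreasing sequence $(\phi_j^k)_k$ in $\PAPC(\tX,\gamma)$ converging to $\phi_j$. Set $\psi_k \coloneqq \max(\phi_1^k, \ldots, \phi_k^k)$. This lies in $\PAPC(\tX,\gamma)$, since the class is stable under finite maxima, and $(\psi_k)$ is decreasing because each $\phi_j^k$ decreases in $k$ and the index set grows. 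For $k \ge j$ we have
\[
\phi_j \le \psi_k \le \max_{i\le k} \phi_i^k ,
\]
and I will check that $\psi_k \searrow \phi$ pointwise: $\phi_j \le \lim_k \psi_k$ holds for all $j$, while $\phi_i^k \le \phi_i^i$ for $k \ge i$ controls each term from above. Since $\psi_k \ge \phi_k$, monotonicity gives $E(\psi_k) \ge E(\phi_k)$. It therefore suffices to show $\lim_k E(\psi_k) \le E(\phi)$, that is, to prove the claim for a decreasing sequence $\psi_k$ in $\PAPC(\tX,\gamma)$ with limit $\phi$.

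\emph{The $\PAPC$ case.} Fix a competitor $\psi \in \PAPC(\tX,\gamma)$ with $\psi \ge \phi$ and $\varepsilon > 0$. I compare $\psi_k$ with $\psi + \varepsilon$. The set $\{\psi_k \ge \psi + \varepsilon\}$ decreases in $k$, and the key claim is that for $k \gg 0$ it is empty. This must hold uniformly on all of $\tX$, not just on compact sets. To get this, I pass to $\psi_k - \gamma$ and $\psi - \gamma$, which extend continuously to the compactification $\overline{\tX}_\Pi$ by Lemma \ref{lem:ext}. I then argue, as in the proof of Lemma \ref{lem:decreasing cv uniform cv}, that the extensions still satisfy $\lim_k \overline{\psi_k - \gamma} \le \overline{\psi-\gamma}$ at boundary points, using the infimum formula of Lemma \ref{lem:ext}. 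Dini's lemma on the compact set $\overline{\tX}_\Pi$, applied to $\max(\psi_k - \psi - \varepsilon, 0)$, then makes this quantity identically zero for $k$ large. This gives $\psi_k \le \psi + \varepsilon$, hence
\[
E(\psi_k) \le E(\psi) + \varepsilon \deg(\gamma).
\]
Letting $\varepsilon \to 0$ and taking the infimum over $\psi$ yields $\lim_k E(\psi_k) \le E(\phi)$.

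The delicate point is the boundary behaviour in the Dini argument. It uses that $\psi$ is $\PAPC$ and hence $\psi-\gamma$ extends continuously, whereas $\phi$ itself need not extend continuously. If this boundary argument fails, the fallback is to apply Dini only on $\Sk(\Pi')$ and use the retraction inequality of Lemma \ref{lem:decr} on the unbounded faces.
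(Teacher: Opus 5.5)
Your consistency, equivariance and monotonicity arguments match the paper's. Your compactification/Dini argument in the ``$\PAPC$ case'' is also sound. The gap is the reduction step of the continuity argument.

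\textbf{The gap.} The diagonal sequence $\psi_k \coloneqq \max(\phi_1^k,\ldots,\phi_k^k)$ is in general neither decreasing nor convergent to $\phi$. Enlarging the index set can only increase a maximum, not decrease it. Moreover, the newly added term $\phi_k^k$ is not controlled by anything. Your bound $\phi_i^k \le \phi_i^i$ only yields $\psi_k \le \max_{i\le k}\phi_i^i$, which does not tend to $\phi$.

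For a concrete failure, take $\phi_j = \phi \in \PAPC(\tX,\gamma)$ for all $j$. Set $\phi_j^k = \phi+1$ for $k \le j$ and $\phi_j^k = \phi + \frac{1}{k-j}$ for $k>j$. Each $(\phi_j^k)_k$ is a legitimate decreasing $\PAPC$ approximation of $\phi_j$, yet $\psi_k = \phi+1$ for every $k$, so $\psi_k \to \phi+1 \neq \phi$. Your $\PAPC$-case argument relies on $(\psi_k)$ decreasing to $\phi$, so the continuity proof does not go through as written.

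\textbf{The fix.} The reduction is unnecessary; run your ``fallback'' directly on the $\phi_j$. Fix a competitor $\psi \in \PAPC(\tX,\gamma)$ with $\psi \ge \phi$. Choose a simplicial refinement $\Pi'$ on which both $\psi$ and $\gamma$ are piecewise affine.
\begin{itemize}
\item Since $\psi-\gamma$ is bounded and affine on each face, its linear part vanishes on recession directions. Hence $\psi-\gamma = (\psi-\gamma)\circ r_{\Pi'}$.
\item The proof of Lemma \ref{lem:decr} only uses an upper bound on $\phi_j-\gamma$. It therefore gives $\phi_j - \gamma \le (\phi_j-\gamma)\circ r_{\Pi'}$ for $\phi_j \in \PSH(\tX,\gamma)$.
\item Combining the two, $\phi_j - \psi \le (\phi_j-\psi)\circ r_{\Pi'}$. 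So $\sup_{\tX}(\phi_j-\psi)$ is attained on the compact set $\Sk(\Pi')$.
\item On $\Sk(\Pi')$, Dini's lemma applies to the continuous functions $\phi_j-\psi$, which decrease to $\phi - \psi \le 0$. This gives $\phi_j \le \psi+\varepsilon$ on all of $\tX$ for $j \gg 1$.
\end{itemize}
You then conclude exactly as you do. This is the paper's argument, phrased there as openness of $\{\eta \in \PSH(\tX,\gamma) : \sup_{\tX}(\eta-\psi)<\varepsilon\}$ via Theorem \ref{thm:comp}.

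Alternatively, your compactification argument also works without the reduction. Extend $\phi_j-\gamma$ to $\overline{\tX}_\Pi$ by the infimum formula of Lemma \ref{lem:ext}. The result is an upper semicontinuous function with values in $[-\infty,+\infty)$, since it is an infimum of continuous truncations. The closed sets $\{\overline{\phi_j-\gamma}-\overline{\psi-\gamma}\ge\varepsilon\}$ then decrease to the empty set, so one of them is empty by compactness.
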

\begin{proof}
First observe that the two definitions are clearly equivalent for $\phi \in \PAPC(\tX, \gamma)$ by monotonicity. Write $E$ for the energy functional as defined in Definition \ref{defi:energy}, and $\tilde{E}$ for the one in Definition \ref{defi:energy2}.

If $\phi \in \PCreg(\tX,\gamma)$ and $(\phi_j)_j$ is a sequence in $\PAPC(\tX, \gamma)$ decreasing to $\phi$, we claim that $E(\phi)$ is the limit of the decreasing sequence $E(\phi_j)$. Indeed, the $(\phi_j)_j$ converge uniformly to $\phi$ by Lemma \ref{lem:decreasing cv uniform cv}, and $E(\cdot)$ is continuous for the topology of uniform convergence by Proposition \ref{prop:energy}. This yields $E(\phi) \ge \tilde{E}(\phi)$. Conversely, if $\psi \in \PAPC(\tX,\gamma)$ such that $\psi \ge \phi$, then by Proposition \ref{prop:energy} $E(\psi) \ge E(\phi)$ and taking the inf over $\psi$ yields $\tilde{E}(\phi) \ge E(\phi)$.

Translation-invariance and monotonicity of $E$ are clear from the definition, so that we prove continuity along decreasing sequences. Let $(\phi_j)_j$ be a sequence in $\PSH(\tX,\gamma)$ decreasing pointwise to $\phi$, and set $\ell = \lim_j E(\phi_j)$. By monotonicity we have $E(\phi) \le \ell$. 
Let $\eps >0$. By Definition  \ref{defi:energy2}, there exists $\psi \in \PAPC(\tX,\gamma)$ such that $\psi \ge \phi$ and $E(\psi) \le E(\phi) + \eps$. Moreover, by Theorem \ref{thm:comp}, the set $U_{\eps} = \{\eta \in \PSH(\tX,\gamma) | \sup_{\tX} (\eta-\psi) < \eps \}$ is an open neighbourhood of $\psi$ in $\PSH(\tX,\gamma)$ for the topology of pointwise convergence, so that $\phi_j \in U_{\eps}$ for $j \gg 1$. It follows that $\phi_j \le \psi + \eps$ for $j \gg 1$, hence $E(\phi_j) \le E(\psi) + \eps \deg(\gamma)$ by monotonicity and translation-invariance. We infer $E(\phi_j) \le E(\phi) +(1+\deg(\gamma)) \eps$ for $j \gg 1$, whence $\ell \le E(\phi) +(1+\deg(\gamma)) \eps$, and letting $\eps \rightarrow 0$ concludes.
\end{proof}

\begin{defi}\label{def:psh-finite-energy} 
The subspace of $\PSH(\tX,\gamma)$ of functions $\phi$ such that $E(\phi) > - \infty$ is denoted by $\mathcal{E}^1(\tX, \gamma)$, and we say that such functions have \emph{finite energy}.
\end{defi}

\begin{lemma}\label{lem:total-mass-finite-energy}
    For any $\phi \in \mathcal{E}^1(\tX,\gamma)$ the measure $\MA_{\poly}(\phi)$ (Definition \ref{def:MA-PSH}) has total mass $\deg(\gamma)$.
\end{lemma}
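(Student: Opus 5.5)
By Definition \ref{def:MA-PSH} and Remark \ref{rem:MApsh}, the total mass of $\MA_{\poly}(\phi)$ is
\[
\lim_{t\to+\infty}\int_{\{\phi>\gamma-t\}}\MA_{\poly}(\phi^{\langle t\rangle})
=\deg(\gamma)-\lim_{t\to+\infty}\int_{\{\phi\le\gamma-t\}}\MA_{\poly}(\phi^{\langle t\rangle}).
\]
This uses that $\MA_{\poly}(\phi^{\langle t\rangle})$ has mass $\deg(\gamma)$ by Theorem \ref{thm:PMA PCreg}, and is supported on $\tX$ by Proposition \ref{prop:suppMA}, since $\gamma$ is positive at infinity. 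So the plan is to show that the mass of $\MA_{\poly}(\phi^{\langle t\rangle})$ on $\{\phi\le\gamma-t\}$ tends to $0$ as $t\to+\infty$, using the hypothesis $E(\phi)>-\infty$. We normalize $\sup_{\tX}(\phi-\gamma)=0$, so that $\phi^{\langle t\rangle}-\gamma\in[-t,0]$.

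The first step is an energy lower bound for the truncations. Since $\phi^{\langle t\rangle}\ge\phi$, monotonicity (Lemma \ref{lem:energy}) gives $E(\phi^{\langle t\rangle})\ge E(\phi)>-\infty$. Next I use the formula of Definition \ref{defi:energy}: each term $\int(\phi^{\langle t\rangle}-\gamma)\MA_{\poly}(\phi^{\langle t\rangle},\ldots,\gamma,\ldots)$ is nonpositive. Hence, for the $j=d$ term alone,
\[
\int_{\tX}(\gamma-\phi^{\langle t\rangle})\,\MA_{\poly}(\phi^{\langle t\rangle})\le -(d+1)E(\phi)\eqqcolon C<\infty,
\]
with $C$ independent of $t$.

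The second step is a Chebyshev-type estimate. I compare the truncation levels $t$ and $2t$. On $\{\phi\le \gamma-2t\}$ one has $\phi^{\langle 2t\rangle}=\gamma-2t$, so $\gamma-\phi^{\langle 2t\rangle}=2t$ there, and therefore
\[
2t\int_{\{\phi\le\gamma-2t\}}\MA_{\poly}(\phi^{\langle 2t\rangle})\le C.
\]
It remains to relate this to the mass on $\{\phi\le\gamma-t\}$. I write
\[
\int_{\{\phi\le\gamma-t\}}\MA_{\poly}(\phi^{\langle t\rangle})=\deg(\gamma)-\int_{\{\phi>\gamma-t\}}\MA_{\poly}(\phi^{\langle t\rangle}).
\]
By Remark \ref{rem:MApsh} (locality), the last integral equals $\int_{\{\phi>\gamma-t\}}\MA_{\poly}(\phi^{\langle 2t\rangle})$. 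That quantity is at least $\deg(\gamma)-\int_{\{\phi\le \gamma-t\}}\MA_{\poly}(\phi^{\langle 2t\rangle})$. On $\{\gamma-2t<\phi\le\gamma-t\}$ we have $\gamma-\phi^{\langle 2t\rangle}\ge t$, so the mass of $\MA_{\poly}(\phi^{\langle 2t\rangle})$ on the whole set $\{\phi\le\gamma-t\}$ is at most $C/t$. Thus the mass of $\MA_{\poly}(\phi^{\langle t\rangle})$ on $\{\phi\le\gamma-t\}$ is at most $C/t\to0$, which gives total mass $\deg(\gamma)$.

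The main obstacle is justifying the bookkeeping with indicator functions of non-open sets such as $\{\phi>\gamma-t\}$, and the use of locality there. Proposition \ref{prop:locreg} is stated for the open set $\{\phi>\psi\}$, which is fine here, and complements are then taken inside measures of known total mass $\deg(\gamma)$ on $\tX$. I also need to check that Definition \ref{def:MA-PSH} indeed yields total mass equal to $\lim_t\int\mathbf 1_{\{\phi>\gamma-t\}}\MA_{\poly}(\phi^{\langle t\rangle})$, by taking $g\equiv1$.
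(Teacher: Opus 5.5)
Your proof is correct. Its skeleton matches the paper's: both reduce to showing $\MA_{\poly}(\phi^{\langle t\rangle})\{\phi\le\gamma-t\}\to 0$ by a Chebyshev-type bound that comes from $E(\phi)>-\infty$. The difference lies in how the energy estimate is obtained.

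\emph{The paper's route.} It compares two truncation levels through the cocycle formula of Proposition~\ref{prop:energy2}, which gives
\[
\frac{t}{2(d+1)}\,\MA_{\poly}(\phi^{\langle t\rangle})\{\phi\le\gamma-t\}\le E(\phi^{\langle t/2\rangle})-E(\phi^{\langle t\rangle}).
\]
It then needs continuity of $E$ along decreasing sequences (Lemma~\ref{lem:energy}) to see that the right-hand side tends to $0$, which yields decay $o(t^{-1})$.

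\emph{Your route.} You normalize so that $\phi\le\gamma$, which makes every term of Definition~\ref{defi:energy} nonpositive. You then use only monotonicity, $E(\phi^{\langle t\rangle})\ge E(\phi)$, to get the uniform bound $\int_{\tX}(\gamma-\phi^{\langle t\rangle})\,\MA_{\poly}(\phi^{\langle t\rangle})\le -(d+1)E(\phi)$, and hence decay $O(t^{-1})$. This is weaker than $o(t^{-1})$ but fully sufficient here, and it uses less machinery. It is in fact the same estimate the paper itself uses later, in the proof of Proposition~\ref{prop:MAdecr}.

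The normalization is harmless. Since $(\phi+c)^{\langle t\rangle}=\phi^{\langle t+c\rangle}+c$, neither $\MA_{\poly}(\phi)$ nor the finiteness of the energy changes when you add a constant.

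One simplification: your detour comparing levels $t$ and $2t$ via locality is unnecessary. On $\{\phi\le\gamma-t\}$ one has $\phi^{\langle t\rangle}=\gamma-t$, so your uniform bound directly gives $t\,\MA_{\poly}(\phi^{\langle t\rangle})\{\phi\le\gamma-t\}\le C$ for every $t$. Your inequality at level $2t$ is exactly this statement with $t$ renamed, so the argument can end there.
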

\begin{proof}
For $t\in \R_{\geq 0}$, write $\phi^{<t>} \coloneqq \max(\phi, \gamma-t),$ and $\mu_t = \MA_{\poly}(\phi^{<t>})$. Since $\phi^{\langle t/2 \rangle} \ge \phi^{\langle t \rangle}$, Proposition \ref{prop:energy2} together with positivity of the Monge-Ampère measure gives:
$$E(\phi^{<t/2>})-E(\phi^{<t>}) \ge \frac{1}{d+1} \int_{\tX} (\phi^{<t/2>}-\phi^{<t>}) \mu_t.$$
 This yields
\begin{align*}
    E(\phi^{<t/2>})-E(\phi^{<t>}) 
    & \ge \frac{1}{d+1} \int_{0}^{t/2} \mu_t \{\phi^{<t/2>}-\phi^{<t>} \ge s \} ds \\
    & \ge \frac{1}{d+1} \int_{0}^{t/2} \mu_t \{\phi^{<t/2>}-\phi^{<t>} \ge t/2 \} ds \\
    & = \frac{t}{2(d+1)} \mu_t \{ \phi \le \gamma -t \}.
\end{align*}
Since $\phi$ has finite energy, the left-hand side goes to zero when $t \to + \infty$, so that $\mu_t \{ \phi \le \gamma-t \} = o(t^{-1}).$
We obtain that
$$
\deg(\gamma)=\int_{\tX} \mu_t =\mu_t\{ \phi \le \gamma-t \} + \mu_t \{ \phi > \gamma-t \} = o(t^{-1}) +\mu_t \{ \phi > \gamma-t \},
$$ hence taking the limit for $t \to +\infty$, by Definition \ref{def:MA-PSH} applied to $g \equiv 1$, we get that $\MA_{\poly}(\phi)$ has full mass $\deg(\gamma)$.
\end{proof}
\begin{prop}\label{prop:MAdecr}
Let $(\phi_j)_j$ be a sequence in $\PSH(\tX,\gamma)$, decreasing pointwise to $\phi \in \mathcal{E}^1(\tX,\gamma)$. Then $\phi_j \in \mathcal{E}^1(\tX,\gamma)$ for all $j$, and for any $v \in \mathcal{C}^0_b(\tX)$:
$$\lim_j \int_{\tX} v\, \PMA(\phi_j) = \int_{\tX} v \,\PMA(\phi).$$
\end{prop}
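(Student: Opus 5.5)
The first claim is immediate from monotonicity. Since $\phi_j \ge \phi$, Lemma \ref{lem:energy} gives $E(\phi_j)\ge E(\phi)>-\infty$, so each $\phi_j$ lies in $\mathcal{E}^1(\tX,\gamma)$. The same monotonicity gives $E(\phi_j)\searrow E(\phi)$ by the continuity along decreasing sequences in Lemma \ref{lem:energy}. By Lemma \ref{lem:total-mass-finite-energy}, all the measures $\PMA(\phi_j)$ and $\PMA(\phi)$ have the same total mass $\deg(\gamma)$. It therefore suffices to treat $v\ge 0$ bounded continuous, since a general $v$ differs from such a function by a constant. I would proceed by truncation, reducing to the $\PCreg$ case, where continuity along decreasing sequences is already known from Theorem \ref{thm:PMA PCreg}.

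Fix $t>0$ and write $\phi^{\langle t\rangle}=\max(\phi,\gamma-t)$ and $\phi_j^{\langle t\rangle}=\max(\phi_j,\gamma-t)$. These lie in $\PCreg(\tX,\gamma)$, and $\phi_j^{\langle t\rangle}$ decreases to $\phi^{\langle t\rangle}$ as $j\to\infty$. By Lemma \ref{lem:decreasing cv uniform cv} the convergence is uniform, so Theorem \ref{thm:PMA PCreg} gives weak convergence $\PMA(\phi_j^{\langle t\rangle})\to\PMA(\phi^{\langle t\rangle})$. Since $\gamma$ is positive at infinity, Proposition \ref{prop:suppMA} shows these measures live on $\tX$, so they can be tested against bounded continuous $v$.

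Next I compare $\PMA(\phi_j)$ with its truncation. By Remark \ref{rem:MApsh}, $\PMA(\phi_j)$ and $\PMA(\phi_j^{\langle t\rangle})$ agree on $\{\phi_j>\gamma-t\}$. Both have total mass $\deg(\gamma)$, so
\[
\Big|\int v\,\PMA(\phi_j)-\int v\,\PMA(\phi_j^{\langle t\rangle})\Big|\le \|v\|_\infty\big(\PMA(\phi_j)\{\phi_j\le\gamma-t\}+\PMA(\phi_j^{\langle t\rangle})\{\phi_j\le \gamma-t\}\big).
\]
The first term equals $\deg(\gamma)-\PMA(\phi_j^{\langle t\rangle})\{\phi_j>\gamma-t\}$, again by Remark \ref{rem:MApsh} and full mass, and this is exactly the second term. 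So everything reduces to a \emph{uniform in $j$} bound
\[
\PMA(\phi_j^{\langle t\rangle})\{\phi_j\le\gamma-t\}\le \varepsilon(t),\qquad \varepsilon(t)\to0 \text{ as } t\to\infty.
\]
The same estimate with $\phi$ in place of $\phi_j$ holds as well. Given this, a standard $3\varepsilon$ argument finishes: first choose $t$ large, then $j$ large using the weak convergence at level $t$.

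The main obstacle is the uniform tail bound. I would take the estimate from the proof of Lemma \ref{lem:total-mass-finite-energy},
\[
\tfrac{t}{2(d+1)}\,\mu_{j,t}\{\phi_j\le\gamma-t\}\le E(\phi_j^{\langle t/2\rangle})-E(\phi_j^{\langle t\rangle}),
\]
where $\mu_{j,t}=\PMA(\phi_j^{\langle t\rangle})$, and bound the right-hand side uniformly in $j$. Since $\phi_j\ge\phi$, we have $E(\phi_j^{\langle t\rangle})\ge E(\phi^{\langle t\rangle})\ge E(\phi)$. Normalizing $\sup(\phi_j-\gamma)\le C$ uniformly (which holds because $\phi_j\le\phi_1$), the quantity $E(\phi_j^{\langle t/2\rangle})$ is bounded above by a constant. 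This yields $\mu_{j,t}\{\phi_j\le\gamma-t\}\le C'/t$ with $C'$ independent of $j$, which suffices. If this crude $O(1/t)$ bound turned out to be insufficient, I would refine it to $o(1)$ uniformly via Dini-type arguments on $E(\phi_j^{\langle t\rangle})$. As it stands, though, $C'/t\to0$ is already enough.
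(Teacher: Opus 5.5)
Your proof is correct and follows the paper's argument. You truncate at level $t$, use weak continuity of $\PMA$ along the uniformly convergent truncations $\phi_j^{\langle t\rangle}\searrow\phi^{\langle t\rangle}$, and control the truncation error uniformly in $j$ by an $O(1/t)$ energy bound; the only differences are cosmetic. The paper compares $\PMA(\phi_j^{\langle t\rangle})$ with $\PMA(\phi_j^{\langle s\rangle})$ and bounds the tail by $\frac{1}{t}\int(\gamma-\phi_j^{\langle t\rangle})\,\PMA(\phi_j^{\langle t\rangle})\le\frac{d+1}{t}\lvert E(\phi)\rvert$ after normalizing via Theorem~\ref{thm:comp}, whereas you use full mass to compare with $\PMA(\phi_j)$ directly and reuse the estimate from the proof of Lemma~\ref{lem:total-mass-finite-energy} together with the uniform bound coming from $\phi_j\le\phi_1$.
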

\begin{proof}
We argue as in \cite[Proposition 6.9]{BoucksomFavreJonsson2015}. Since $\phi_j \ge \phi$, Lemma \ref{lem:energy} grants $E(\phi_j) \ge E(\phi) >-\infty$, so that $\phi_j \in \mathcal{E}^1(\tX,\gamma)$ for all $j$.
Write $\phi^{\langle t \rangle} = \max ( \phi, \gamma -t )$ and similarly for $\phi_j^{\langle t \rangle}$. By the proof of Lemma \ref{lem:total-mass-finite-energy}, the estimate $$\MA_{\poly}(\phi_j^{<t>})\{ \phi_j \le \gamma-t \} = o(t^{-1})$$ holds, which implies that for all $j$ and any $v \in \mathcal{C}^0_b(\tX)$, 
$$\int_{\tX} v\, \PMA(\phi_j) = \lim_t \int_{\tX} v\, \PMA(\phi_j^{\langle t \rangle}),$$ and similarly for $\phi$. We claim that this convergence is uniform with respect to $j$. Granting the claim for now, the result follows from the convergence 
$$\lim_j \int_{\tX} v\, \PMA(\phi_j^{\langle t \rangle}) = \int_{\tX} v\, \PMA(\phi^{\langle t \rangle})$$ for all $t$, since $\phi_j^{\langle t \rangle}$ converges uniformly to $\phi^{\langle t \rangle}$ by Dini's lemma and Lemma \ref{lem:decreasing cv uniform cv}.

We now prove the claim. Up to shifting by constants, we may assume $\sup_{\tX} \lvert v \rvert =1$ and $\phi \le \gamma-1$, so that $E(\phi) \le 0$ by monotonicity of the energy, and moreover $\sup_{\tX} (\phi_j-\gamma) \le 0$ for all $j \gg 1$ by Theorem \ref{thm:comp}.
Fix $j \gg 1$ and set $\mu_t(j) \coloneqq \PMA(\phi_j^{\langle t \rangle})$. For $s \ge t$, $\mu_s(j)$ and $\mu_t(j)$ agree on $\{ \phi_j \ge \gamma -t \}$, hence we have
\begin{align*}
\big\lvert \int_{\tX} v \mu_t(j) - \int_{\tX} v \mu_s(j) \big\rvert 
& \le (\mu_t(j)+\mu_s(j))(\{ \phi_j < \gamma-t\})  \\
& \le \int \frac{\gamma-\phi_j^{\langle t \rangle}}{t} \mu_t(j)+ \int \frac{\gamma-\phi_j^{\langle s \rangle}}{t} \mu_s(j) \\
& \le \frac{d+1}{t} (\lvert E(\phi_j^{\langle t \rangle}) \rvert +\lvert E(\phi_j^{\langle s \rangle}) \rvert) \\
& \le \frac{2(d+1)}{t} \lvert E(\phi) \rvert,
\end{align*}
since $E(\phi) \le E(\phi_j) \le E(\gamma) = 0$ by monotonicity.  Letting $s$ go to infinity now shows that the weak convergence of $\mu_t(j)$ to $\PMA(\phi_j)$ is uniform with respect to $j$, and concludes the proof.
\end{proof}

\begin{prop}\label{prop:comparisonE} 
    For any $\phi, \psi \in \mathcal{E}^1(\tX,\gamma)$, the following hold:
    \begin{itemize}
        \item Locality: $\mathbf{1}_{\{\phi > \psi\}} \MA_{\poly} (\max (\phi, \psi)) =  \mathbf{1}_{\{\phi > \psi\}} \MA_{\poly}(\phi)$,
        \item Comparison principle: $\int_{\phi < \psi} \MA_{\poly}(\psi) \le \int_{\phi < \psi} \MA_{\poly}(\phi)$ .
    \end{itemize}
\end{prop}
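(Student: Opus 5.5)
We reduce both statements to their $\PCreg$ versions, Proposition \ref{prop:locreg} and Corollary \ref{cor:comparison}, by truncating at level $t$ and letting $t\to+\infty$. For $t\in\R$ set $\phi^{\langle t\rangle}=\max(\phi,\gamma-t)$ and $\psi^{\langle t\rangle}=\max(\psi,\gamma-t)$; both lie in $\PCreg(\tX,\gamma)$. We will use repeatedly Remark \ref{rem:MApsh},
$$\mathbf{1}_{\{\phi>\gamma-t\}}\PMA(\phi^{\langle t\rangle})=\mathbf{1}_{\{\phi>\gamma-t\}}\PMA(\phi),$$
together with the estimate $\PMA(\phi^{\langle t\rangle})\{\phi\le\gamma-t\}=o(t^{-1})$ from the proof of Lemma \ref{lem:total-mass-finite-energy}. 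Finite energy is used only through this estimate, which says that the truncation region carries vanishing mass. Note also that $\max(\phi,\psi)\in\mathcal E^1(\tX,\gamma)$ by monotonicity of $E$ (Lemma \ref{lem:energy}), and that $\max(\phi,\psi)^{\langle t\rangle}=\max(\phi^{\langle t\rangle},\psi^{\langle t\rangle})$.

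\textbf{Locality.} Fix $t$. On the open set $\{\phi>\psi\}\cap\{\phi>\gamma-t\}$ we have $\phi^{\langle t\rangle}=\phi>\psi^{\langle t\rangle}$, since $\psi^{\langle t\rangle}=\max(\psi,\gamma-t)<\phi$ there. Proposition \ref{prop:locreg} applied to $\phi^{\langle t\rangle},\psi^{\langle t\rangle}$ therefore gives
$$\PMA(\max(\phi,\psi)^{\langle t\rangle})=\PMA(\phi^{\langle t\rangle})$$
on that set. The set is contained in $\{\max(\phi,\psi)>\gamma-t\}$, so Remark \ref{rem:MApsh}, applied once to $\max(\phi,\psi)$ and once to $\phi$, replaces both truncated measures by the untruncated ones there. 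Hence $\PMA(\max(\phi,\psi))=\PMA(\phi)$ on $\{\phi>\psi\}\cap\{\phi>\gamma-t\}$. These sets increase to $\{\phi>\psi\}$ as $t\to\infty$, because $\phi$ is finite everywhere, and monotone convergence concludes.

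\textbf{Comparison principle.} Apply Corollary \ref{cor:comparison} to $\phi^{\langle t\rangle},\psi^{\langle t\rangle}$:
$$\int_{\{\phi^{\langle t\rangle}<\psi^{\langle t\rangle}\}}\PMA(\psi^{\langle t\rangle})\le\int_{\{\phi^{\langle t\rangle}<\psi^{\langle t\rangle}\}}\PMA(\phi^{\langle t\rangle}).$$
First observe that $\{\phi^{\langle t\rangle}<\psi^{\langle t\rangle}\}=\{\phi<\psi\}\cap\{\psi>\gamma-t\}$. On this set $\psi>\gamma-t$, so by Remark \ref{rem:MApsh} the left side equals $\int_{\{\phi<\psi\}\cap\{\psi>\gamma-t\}}\PMA(\psi)$, which increases to $\int_{\{\phi<\psi\}}\PMA(\psi)$. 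For the right side, split according to whether $\phi>\gamma-t$ or $\phi\le\gamma-t$. The first piece equals $\int_{\{\phi<\psi,\,\psi>\gamma-t,\,\phi>\gamma-t\}}\PMA(\phi)\le\int_{\{\phi<\psi\}}\PMA(\phi)$, again by Remark \ref{rem:MApsh}. The second piece is bounded by $\PMA(\phi^{\langle t\rangle})\{\phi\le\gamma-t\}=o(t^{-1})\to0$. Letting $t\to\infty$ gives the claim.

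The main obstacle is this second, boundary piece of the right-hand side in the comparison principle. It is the only place where finite energy is genuinely needed, and it requires the mass estimate from Lemma \ref{lem:total-mass-finite-energy} to hold for the single function $\phi$ uniformly in $t$, which it does.
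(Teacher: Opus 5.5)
Your proof is correct. For locality you follow the paper's argument. Your set $\{\phi>\psi\}\cap\{\phi>\gamma-t\}$ is exactly $\{\phi^{\langle t\rangle}>\psi^{\langle t\rangle}\}$, and your chain (Remark \ref{rem:MApsh}, then Proposition \ref{prop:locreg} applied to the truncations, then Remark \ref{rem:MApsh} again) is the paper's. The only difference is the final limit: you use continuity from below along these increasing sets, while the paper splits the set into $\{\phi>\gamma-t\ge\psi\}$ and $\{\phi>\psi>\gamma-t\}$ and uses dominated convergence. The two are equivalent.

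For the comparison principle your route is genuinely different. The paper reruns the $\eps$-shift argument of Corollary \ref{cor:comparison} directly in $\mathcal{E}^1(\tX,\gamma)$. It combines the locality just established with the full-mass statement of Lemma \ref{lem:total-mass-finite-energy}, applied to $\max(\phi,\psi-\eps)$ and to $\phi$. You instead apply the $\PCreg$ comparison principle to $\phi^{\langle t\rangle},\psi^{\langle t\rangle}$, and handle the single error term with the $o(t^{-1})$ estimate from inside the proof of that lemma rather than with its conclusion.

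What each approach buys:
\begin{itemize}
\item Your argument does not depend on the first bullet. It also makes visible that finite energy enters only through $\phi$: the left-hand side needs nothing beyond $\psi\in\PSH(\tX,\gamma)$ and Remark \ref{rem:MApsh}.
\item The paper's argument is shorter once locality and full mass are available, and it is a verbatim transcription of the $\PCreg$ proof.
\end{itemize}
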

\begin{proof}
We follow the proof of \cite[Proposition 6.11]{BoucksomFavreJonsson2015}, and set $u = \max \{ \phi, \psi \}$. By Remark \ref{rem:MApsh}, for any $t \in \R$, we have $\mathbf{1}_{\{ \phi > \gamma-t\} } \PMA(\phi^{\langle t \rangle}) = \mathbf{1}_{ \{\phi > \gamma-t\} } \PMA(\phi)$, hence
\begin{alignat*}{2}
\mathbf{1}_{\{ \phi^{\langle t \rangle} > \psi^{\langle t \rangle}\} } \PMA(\phi)
& =\mathbf{1}_{\{ \phi^{\langle t \rangle} > \psi^{\langle t \rangle}\} } \PMA(\phi^{\langle t \rangle})
&& \; \text{as $\{ \phi^{\langle t \rangle} > \psi^{\langle t \rangle}\} \subseteq \{ \phi > \gamma -t \}$} \\
& = \mathbf{1}_{\{ \phi^{\langle t \rangle} > \psi^{\langle t \rangle}\} } \PMA(u^{\langle t \rangle})
&& \; \text{by locality (Proposition \ref{cor:locality})}\\
& =\mathbf{1}_{\{ \phi^{\langle t \rangle} > \psi^{\langle t \rangle}\} } \PMA(u)
&& \; \text{as $\{ \phi^{\langle t \rangle} > \psi^{\langle t \rangle}\} \subseteq \{ u > \gamma -t \}$ and Remark \ref{rem:MApsh}.}
\end{alignat*}
Now we write
$$\mathbf{1}_{\{ \phi^{\langle t \rangle} > \psi^{\langle t \rangle}\} } \PMA(u) = \mathbf{1}_{\{ \phi > \gamma-t \geq \psi \}} \PMA(u)+\mathbf{1}_{\{\phi > \psi> \gamma-t\}} \PMA(u).$$
As $\mathbf{1}_{\{ \phi > \gamma-t \geq \psi \}}$ goes to zero pointwise on $\tX$, the first term of the right-hand side  converges weakly to zero by dominated convergence, while the second term converges weakly to $\mathbf{1}_{\{\phi > \psi\}} \PMA(u)$ also by dominated convergence. Since the left-hand side converges to $\mathbf{1}_{\{\phi>\psi\}} \PMA(\phi)$, we conclude the proof of locality. 

The comparison principle follows exactly as in the proof of Corollary \ref{cor:comparison}.
\end{proof}
\subsection{Capacity} 
In this section, let $\Pi$ be a simplicial polyhedral structure on $\tX$, and assume that $\gamma \in \PAPC^+(\Pi)$. 
The goal of this section is to establish the domination principle (Proposition \ref{prop:dom}), using the notion of \emph{capacity} of a Borel subset of $\tX$. 
\begin{defi} \label{defi:capa}
Let $E \subset \tX$ be a Borel set. Its capacity is the real number

\[\Capa(E) \coloneqq \sup \left\{ \int_E \MA_{\poly}(\phi)\; | \; \phi \in \PSH(\tX,\gamma), \gamma-1 \le \phi \le \gamma \right\}.
\]
\end{defi}
\begin{lemma} \label{lem:cap open}
    Every non-empty open subset in $\tX$ has positive capacity.
\end{lemma}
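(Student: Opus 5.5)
The plan is to exhibit, for any non-empty open $U\subset\tX$, a single function $\phi\in\PSH(\tX,\gamma)$ with $\gamma-1\le\phi\le\gamma$ and $\MA_{\poly}(\phi)(U)>0$. We take $\phi$ to be a small piecewise affine perturbation of $\gamma$ that puts Monge--Ampère mass at a vertex lying inside $U$.

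First we fix a vertex in $U$. Pick a point $x\in U\cap N_\Q$ lying in the relative interior of a top-dimensional face $\sigma$ of $\Pi$; this is possible since $\tX$ is pure dimensional and $U$ is open. Choose a simplicial refinement $\Pi'\geq\Pi$ having $x$ as a vertex and with $\Star_{\Pi'}(x)$ contained in $U$. Let $h\in\PA_b(\Pi')$ be the bounded "hat" function that equals $1$ at $x$, vanishes at every other vertex of $\Pi'$, and is affine on each face of $\Pi'$. It is supported in the closed star of $x$, and in particular it is bounded.

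Next we use strict convexity of $\gamma$. Since $\gamma\in\PAPC^+(\Pi)$ and $\Pi$ is quasi-projective, Lemma~\ref{lem:PA plus PAPC+} applied to $f=-h$ gives $m>0$ with $m\gamma-h\in\PAPC(\tX)$. Set $\phi\coloneqq\gamma-\tfrac1m h$. Then $\phi$ is $\PAPC$; it lies in $\PAPC(\tX,\gamma)\subset\PSH(\tX,\gamma)$ because $h$ is bounded; and $\gamma-1\le\phi\le\gamma$ provided $m\ge1$, which we may assume by enlarging $m$.

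It remains to show $\MA_{\poly}(\phi)(\{x\})>0$. Since $x\in\relint(\sigma)$, Proposition~\ref{prop:MA-comp} identifies this mass with $d!\,[\tX](\sigma)\,\RMA(\phi|_{\relint\sigma})(\{x\})$. Near $x$ the function $\gamma$ is affine, so $\phi$ coincides with an affine function minus $\tfrac1m h$. The function $-\tfrac1m h$ is a convex cone function at $x$ (an inverted pyramid), whose subgradient at $x$ is the polytope $\tfrac1m\conv\{\text{slopes of } -h \text{ on the cones at } x\}$. This polytope contains a neighbourhood of $0$ in $M_{\sigma,\R}$, because $-h$ is strictly positive away from $x$ in every direction. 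Hence its volume is positive, and so $\Capa(U)\ge\MA_{\poly}(\phi)(U)>0$.

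The main point to check is that $\phi$ really is $\PAPC$ globally, not merely near $x$; this is exactly what Lemma~\ref{lem:PA plus PAPC+} supplies. The hypothesis $\gamma\in\PAPC^+(\Pi)$ is needed there, since $h$ is defined only on the refinement $\Pi'$ and not on $\Pi$. If the lemma only applied on $\Pi$, one would instead choose $\Pi'$ to be quasi-projective via Proposition~\ref{prop:quasi-proj}, take $\alpha\in\PAPC^+(\Pi')$, and replace $\gamma$ in the argument by a suitable $\gamma+\varepsilon\alpha$ corrected to stay within $\gamma+O(1)$.
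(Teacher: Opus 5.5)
There is a genuine gap at exactly the step you flag as the main point: $\gamma-\tfrac1m h$ is not polyhedrally convex for any $m$, and Lemma~\ref{lem:PA plus PAPC+} does not claim it is.

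That lemma requires $f\in\PA(\Pi)$ for the same structure $\Pi$ on which $\gamma$ is strictly convex; its proof uses that $f$ is affine on each face $\delta$ of $\Pi$. Your $h$ lives on $\Pi'$ and has kinks inside $\relint(\sigma)$, and the conclusion genuinely fails there.

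\begin{itemize}
\item On $\relint(\sigma)$ the function $\gamma$ is affine, so $m\gamma-h$ is convex there if and only if $-h$ is.
\item But $-h$ is convex only at $x$. Along the boundary of $\Star_{\Pi'}(x)$, which you placed inside $U\cap\relint(\sigma)$, $-h$ passes from negative values to $0$, so it has concave kinks.
\item Already for $\tX=\R$, take $\gamma$ affine on a face $[0,1]$ and $h$ the hat on $[\tfrac14,\tfrac34]$. At $\tfrac34$ the slope of $m\gamma-h$ drops by $4$, whatever $m$ is.
\end{itemize}

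In fact no perturbation supported near $x$ can work. A bounded $g$ with $\gamma+g$ PC must be convex on $\sigma$, since $\gamma$ is affine there. A convex function on $\sigma$ with $g(x)<0$ cannot vanish on a non-empty open subset of $\relint(\sigma)$: compare values along a ray from $x$ passing through that open set.

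Your fallback does not repair this. An $\alpha\in\PAPC^+(\Pi')$ is in general not in $\gamma+O(1)$. Correcting it to be a bounded perturbation with $\gamma-1\le\phi\le\gamma$ while keeping convexity is precisely the missing step.

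The fix, which is what the paper does, is to let the perturbation spread over the whole face $\sigma$ and require it to be convex only within each face of $\Pi$.

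\begin{itemize}
\item Subdivide $\sigma$ stellarly from $x$: replace $\sigma$ by the polyhedra $\langle\tau,x\rangle$ spanned by $x$ and the proper faces $\tau$ of $\sigma$, giving $\Pi_x$.
\item Let $f\in\PA_b(\Pi_x)$ equal $-1$ at $x$ and $0$ on every face not containing $x$. On $\sigma$ this is an inverted pyramid whose kinks, along the $\langle\tau,x\rangle$, are all convex.
\item So $f$ is strictly convex relative to $\Pi$ in the sense of Amini--Piquerez. Its only concave kinks sit on $\partial\sigma$, i.e.\ across faces of $\Pi$, where the strict convexity of $\gamma$ absorbs $\varepsilon f$ for $\varepsilon$ small.
\item Hence $\gamma_x=\gamma+\varepsilon f$ is strictly convex on $\Pi_x$ and satisfies $\gamma-1\le\gamma_x\le\gamma$.
\item Then $\MA_{\poly}(\gamma_x)\{x\}>0$, by Remark~\ref{rem:PAPC+}, or equally by your subgradient computation via Proposition~\ref{prop:MA-comp}.
\end{itemize}

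Since capacity is monotone, $x\in U$ already gives $\Capa(U)\ge\Capa\{x\}>0$. There is no need for the support of the perturbation to lie in $U$, and requiring it is exactly what breaks your construction.
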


\begin{proof} 
Since the capacity is clearly increasing with respect to inclusions of Borel sets, it is enough to show that every non-empty open subset contains a point $x$ such that $\Capa\{x\}>0$. By the pure-dimensionality of $\tX$, it suffices to show that there exists a polyhedral structure $\Pi$ on $\tX$ such that any rational point $x \in \tX \cap N_\Q$ in the relative interior of a top-dimensional face of $\Pi$ has $\Capa\{x\}>0$.

To this purpose, let $\Pi$ be the simplicial polyhedral structure of $\tX$ such that $\gamma \in \PAPC^+(\Pi)$. Let $\sigma \in \Pi^{(d)}$ be a maximal face, and write $\sigma=\conv(p_0,\ldots,p_l)+\Cone(v_{l+1},\ldots,v_d)$. Let $x \in N_{\Q}$ be any point in the relative interior of $\sigma$. The refinement $\Pi_x$ of $\Pi$ is defined as the collection of
\begin{itemize}
    \item the polyhedra $\nu \in \Pi$ not containing $x$,
    \item the polyhedron 
    $
    \langle \tau,x \rangle \coloneqq \conv(x,p_{i_0},\ldots,p_{i_k})+\Cone(v_{i_{k+1}},\ldots,v_{i_j}),
    $ for any proper face $\tau=\conv(p_{i_0},\ldots,p_{i_k})+\Cone(v_{i_{k+1}},\ldots,v_{i_j}) \in \Pi$ of $\sigma$.
\end{itemize}
The polyhedral structure $\Pi_x$ is simplicial. Indeed, let $\tau$ be a proper face of $\sigma$ of dimension $j$. One one hand, $\tau$ doesn't contain $x$, hence $\langle \tau,x \rangle$ has dimension at least $j+1$; on the other hand, by definition, $\langle \tau,x \rangle$ has dimension at most $j+1$. 

Define $f \in \PA_b(\Pi_x)$ as the unique bounded piecewise affine function such that $f$ vanishes on any face of $\Pi$ not containing $x$, and takes value $-1$ at $x$. By adapting the proof of \cite[Proposition 4.17]{AminiPiquerez2020}, we show that $f$ is strictly convex relative to $\Pi$, i.e. for any face $\mu \in \Pi$ the restriction of $f$ to $(\Pi_x)_{|\mu}$ is strictly convex.

Let $\mu \in \Pi$. If $\mu$ does not contain $x$, then $(\Pi_x)_{|\mu}$ consists of $\mu$ and its faces, and $f$ is zero, hence convex on $(\Pi_x)_{|\mu}$. Assume now that $\mu=\sigma$. Let $\tau$ be any proper face of $\sigma$; we want to show that $f_{|\sigma}$ is strictly convex around $\tau$ and around $\langle \tau, x \rangle$. Let $\ell$ be an affine linear function on $\R^n \supseteq \tX$ which vanishes on $\tau$ and is strictly negative on $\sigma \setminus \tau$; we can assume that $\ell(x)=-1$. Now, by construction we have
\begin{itemize}
    \item $(f-\ell)_{|\langle \tau, x \rangle}=0$,
    \item if $y \in \sigma \setminus \langle \tau, x \rangle$, then there exists a face $\nu$ of $\sigma$ such that $y \in \langle \nu, x \rangle \setminus \langle \tau, x \rangle$ and we can write $y=z + \lambda x$, with $z \in \nu \setminus \tau$ and $\lambda \geq 0$, such that $$(f-\ell)(y)=\underbrace{(f-\ell)(z)}_\text{$>0$}+ \lambda \underbrace{(f-\ell)(x)}_\text{=0} >0.$$
\end{itemize}
This proves the strict convexity of $f$ around $\langle \tau, x \rangle$. Similarly, we have
\begin{itemize}
    \item $(f-2\ell)_{|\tau}=0$,
    \item if $y \in \sigma \setminus \tau$, then either there exists a face $\nu \neq \tau$ of $\sigma$ such that $y \in \langle \nu, x \rangle \setminus \tau$ and we can write $y=z + \lambda x$, with $z \in \nu \setminus \tau$ and $\lambda \geq 0$, such that $$(f-2\ell)(y)=\underbrace{(f-2\ell)(z)}_\text{$>0$}+ \lambda \underbrace{(f-2\ell)(x)}_\text{$>0$} >0;$$
    or we have that $y \in \langle \tau, x \rangle \setminus \tau$, hence we can write $y=z + \lambda x$, with $z \in \tau$ and $\lambda \geq 0$, which implies that $$(f-2\ell)(y)=\underbrace{(f-2\ell)(z)}_\text{$=0$}+ \lambda \underbrace{(f-2\ell)(x)}_\text{$>0$} >0.$$
\end{itemize}
This proves the strict convexity of $f$ around $\tau$. We conclude therefore that $f \in \PA(\Pi_x)$ is strictly convex relatively to $\Pi$.

For any sufficiently small $\varepsilon \in \Q_{>0}$, the function $\gamma_x \coloneqq \gamma + \varepsilon f$ is strictly convex on $\Pi_x$ by \cite[Proof of Proposition 4.15]{AminiPiquerez2020}; by Remark \ref{rem:PAPC+}, we have $$\PMA(\gamma_x)\{x\} >0$$ as $ (\gamma_x)^d \cdot [\tX]$ is a strictly positive $0$-cycle defined on $\Pi_x$ and $x \in \Pi_x^{(0)}$. Finally, the inequality $\gamma -1 \leq \gamma_x \leq \gamma$ holds by construction of $f$, hence $\gamma_x$ is a competitor in the supremum defining the capacity, and therefore $\Capa\{x\}>0$.
\end{proof}

\begin{lemma}\label{lem:cap}
    Let $\phi, \psi \in \mathcal{E}^1(\tX,\gamma)$ with $\phi \le \gamma$. Then
    \[
    \Capa \{ \psi < \phi \} \le t^{-d} \int_{\{ \psi<(1-t)\phi +t (\gamma+1) \}} \MA_{\poly}(\psi)
    \]
    for $0 < t < 1$.
\end{lemma}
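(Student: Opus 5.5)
The plan is to run the classical Guedj--Zeriahi/Boucksom--Favre--Jonsson argument: dominate each competitor in the definition of capacity by a suitable convex combination, then invoke the comparison principle of Proposition \ref{prop:comparisonE}.

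\textbf{Step 1: the comparison function.} Fix a competitor $u \in \PSH(\tX,\gamma)$ with $\gamma-1\le u\le\gamma$. Then $u-\gamma$ is bounded, so $u \in \PC(\tX,\gamma)\cap\PSH(\tX,\gamma)=\PCreg(\tX,\gamma)$ by Proposition/Definition \ref{def:PC-reg}. Set
$$v \coloneqq (1-t)\phi + t(u+1),$$
which lies in $\PSH(\tX,\gamma)$ since this class is closed under convex combinations and additive constants. Using $u+1\ge\gamma\ge\phi$, we get $v\ge(1-t)\phi+t\gamma\ge\phi$. Hence $E(v)\ge E(\phi)>-\infty$ by Lemma \ref{lem:energy}, so $v\in\mathcal{E}^1(\tX,\gamma)$. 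Using $u\le\gamma$, we also get $v\le(1-t)\phi+t(\gamma+1)$. Together these give the inclusions
$$\{\psi<\phi\}\subseteq\{\psi<v\}\subseteq\{\psi<(1-t)\phi+t(\gamma+1)\}.$$

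\textbf{Step 2: the mass bound.} I claim $\PMA(v)\ge t^d\,\PMA(u)$ as measures on $\tX$. I would first prove this when $\phi\in\PCreg(\tX,\gamma)$. There, multilinearity and symmetry (Theorem \ref{thm:PMA PCreg}, with additive constants harmless by item (2)) expand $\PMA(v)$ as $\sum_j \binom{d}{j}(1-t)^{d-j}t^j\,\PMA(\phi^{[d-j]},u^{[j]})$. Each mixed term is positive by item (4), and keeping only the $j=d$ term gives the claim.

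For general $\phi\in\mathcal{E}^1$, I would pass through the truncations $\phi^{\langle s\rangle}$. On the open set $\{\phi>\gamma-s\}$ the function $v$ agrees with $v_s\coloneqq(1-t)\phi^{\langle s\rangle}+t(u+1)\in\PCreg$. Here $\{\phi>\gamma-s\}$ is contained in $\{v>\gamma-s'\}$ for $s'$ large, since $v\ge\phi$. So Remark \ref{rem:MApsh} and locality (Corollary \ref{cor:locality}) give $\mathbf{1}_{\{\phi>\gamma-s\}}\PMA(v)=\mathbf{1}_{\{\phi>\gamma-s\}}\PMA(v_s)\ge t^d\,\mathbf{1}_{\{\phi>\gamma-s\}}\PMA(u)$. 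Letting $s\to\infty$, the sets $\{\phi>\gamma-s\}$ exhaust $\tX$, which yields the claim on all of $\tX$. I expect this reduction to be the main technical point, because mixed Monge--Amp\`ere measures are only defined for $\PCreg$ tuples and the identification must be checked carefully on these open sets.

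\textbf{Step 3: conclusion.} Combining Steps 1 and 2 with the comparison principle of Proposition \ref{prop:comparisonE}, applied to $\psi,v\in\mathcal{E}^1(\tX,\gamma)$:
$$\int_{\{\psi<\phi\}}\PMA(u)\le t^{-d}\int_{\{\psi<v\}}\PMA(v)\le t^{-d}\int_{\{\psi<v\}}\PMA(\psi)\le t^{-d}\int_{\{\psi<(1-t)\phi+t(\gamma+1)\}}\PMA(\psi).$$
Taking the supremum over all competitors $u$ gives the stated bound on $\Capa\{\psi<\phi\}$.
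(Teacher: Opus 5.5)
Your proposal is correct and follows the paper's argument: take the convex combination of $\phi$ with a (shifted) competitor, get $\PMA(v)\ge t^d\,\PMA(u)$ from multilinearity and positivity, then apply the comparison principle of Proposition \ref{prop:comparisonE}. Your Step 2 truncation argument and your check that $v\in\mathcal{E}^1(\tX,\gamma)$ supply details the paper leaves implicit, since the paper applies item (3) of Theorem \ref{thm:PMA PCreg} directly even though $\phi$ is only assumed to lie in $\mathcal{E}^1(\tX,\gamma)$.
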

\begin{proof}
We adapt the proof from \cite[Lemma 8.3]{BoucksomFavreJonsson2015}. Fix $u \in \PSH(\tX, \gamma)$ with $\gamma \leq u \leq \gamma +1$  
Furthermore, set $\phi_t \coloneqq (1-t)\phi + t u$. Since $\phi \leq \gamma \le u$ and $u \le (\gamma+1)$ we have
\[
\left\{\psi < \phi \right\} \subseteq \left\{\psi < \phi_t\right\} \subseteq \left\{\psi < (1-t)\phi + t (\gamma+1) \right\}.
\]
Now, from the additivity property (3) of Theorem \ref{thm:PMA PCreg} we get that $\MA_{\poly}(\phi_t) \geq t^d \,\PMA(u)$. Hence, 
\begin{eqnarray*}
t^d \int_{\psi < \phi} \MA_{\poly}(u) \leq \int_{\psi < \phi}\MA_{\poly} (\phi_t) \leq \int_{\psi < \phi_t} \PMA(\psi) \leq \int_{\psi < (1-t)\phi + t(\gamma+1)} \MA_{\poly}(\psi),
\end{eqnarray*}
where the second inequality follows from the comparison principle in Proposition \ref{prop:comparisonE}. Taking the supremum over all $u$ shows the result.
\end{proof}

We are now ready to prove the following: 
\begin{prop}[Domination principle]\label{prop:dom}
Let $\phi \in \PCreg(\tX, \gamma)$ and $\psi \in \mathcal{E}^1(\tX,\gamma)$, such that
\begin{itemize}
    \item $\nu \coloneqq \MA_{\poly}(\psi)$ is supported on a compact set $K \subset \tX$,
    \item the inequality $\phi \le \psi$ holds $\nu$-almost everywhere on $K$.
\end{itemize}
Then $\phi \le \psi$ on $\tX$.
\end{prop}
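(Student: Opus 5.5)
We follow the strategy of \cite[Corollary 8.4]{BoucksomFavreJonsson2015}, combining Lemma \ref{lem:cap} with Lemma \ref{lem:cap open}. The plan is to argue by contradiction: suppose the open set $\{\psi < \phi\}$ is non-empty.

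\textbf{Normalization.} Since $\phi \in \PCreg(\tX,\gamma)$, the difference $\phi - \gamma$ is bounded. Subtracting the same constant from $\phi$ and $\psi$ changes neither the hypotheses (Monge--Ampère measures are invariant under constants) nor the conclusion. We may therefore assume $\phi \le \gamma$. Note also that $\phi \in \PCreg(\tX,\gamma) \subseteq \mathcal{E}^1(\tX,\gamma)$, since its energy is finite by Definition \ref{defi:energy}.

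\textbf{Perturbation.} Fix $\varepsilon>0$ small and set $\phi_\varepsilon \coloneqq \phi - \varepsilon$. Choose $\varepsilon$ so that $\{\psi < \phi_\varepsilon\}$ is still a non-empty open set; this is possible because all functions involved are continuous. Lemma \ref{lem:cap open} then gives $\Capa\{\psi<\phi_\varepsilon\}>0$. Apply Lemma \ref{lem:cap} to the pair $(\phi_\varepsilon,\psi)$, which is legitimate since $\phi_\varepsilon \le \gamma$. For $0<t<1$ this yields
\[
\Capa\{\psi<\phi_\varepsilon\} \le t^{-d}\,\nu\big(\{\psi < (1-t)\phi_\varepsilon + t(\gamma+1)\}\big).
\]
Since $\nu$ is supported on $K$, only the intersection of this set with $K$ matters. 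On $K$ the function $\gamma+1-\phi$ is bounded, say by $A$, because $K$ is compact and $\phi-\gamma$ is bounded. Hence
\[
(1-t)\phi_\varepsilon + t(\gamma+1) = \phi - \varepsilon + t\bigl(\gamma+1-\phi+\varepsilon\bigr) \le \phi - \varepsilon + t(A+\varepsilon),
\]
and the right-hand side is $<\phi$ for $t < \varepsilon/(A+\varepsilon)$. For such $t$, the set $\{\psi < (1-t)\phi_\varepsilon + t(\gamma+1)\}\cap K$ is contained in $\{\psi<\phi\}\cap K$. This last set is $\nu$-null by hypothesis. So the right-hand side of the capacity bound vanishes, giving $\Capa\{\psi<\phi_\varepsilon\}=0$, which contradicts Lemma \ref{lem:cap open}. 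We conclude $\psi \ge \phi - \varepsilon$ for every small $\varepsilon$, hence $\phi\le\psi$.

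\textbf{Main obstacle.} The delicate point is checking that Lemma \ref{lem:cap} and the comparison principle (Proposition \ref{prop:comparisonE}) apply in exactly this setting. This requires $\phi_\varepsilon$ to lie in $\mathcal{E}^1(\tX,\gamma)$, which holds since $\PCreg$ functions have finite energy. It also requires that the competitors $u$ in the capacity satisfy $\gamma-1\le u\le\gamma$, whereas the proof of Lemma \ref{lem:cap} uses $\gamma\le u\le \gamma+1$; these differ only by a shift of $1$, which is harmless. Finally, the comparison of sets above must be done only on $K$: off $K$ the term $t(\gamma+1-\phi)$ could be large, but this does not matter because $\nu$ does not charge the complement of $K$. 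The compact-support hypothesis is exactly what provides the uniform bound $A$.
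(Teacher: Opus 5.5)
Your proof is correct and is essentially the paper's argument. The paper applies Lemma \ref{lem:cap} to $(\phi,\psi+\eps)$ rather than $(\phi-\eps,\psi)$, uses $\gamma-c\le\phi\le\gamma$ to get $(1-t)\phi+t(\gamma+1)\le\phi+t(c+1)$, and then concludes with Lemma \ref{lem:cap open} exactly as you do. One small remark: your bound $A$ holds on all of $\tX$, since $\phi-\gamma$ is globally bounded for $\phi\in\PCreg(\tX,\gamma)$; restricting to $K$ is unnecessary, and the bound does not come from the compact-support hypothesis.
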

\begin{proof}
Up to shifting by a constant, we may assume $\gamma -c \le \phi \le \gamma$ for some $c \in \R$. Let $\eps >0$, and $t \ll 1$ such that $t(c+1) \le \eps/2$. Then we have
$$ \nu \{ \psi + \eps < (1-t) \phi +t (\gamma+1) \} \le \nu \{ \psi + \eps/2 < \phi \} =0$$ by the hypothesis on $\phi$ and $\psi$.
Lemma \ref{lem:cap} implies $\Capa \{ \psi + \eps < \phi \}  =0$. The latter is however an open subset of $\tX$ by continuity of $\PSH$ functions, and must therefore be empty by Lemma \ref{lem:cap open}. It follows that $\phi \ge \psi + \eps$, and this for any $\eps >0$, which concludes the proof.
\end{proof}

\subsection{Boundedness of solutions to the Monge--Ampère equation}

\begin{prop} \label{prop:boundedsol}
    Let $\phi \in \mathcal{E}^1(\tX,\gamma)$. If the measure $\MA_{\poly} (\phi)$ is supported on a compact subset $K \subset \tX$, then $\phi \in \PCreg(\tX, \gamma)$.
\end{prop}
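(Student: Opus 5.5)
Since $\phi \in \PSH(\tX,\gamma)$, we already have $\phi \le \gamma + O(1)$ by Proposition \ref{prop:gamma-psh}, and $\phi$ is continuous. By Proposition/Definition \ref{def:PC-reg} it therefore suffices to show $\phi \ge \gamma - C$ on $\tX$, so that $\phi \in \PC(\tX,\gamma)\cap\PSH(\tX,\gamma) = \PCreg(\tX,\gamma)$. The plan is to compare $\phi$ with its truncations $\phi^{\langle t\rangle} = \max(\phi,\gamma - t)$, which lie in $\PCreg(\tX,\gamma)$. We then apply the domination principle (Proposition \ref{prop:dom}) with $\phi^{\langle t\rangle}$ in the role of the $\PCreg$ function and $\phi$ itself in the role of the finite-energy function whose Monge--Ampère measure has compact support.

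First we normalize $\sup_{\tX}(\phi-\gamma)=0$. Since $K$ is compact and $\phi-\gamma$ is continuous on $\tX$, there is $t_0$ with $\phi > \gamma - t_0$ on $K$. Equivalently, by Proposition \ref{prop:equi PSH}(1) we have $\sup_K|\phi-\gamma| \le C(K)$, and we take $t_0 = C(K)+1$. For every $t\ge t_0$ this gives $\phi^{\langle t\rangle} = \phi$ on $K$, hence $\phi^{\langle t\rangle}\le\phi$ everywhere on $K$, and in particular $\MA_{\poly}(\phi)$-almost everywhere. The other hypotheses of Proposition \ref{prop:dom} hold as well: $\psi\coloneqq\phi\in\mathcal{E}^1(\tX,\gamma)$, its measure $\nu=\MA_{\poly}(\phi)$ is supported on $K$, and $\phi^{\langle t\rangle}\in\PCreg(\tX,\gamma)$. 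The domination principle then gives $\phi^{\langle t_0\rangle}\le\phi$ on all of $\tX$, hence $\phi\ge\gamma-t_0$ everywhere. This is the required lower bound, and it concludes the argument.

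The main point to check is that Proposition \ref{prop:dom} is really applicable in this setting. It was proved under the standing assumptions of the capacity subsection, namely $\gamma\in\PAPC^+(\Pi)$ on a simplicial structure; these are used for the positivity of capacity of open sets in Lemma \ref{lem:cap open}. We work under the same assumptions. If they are not available, the fallback is to run the proof of Proposition \ref{prop:dom} directly. Lemma \ref{lem:cap} applied to $\psi=\phi$ and to $\phi^{\langle t\rangle}$ (shifted below $\gamma$) shows that the open set $\{\phi+\varepsilon<\phi^{\langle t\rangle}\}$ has zero capacity. This uses the comparison principle of Proposition \ref{prop:comparisonE}, which holds since both functions have finite energy, $\phi^{\langle t\rangle}$ being bounded relative to $\gamma$. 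The set is therefore empty by Lemma \ref{lem:cap open}. A secondary point is that $\MA_{\poly}(\phi)$ is defined via Definition \ref{def:MA-PSH}. By Lemma \ref{lem:total-mass-finite-energy} it has full mass $\deg(\gamma)$, which is what makes its support on $K$ meaningful in the comparison estimates.
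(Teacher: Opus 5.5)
Your argument is correct. It uses the same key tool as the paper, the domination principle, but applies it to a different pair of functions.

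\textbf{The paper's route.} The paper takes a decreasing sequence $(\phi_j)_j$ in $\PAPC(\tX,\gamma)$ converging to $\phi$. Dini's lemma on $K$ gives $\phi_j\le\phi+\varepsilon$ on $K$ for $j\gg1$. Proposition~\ref{prop:dom}, applied to $\phi_j$ and $\phi+\varepsilon$, propagates this to all of $\tX$. Hence the given approximants converge uniformly, and $\phi\in\PCreg(\tX,\gamma)$ follows directly from condition (2) of Proposition/Definition~\ref{def:PC-reg}.

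\textbf{Your route.} You compare the truncation $\max(\phi,\gamma-t_0)$ with $\phi$ itself. These coincide on $K$, so domination yields the lower bound $\phi\ge\gamma-t_0$ on $\tX$. You then pass to uniform approximability via the nontrivial implication (1)$\Rightarrow$(2) of Proposition/Definition~\ref{def:PC-reg}, which is the compactification-plus-Dini argument.

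\textbf{What each buys.} The paper's version shows that every decreasing $\PAPC$ approximant of $\phi$ converges uniformly, without appealing to that implication. Yours avoids Dini on $K$ and the preliminary normalization via Theorem~\ref{thm:comp}. Combined with Proposition~\ref{prop:equi PSH}(1), it also gives an a priori estimate: after normalizing $\sup_{\tX}(\phi-\gamma)=0$, one has $\sup_{\tX}|\phi-\gamma|\le C$ with $C$ depending only on $K$, not on $\phi$.

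\textbf{A correction to your last paragraph.} The ``fallback'' does not remove the standing hypothesis $\gamma\in\PAPC^+(\Pi)$. That hypothesis enters exactly through Lemma~\ref{lem:cap open}, which your fallback still invokes, so it is just the proof of Proposition~\ref{prop:dom} rewritten. This is harmless. The paper's proof also goes through Proposition~\ref{prop:dom} and so tacitly works under the same assumption, as reflected in the clause ``if $\gamma$ is strictly convex'' in Proposition~\ref{prop:MA maxsol}.
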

\begin{proof}
We may assume $\sup_{\tX} (\phi-\gamma) =-1$. By assumption there exists a sequence $(\phi_j)_j$ in $\PAPC(\tX,\gamma)$ decreasing pointwise to $\phi$, we will show that the convergence is in fact uniform. 
\\By Theorem \ref{thm:comp} the sequence $(\sup (\phi_j - \gamma))_j$ converges to $-1$, so we may assume $\phi_j \le \gamma$ for all $j$. Let $\eps>0$, by Dini's lemma we have $\phi \le \phi_j + \eps$ on $K$ for $j \gg 1$, which by the domination principle (Proposition \ref{prop:dom}) implies $\phi \le \phi_j + \eps$ on $\tX$, and thus convergence is uniform.
\end{proof}

\section{The polyhedral Monge--Ampère equation} \label{sec:PMA}
Let $\tX \subseteq N_{\R}$ be a balanced polyhedral space of dimension $d$, and $\gamma$ a $\PAPC$ function on $\tX$ that is positive at infinity. In this section, we study polyhedral Monge--Amp\`ere equations and establish sufficient conditions for the existence of solutions using a variational approach.

\subsection{Regularization} \label{sec:reg}
\begin{defi} 
A continuous function $f \in \mathcal{C}^0(\tX)$ is said to be quasi-$\gamma$-$\PSH$, if there exists $\phi \in \PSH(\tX, \gamma)$ and $g \in \mathcal{C}^0_b(\tX)$ such that $f = \phi+g$. 
\end{defi}
\begin{defi}\label{defi:envelope}
Let $u$ be a continuous function on $\tX$. The envelope $P_{\gamma}(u)$ is defined as
$$P_{\gamma}(u) (x) \coloneqq \sup \{ \psi(x) \,| \,\psi \le (\gamma +u), \psi \in \PC(\tX), \psi \le \gamma+O(1) \},$$
with the convention that $P_{\gamma}(u) \equiv -\infty$ if the set of functions $\{ \psi \in \PC(\tX) | \psi \le \gamma+u, \psi \le \gamma +O(1) \}$ is empty.
\end{defi}
If $(\gamma+u)$ is quasi-$\gamma$-$\PSH$, then $P_{\gamma}(u) \not\equiv -\infty$. Indeed, let $\gamma+u = \phi+g$ with $\phi \in \PSH(\tX,\gamma)$ and $g \in \mathcal{C}^0_b(\tX)$, then $\psi \coloneqq \phi + \inf_{\tX} g$ is a competitor in the supremum defining the envelope.

The envelope satisfies the following basic properties.
\begin{prop} \label{prop:prop env}
Let notations as above and let $c \in \R$. The following hold true:
\begin{enumerate}
\item if $P_{\gamma}(u) \not\equiv - \infty$, then $P_\gamma(u) \in \PC(\tX)$,
\item $P_{\gamma}(u) \le \gamma+u$,
\item if $u$ is bounded, then $P_{\gamma}(u) \in \PC(\tX, \gamma)$,
\item if $(\gamma+u) \in \PSH(\tX,\gamma)$ or $(\gamma+u) \in \PC(\tX,\gamma)$, then $P_{\gamma}(u) = \gamma +u$,
\item if $u \le v$ then $P_{\gamma}(u) \le P_{\gamma}(v)$,
\item $P_{\gamma}(u+c) =P_{\gamma}(u) +c$,
\item $P_{\gamma}(u)$ is concave in both arguments in the following sense: 
$$P_{t\gamma+(1-t)\gamma'}(tu + (1-t)u') \geq tP_{\gamma}(u) + (1-t)P_{\gamma'}(u'),$$
\item $\sup_{\tX} \lvert P_{\gamma}(u) - P_{\gamma}(v) \lvert \le \sup_{\tX} \lvert u-v \rvert$.
\end{enumerate}
\end{prop}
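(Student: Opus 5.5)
Most items follow from the definition of $P_\gamma(u)$ as the pointwise supremum of the family
$$\mathcal{F}(u)\coloneqq\{\psi\in\PC(\tX)\,|\,\psi\le\gamma+u,\ \psi\le\gamma+O(1)\},$$
together with Lemma~\ref{lem:sup}. I will treat them in an order where each builds on the previous ones.

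\emph{Items (2), (5), (6).} Item (2) is immediate, since every competitor satisfies $\psi\le\gamma+u$. For (5), note that $u\le v$ gives $\mathcal{F}(u)\subseteq\mathcal{F}(v)$. For (6), the map $\psi\mapsto\psi+c$ is a bijection $\mathcal{F}(u)\to\mathcal{F}(u+c)$, because $\PC(\tX)$ is stable under adding constants.

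\emph{Item (1).} I would like to apply Lemma~\ref{lem:sup}, but that lemma requires $(\phi_\alpha-\gamma)$ to be uniformly bounded above, which may fail when $u$ is unbounded. So I will instead use its proof directly. First, $\mathcal{F}(u)$ is stable under finite maxima: the maximum of two $\PC$ functions is $\PC$, and the bounds $\psi\le\gamma+u$ and $\psi\le\gamma+O(1)$ are preserved. Hence $\mathcal{F}(u)$ forms an increasing net indexed by finite subsets. The supremum is finite everywhere, being bounded by $\gamma+u$ and nonempty. By \cite[Theorem 6.24]{BBS}, the pointwise limit of an increasing net of $\PC$ functions that is locally bounded above is $\PC$. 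Alternatively, this can be checked directly: the $\PC$ inequality $\phi(x)\le\sum_i\alpha_i\phi(x_i)$ passes to suprema, since for each competitor $\psi(x)\le\sum\alpha_i\psi(x_i)\le\sum\alpha_i P_\gamma(u)(x_i)$. The only genuine point is continuity, which \cite[Theorem~6.2]{BBS} gives for finite $\PC$ functions.

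\emph{Item (3).} If $u$ is bounded, then $\gamma+\inf u$ belongs to $\mathcal{F}(u)$, because $\gamma$ is $\PAPC$ by the standing assumption of this section. Combined with (2), this gives $\gamma+\inf u\le P_\gamma(u)\le\gamma+\sup u$, and together with (1) we get $P_\gamma(u)\in\PC(\tX,\gamma)$.

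\emph{Item (4).} If $\gamma+u\in\PSH(\tX,\gamma)$, then Proposition~\ref{prop:gamma-psh} shows that it is $\PC$ and bounded by $\gamma+O(1)$. The same holds trivially if $\gamma+u\in\PC(\tX,\gamma)$. In either case $\gamma+u$ is itself a competitor, so $P_\gamma(u)\ge\gamma+u$, and (2) gives equality.

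\emph{Item (7).} Here the envelope depends on $\gamma$ as well, so $\mathcal{F}$ carries a subscript: $\mathcal{F}_\gamma(u)$ is defined as above with reference function $\gamma$, and $\mathcal{F}_{\gamma'}(u')$ likewise with $\gamma'$. Take $\psi\in\mathcal{F}_\gamma(u)$ and $\psi'\in\mathcal{F}_{\gamma'}(u')$. Then $t\psi+(1-t)\psi'$ is $\PC$, it is bounded by $t\gamma+(1-t)\gamma'+tu+(1-t)u'$, and it is bounded by $t\gamma+(1-t)\gamma'+O(1)$. So it is a competitor for the left-hand side. Taking suprema over $\psi$ and $\psi'$ separately gives the inequality. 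The conventions for $-\infty$ are handled trivially.

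\emph{Item (8).} This follows from (5) and (6). Set $c=\sup|u-v|$; if $c=\infty$ there is nothing to prove. Otherwise $v-c\le u\le v+c$, so $P_\gamma(v)-c\le P_\gamma(u)\le P_\gamma(v)+c$.

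The main obstacle is item (1): the supremum of the family must be continuous and finite, and we cannot appeal to Lemma~\ref{lem:sup} directly. I would handle it through the increasing-net argument above, relying on the continuity of finite $\PC$ functions from \cite{BBS}.
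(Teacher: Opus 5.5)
Your proposal is correct and follows essentially the same route as the paper: every item comes from exhibiting explicit competitors in the supremum defining $P_\gamma(u)$. In particular, item (1) uses \cite[Theorem 6.24]{BBS} for the family of competitors, which is bounded above by $\gamma+u$, and item (8) is deduced from (5) and (6). Your version of (7) is slightly more careful than the paper's: the paper asserts that $t\psi+(1-t)\psi'$ lies in $\PSH(\tX,t\gamma+(1-t)\gamma')$, whereas all that is needed is membership in $\PC(\tX)$ with growth at most $t\gamma+(1-t)\gamma'+O(1)$, which is what you verify.
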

\begin{proof}
If $ P_{\gamma}(u) \not\equiv - \infty$, then the subset $\{ \psi \in \PC(\tX) | \psi \le \gamma+u, \psi \le \gamma +O(1) \} \subset \PC(\tX)$ is non-empty and uniformly bounded from above, so that its pointwise sup defines a PC function by \cite[Theorem 6.24]{BBS}, and Part (1) follows. Part (2) is clear, and Part (3) follows from the inequalities:
$$\gamma+\inf_{\tX} u \le P_{\gamma}(u) \le \gamma+u \le \gamma+ \sup_{\tX} u.$$
Finally, (4)-(6) are clear from the definitions. 
Part (7) follows from the fact that if $\psi \in \PC(\tX)$, $\psi' \in \PC(\tX)$ with $\psi \leq \gamma + u$ and $\psi' \leq \gamma' + u'$, $\psi \le \gamma+O(1)$ and $\psi \le \gamma' \le O(1)$ then 
$$ t\psi + (1-t)\psi' \in \PSH(\tX, t\gamma + (1-t)\gamma')$$ and is dominated by $tu+ (1-t)u'$. 
Finally, (8) follows from applying (5) to the inequality $v + \inf (u-v) \le u \le v + \sup (u-v)$.
\end{proof}

\begin{theorem}\label{th:regularisation}
The following conditions are equivalent:
\begin{enumerate}
    \item for any bounded piecewise affine function $u$ on $\tX$, the envelope $P_\gamma(u)$ is in $\PCreg(\tX,\gamma)$;
    \item the subspace $\PAPC(\tX, \gamma) \subseteq \PC(\tX, \gamma)$ is dense for the topology of uniform convergence;
    \item $\PC(\tX,\gamma)=\PCreg(\tX,\gamma)$.
\end{enumerate}
\end{theorem}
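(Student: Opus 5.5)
I will prove the cycle of implications $(3)\Rightarrow(2)\Rightarrow(1)\Rightarrow(3)$. Throughout I use the characterization in Proposition/Definition \ref{def:PC-reg}: $\PCreg(\tX,\gamma)$ consists of the uniform limits on $\tX$ of sequences in $\PAPC(\tX,\gamma)$, and such a sequence may be taken decreasing.

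\emph{$(3)\Rightarrow(2)$.} This implication is immediate. Any $\phi\in\PC(\tX,\gamma)=\PCreg(\tX,\gamma)$ is a uniform limit of functions in $\PAPC(\tX,\gamma)$, which is exactly density for the uniform topology.

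\emph{$(2)\Rightarrow(1)$.} Let $u\in\PA_b(\tX)$. By Proposition \ref{prop:prop env}(3), $P_\gamma(u)\in\PC(\tX,\gamma)$. By density, for each $\eps>0$ there is $\phi_\eps\in\PAPC(\tX,\gamma)$ with $\sup_{\tX}|\phi_\eps-P_\gamma(u)|\le\eps$. Then $P_\gamma(u)$ is the uniform limit of the sequence $\phi_{1/j}$ of $\PAPC(\tX,\gamma)$ functions, so $P_\gamma(u)\in\PCreg(\tX,\gamma)$ by Proposition/Definition \ref{def:PC-reg}. No property of the envelope beyond membership in $\PC(\tX,\gamma)$ is needed here.

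\emph{$(1)\Rightarrow(3)$.} This is the substantive direction. The inclusion $\PCreg\subseteq\PC(\tX,\gamma)$ holds by definition, so let $\phi\in\PC(\tX,\gamma)$ and set $u_0\coloneqq\phi-\gamma$. Since $\phi=\gamma+O(1)$, $u_0$ is a bounded continuous function, and Proposition \ref{prop:prop env}(4) gives $P_\gamma(u_0)=\phi$.

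The key step is to approximate $u_0$ uniformly by bounded piecewise affine functions $u_j\in\PA_b(\tX)$ with $\sup_{\tX}|u_j-u_0|\to0$. To do this, fix a simplicial structure $\Pi$ with $\gamma\in\PA(\Pi)$. By Lemma \ref{lem:ext}, $u_0$ extends continuously to the compactification $\overline{\tX}_\Pi$. Proposition \ref{prop:bounded-dense} then provides $u_j\in\PA_b(\overline{\tX}_\Pi)$ converging uniformly to this extension, hence to $u_0$ on $\tX$.

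By hypothesis (1), each $P_\gamma(u_j)$ lies in $\PCreg(\tX,\gamma)$. By the Lipschitz property of the envelope (Proposition \ref{prop:prop env}(8)),
\[
\sup_{\tX}|P_\gamma(u_j)-\phi|=\sup_{\tX}|P_\gamma(u_j)-P_\gamma(u_0)|\le\sup_{\tX}|u_j-u_0|\to0.
\]
So $\phi$ is a uniform limit of functions in $\PCreg(\tX,\gamma)$. For each $j$, choose $\psi_j\in\PAPC(\tX,\gamma)$ with $\sup_{\tX}|\psi_j-P_\gamma(u_j)|\le 1/j$. A diagonal argument then shows that $\phi$ is the uniform limit of the sequence $(\psi_j)_j$ in $\PAPC(\tX,\gamma)$. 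Hence $\phi\in\PCreg(\tX,\gamma)$ by Proposition/Definition \ref{def:PC-reg}; a decreasing sequence is not needed for this membership.

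The main obstacle is the approximation of $u_0=\phi-\gamma$ by bounded piecewise affine functions uniformly on all of $\tX$, including near infinity. Uniform approximation on compact sets alone would not suffice, because the envelope estimate is global. This is exactly why the continuous extension of Lemma \ref{lem:ext} combined with the density result of Proposition \ref{prop:bounded-dense} is needed. One must check that the approximants in $\PA_b(\overline{\tX}_\Pi)$ restrict to elements of $\PA_b(\tX)$, which holds by the definition of that space.
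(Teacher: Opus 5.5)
Your proof is correct and essentially the same as the paper's. The substantive direction (your $(1)\Rightarrow(3)$, the paper's $(1)\Rightarrow(2)$) uses the same ingredients: the continuous extension of Lemma~\ref{lem:ext}, the density from Proposition~\ref{prop:bounded-dense}, the identity $P_\gamma(\phi-\gamma)=\phi$, the $1$-Lipschitz estimate of Proposition~\ref{prop:prop env}(8), and a diagonal argument. The remaining implications rest on the same easy facts, Proposition~\ref{prop:prop env}(3) and Proposition/Definition~\ref{def:PC-reg}, only arranged in the opposite cyclic order.
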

\begin{proof}
We start by proving $(1) \implies (2)$. Let $\phi \in \PC(\tX, \gamma)$ and let $\Pi$ be a simplicial polyhedral structure where $\gamma$ is defined. By Lemma \ref{lem:ext} $\phi$ admits a continuous extension to $\overline{\tX}_\Pi$, and by Proposition \ref{prop:bounded-dense} there exists $(\phi_j)_j$ a sequence in $\PA(\tX,\gamma)$ converging uniformly to $\phi$. 
Set $\psi_j \coloneqq P_{\gamma}(\phi_j -\gamma)$; by $(1)$, $\psi_j$ is the uniform limit of functions $(\psi_{j,i})_i$ in $\PAPC(\tX,\gamma)$. For $j \gg 1$, we have $\lVert \phi_j -\phi \lVert_{\infty} \le \eps$, so the last point of Proposition \ref{prop:prop env} applied to $(\phi -\gamma)$ and $(\phi_j - \gamma)$ yields
$$\lVert \phi -\psi_j \rVert_{\infty} \le \eps,$$
as $P_{\gamma}(\phi-\gamma) = \phi$. It follows that $(\psi_j)_j$ is a sequence of functions converging uniformly to $\phi$, and we conclude by using a standard diagonal argument on $(\psi_{j,i})_{i,j}$.

The implication $(2) \implies (3)$ follows from Definition/Proposition \ref{def:PC-reg}, while $(3) \implies (1)$ follows from item $(3)$ in Proposition \ref{prop:prop env}.
\end{proof}

\begin{defi} \label{def: regularity property}
We say that $(\tX,\gamma)$ has the regularity property if one of the equivalent conditions in Theorem \ref{th:regularisation} holds.
\end{defi}

\begin{rem}\label{rem:regularity}
In the complete case $\tX=N_{\R}$, the regularity property holds for any $\gamma$. Indeed, in this case the $\gamma$-envelope of a function $u$ can be described as the convex support function of the convex hull of the epigraph of $u+\gamma$ (see \cite[p.\,36]{Rockafellar}). If $u$ is piecewise affine, then the epigraph is polyhedral, and so is its convex hull. Thus, the associated support function is again piecewise affine.
When $\tX$ is not complete, one may attempt to replace the usual convex hull by a polyhedral convex hull. However, it is unclear whether the polyhedral convex hull preserves polyhedrality in general. On the other hand, similar to \cite[Theorem 2.13]{BoteroGil2022}, one can show that conical $\PC$ functions on conical polyhedral spaces (i.\,e.\,\emph{tropical fans}), are regularizable. This amounts to the fact that by the conical property the $\PC$ functions are determined by their restrictions to the compact unit sphere. 
\end{rem}

\begin{cor} Assume that $(\tX,\gamma)$ has the regularity property. For any $u \in \mathcal{C}_b^0(\tX)$ we have
$$P_{\gamma}(u) (x) = \sup \left\{ \psi(x) \; |  \; \psi \le (\gamma +u), \psi \in \PAPC(\tX, \gamma) \right\}.$$
\end{cor}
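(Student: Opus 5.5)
We argue by double inequality. Denote by $Q(x)$ the supremum on the right-hand side. The inequality $Q \le P_\gamma(u)$ is immediate: every $\psi \in \PAPC(\tX,\gamma)$ with $\psi \le \gamma + u$ is polyhedrally convex by Lemma \ref{lem:pc} and satisfies $\psi = \gamma + O(1)$, so it is a competitor in Definition \ref{defi:envelope}.

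For the reverse inequality we first use that $u$ is bounded. By Proposition \ref{prop:prop env}(3), $P_\gamma(u) \in \PC(\tX,\gamma)$, and by Proposition \ref{prop:prop env}(2) we have $P_\gamma(u) \le \gamma + u$. The regularity property, in the form of condition (2) (equivalently (3)) of Theorem \ref{th:regularisation}, then provides, for every $\eps > 0$, a function $\psi_\eps \in \PAPC(\tX,\gamma)$ with $\sup_{\tX} \lvert \psi_\eps - P_\gamma(u)\rvert \le \eps$. Consider $\psi_\eps - \eps$. It is still in $\PAPC(\tX,\gamma)$, and
\[
\psi_\eps - \eps \le P_\gamma(u) \le \gamma + u,
\]
so it is a competitor for $Q$. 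Hence $Q \ge P_\gamma(u) - 2\eps$ pointwise. Letting $\eps \to 0$ gives $Q \ge P_\gamma(u)$.

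There is only one point needing care. Subtracting a rational versus a real constant $\eps$ from a $\PAPC$ function must preserve piecewise affinity, since the definition of $\PA$ asks for constants $b_\sigma \in \Q$. We therefore choose $\eps \in \Q_{>0}$ throughout. With this choice there is no real obstacle: the content is entirely carried by Theorem \ref{th:regularisation} together with the basic properties of the envelope in Proposition \ref{prop:prop env}.
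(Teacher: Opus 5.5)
Your proof is correct and follows the paper's argument: the paper likewise uses Proposition~\ref{prop:prop env}(3) and Theorem~\ref{th:regularisation} to approximate $P_\gamma(u)$ uniformly by functions in $\PAPC(\tX,\gamma)$, and takes the approximating sequence to be increasing so that it lies below $P_\gamma(u)\le\gamma+u$. Subtracting a rational $\eps$ is an explicit way of getting the approximants below the envelope, which is the step the paper covers by assuming the sequence is increasing.
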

\begin{proof}
The inequality $\geq$ is clear by definition. To prove the reverse inequality at $x \in \tX$, it is enough to show that for any $\eps >0$, there exists $\psi \in \PAPC(\tX,\gamma)$ such that $\psi \le (\gamma+u)$ and $\psi(x) \ge P_{\gamma}(u)(x) - \eps$.
By item (3) from Proposition \ref{prop:prop env}, $P_{\gamma}(u) \in \PC(\tX, \gamma)$, so that by Theorem \ref{th:regularisation}, there exists a sequence $(\psi_j)_j$ in $\PAPC(\tX,\gamma)$, that we can assume to be increasing, converging uniformly to $P_{\gamma}(u)$. As a result, $\psi_j(x) \ge P_{\gamma}(u)(x) - \eps$ work for $j$ sufficiently large, concluding the proof.
\end{proof}

We show that Theorem \ref{th:regularisation} implies that Properties (i) and (ii) from Proposition \ref{prop:gamma-psh} are in fact equivalent to being $\gamma$-$\PSH$.
\begin{cor} \label{cor:envpsh}Assume that $(\tX,\gamma)$ has the regularity property. A function $\phi$ lies in $\PSH(\tX, \gamma)$ if and only if $\phi \in \PC(\tX)$ and $\phi \leq \gamma + O(1)$. 
\\As a consequence, if $u$ is a continuous function on $\tX$ such that $(\gamma+u)$ is quasi-$\gamma$-$\PSH$, then $P_{\gamma}(u) \in \PSH(\tX, \gamma)$.
\end{cor}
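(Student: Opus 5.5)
The forward implication is already contained in Proposition \ref{prop:gamma-psh}, items (i) and (ii). For the converse, let $\phi \in \PC(\tX)$ with $\phi \le \gamma + C$. I will approximate $\phi$ from above by a decreasing sequence of functions in $\PC(\tX,\gamma)$, which are regularizable by the regularity property. Then I will pass from a decreasing limit of $\PCreg$ functions to a decreasing limit of $\PAPC$ functions.

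The natural truncations are
$$\phi_j \coloneqq \max(\phi, \gamma - j).$$
Since $\gamma$ is $\PAPC$ and $\phi$ is $\PC$, each $\phi_j$ is $\PC$, because a maximum of $\PC$ functions is $\PC$ directly from Definition \ref{defn:PC}. Moreover $\gamma - j \le \phi_j \le \gamma + \max(C,-j)$, so $\phi_j \in \PC(\tX,\gamma)$. The sequence $(\phi_j)_j$ decreases pointwise to $\phi$, since $\gamma - j \to -\infty$ pointwise. By Theorem \ref{th:regularisation}(3), each $\phi_j$ lies in $\PCreg(\tX,\gamma)$. Hence there is a decreasing sequence $(\phi_{j,i})_i$ in $\PAPC(\tX,\gamma)$ converging uniformly to $\phi_j$ (Proposition \ref{def:PC-reg}).

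The remaining task is to extract a single decreasing $\PAPC$ sequence converging to $\phi$, and I expect this to be the main (if mild) obstacle. I would replace $\phi_{j,i}$ by $\phi_{j,i} + 2^{-j}$ with $i = i(j)$ chosen so that
$$\phi_j + 2^{-j} \le \psi_j \coloneqq \phi_{j,i(j)} + 2^{-j} \le \phi_j + 2^{-j} + 2^{-j-2}.$$
Then $\psi_{j+1} \le \phi_{j+1} + 2^{-j-1} + 2^{-j-3} < \phi_j + 2^{-j} \le \psi_j$, using $\phi_{j+1} \le \phi_j$. So $(\psi_j)_j$ is decreasing in $\PAPC(\tX,\gamma)$, adding rational constants keeps $\PA$ functions $\PA$, and it converges pointwise to $\phi$. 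This gives $\phi \in \PSH(\tX,\gamma)$.

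For the consequence, write $\gamma + u = \phi + g$ with $\phi \in \PSH(\tX,\gamma)$ and $g \in \mathcal{C}^0_b(\tX)$. As noted after Definition \ref{defi:envelope}, $P_\gamma(u) \not\equiv -\infty$. Proposition \ref{prop:prop env}(1) then gives $P_\gamma(u) \in \PC(\tX)$. Every competitor is $\le \gamma + u = \phi + g \le \gamma + O(1)$, using Proposition \ref{prop:gamma-psh}(ii) and the boundedness of $g$, so the supremum $P_\gamma(u)$ also satisfies $P_\gamma(u) \le \gamma + O(1)$. The first part then yields $P_\gamma(u) \in \PSH(\tX,\gamma)$.
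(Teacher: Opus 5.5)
Your proof is correct and follows the paper's argument. The paper also truncates by $\max(\phi,\gamma-j)$, regularizes each truncation via the regularity property, and diagonalizes; for the consequence it likewise bounds $P_\gamma(u)\le \phi+\sup_{\tX} g$. The only difference is that you carry out the diagonal extraction explicitly, using the rational shifts by $2^{-j}$ to force monotonicity, where the paper simply cites a standard diagonal lemma.
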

\begin{proof}
The implication $\Rightarrow$ follows from Proposition \ref{prop:gamma-psh} so that we prove the reverse inclusion.
\\Let $\phi \in \PC(\tX)$ such that $\phi \leq \gamma + O(1)$, and set $\phi_j \coloneqq \max \{ \phi, \gamma -j \}$, which lie in $\PC(\tX,\gamma)$. By Theorem \ref{th:regularisation}, for all $j$, there exists a sequence $(\psi_{j, n})_n$ in $\PAPC(\tX,\gamma)$ converging uniformly to $\phi_j$, and we may assume it is decreasing with respect to $n$. The first claim then follows by using a standard diagonal argument (see e.\,g.\,\cite[Lemma~4.6]{BoucksomJonssonc}).

For the second claim, write $\gamma+u = \phi+g$ with $\phi \in \PSH(\tX,\gamma)$ and $g \in \mathcal{C}^0_b(\tX)$. The bound $P_{\gamma}(u) \le \gamma+u \le \phi+ \sup_{\tX} g$ shows that $P_{\gamma}(u) \le \gamma+O(1)$, so that the first claim and item (1) of Proposition \ref{prop:prop env} show that $P_{\gamma}(u) \in \PSH(\tX,\gamma)$.
\end{proof}

\subsection{Orthogonality}\label{sec:ortho}
\begin{defi} \label{defi:orthogonality property}
Assume that $(\tX,\gamma)$ has the regularity property. We say that $(\tX,\gamma)$ has the orthogonality property if, for any bounded piecewise affine function $u$ on $\tX$, the envelope $P_\gamma(u)  \in \PCreg(\tX,\gamma)$ satisfies
\begin{align*}\label{eq:ortho}\int_{\tX} (P_{\gamma}(u)-(\gamma+u)) \PMA(P_{\gamma}(u)) =0
\end{align*}
\end{defi}

\begin{theorem} \label{th:diffEP}
Assume that $(\tX,\gamma)$ has the regularity and orthogonality properties. Let $u, v \in \PA_b(\tX)$. Then $E \circ P_{\gamma}$ is differentiable at $u$ in the direction $v$, and
$$\left. \frac{d}{dt} \right|_{t=0} E(P_{\gamma}(u+tv)) = \int_{\tX} v \,\PMA(P_{\gamma}(u)).
$$

\end{theorem}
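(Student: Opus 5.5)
The argument follows the standard non-archimedean strategy (Boucksom--Favre--Jonsson): bound the difference quotient from above and below by the same quantity up to $o(t)$. Write $\phi_t \coloneqq P_\gamma(u+tv)$ for $t \in \R$. By the regularity property, each $\phi_t$ lies in $\PCreg(\tX,\gamma)$. By Proposition \ref{prop:prop env}(8), $\sup_{\tX}|\phi_t-\phi_0| \le |t|\sup|v|$, so $\phi_t \to \phi_0$ uniformly. Consequently $\PMA(\phi_t^{j},\phi_0^{d-j})$ converges weakly to $\PMA(\phi_0)$ for every $j$, by the continuity in Theorem \ref{thm:PMA PCreg}.

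The first step is to apply Proposition \ref{prop:energy2} and the positivity of mixed measures. This yields the concavity-type bounds
\[
\int_{\tX}(\phi_t-\phi_0)\,\PMA(\phi_t) \le E(\phi_t)-E(\phi_0) \le \int_{\tX}(\phi_t-\phi_0)\,\PMA(\phi_0).
\]
These are the standard consequences of concavity of $E$ along the segment together with Lemma \ref{lem:der energy}, extended to $\PCreg$ by continuity. Alternatively, each mixed term in Proposition \ref{prop:energy2} can be sandwiched between the extreme terms via Lemma \ref{lem:nega}.

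The second step uses orthogonality and $\phi_s \le \gamma+u+sv$ (Proposition \ref{prop:prop env}(2)) to estimate the integrands. On the upper side,
\[
\int(\phi_t-\phi_0)\,\PMA(\phi_0) = \int(\phi_t-(\gamma+u))\,\PMA(\phi_0) \le t\int v\,\PMA(\phi_0),
\]
where the equality holds because $\phi_0=\gamma+u$ $\PMA(\phi_0)$-almost everywhere, and the inequality uses $\phi_t \le \gamma+u+tv$. On the lower side, orthogonality for $u+tv$ (still bounded piecewise affine) gives $\phi_t = \gamma+u+tv$ $\PMA(\phi_t)$-almost everywhere. Together with $\phi_0 \le \gamma+u$ this yields
\[
\int(\phi_t-\phi_0)\,\PMA(\phi_t) \ge t\int v\,\PMA(\phi_t).
\]
Note that the orthogonality integrand $P_\gamma(u)-(\gamma+u)$ is nonpositive, so vanishing of the integral means equality almost everywhere.

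Combining the two steps gives, for every real $t$,
\[
t\int v\,\PMA(\phi_t) \le E(\phi_t)-E(\phi_0) \le t\int v\,\PMA(\phi_0).
\]
Dividing by $t$, with the inequality signs flipping for $t<0$, squeezes the difference quotient between $\int v\,\PMA(\phi_t)$ and $\int v\,\PMA(\phi_0)$. Since $v$ is bounded and continuous and $\PMA(\phi_t)\to\PMA(\phi_0)$ weakly, both bounds converge to $\int v\,\PMA(\phi_0)$, and the two one-sided derivatives agree.

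The main obstacle is this weak convergence. The measures are a priori defined on $\overline{\tX}_\Pi$, and $v$ must extend continuously there to pair weakly. One option is to choose the compactification so that $v \in \PA_b(\overline{\tX}_\Pi)$, refining $\Pi$ so that $v$ is constant along recession directions and hence extends. The other is to invoke Proposition \ref{prop:suppMA}: since the measures do not charge the boundary and have constant mass $\deg(\gamma)$, one can test against bounded continuous functions on $\tX$ via tightness. I would try the first option first.
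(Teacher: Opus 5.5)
Your proof is correct, but it takes a different route from the paper's in the key step.

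\textbf{The paper's route.} It first uses the concavity of $E$ and of $P_\gamma$, following Boucksom--Favre--Jonsson, to reduce to differentiating $t\mapsto\int_{\tX}(P_\gamma(u+tv)-\gamma)\,\mu$ against the \emph{fixed} measure $\mu=\PMA(P_\gamma(u))$. Orthogonality at $u$ then shows that $P_\gamma(u+tv)-P_\gamma(u)-tv$ is $\mu$-a.e.\ nonpositive and vanishes outside $\Omega_t=\{P_\gamma(u+tv)<P_\gamma(u)+tv\}$. The paper then proves $\mu(\Omega_t)=O(t)$ as follows:
\begin{itemize}
\item it chooses $\gamma'$ strictly convex on a quasi-projective refinement, with $m\gamma'+v$ polyhedrally convex (Lemma~\ref{lem:PA plus PAPC+});
\item it applies the comparison principle (Corollary~\ref{cor:comparison}) in the twisted class $\PCreg(\tX,\gamma+tm\gamma')$ and expands by multilinearity;
\item it kills the main term using orthogonality at $u+tv$.
\end{itemize}

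\textbf{Your route.} You combine the two tangent inequalities of the concave energy with orthogonality at both $u$ and $u+tv$. This gives $t\int v\,\PMA(\phi_t)\le E(\phi_t)-E(\phi_0)\le t\int v\,\PMA(\phi_0)$. You then conclude with Proposition~\ref{prop:prop env}(8) and the continuity of $\PMA$ under uniform convergence from Theorem~\ref{thm:PMA PCreg}. This is the classical sandwich argument from the complex setting.

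\textbf{Comparison.} Your argument uses exactly the same hypotheses, since the paper also needs orthogonality at $u+tv$. It avoids the comparison principle, the quasi-projective refinement, and redoing the $\PCreg$ theory for the perturbed reference function $\gamma+tm\gamma'$. The price is that you must pair $v$ with the moving measures $\PMA(\phi_t)$, which is exactly the compactification issue you flag. Either of your fixes works:
\begin{itemize}
\item Since $\gamma$ is positive at infinity, Proposition~\ref{prop:suppMA} shows these measures put no mass on the boundary. Hence they do not depend on the compactification, and you may take $\Pi$ with $v\in\PA(\Pi)$, so that $v$ extends continuously.
\item Alternatively, all these measures have the same mass $\deg(\gamma)$ and none charges the boundary, so weak convergence on the compactification upgrades, by a tightness argument, to convergence against bounded continuous functions on $\tX$.
\end{itemize}
In exchange for its heavier machinery, the paper's route gives a quantitative $O(t^2)$ linearization against the fixed measure $\mu$.
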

\begin{proof}
   Set $\mu \coloneqq \PMA(P_{\gamma}(u))$. We use the concavity properties of both the energy functional and of the envelope (see Propositions \ref{prop:energy} and \ref{prop:prop env}) and we argue as in the proof of \cite[Theorem~7.2]{BoucksomFavreJonsson2015} to obtain that
   $$\left. \frac{d}{dt} \right|_{t=0} E(P_{\gamma}(u+tv)) = \left. \frac{d}{dt} \right|_{t=0} \int_{\tX} (P_{\gamma}(u+tv)-\gamma) \mu.$$

    We are left to show that the right-hand derivative in the above equality agrees with $\int_{\tX} v \mu$, or equivalently
   $$\int_{\tX} (P_{\gamma}(u+tv)-P_\gamma(u)) \mu = t \int_{\tX} v \mu + o(t).$$
   Write $\Omega_t \coloneqq \{ P_{\gamma}(u+tv) < P_{\gamma}(u)+tv \}$, we claim that $\mu(\Omega_t) = O(t)$. Granting the claim for now, we observe that $\lvert (P_{\gamma}(u+tv) -P_{\gamma}(u)) \lvert \le t \sup_{\tX} \lvert v \rvert$ by Proposition \ref{prop:prop env}. Moreover, we have $P_{\gamma}(u+tv) \le (\gamma+u+tv) = P_{\gamma}(u)+tv$ $\mu$-almost everywhere by orthogonality. It follows that
   \begin{align*}
        \int_{\tX} (P_{\gamma}(u+tv)-P_\gamma(u)-tv) \mu 
        & = \int_{\Omega_t} (P_{\gamma}(u+tv)-P_\gamma(u)-tv) \mu\\ 
        & \le \mu(\Omega_t) \sup_{\tX} \lvert P_{\gamma}(u+tv)-P_\gamma(u)-tv\rvert \\ 
        & \le 2 \mu(\Omega_t) \cdot t \sup_{\tX} \lvert v \rvert = O(t^2),
   \end{align*}
   and the result follows. 
   
   We now prove the claim on $\mu(\Omega_t)$.
   To this end, let $\Pi$ be a polyhedral structure on $\tX$ such that $v \in \PA(\Pi)$. By Proposition \ref{prop:quasi-proj} there exists a quasi-projective refinement $\Pi'$ of $\Pi$, hence we fix $\gamma' \in \PAPC^+(\Pi')$. By Lemma \ref{lem:PA plus PAPC+}, there exists $m>0$ such that $m\,\gamma'+v \in \PAPC(\tX)$. Writing $w \coloneqq m\,\gamma'$ and $\Omega_t = \{ P_{\gamma}(u+tv)+tw < P_{\gamma}(u)+t(v+w) \}$, we apply the comparison principle (Corollary \ref{cor:comparison}) to these functions lying in $\PCreg(\tX,\gamma +tm\,\gamma')$ to get
   $$\int_{\Omega_t} \PMA(P_{\gamma}(u)+t(v+w)) \le \int_{\Omega_t} \PMA(P_{\gamma}(u+tv)+tw )).$$
   Expanding both Monge-Ampère measures by multilinearity now gives
   $$\int_{\Omega_t} \mu \le \int_{\Omega_t} \PMA(P_{\gamma}(u+tv)) +O(t),$$
   but the integral on the right-hand side vanishes by orthogonality, which concludes the proof.
\end{proof}

\begin{cor} \label{cor:diff}
Assume that $(\tX,\gamma)$ has the regularity and orthogonality properties. Let $u \in \mathcal{C}^0(\tX)$  such that $(\gamma+u) \in \mathcal{E}^1(\tX, \gamma)$. Then $E \circ P_{\gamma}$ is differentiable at $u$, and we have
$$\left. \frac{d}{dt} \right|_{t=0} E(P_{\gamma}(u+tv)) = \int_{\tX} v \,\PMA(P_{\gamma}(u)),
$$
for all $v \in \PA_b(\tX)$.
\end{cor}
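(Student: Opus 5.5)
We reduce to Theorem \ref{th:diffEP} by approximating $u$ by bounded piecewise affine functions and using concavity, following the proof of \cite[Theorem~7.2, Corollary~7.3]{BoucksomFavreJonsson2015}. A first point to settle is what $E(P_\gamma(u+tv))$ means. Since $(\gamma+u)\in\mathcal{E}^1(\tX,\gamma)$, Proposition \ref{prop:prop env}(4) gives $P_\gamma(u)=\gamma+u$. For bounded $v$, the function $\gamma+u+tv$ is quasi-$\gamma$-$\PSH$, so by Corollary \ref{cor:envpsh} we have $P_\gamma(u+tv)\in\PSH(\tX,\gamma)$. Moreover $P_\gamma(u+tv)\ge \gamma+u-|t|\sup|v|$, so $E(P_\gamma(u+tv))>-\infty$ by monotonicity and Lemma \ref{lem:energy}. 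Hence $f(t)\coloneqq E(P_\gamma(u+tv))$ is a finite function of $t\in\R$.

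\textbf{Step 1: concavity.} By Proposition \ref{prop:prop env}(7) (with a fixed $\gamma$) and the monotonicity and concavity of $E$, the function $f$ is concave. For $\PSH$ functions, concavity of $E$ is obtained by passing the $\PCreg$ statement of Proposition \ref{prop:energy} through decreasing limits. Its one-sided derivatives $f'(0^\pm)$ therefore exist, and it suffices to show
\[
f'(0^+)\le\int v\,\PMA(\gamma+u)\le f'(0^-).
\]

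\textbf{Step 2: lower/upper bound via the energy inequality.} Write $\phi_t=P_\gamma(u+tv)$ and $\phi_0=\gamma+u$. From Proposition \ref{prop:energy2}, extended to $\mathcal{E}^1$ by decreasing limits together with Proposition \ref{prop:MAdecr}, and from concavity in its standard form, we get
\[
\int(\phi_t-\phi_0)\,\PMA(\phi_t)\le E(\phi_t)-E(\phi_0)\le\int(\phi_t-\phi_0)\,\PMA(\phi_0).
\]
For the right-hand side, $\phi_t\le\gamma+u+tv=\phi_0+tv$, which gives $f(t)-f(0)\le t\int v\,\PMA(\phi_0)$. Dividing by $t>0$ and by $t<0$ yields both required inequalities for the one-sided derivatives. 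So only the upper bound on $f(t)-f(0)$ is needed.

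If Step 2 fails to give differentiability cleanly, for instance because the extension of Proposition \ref{prop:energy2} to $\mathcal{E}^1$ is problematic, the fallback is approximation. Take $u_j\in\PA_b$ with $\gamma+u_j$ decreasing to $\gamma+u$. Apply Theorem \ref{th:diffEP} to each $u_j$, which gives derivatives $\int v\,\PMA(P_\gamma(u_j))$. Then $P_\gamma(u_j)$ decreases to $P_\gamma(u)=\phi_0\in\mathcal{E}^1$, so Proposition \ref{prop:MAdecr} gives convergence of these derivatives to $\int v\,\PMA(\phi_0)$. Concave functions whose derivatives converge and whose values converge pointwise (by continuity of $E$ along decreasing sequences) have a limit that is differentiable with the limiting derivative.

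\textbf{Main obstacle.} The delicate step is justifying the energy inequality, or the approximation, at the level of $\mathcal{E}^1$ rather than $\PCreg$. In particular, $u$ is only continuous and $\gamma+u$ need not be bounded relative to $\gamma$. I would handle this by truncating with $\max(\cdot,\gamma-s)$, which lands in $\PCreg$ by Proposition/Definition \ref{def:PC-reg}. Monotone convergence is then controlled using the $o(t^{-1})$ mass estimate from Lemma \ref{lem:total-mass-finite-energy}.
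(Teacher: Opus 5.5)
Your preliminary step and the concavity of $f$ are fine, but the differentiability criterion in Step~2 is backwards. For a \emph{concave} $f$ one always has $f'(0^+)\le f'(0^-)$. So showing $f'(0^+)\le c\le f'(0^-)$, with $c=\int_{\tX}v\,\PMA(\gamma+u)$, only says that $c$ is a supergradient of $f$ at $0$, and gives no differentiability (think of $f(t)=-|t|$ and $c=0$). Your criterion would be the right one for a convex function.

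What you need is $f'(0^-)\le c\le f'(0^+)$, i.e.\ a \emph{lower} bound $f(t)-f(0)\ge tc+o(t)$. The bound $f(t)-f(0)\le tc$ that you derive is the easy half. Your argument never uses orthogonality, and that is exactly what the other half requires. In a BBGZ-type argument it comes from $E(\phi_t)-E(\phi_0)\ge\int_{\tX}(\phi_t-\phi_0)\,\PMA(\phi_t)=t\int_{\tX}v\,\PMA(\phi_t)$, where the equality holds because orthogonality forces $\phi_t=\phi_0+tv$ $\PMA(\phi_t)$-almost everywhere. One then also needs continuity of $t\mapsto\int_{\tX}v\,\PMA(\phi_t)$ at $0$. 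Moreover, orthogonality is only assumed for $u\in\PA_b(\tX)$, so for a merely continuous $u$ this route is not directly available.

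Your fallback does not close the gap either, because the lemma it rests on is false. Concave $f_j$ that are differentiable at $0$, with $f_j\to f$ pointwise and $f_j'(0)\to c$, need not have a differentiable limit: take $f_j(t)=-\sqrt{t^2+j^{-2}}\to-|t|$, with $f_j'(0)=0$. Pointwise limits only preserve the supergradient inequality, which again is the easy half.

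The missing idea, which is how the paper argues, is to apply Theorem~\ref{th:diffEP} at \emph{every} point $u_j+sv$ of the line, not just at $u_j$ (where $\gamma+u_j\in\PAPC(\tX,\gamma)$ decreases to $\gamma+u$), and then integrate:
$E(P_\gamma(u_j+tv))=E(\gamma+u_j)+\int_0^t\int_{\tX}v\,\PMA(P_\gamma(u_j+sv))\,ds$.
\begin{itemize}
\item For each fixed $s$, $P_\gamma(u_j+sv)$ decreases to $P_\gamma(u+sv)$. The limit is $\ge P_\gamma(u+sv)$ by monotonicity of the envelope. It is also $\le P_\gamma(u+sv)$, because it is a competitor for that envelope: a decreasing limit of $\PC$ functions, bounded above by $\gamma+u+sv$ and by $\gamma+O(1)$.
\item Lemma~\ref{lem:energy}, Proposition~\ref{prop:MAdecr} and dominated convergence then give the same identity with $u$ in place of $u_j$. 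Domination holds since the inner integrals are bounded by $\deg(\gamma)\sup_{\tX}|v|$.
\item Finally, differentiating at $t=0$ needs continuity of $s\mapsto\int_{\tX}v\,\PMA(P_\gamma(u+sv))$ at $s=0$. This follows from the uniform convergence $P_\gamma(u+sv)\to\gamma+u$ (Proposition~\ref{prop:prop env}(8)). Truncate with $\max(\cdot,\gamma-T)$ and control the tails uniformly through the energy, as in the proof of Proposition~\ref{prop:MAdecr}.
\end{itemize}
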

\begin{proof}
Note that $(\gamma+u)+tv$ is in particular quasi-$\gamma$-PSH, so that the envelope $P_{\gamma}(u+tv)$ lies in $\PSH(\tX,\gamma)$ for all $t$ by Corollary \ref{cor:envpsh}. Moreover, since $P_{\gamma}(u+tv) \ge (\gamma+u) - \lvert t \rvert \sup_{\tX} \lvert v \rvert$ and $(\gamma+u)$ has finite energy, so does $P_{\gamma}(u+tv)$ by monotonicity of the energy.

We will reduce to the setting of Theorem \ref{th:diffEP}. Since $(\gamma +u) \in \PSH(\tX,\gamma)$, there exists a sequence $(u_j)$ in $\PA_b(\tX)$, decreasing pointwise to $u$, with $(\gamma+u_j) \in \PAPC(\tX,\gamma)$. Set $\psi_j \coloneqq P_\gamma(u_j +tv)$, it is a decreasing sequence in $\PCreg(\tX,\gamma)$ by the regularity property, and we claim that its pointwise limit $\psi \coloneqq \lim_j \psi_j$ is $P_\gamma(u+tv)$. Indeed, since $u_j +tv \ge u+tv$, we have $\psi_j \ge P_{\gamma}(u+tv)$ for all $j$ by Proposition \ref{prop:prop env}, and thus $\psi \geq P_\gamma(u+tv)$. For the reverse inequality, note that $\psi \le \psi_j \le (\gamma+u_j +tv)$ for all $j$, whence $\psi \le (\gamma+u +tv)$. Since $\psi$ is $\PSH(\tX,\gamma)$, we infer $\psi \le P_\gamma(u+tv)$, which proves the claim. Using Lemma \ref{lem:energy}, we then have
$$E(P_{\gamma}(u+tv)) = \lim_j E(P_{\gamma}(u_j+tv)).$$
Applying Theorem \ref{th:diffEP}, we have
$$E(P_{\gamma}(u_j+tv)) = E(u_j) + \int_0^t \Bigg( \int v \,\PMA(P_{\gamma}(u_j +s v)) \Bigg) ds.$$
As $j \to \infty$, the left-hand side goes to $E(P_{\gamma}(u+tv))$, while the right-hand side goes to
$$E(u) + \int_0^t \Bigg( \int v\, \PMA(P_\gamma(u +s v)) \Bigg) ds,$$
by Proposition \ref{prop:MAdecr}. Differentiating with respect to $t$ now concludes the proof.
\end{proof}

\subsection{The variational approach to the polyhedral Monge--Ampère equation}\label{sec:variational app}

Let $\mu$ be a compactly supported Radon measure on $\tX$, of total mass $\deg(\gamma)$. Consider the functional 
\begin{align*}
F_{\mu} \colon \PSH(\tX,\gamma) \to 
&\,  \R \cup \{- \infty \} \\
\phi \mapsto 
& \, F_{\mu} (\phi) = E(\phi)-\int_{\tX} \phi \,\mu.
\end{align*}
Since $\mu$ is compactly supported and $\PSH(\tX,\gamma)$ is endowed with the topology of uniform convergence on compact subsets, the functional $F_\mu$ is usc as $E$ is. By the compactness result in Theorem \ref{thm:comp}, the functional $F_{\mu}$ admits a maximizer $\phi \in \PSH(\tX,\gamma)$. 

\begin{prop} \label{prop:MA maxsol}
Assume that $(\tX,\gamma)$ has the regularity and orthogonality properties. If $\phi \in \PSH(\tX,\gamma)$ is a maximizer of $F_\mu$, then we have $\PMA(\phi) = \mu$. Moreover if $\gamma$ is strictly convex, then $\phi \in \PCreg(\tX,\gamma)$. 
\end{prop}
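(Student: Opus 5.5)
We will follow the variational argument of \cite[Theorem 8.1]{BoucksomFavreJonsson2015}, using Corollary \ref{cor:diff} as the key differentiability input. Let $\phi$ be a maximizer of $F_\mu$. First we check that $\phi \in \mathcal{E}^1(\tX,\gamma)$: since $\gamma\in\PSH(\tX,\gamma)$ has $F_\mu(\gamma)>-\infty$ (as $\mu$ is compactly supported and $\gamma$ is continuous), the maximum is finite, and $\int\phi\,\mu$ is finite because $\phi$ is continuous and $\mu$ has compact support, so $E(\phi)>-\infty$. Writing $u\coloneqq\phi-\gamma$, which is continuous, we have $P_\gamma(u)=\phi$ by Proposition \ref{prop:prop env}(4).

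Next, for $v\in\PA_b(\tX)$ and $t\in\R$ we consider the function
$$g(t)\coloneqq E(P_\gamma(u+tv))-\int_{\tX}(\gamma+u+tv)\,\mu .$$
By Corollary \ref{cor:envpsh}, $P_\gamma(u+tv)\in\PSH(\tX,\gamma)$, and since $P_\gamma(u+tv)\le\gamma+u+tv$, we get $g(t)\le F_\mu(P_\gamma(u+tv))\le F_\mu(\phi)=g(0)$. Thus $g$ attains its maximum at $t=0$. By Corollary \ref{cor:diff}, $g$ is differentiable at $0$ with
$$g'(0)=\int_{\tX} v\,\PMA(\phi)-\int_{\tX} v\,\mu,$$
so $g'(0)=0$. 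It follows that $\int v\,\PMA(\phi)=\int v\,\mu$ for all $v\in\PA_b(\tX)$.

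To conclude that $\PMA(\phi)=\mu$, note that both are positive Radon measures on $\tX$, and $\PMA(\phi)$ has mass $\deg(\gamma)$ by Lemma \ref{lem:total-mass-finite-energy}. View them as measures on $\overline{\tX}_\Pi$, with $\PMA(\phi)$ putting no mass on the boundary; this follows from the definition as a limit of the measures $\mathbf 1_{\{\phi>\gamma-t\}}\PMA(\phi^{\langle t\rangle})$, which live on $\tX$ by Proposition \ref{prop:suppMA}. Taking $\Pi$ simplicial, Proposition \ref{prop:bounded-dense} gives density of $\PA_b(\overline{\tX}_\Pi)$ in $\mathcal C^0(\overline{\tX}_\Pi)$, which yields equality. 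The main subtlety is this identification step: one must ensure that test functions extending continuously to the compactification suffice. This holds because both measures are finite and carried by $\tX$.

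For the last assertion, $\PMA(\phi)=\mu$ is compactly supported and $\phi\in\mathcal E^1(\tX,\gamma)$. Since $\gamma$ is strictly convex, the capacity results of the previous section apply, so Proposition \ref{prop:boundedsol}, which rests on the domination principle Proposition \ref{prop:dom}, gives $\phi\in\PCreg(\tX,\gamma)$.
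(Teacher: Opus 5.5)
Your proposal is correct and follows essentially the same argument as the paper. You differentiate $t\mapsto E(P_\gamma(\phi-\gamma+tv))-\int(\phi+tv)\,\mu$ at $t=0$ via Corollary \ref{cor:diff}, conclude by density of bounded piecewise affine functions on a compactification, and invoke Proposition \ref{prop:boundedsol} for regularity; you also make explicit two points the paper leaves implicit, namely why $t=0$ is a maximum (via $P_\gamma(u+tv)\le\gamma+u+tv$) and why testing against $\PA_b(\overline{\tX}_\Pi)$ suffices to identify the measures.
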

\begin{proof}
We start by observing that since $\phi$ maximizes $F_{\mu}$, we must have $E(\phi)> - \infty$, whence $\phi \in \mathcal{E}^1(\tX,\gamma)$. Let $v \in \PA_b(\tX)$, we will prove that $\int_{\tX} v \,\mu = \int_{\tX} v \PMA(\phi)$, which will imply the proposition by density of $\PA_b(\tX)$ in the space of continuous functions on a compactification of $\tX$ (Proposition \ref{prop:bounded-dense}). Set $\psi = \phi-\gamma$, and
    $$h(t)\coloneqq E(P_{\gamma}(\psi+tv)) -\int_{\tX}(\psi+tv)\, \mu.$$
By Corollary \ref{cor:diff}, $h$ is differentiable at $t=0$ and moreover we have that 
\[
h'(0) =\int_{\tX} v \,\PMA(\phi) - \int_{\tX} v \,\mu.
\]
Since $\phi$ maximizes $F_\mu$ it follows that $h(0)= F_{\mu}(\phi)$ is a local maximum of $h$, hence $h'(0)=0$. Finally, the maximizer $\phi$ lies in $\PCreg(\tX,\gamma)$ by Proposition \ref{prop:boundedsol}, if $\gamma$ is strictly convex. 
\end{proof}

\subsection{Dimension one}
\begin{prop} \label{prop:env PA} 
Assume that $\tX$ is one-dimensional. For any bounded $\PA$ function $u$ on $\tX$ defined on a polyhedral structure $\Pi$ where $\gamma$ is also defined, the envelope $P_{\gamma}(u)$ is in $\PAPC(\tX,\gamma)$ and is defined on $\Pi$.
\\In particular, the regularity property holds in dimension one.
\end{prop}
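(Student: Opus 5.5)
The plan is to build explicitly a function $\phi\in\PAPC(\Pi)$ with $\phi=\gamma+O(1)$ and $\phi\le\gamma+u$, and then show that it dominates every competitor in Definition \ref{defi:envelope}. That gives $P_\gamma(u)=\phi$. We may refine $\Pi$ and assume it simplicial; the argument runs on $\Pi$ itself. Since $\tX$ is one-dimensional, $\Pi$ has vertices, bounded edges and unbounded rays. By Proposition \ref{prop:convex-ap} and Lemma \ref{lem:2 defn PAPC}, a function $\phi\in\PA(\Pi)$ is $\PC$ exactly when it is convex along each edge and, at each vertex $v$, the outgoing slopes satisfy $(\phi\cdot c)(v)\ge 0$ for every non-negative local $1$-cycle $c$ on $\Star_v(\Pi)$. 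Equivalently, some affine $\ell$ on $N_\R$ satisfies $\phi\ge\ell$ near $v$ with equality at $v$.

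\textbf{Step 1: a finite-dimensional problem.} A candidate $\phi\in\PA(\Pi)$ with $\phi-\gamma$ bounded is determined by its values $a=(a_v)_{v\in\Pi^{(0)}}$. On each unbounded ray it has the same slope as $\gamma$; on each bounded edge it is the affine interpolation of its endpoint values. Write $\phi_a$ for this function. Bounded-edge convexity is automatic, and the vertex conditions are finitely many linear inequalities in $a$, namely the pairing with the finitely many extremal non-negative local $1$-cycles. The condition $\phi_a\le\gamma+u$ reduces to checking vertices and the rays, since both sides are affine on each face and share slopes on rays. So it is the linear inequality $a_v\le\gamma(v)+u(v)$.

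The admissible set $A$ is a polyhedron in $\R^{\Pi^{(0)}}$. It is nonempty because $\gamma+\inf u$ lies in it. It is closed under componentwise maximum: $\max(\phi_a,\phi_b)$ is $\PC$ and bounded above by $\gamma+u$, and replacing it by its interpolation on bounded edges only lowers it and keeps it $\PC$ (this needs a check). The set is also bounded above. Hence it has a componentwise maximal element $a^*$, and we set $\phi\coloneqq\phi_{a^*}\in\PAPC(\Pi)\cap\PA(\tX,\gamma)$.

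\textbf{Step 2: domination.} Let $\psi$ be any competitor, i.e.\ $\psi\in\PC(\tX)$, $\psi\le\gamma+u$, $\psi\le\gamma+O(1)$. The claim is that $\psi\le\phi_b$ with $b_v=\psi(v)$, and that $b\in A$, which forces $b\le a^*$ and hence $\psi\le\phi$.

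On a bounded edge, $\psi\le$ linear interpolation by convexity. On an unbounded ray $v+\R_{\ge0}n$, Lemma \ref{lem:decr} gives $\psi-\gamma\le(\psi-\gamma)(v)$, so $\psi\le\phi_b$ there. The inequality $b_v\le\gamma(v)+u(v)$ is clear.

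\textbf{Step 3: the main obstacle.} It remains to see that $\phi_b$ is $\PC$ at each vertex $v$. The outgoing slope of $\phi_b$ along each edge is at least that of $\psi$ at $v$: on bounded edges by convexity, and on rays because $\psi-\gamma$ is non-increasing there. Pairing with a non-negative local cycle $c$ then gives $(\phi_b\cdot c)(v)\ge\sum_\rho c(\rho)D_v\psi(\rho)$. The right-hand side is $\ge0$, since by the convex-combination definition with points $v+\epsilon n_\rho$ it is the limit of $\epsilon^{-1}\bigl(\sum\alpha_i\psi(x_i)-\psi(v)\bigr)\ge0$, as in the proof of Lemma \ref{lem:2 defn PAPC}.

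This slope-comparison step carries the real content; the care needed is in handling $\phi_b\cdot c$ with the balancing correction term. That term vanishes after choosing normal vectors as in Lemma \ref{lem:2 defn PAPC}, since $N_v=0$ at a vertex. Given this, $P_\gamma(u)=\phi\in\PAPC(\tX,\gamma)$ is defined on $\Pi$, and condition (1) of Theorem \ref{th:regularisation} holds, which is the regularity property.
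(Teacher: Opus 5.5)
Your argument is correct, but it takes a different route from the paper. The paper argues by contradiction, one edge at a time. After subtracting a global affine function, suppose $P_\gamma(u)$ dips strictly below its chord inside a bounded edge $e$ (resp.\ below the translate of $\gamma$ through its vertex value on a ray $e$). The paper then replaces $P_\gamma(u)$ on $e$ only by $\max\{P_\gamma(u),-\varepsilon/2\}$ (resp.\ $\max\{P_\gamma(u),\gamma-\varepsilon/2\}$). This modification agrees with $P_\gamma(u)$ near the vertices, so it is still $\PC$ by locality of polyhedral convexity \cite[Proposition 5.13]{BBS}. It also stays below $\gamma+u$, which contradicts maximality. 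That route never has to examine a vertex. However, it presupposes that $P_\gamma(u)$ is itself $\PC$ (Proposition \ref{prop:prop env}(1), which rests on the result on suprema of $\PC$ functions).

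You instead prove that every competitor $\psi$ is dominated by its interpolation $\phi_b$, which is again a competitor. The price is the vertex check of Step 3, which you carry out correctly: chord slopes and $\gamma$-slopes dominate the one-sided derivatives of $\psi$, and the $\PCC$ limit gives $\sum_\rho c(\rho)D_v\psi(\rho)\ge 0$. That limit also excludes $D_v\psi(\rho)=-\infty$ on the support of $c$. In return, Step 1 lets you identify $P_\gamma(u)=\phi_{a^*}$ without knowing a priori that the envelope is $\PC$. It also exhibits the vertex values as the unique maximizer of a linear program with rational data, hence rational, which the paper's argument does not address.

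Two minor points. First, in Step 1, replacing $\max(\phi_a,\phi_{a'})$ by its interpolation \emph{raises} it on bounded edges, since a maximum of affine functions is convex there. So staying below $\gamma+u$ is not automatic. The closure $\max(a,a')\in A$ is nevertheless true, precisely by your Steps 2--3 applied to $\psi=\max(\phi_a,\phi_{a'})$. Alternatively, drop Step 1: $P_\gamma(u)$ is a competitor by Proposition \ref{prop:prop env}(1)--(3), so Steps 2--3 with $\psi=P_\gamma(u)$ give $P_\gamma(u)\le\phi_b\le P_\gamma(u)$ directly. Second, Lemma \ref{lem:decr} is stated for $\PC(\tX,\gamma)$, while a competitor only satisfies $\psi\le\gamma+O(1)$. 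This is harmless, because that lemma's proof uses only the upper bound.
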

\begin{proof}
We prove that $P_{\gamma}(u)$ is affine on each $e$ of $\Pi$, we start with the case where $e$ is bounded. Up to adding a global affine function, we may assume that $P_{\gamma}(u) \le 0$ on $e$ and is zero on the vertices of $e$. We will show that $P_{\gamma}(u)$ has to be zero on the whole edge. Assume by contradiction that there exists $x \in \relint(e)$ such that $P_{\gamma}(u)(x) = - \eps <0$.
We claim that the function 
$$Q= \begin{cases}
    \max \{ P_{\gamma}(u), -\eps/2 \} & \text{on } e \\
    P_{\gamma}(u) & \text{otherwise}
\end{cases}
$$
lies in $\PC(\tX,\gamma)$. Since moreover $Q \le (\gamma+u)$, this will provide a contradiction as $Q(x) > P_{\gamma}(u)(x)$.
To prove the claim, let $U \subset \tX$ be the complement in $\tX$ of $\relint(e) \cap \{ P_{\gamma}(u) \ge -3\eps/4 \} $. Then $U \cup \relint(e)$ is an open cover of $\tX$, and $Q = P_{\gamma}(u)$ on $U$. As a result, $Q$ is $\PC$ by \cite[Proposition 5.13]{BBS}. 

We now let $e$ be an unbounded edge with vertex $v$, and assume again that $P_{\gamma}(u)(v)=0$. Assume by contradiction that the function $(P_{\gamma}(u)-\gamma)$ is not constant on $e$. Since it is a bounded convex function on $e$, it is non-positive, so that there exists $x \in \relint(e)$ with $P_{\gamma}(u)(x) = \gamma(x)-\eps$, with $\eps >0$. Similarly to the previous case, the function:
$$Q= \begin{cases}
    \max \{ P_{\gamma}(u), \gamma-\eps/2 \} & \text{on } e \\
    P_{\gamma}(u) & \text{otherwise}
\end{cases}
$$
lies in $\PC(\gamma)$, satisfies $Q \le \gamma+u$, and we have $Q(x) = \gamma(x)-\eps/2 >P_{\gamma}(u)(x)$, a contradiction.
\end{proof}

We introduce a Laplacian operator for suitable regular functions on $\tX$, and compare it with the polyhedral Monge--Ampère operator. It is an unbounded version of the Laplacian operator on bounded metric graphs introduced by Baker and Faber in \cite{baker-faber}. To this purpose, recall that given a polyhedral structure $\Pi$ on $\tX$, any edge $e$ of $\Pi$ carries an integral affine structure, and hence admits a natural coordinate $t_e$ (defined up to a sign, corresponding to orientation of the edge). We then equip $\tX$ with the measure $d\lambda$ such that for any edge $e$ of $\Pi$, $d \lambda_{|e} = [\tX](e) d\ell_e$, where $d \ell_e=d t_e$ is the Lebesgue measure on $e$ normalized by the integral affine structure. 

\begin{defi} \label{defi:PS}
A continuous function $f$ on $\tX$ is piecewise smooth if there exists a polyhedral structure $\Pi$ on $\tX$ such that $f_{|e}$ is $\mathcal{C}^2$ for any edge $e \in \Pi^{(1)}$; for any such $\Pi$, we say that $f$ \emph{is defined on} $\Pi$. The set of such functions is denoted by $\operatorname{PS}(\tX)$. 
\end{defi}

If $f \in \operatorname{PS}(\tX)$ is defined on $\Pi$, the second derivative 
$$f''_{| e} \colon e \to \R$$ defined as $\frac{d^2 f_{|e}}{d t_e^2}$ is a well-defined continuous function on $e$. Moreover, if $v \in \Pi^{(0)}$ and $e$ is an edge adjacent to $v$, define the outgoing slope of $f$ at $v$ in the direction $e$ as
$$s_{e/v}(f) \coloneqq \left. \frac{d}{dt} \right|_{t=0^+} f(v+tn_{e/v}),$$
where $n_{e/v}$ is the normal vector of $e$ relative to $v$; see Definition \ref{def:normal vectors}.

\begin{defi}\label{def:laplacian}
Let $f$ be a piecewise smooth function on $\tX$, defined on a polyhedral structure $\Pi$. The \emph{Laplacian} of $f$ is the measure on $\tX$ defined as
$$\Delta f \coloneqq \sum_{e \in \Pi^{(1)}} f_{| e}'' \, d\lambda_{|e} + \sum_{v \in \Pi^{(0)}} \sum_{\substack{e \succ v \\ e \in \Pi^{(1)}}} [\tX](e) s_{e/v}(f) \; \delta_v.$$
\end{defi}
\begin{rem}
The Laplacian $\Delta f$ depends only on the space $\tX$ and not on the choice of polyhedral structure $\Pi$. Moreover, it is a linear operator in the sense that $\Delta(af + bg) = a\Delta f + b \Delta g$ for any piecewise smooth functions $f$ and $g$ and any real numbers $a$ and $b$. 
\end{rem}
The Laplacian agrees with the measure $\PMA(f)$ whenever $f \in \PA(\tX)$, since $s_{e/v}(f) = f_e(n_{e/v})$ with the notation from Section \ref{sec:intersection}, and $f_{| e}''=0$. We claim that this holds more generally.

\begin{prop}\label{prop:laplacian}
For any $\phi \in \PC(\tX,\gamma) \cap \operatorname{PS}(\tX)$ the equality of measures
$$\PMA(\phi) = \Delta \phi$$
holds.
\end{prop}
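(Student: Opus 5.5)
Fix a polyhedral structure $\Pi$ on which $\phi$, $\gamma$ and $[\tX]$ are all defined, with $\phi_{|e}$ of class $\mathcal{C}^2$ on every edge. The plan is to approximate $\phi$ uniformly by piecewise affine interpolants, using the PA case ($\PMA = \Delta$ on $\PA(\tX)$, noted just before the statement) and the continuity of $\PMA$ under uniform convergence from Theorem~\ref{thm:PMA PCreg}.

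For each $k$, let $\Pi_k$ be the refinement of $\Pi$ obtained by subdividing each bounded edge into $k$ equal pieces. On unbounded edges, place new vertices at spacing $1/k$ on the initial segment of length $k$ and keep the remaining infinite ray as an edge. Let $\phi_k$ be the function that agrees with $\phi$ at the vertices of $\Pi_k$, is affine on bounded edges, and on the final infinite ray of each unbounded edge $e$ equals $\phi(v_k)+\gamma_e\cdot(t-t_k)$, where $\gamma_e$ is the slope of $\gamma$ along $e$. Since $\phi$ is convex on each edge, this piecewise linear interpolation satisfies $\phi_k\ge\phi$ on bounded pieces. At vertices, convexity in the sense of Definition~\ref{def:PAPC} holds because, at an old vertex $v$, the outgoing slopes of $\phi_k$ dominate the one-sided derivatives $s_{e/v}(\phi)$, and the PC condition for $\phi$ gives $\sum_e [\tX](e)\,s_{e/v}(\phi)\ge 0$ (via the proof of Lemma~\ref{lem:2 defn PAPC} with first-order expansions). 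At new vertices inside an edge, convexity is just one-dimensional convexity. Hence $\phi_k\in\PAPC(\tX,\gamma)$. Uniform convergence $\phi_k\to\phi$ on compacts follows from $\mathcal{C}^2$ regularity. The key point is at infinity: $\phi-\gamma$ is bounded and convex on the ray, so it is monotone with a limit (Lemma~\ref{lem:ext}), and therefore the tail error tends to $0$ uniformly. This gives $\phi\in\PCreg(\tX,\gamma)$ and $\PMA(\phi_k)\to\PMA(\phi)$ weakly.

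It remains to show $\PMA(\phi_k)=\Delta\phi_k\to\Delta\phi$ weakly; testing against $g\in\mathcal{C}^0_c$ or $\PA_b$ suffices, by Proposition~\ref{prop:suppMA} and the density result Proposition~\ref{prop:bounded-dense}.
\begin{itemize}
\item At an interior subdivision point $w$ of an edge $e$, $\Delta\phi_k(\{w\})=[\tX](e)\,(\text{difference of the two adjacent secant slopes})$. This equals $[\tX](e)\int\phi''$ over an interval of length $O(1/k)$ around $w$ up to $o(1/k)$ error, so these atoms form a Riemann sum converging to $\phi''\,d\lambda$.
\item At an original vertex $v$, the secant slope along $e$ converges to $s_{e/v}(\phi)$, giving the atom $\sum_e[\tX](e)\,s_{e/v}(\phi)\,\delta_v$.
\end{itemize}
The main obstacle is bookkeeping the half-interval contributions next to original vertices, together with the tails: the secant slope on the last bounded piece of a ray must be matched with $\gamma_e$. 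This mismatch produces an atom that moves to infinity and whose mass must tend to zero. I expect it does, because $\phi'$ tends to $\gamma_e$ along the ray, which follows from convexity and boundedness of $\phi-\gamma$, and the test functions are compactly supported or handled by Proposition~\ref{prop:suppMA}. Uniqueness of weak limits then yields $\PMA(\phi)=\Delta\phi$.
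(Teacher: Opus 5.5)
Your argument is correct in substance, but it takes a different route from the paper. The paper does not build approximants by hand. It gets $\phi_j\in\PAPC(\tX,\gamma)$ with $\phi_j\to\phi$ uniformly from the regularity property in dimension one (Proposition~\ref{prop:env PA} and Theorem~\ref{th:regularisation}). Then, for $g\in\PA_b(\tX)$, it uses the symmetry in Lemma~\ref{lem:integral-formulae} to write $\int g\,\PMA(\phi_j)=\int g\,\Delta\gamma+\int(\phi_j-\gamma)\,\Delta g$. Since $\Delta g$ is a finite atomic measure, the limit in $j$ is immediate. The Laplacian is then moved back onto $\phi-\gamma$ by the classical integration by parts of Lemma~\ref{lem:intlapl}, whose hypothesis $(\phi-\gamma)'\to 0$ on unbounded edges comes from bounded convexity. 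You instead construct explicit interpolants. This proves $\PC(\tX,\gamma)\cap\operatorname{PS}(\tX)\subseteq\PCreg(\tX,\gamma)$ directly, without the envelope machinery. You then prove $\Delta\phi_k\to\Delta\phi$ by a discrete second-difference computation, e.g.\ $s_j-s_{j-1}=\int\phi''\Lambda_j$ with hat functions $\Lambda_j$. The paper's duality trick avoids this Riemann-sum bookkeeping entirely; your route is more self-contained and gives an explicit regularizing sequence.

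Two points need tightening. First, $\phi_k\in\PAPC$ requires $\sum_e c(e)\,(\phi_k)_e(n_{e/v})\ge 0$ for every non-negative local $1$-cycle $c$ at $v$, not only for $c=[\tX]$. At the crossing point in Example~\ref{ex:smoothness-necessary}, the single inequality for $[\tX]$ does not give convexity along each axis. The same argument works for every $c$: apply the PC condition for $\phi$ to the combination $\bigl(v,(v+\eps n_{e/v})_e,(c(e)/S)_e\bigr)$, as in the proof of Lemma~\ref{lem:2 defn PAPC}. Then use that secant slopes of $\phi_{|e}$ from $v$ are non-decreasing in the step, so the outgoing slopes of $\phi_k$ dominate the secants at any smaller common step $\eps$.

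Second, on the growing segments $[0,k]$ of the rays, $\mathcal{C}^2$ regularity gives no uniform bound, so argue as you do for the tail. Since $\gamma$ is affine on the ray, $\phi_k-\gamma$ is the piecewise linear interpolant of the bounded non-increasing function $\psi=\phi-\gamma$, and it is constant past $t_k$. Hence on every piece contained in $[T,\infty)$, and on the final ray, the error is at most $\psi(T)-\lim_{+\infty}\psi$, which is small for $T$ large.

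Finally, if you test against $g\in\mathcal{C}^0_c(\tX)$, which is legitimate by Proposition~\ref{prop:suppMA}, the junction atom at $t_k$ eventually leaves the support of $g$. Its mass then never needs to be controlled.
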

\begin{proof}
Let $\phi \in \PC(\tX,\gamma) \cap \operatorname{PS}(\tX)$. By Proposition \ref{prop:bounded-dense} it suffices to show that we have
$$\int_{\tX} g \,\PMA(\phi)= \int_{\tX} g\, \Delta \phi$$
for any $g \in \PA_b(\tX)$.
By Proposition \ref{prop:env PA} and Theorem \ref{th:regularisation}, there exists a sequence $(\phi_j)_j \in \PAPC(\tX,\gamma)$ converging uniformly to $\phi$, and we have
$$\int_{\tX} g\, \PMA(\phi) 
= \lim_j \int_{\tX} g \,\PMA(\phi_j) 
=\int_{\tX} g \,\PMA(\gamma) +  \lim_j  \int_{\tX} (\phi_j-\gamma) \PMA(g) $$
by Lemma \ref{lem:integral-formulae}. As $\PMA$ agrees with the Laplacian on piecewise affine functions, we have
$$\int_{\tX} g \,\PMA(\phi) 
= \int_{\tX} g \,\Delta \gamma + \lim_j \int_{\tX} (\phi_j-\gamma) \Delta g 
= \int_{\tX} g \,\Delta \gamma + \int_{\tX} (\phi-\gamma) \Delta g$$
since the $(\phi_j)$ converge to $\phi$ uniformly. Since $g$ is bounded and piecewise affine, $g' \equiv 0$ outside of a compact subset. Moreover, $(\phi-\gamma)$ is a bounded convex function on each unbounded edge of a polyhedral structure $\Pi$ on $\tX$ where $\gamma$, $\phi$ (and $g$) are defined. Thus, $(\phi-\gamma)$ has non-positive and increasing derivative, which must then go to zero at infinity on each unbounded edge of $\Pi$. We apply Lemma \ref{lem:intlapl} to $g$ and $\phi-\gamma$, and conclude that
$$\int_{\tX} g \,\PMA(\phi) 
= \int_{\tX} g \,\Delta \gamma + \int_{\tX} g \,\Delta(\phi-\gamma) = \int_{\tX} g\, \Delta \phi.$$
\end{proof}

\begin{lemma} \label{lem:intlapl}
Let $f, g \in \operatorname{PS}(\tX) \cap \mathcal{C}^0_b(\tX)$ be defined on a polyhedral structure $\Pi$ on $\tX$. If $f'_{| e}$ and $g'_{| e}$ go to zero at infinity on each unbounded edge $e \in \Pi$, we have the equality
$$\int_{\tX} f \,\Delta g = \int_{\tX} g\, \Delta f.$$
\end{lemma}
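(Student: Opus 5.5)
The plan is to expand both sides using Definition \ref{def:laplacian} and reduce the statement to one-dimensional integration by parts on each edge. The boundary terms at finite vertices will cancel against the atomic part of the Laplacian, and the boundary terms at infinity will vanish by the decay hypothesis on the derivatives. Fix $\Pi$ on which $f,g$ are defined. Since $\Pi^{(1)}$ is finite, it suffices to compute, for each edge $e$, the contribution $\int_e f\, g''\,[\tX](e)\,d\ell_e$ together with the atoms at the vertices of $e$.

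\emph{Bounded edges.} Let $e=[v,w]$ and parametrize it by $t_e\in[0,L]$, with $t_e=0$ at $v$ and the direction $n_{e/v}$. Then $s_{e/v}(g)=g'(0^+)$ and $s_{e/w}(g)=-g'(L^-)$. Integrating by parts twice on $[0,L]$ gives
\[
\int_0^L f g''\,dt-\int_0^L g f''\,dt=\big[f g'-g f'\big]_0^L .
\]
Multiplying by $[\tX](e)$, the right-hand side equals
\[
-[\tX](e)\big(f(v)s_{e/v}(g)+f(w)s_{e/w}(g)\big)+[\tX](e)\big(g(v)s_{e/v}(f)+g(w)s_{e/w}(f)\big).
\]
Hence the contribution of $e$ to $\int f\Delta g-\int g\Delta f$, counting both its interior part and its share of the vertex atoms, is zero.

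\emph{Unbounded edges.} Let $e=v+\R_{\ge0}n_{e/v}$ with coordinate $t\in[0,\infty)$. Integrate by parts on $[0,T]$ and let $T\to\infty$. The boundary term at $T$ is $f(T)g'(T)-g(T)f'(T)$, which tends to $0$ because $f,g$ are bounded and $f',g'\to0$. The boundary term at $0$ cancels the atom at $v$ exactly as before. Summing over all edges gives $\int_{\tX} f\,\Delta g=\int_{\tX} g\,\Delta f$.

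\emph{Main obstacle.} The delicate point is the convergence of $\int_0^\infty f g''\,dt$ on unbounded edges, since $g''$ need not be integrable a priori. The plan is to treat each such integral as the limit $T\to\infty$ of the truncated integrals. Integrating by parts once gives $\int_0^T f g''=[fg']_0^T-\int_0^T f'g'$, and likewise with $f$ and $g$ exchanged. The difference of the two truncated expressions is then exactly the boundary terms, so it converges even if the individual integrals do not. This identifies both sides as the same improper limit. In the setting where the lemma is applied in Proposition \ref{prop:laplacian}, $g$ is piecewise affine, so $g''=0$ and $g'$ vanishes near infinity, and no convergence issue arises there.
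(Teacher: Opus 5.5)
Your proof is correct and follows the paper's argument: integrate by parts on each edge, match the finite-vertex boundary terms with the atomic part of $\Delta$, and note that the terms at infinity on unbounded edges vanish because $f,g$ are bounded and $f',g'\to 0$. Your remark that $\int_e f\,g''$ on an unbounded edge should be read as an improper limit, whose difference with $\int_e g\,f''$ always converges, is a point of rigour the paper passes over silently.
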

\begin{proof}
Let $\Pi$ be a polyhedral structure on $\tX$ where both $f$ and $g$ are defined. For any edge $e\in \Pi^{(1)}$, by classical integration by parts, we have
$$\int_{e} f(t_e) g''(t_e) d\ell_e - \int_{e} f''(t_e) g(t_e) d \ell_e = \Bigg[ f(t_e) g'(t_e)-f'(t_e)g(t_e) \Bigg]_{e_0}^{e_1}$$
where $e_i$ are the endpoints of $e$, and $e_1=\infty$ if $e$ is unbounded. It follows that 
$$\int_{e} f(t_e) g''(t_e) d\ell_e + f(e_0) g'(e_0) - f(e_1)g'(e_1)= \int_{e} f''(t_e) g(t_e) d \ell_e + f'(e_0)g(e_0) - f'(e_1)g(e_1).$$
Multiplying the left-hand side by $[\tX](e)$ and summing over all the edges of $\Pi$ give 
\begin{align*}
   \sum_{e\in \Pi^{(1)}}&  [\tX](e)  \Big(\int_{e} f(t_e) g''(t_e) d\ell_e + f(e_0) g'(e_0) - f(e_1)g'(e_1) \Big) \\
   & = \sum_{e\in \Pi^{(1)}} \int_{e} f(t_e) g''(t_e) d\lambda_e + \sum_{e\in \Pi^{(1)}} [\tX](e) f(e_0) s_{e/e_0}(g)+ \sum_{e\in \Pi^{(1)}} [\tX](e) f(e_1)s_{e/e_1}(g) = \Delta g \,(f)
\end{align*}
as by assumption we have $s_{e/e_1}(g)=0$ for any unbounded edge. A similar computation gives that the right-hand side is equal to $\Delta f (g)$, which concludes the proof.
\end{proof}

As we shall see in Section \ref{sec:counter-example}, the orthogonality property does not hold for arbitrary balanced graphs. We will however show that it holds assuming a smoothness condition, which we now define.
\begin{defi}\label{def:smooth}
Let $\tX$ be one-dimensional and let $\Pi$ be a polyhedral structure on $\tX$ where $[\tX]$ is defined. We say that 
\begin{itemize}
    \item a vertex $v$ of $\Pi$ is \emph{smooth} if $\on{Span}_{\Z}\left(n_{\tau/v}\; | \; v \prec \tau\right)$ is a saturated lattice of rank $\on{val}(v)-1$ in $N $, where $\on{val}(v)$ is the number of one-dimensional faces adjacent to $v$. A vertex that is not smooth is called \emph{singular};
    \item the space $\tX$ is \emph{polyhedrally smooth} if, for any polyhedral structure $\Pi$ on $\tX$ where $[\tX]$ is defined, all vertices of $\Pi$ are smooth.
\end{itemize}
\end{defi}

\begin{rem}
The above definition is more general than the one in \cite[Definition 2.6]{Jell}, where the weights of $[\tX]$ are required to be all equal to one. Such polyhedral spaces are known as \emph{smooth} in the literature and can be thought as being locally (in a suitable sense) Bergman fans of matroids. Figure \ref{fig:smooth} shows a one-dimensional, polyhedrally smooth, balanced polyhedral space which is not smooth. Indeed, for the depicted polyhedral structure, one has to put weights $3$ on the unbounded faces, and $1$ on the bounded ones. 

Higher dimensional generalizations of the smoothness notion by Jell can be found for example in \cite[Definition~11]{smooth-higher}. It would be interesting to generalize \emph{polyhedrally smoothness} to higher dimensions and compare it to other smoothness conditions on tropical varieties, such as the notion of \emph{homologically smooth tropical fans} in \cite{hom-smooth}.  
\end{rem}
\begin{center}
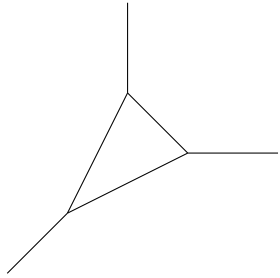
\begin{figure}[h]
\begin{tikzpicture}[scale=0.8]
\draw (-1, -1) -- (1, 0);
\draw (1, 0) -- (0, 1);
\draw (0,1) -- (-1,-1);
\draw (-1,-1)--(-2,-2);
\draw (1,0) -- (2.5,0);
\draw (0,1) -- (0,2.5);
\end{tikzpicture}\caption{A polyhedrally smooth polyhedral space which is not smooth}\label{fig:smooth}
\end{figure}
\end{center}

\begin{prop} \label{prop:dim1 ortho}
If a balanced polyhedral space is one-dimensional and polyhedrally smooth, then the orthogonality property holds.
\end{prop}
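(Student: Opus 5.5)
Write $\phi\coloneqq P_\gamma(u)$ for a bounded piecewise affine $u$. By Proposition \ref{prop:env PA} we may fix a polyhedral structure $\Pi$ on which $\gamma$, $u$ and $[\tX]$ are all defined; then $\phi\in\PAPC(\tX,\gamma)$ and $\phi$ is affine on every edge of $\Pi$. Hence $\PMA(\phi)$ is the atomic measure $\sum_{v\in\Pi^{(0)}} m_v\delta_v$ with $m_v=\sum_{e\succ v}[\tX](e)\,s_e$, where $s_e\coloneqq\phi_e(n_{e/v})$ are the outgoing slopes. Since $\phi\le\gamma+u$ by Proposition \ref{prop:prop env}, orthogonality amounts to the following claim: $m_v=0$ at every vertex $v$ with $\phi(v)<(\gamma+u)(v)$. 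I will argue by contradiction, assuming $m_v>0$ at such a vertex (recall $m_v\ge0$ by convexity).

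First I use polyhedral smoothness to describe the local convexity condition at $v$. Since $\tX$ is polyhedrally smooth, $v$ is a smooth vertex of $\Pi$, so the normal vectors $n_{e/v}$, $e\succ v$, span a lattice of rank $\on{val}(v)-1$. Therefore the space of linear relations $\sum_e c_e n_{e/v}=0$ is one-dimensional, and the balancing condition shows it is spanned by $w=([\tX](e))_{e\succ v}$. Consequently every local $1$-cycle at $v$ (more precisely, every cycle on a neighbourhood $U$ of $v$ meeting only the edges at $v$) is a real multiple of $w$. By Definition \ref{def:PAPC} (equivalently Lemma \ref{lem:2 defn PAPC}), a function that is piecewise affine near $v$ is then $\PC$ near $v$ if and only if the weighted sum of its outgoing slopes at $v$ is non-negative. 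I expect this step to be the crux, since it is exactly where smoothness enters; the counterexamples of Section \ref{sec:counter-example} should fail precisely here, through extra nonnegative cycles.

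Next I perturb $\phi$ near $v$. Fix $\varepsilon>0$ small, so that the points $v+\varepsilon n_{e/v}$ lie in the interiors of the edges $e\succ v$ and $\phi<\gamma+u$ on the union of the segments $[v,v+\varepsilon n_{e/v}]$ (possible by continuity and the strict inequality at $v$). For $\delta>0$ small, define $\psi$ to equal $\phi$ outside these segments, and to be affine on each segment $[v,v+\varepsilon n_{e/v}]$ with $\psi(v)=\phi(v)+\delta$ and $\psi(v+\varepsilon n_{e/v})=\phi(v+\varepsilon n_{e/v})$. The new slopes at $v$ are $s_e-\delta/\varepsilon$. Hence the weighted slope sum at $v$ is $m_v-\delta\varepsilon^{-1}\sum_e[\tX](e)$, which stays $\ge0$ for $\delta$ small, and by the previous step $\psi$ is $\PC$ near $v$.

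It remains to check the other points and conclude. At each new bivalent vertex $v+\varepsilon n_{e/v}$ the two normals are opposite, and the slope along the edge increases by $\delta/\varepsilon$ when crossing it, so $\psi$ is convex there. Away from the segments $\psi=\phi$. Thus $\psi$ is $\PC$ by locality of polyhedral convexity \cite[Proposition 5.13]{BBS}. Moreover $\psi=\gamma+O(1)$, and $\psi\le\gamma+u$ for $\delta$ small, since the inequality $\phi<\gamma+u$ is strict on the compact union of segments. So $\psi$ is a competitor in Definition \ref{defi:envelope} with $\psi(v)>\phi(v)$, a contradiction. Finally, since the vanishing of each $m_v$ was checked for a fixed $\Pi$ and $\PMA(\phi)$ is independent of $\Pi$, we obtain $\int_{\tX}(\phi-(\gamma+u))\,\PMA(\phi)=0$.
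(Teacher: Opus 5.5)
Your proof is correct and follows essentially the same route as the paper. Both argue by contradiction at a vertex $v$ where $\PMA(P_\gamma(u))$ has positive mass but $P_\gamma(u)(v)<(\gamma+u)(v)$. Both then build a competitor that raises the value at $v$, slightly lowers the outgoing slopes along the edges at $v$, and rejoins $P_\gamma(u)$ further along each edge. The only difference is how convexity at $v$ is certified. The paper uses smoothness to add an ambient linear function making all outgoing slopes strictly positive, which is the affine-minorant form of convexity. You use the dual form: the nonnegative local $1$-cycles at $v$ are exactly the nonnegative multiples of $[\tX]$, so a nonnegative weighted slope sum suffices.
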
 
\begin{proof} 
Let $u$ be a bounded $\PA$ function on $\tX$ defined on a polyhedral structure $\Pi$. By Proposition \ref{prop:env PA} $P_\gamma(u)$ is piecewise affine and defined on $\Pi$ as well, and the measure $\PMA(P_\gamma(u))$ is supported on the vertices of $\Pi$. 
Let $v$ be a vertex of $\Pi$ with $\PMA(P_\gamma(u))(v)>0$, and assume by contradiction that $P_\gamma(u)(v)=0$ and $\eps \coloneqq(\gamma+u)(v)>0$. 

Let $(n+1)$ be the valency of $\tX$ at $v$, and denote $\tau_0,\ldots,\tau_n$ the one-dimensional faces adjacent to $v$. By the polyhedrally smoothness assumption, $\tX$ is isomorphic to a tropical line in $\R^n$ in the neighbourhood of $v$; we label $e_i \coloneqq n_{\tau_i/v}$ the normal vectors at $v$, the balancing condition $a_i \coloneqq [\tX](e_i)$ along $e_i$, and the relation $\sum_{i=0}^n a_i e_i =0$ holds. We claim that, after adding an ambient affine function, the outgoing slope $P_\gamma(u)$ at $v$ are all strictly positive. Indeed, let $(s_0,\ldots,s_n)$ be the slopes of $P_\gamma(u)$ at $v$ along $e_i$; they satisfy the inequality $\sum_{i=0}^n a_i s_i = \PMA(P_\gamma(u))(v)>0$. We want to show that there exists a linear function $l$ such that $(a_0(s_0 + l_0),\ldots, a_n(s_n+ l_n))$ lies in $(\R_{>0})^{n+1}$, where $l_j\coloneqq l(e_j)$. Set
$$
l_j= \frac{\sum_{i=0}^{n}a_i s_i}{a_j(n+1)} -s_j,
$$
then clearly $a_j(l_j+s_j)>0$ for any $j$, and $l$ is a linear function on $\R^n$ as $\sum_{i=0}^{n}a_i l(e_i)=0$.

By the above claim, we can assume that $s_i >0$ for any $i=0,\ldots,n$. We define $Q$ to be the $\PA$ function on $\tX$ which takes value $\tfrac{\eps}{2}$ at $v$, and has slopes $\tilde{s_i}$ along $e_i$ near $v$, with $s_i >\tilde{s}_i>0$ sufficiently close to $s_i$ so that $Q \le (\gamma+u)$ and $Q$ agrees with $P_\gamma(u)$ at a point $w_i$ on $\tau_i$; after $w_i$, $Q$ is set to be equal to $P_\gamma(u)$. By construction, $Q$ is a competitor for the envelope of $u$, but $\tfrac{\eps}{2}=Q(v)>P_\gamma(u)(v)=0$, a contradiction.
\end{proof}

\begin{theorem} \label{theo:soldim1}
Let $\tX$ be a connected, one-dimensional, polyhedrally smooth, balanced polyhedral space. For any compactly supported Radon measure $\mu$ on $\tX$, there exists a unique (up to additive constant) function $\phi \in \PSH(\tX,\gamma)$ solving the Monge--Ampère equation $\PMA(\phi) = \mu.$ Moreover if $\gamma$ is strictly convex, then $\phi \in \PCreg(\tX,\gamma)$. 
\end{theorem}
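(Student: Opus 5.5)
Existence will follow from the variational machinery already set up, and uniqueness from the domination principle; the one-dimensional hypotheses enter only through Propositions \ref{prop:env PA} and \ref{prop:dim1 ortho}.

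\textbf{Existence.} By Proposition \ref{prop:env PA} the pair $(\tX,\gamma)$ has the regularity property in dimension one. Since $\tX$ is polyhedrally smooth, Proposition \ref{prop:dim1 ortho} gives the orthogonality property. We assume, as is implicit, that $\mu$ has total mass $\deg(\gamma)$; this is necessary because every measure $\PMA(\phi)$ with $\phi\in\mathcal E^1$ has this mass by Lemma \ref{lem:total-mass-finite-energy}. The functional $F_\mu$ is upper semicontinuous and invariant under adding constants, so it descends to the compact space $\PSH(\tX,\gamma)/\R$ of Theorem \ref{thm:comp}. To get a maximizer one must check that $F_\mu\not\equiv-\infty$; this holds because $F_\mu(\gamma)$ is finite, as $\gamma$ is continuous and $\mu$ has compact support. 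Proposition \ref{prop:MA maxsol} then says the maximizer $\phi$ solves $\PMA(\phi)=\mu$, and that $\phi\in\PCreg(\tX,\gamma)$ when $\gamma$ is strictly convex.

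\textbf{Uniqueness.} Let $\phi_1,\phi_2\in\PSH(\tX,\gamma)$ both solve the equation. The plan has three steps.
\begin{enumerate}
\item Show that both solutions have finite energy. Integrability of $\phi_i-\gamma$ against $\mu$ alone is not enough for this. Instead, mimic the proof of Proposition \ref{prop:boundedsol}: in dimension one $\PMA(\phi_i)$ is a Laplacian, and a solution whose measure has compact support $K$ is determined off $K$ by harmonicity on the unbounded edges. There $\phi_i-\gamma$ is bounded, convex and has vanishing slope at infinity, so it is constant on each unbounded edge beyond $K$. Hence $\phi_i-\gamma$ is bounded, $\phi_i\in\PC(\tX,\gamma)=\PCreg(\tX,\gamma)$ by regularity, and in particular $\phi_i\in\mathcal E^1$.
\item Show that $\phi_1-\phi_2$ is constant. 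Since both functions are now bounded relative to $\gamma$, use the one-dimensional energy identity. Proposition \ref{prop:energy2} together with Lemma \ref{lem:nega} gives the negativity
\[
\int(\phi_1-\phi_2)\bigl(\PMA(\phi_1)-\PMA(\phi_2)\bigr)= -\sum_e \tfrac{a_e}{\lambda_e}(\Delta\text{-slope differences})^2\le 0 .
\]
In dimension one $\PMA$ is linear, so $\PMA(\phi_1)-\PMA(\phi_2)=\PMA(\phi_1-\phi_2,\gamma)$-type expressions reduce to $\Delta(\phi_1-\phi_2)$. Since $\PMA(\phi_1)=\PMA(\phi_2)=\mu$, the left side vanishes, which forces $\phi_1-\phi_2$ to have zero slope on every bounded edge. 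Connectedness, together with the constancy on unbounded ends from step 1, then gives that $\phi_1-\phi_2$ is constant.
\item As an alternative route for step 2, use the domination principle, Proposition \ref{prop:dom}: compare $\phi_1$ with $\phi_2+\sup_K(\phi_1-\phi_2)$.
\end{enumerate}

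The main obstacle is the uniqueness argument for merely $\PSH$ solutions, that is, establishing boundedness and finite energy in step 1. The energy identity requires passing to limits in the linearity of the dimension-one Laplacian via Proposition \ref{prop:laplacian} and the approximations $\phi^{\langle t\rangle}$, and I expect this to be the delicate point.
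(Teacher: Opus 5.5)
Your existence argument is the paper's: regularity from Proposition~\ref{prop:env PA}, orthogonality from Proposition~\ref{prop:dim1 ortho}, a maximizer of $F_\mu$ by Theorem~\ref{thm:comp}, then Proposition~\ref{prop:MA maxsol}, with Proposition~\ref{prop:boundedsol} in the strictly convex case. You are right to make $\mu(\tX)=\deg(\gamma)$ explicit.

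\textbf{The gap: Step~1 of uniqueness is circular.} You use that $\phi_i-\gamma$ is bounded on the unbounded edges to conclude it is constant there, and then deduce that $\phi_i-\gamma$ is bounded. For $\phi_i\in\PSH(\tX,\gamma)$ one only knows $\phi_i-\gamma\le O(1)$.

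Harmonicity off $K$ gives less than you need. Via Remark~\ref{rem:MApsh} and Proposition~\ref{prop:comp MA} applied to $\phi_i^{\langle t\rangle}$ on the sets $\{\phi_i>\gamma-t\}$, which exhaust $\tX$, it only shows the following. On each unbounded edge $e$ disjoint from $K$ (of a structure on which $\gamma$ is defined), $\phi_i-\gamma$ is affine with some slope $-b_e\le 0$. Nothing you say rules out $b_e>0$.

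No argument that ignores the mass of $\mu$ can rule it out. On $\tX=\R$ with weight $1$ and $\gamma=\lvert x\rvert$, both $\max(x,0)$ and $\max(-x,0)$ lie in $\PSH(\tX,\gamma)$. Definition~\ref{def:MA-PSH} gives $\PMA=\delta_0$ for both, since the truncations lose a unit of mass at $-\infty$, respectively $+\infty$. Yet their difference is not constant.

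The missing idea is exactly this loss of mass. For $t\gg1$ and each $e$ with $b_e>0$, $\phi_i^{\langle t\rangle}$ has a single kink on $e$. It lies in $\{\phi_i\le\gamma-t\}$ and carries mass $[\tX](e)\,b_e$. Using Proposition~\ref{prop:suppMA} to exclude mass at the boundary,
\[
\mu(\tX)=\PMA(\phi_i)(\tX)=\deg(\gamma)-\sum_e [\tX](e)\,b_e .
\]
So $\mu(\tX)=\deg(\gamma)$ forces every $b_e=0$. Then $\phi_i-\gamma$ is bounded and $\phi_i\in\PC(\tX,\gamma)=\PCreg(\tX,\gamma)$ by Theorem~\ref{th:regularisation}.

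The paper's own proof passes over this point. It reduces uniqueness directly to showing that $f\in\mathcal C^0_b(\tX)$ with $\PMA(f)=0$ is constant, so it takes for granted that two solutions differ by a bounded function. The computation above is what justifies that reduction.

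\textbf{Step~2.} With boundedness in hand, Step~2 is a legitimate alternative to the paper's argument, but it must be run through approximations. Lemma~\ref{lem:nega} only concerns $\PA_b(\tX)$, and $\phi_1-\phi_2$ is not piecewise affine, so ``zero slope on every bounded edge'' is not yet meaningful. The fix:
\begin{itemize}
\item take $\phi_i^j\in\PAPC(\tX,\gamma)$ converging uniformly to $\phi_i$, and set $f_j=\phi_1^j-\phi_2^j$;
\item in dimension one, Lemma~\ref{lem:nega} reads $\int f_j\,\PMA(f_j)=-\int\lvert f_j'\rvert^2\,d\lambda$;
\item the left side tends to $\int(\phi_1-\phi_2)(\mu-\mu)=0$ by weak convergence on the compactification;
\item Cauchy--Schwarz along edges then makes $\phi_1-\phi_2$ locally constant, hence constant by connectedness.
\end{itemize}

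The paper avoids approximation. Proposition~\ref{prop:comp MA}, applied to $\phi_1$ and $\phi_2$ on each open edge, shows $f=\phi_1-\phi_2$ is affine on every edge. Then a minimum principle finishes: at a minimum point all outgoing slopes are $\ge 0$ and their weighted sum is $\PMA(f)=0$, so $\{f=\min f\}$ is open and closed. Your Dirichlet-form route mirrors variational uniqueness proofs, but in dimension one it buys nothing over the paper's.

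\textbf{Step~3 does not work.} Proposition~\ref{prop:dom} with $\psi=\phi_2+\sup_K(\phi_1-\phi_2)$ only yields $\sup_{\tX}(\phi_1-\phi_2)=\sup_K(\phi_1-\phi_2)$, not constancy. It also requires $\gamma\in\PAPC^+(\Pi)$, which the uniqueness statement does not assume.
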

\begin{proof}
The existence of a solution in $\PSH(\tX,\gamma)$ follows from Propositions \ref{prop:MA maxsol}, \ref{prop:env PA}, and \ref{prop:dim1 ortho}; moreover, if $\gamma$ is strictly convex, the solution is in $\PCreg(\tX,\gamma)=\PC(\tX,\gamma)$ by Proposition \ref{prop:boundedsol}. Since $\tX$ is one-dimensional, the operator $\PMA$ is linear, thus to prove uniqueness it is enough to show that if $f \in \mathcal{C}^0_b(\tX)$ satisfies $\PMA(f) =0$, then $f$ is constant.

 Let $\Gamma$ be a polyhedral structure on $\tX$ and $\PMA(f) =0$ for some $f \in \mathcal{C}^0_b(\tX)$. Then, on the interior of any edge of $\Gamma$, we have $f'' \equiv 0$ (in the sense of distributions) by comparison with the real Monge--Ampère measure (Proposition \ref{prop:comp MA}), so that $f$ is affine on any edge of $\Gamma$. It follows that $f \in \PA_b(\Gamma)$ . Let $p$ be any point where $f$ achieves its minimum $m$. Since $p$ is a minimum of $f$, each outgoing slope of $f$ at $p$ must be nonnegative, but since $\PMA(f)(p) =0$ the outgoing slopes of $f$ at $p$ add up to zero, and hence are all zero. It follows that $f$ is constant on a neighbourhood of $p$, hence $\{ f= m \}$ is both open and closed in $\tX$. Since $\tX$ is connected, we conclude that $f$ must be constant, as was to be shown.
\end{proof}

\subsection{Counterexamples to the orthogonality property}\label{sec:counter-example} 

We give counterexamples to the orthogonality property (Definition \ref{defi:orthogonality property}) in the one-dimensional case when the polyhedrally smoothness assumption doesn't hold (see Definition \ref{def:smooth} and Proposition \ref{prop:dim1 ortho}), and in higher dimension in the case of a Bergman fan of a matroid when $\gamma$ is not strictly convex.

\begin{example}\label{ex:smoothness-necessary} Let $\tX \subset \R^2$ be the union of the $x$-axis and the $y$-axis. Let $\gamma$ be the restriction to $\tX$ of the function $|x|+ |y|$. For $\eps>0$, set 
$$
\phi(x,y)= \begin{cases}
    \lvert x \rvert + \eps & \text{on the $x$-axis}\\
    \eps - \lvert y \rvert & \text{for $-\eps \leq y \leq \eps$} \\
    \lvert y - \eps \rvert & \text{otherwise}.
\end{cases}
$$
Because of the $y$-axis the envelope of $u \coloneqq \phi-\gamma$ must satisfy $P_\gamma(u)(0,0) \le 0$, from which we can easily check that $P_\gamma(u)$ is $\lvert x \rvert$ on the $x$-axis, and $P_\gamma(u)= \max \{ 0, \lvert y \rvert -\eps \}$ on the $y$-axis. As a result $\PMA(P_\gamma(u)))(0,0)=2 \delta_{(0,0)}$, while $0=P_\gamma(u)(0,0) < \phi(0,0)=\eps$, thus the orthogonality property doesn't hold for $u$. 
\end{example}

\begin{example}\label{exa:counter1} Let $M$ be the uniform matroid $U_{3,4}$ of rank $3$ with ground set $\{1,2,3,4\}$. The Bergman fan $\Sigma_M \subseteq \R^3$ of $M$, in the sense of Ardila and Klivans \cite{zbMATH02244752}, is a refinement of the 2-skeleton of the fan $\Sigma_{\PP^3}$ of $\PP^3$. In particular, let $e_i$ for $i=1,2,3$ and $e_4 \coloneqq -\sum_i e_i$ be the generators of the rays of $\Sigma_{\PP^3}$, and denote by $\sigma_{ij} \coloneqq \R_{\geq 0}e_i + \R_{\geq 0}e_j$. The fan $\Sigma_M$ refines $\Sigma_{\PP^3}$ by adding the ray $e_{ij}=e_i+e_j$ in each $\sigma_{ij}$. The space $\tX \coloneqq |\Sigma_M|$ is balanced by assigning weight one to all maximal dimensional faces of $\Sigma_M$. 

Let $\gamma$ be the piecewise linear $\PC$ function on $\tX$ such that $\gamma$ takes value $1$ at any primitive generator of a one-dimensional face of $\Sigma_M$, except for $e_{12}$ and $e_{34}$ where $\gamma(e_{12})=\gamma(e_{34})=0$. 

Let $\sigma'_{12}$ be the simplicial refinement of $\sigma_{12} \simeq (\R_{\geq 0})^2$ given by adding the vertices $(0, \eps)$, $(\eps, 0)$, $(\eps, \eps)$, and the half-lines $(0, \eps) +\R_{\ge 0} (1,0)$, $(0,0) + \R_{\ge 0} (1,1)$, and $(\eps,0) +\R_{\ge 0} (0,1)$. We define the function $g$ to be affine on each face of $\sigma'_{12}$, with values $\eps$ at $(0,0)$, $2 \eps$ at $(0, \eps)$ and $(\eps, 0)$, 0 at $(\eps, \eps)$; the slope of $g$ along vertical and horizontal unbounded face is $1$, while the slope along the $(1,1)$-direction ray is zero. On $\sigma_{34}$, the function $g$ is defined similarly; on every other face of $\Sigma_M$, set $g=\gamma+\eps$. In particular, $g$ is continuous on $\tX$, $\gamma \leq g$, and $u \coloneqq g-\gamma \in \PA_b(\tX)$.

\begin{claim}
$P_\gamma(u)=\gamma$.
\end{claim}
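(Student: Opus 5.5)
The plan is to prove the two inequalities $P_\gamma(u)\ge\gamma$ and $P_\gamma(u)\le\gamma$ separately. The lower bound is immediate. By construction $\gamma\le g=\gamma+u$, and $\gamma$ is piecewise linear and $\PC$ on $\tX$ with $\gamma\le\gamma+O(1)$. So $\gamma$ is itself a competitor in the supremum of Definition~\ref{defi:envelope}, and $P_\gamma(u)\ge\gamma$.

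For the upper bound, fix a competitor $\psi\in\PC(\tX)$ with $\psi\le g$ and $\psi\le\gamma+C$. The goal is $\psi\le\gamma$ everywhere. I will do this in three steps.

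\emph{Step 1: use the two distinguished rays.} In the coordinates $(a,b)\mapsto ae_1+be_2$ on $\sigma_{12}$, the ray $\R_{\ge0}e_{12}$ is the diagonal. By the definition of $g$, we have $g(\varepsilon,\varepsilon)=0$ and $g$ has slope zero along $(\varepsilon,\varepsilon)+\R_{\ge0}(1,1)$. Hence $g(te_{12})=0$ for all $t\ge\varepsilon$, and similarly $g(te_{34})=0$ for $t\ge\varepsilon$. It follows that $\psi(te_{12})\le0$ and $\psi(te_{34})\le0$ for $t\ge\varepsilon$.

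\emph{Step 2: bound $\psi$ at the origin.} Since $e_1+e_2+e_3+e_4=0$, we have $e_{12}+e_{34}=0$. Therefore, for rational $t\ge\varepsilon$,
\[
\bigl(0,(te_{12},te_{34}),(\tfrac12,\tfrac12)\bigr)
\]
is a polyhedral convex combination: both points lie on rays of $\Sigma_M$ that contain the vertex $0$. Definition~\ref{defn:PC} then gives $\psi(0)\le\tfrac12\psi(te_{12})+\tfrac12\psi(te_{34})\le0=\gamma(0)$.

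\emph{Step 3: propagate along rays.} Let $\sigma$ be a cone of $\Sigma_M$ and $x\in\sigma$. Since $\gamma$ is linear on $\sigma$, the function $s\mapsto(\psi-\gamma)(sx)$ on $\R_{\ge0}$ is convex, by the convexity of $\PC$ functions on faces (\cite[Proposition 5.10]{BBS}). It is also bounded above by $C$. A convex function on $\R_{\ge0}$ that is bounded above is non-increasing; this is the same argument as in Lemma~\ref{lem:decr}. Hence $(\psi-\gamma)(x)\le(\psi-\gamma)(0)\le0$. Taking the supremum over all competitors $\psi$ yields $P_\gamma(u)\le\gamma$.

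The only step that genuinely needs care is Step~2. Here one must check that this two-point combination is polyhedral in the sense of Definition~\ref{def:poly-combi}: $0$ is a vertex of $\Sigma_M$ lying in every ray. One must also check that the rays $\R_{\ge0}e_{12}$ and $\R_{\ge0}e_{34}$ are genuine faces of $\Sigma_M$, which they are by construction of the Bergman fan. The remaining steps are routine once the value $g=0=\gamma$ on the diagonal rays beyond $\varepsilon$ has been read off from the definition of $g$.
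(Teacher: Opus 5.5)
Your proof is correct. It shares the paper's two key ingredients. The first is the antipodal polyhedral convex combination at the origin built from $e_{12}=-e_{34}$, using that $g$ vanishes on the diagonal half-lines beyond $\eps$. The second is the fact that a convex function on a half-line which is bounded above is non-increasing. Where you differ is in how you propagate the bound from the origin.

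\emph{The paper's route.} It works with the envelope $P_\gamma(u)$ itself, using that it is $\PC$ by Proposition~\ref{prop:prop env}, and proceeds face by face:
\begin{itemize}
\item it shows $P_\gamma(u)$ vanishes on the line $\R e_{12}$;
\item it deduces $P_\gamma(u)=\gamma$ on the rays $\R_{\ge 0}e_i$, using $\gamma\le P_\gamma(u)\le\gamma+\eps$ there;
\item it handles $\sigma_{12}$ and $\sigma_{34}$ by convexity along segments joining the boundary of the cone to its diagonal;
\item it treats the remaining cones $\sigma_{ij}$ through their own diagonal rays $\R_{\ge 0}e_{ij}$.
\end{itemize}

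\emph{Your route.} You bound each competitor $\psi$ and propagate radially from the origin in a single step. This is exactly Lemma~\ref{lem:decr} applied to the simplicial fan $\Sigma_M$, whose retraction collapses $\tX$ onto $\{0\}$; you only use the upper bound $\psi\le\gamma+C$ and the conewise linearity of $\gamma$.

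\emph{Comparison.} Your argument is shorter and uniform over all cones, with no separate treatment of $\sigma_{12}$, $\sigma_{34}$ and the $\sigma_{ij}$ with $\{i,j\}\neq\{1,2\},\{3,4\}$. It also does not need to know beforehand that the envelope is $\PC$. Finally, it makes clear that the only information about $g$ that matters is $g\ge\gamma$ together with $g(te_{12})=g(te_{34})=0$ for a single rational $t>0$. The paper's face-by-face argument is more hands-on and uses more of the explicit shape of $g$, but it gains nothing further for this claim.
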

\begin{proof}
Since $\gamma \le g$ and $\gamma$ is PC, we have $P_\gamma(u) \ge \gamma$ by definition of the envelope.

Since $P_\gamma(u) \le g$, we get $P_\gamma(g) \le 0$ on the unbounded faces $(\eps, \eps) + \R_{\ge 0} (1,1)$ both in $\sigma_{12}$ and in $\sigma_{34}$. Since $\gamma \leq P_\gamma(u)$, we get $P_\gamma(u)=0$ on the line $\R e_{12} = \R e_{34}$, and in particular $P_\gamma(u) (0,0,0) =0$. 
It follows that on $\R_{\ge 0} e_1$, $P_\gamma(u)$ is a convex function, vanishing at $0$ and such that $x \le P_\gamma(u)(x) \le x+ \eps$, where $x$ is the standard coordinate on $\R_{\ge 0} e_1$. This yields $P_\gamma(u)(x) = x = \gamma(x)$ on $\R_{\ge 0} e_1$, and similarly on $\R_{\ge 0} e_2$.

We now prove $P_\gamma(u)=\gamma$ on $\sigma_{12}$. Indeed, the equality holds on the boundary $\partial \sigma_{12}$ of $\sigma_{12}$ and the half line $\ell = \R_{\ge} (1,1)$. Since any point in $\relint(\sigma_{12})$ lies in a segment with one endpoint in $\partial \sigma$ and one in $\ell$, the convexity inequality applied on that segment then yields $P_\gamma(u) \le \gamma$, since $\gamma$ is affine on the two cones delimited by $\ell$. Similarly, we get $P_\gamma(u)=\gamma$ on $\sigma_{34}$.

Consider now the cone $\sigma_{23}$. The equality $P_\gamma(u)=\gamma$ holds on the boundary of $\sigma_{23}$, so arguing as above we need to show it on the half-line $\ell'=\R_{\geq 0}e_{23}$. Writing $y$ the integral coordinate on $\ell'$, we have once again that $\gamma(y)= y \le P_\gamma(u)(y) \le y + \eps = g(y)$ while $P_\gamma(u)(0)=0$, so that $P_\gamma(u)(y) = y$ on the diagonal, and we get $P_\gamma(u)=\gamma$ on $\sigma_{23}$. The same holds for the other 2-dimensional cones of $\Sigma_{\PP^3}$, and this concludes the proof.
\end{proof}

As $\PMA(\gamma)(0,0)=4 \delta_{(0,0,0)}$, while $0=\gamma(0,0,0) < g(0,0,0)=\eps$, the orthogonality property doesn't hold for $u$. 
\end{example}
\begin{rem} \label{rem:counterexpos}
The function $\gamma$ in Example \ref{exa:counter1} is identically zero on the line generated by $e_{12}$, so that $\gamma$ is not strictly convex on $\Sigma_M$ (or any refinement). This example shows that some strict positivity condition should be imposed on $\gamma$ in order to expect orthogonality to hold.
\end{rem}
\begin{rem}\label{rem:matroids}
Example \ref{exa:counter1} can be modified to obtain a conical example, thereby producing a counterexample to the orthogonality property for the Bergman fan of a matroid. Since Bergman fans of matroids are among the simplest and most fundamental examples of polyhedral spaces, this naturally leads to the question of determining for which classes of matroids the associated Bergman fan satisfies the orthogonality property.
\end{rem}

\section{Connection with the non-archimedean Monge-Ampère equation}\label{sec:NA}
The main result of this section is Proposition \ref{prop:solMANA}, in which we relate the solution of the polyhedral Monge--Amp\`ere equation to that of the corresponding non-archimedean Monge--Amp\`ere equation in the case where $\tX$ is the tropicalization of a toric degeneration of Calabi--Yau complete intersections. To this end, we first briefly recall the relevant notation and some results on non-archimedean Monge--Amp\`ere equations and toric degenerations of complete intersections.

\subsection{Non-archimedean approach to the SYZ conjecture}
In this section, we briefly recall the non-archimedean approach to the SYZ conjecture, and refer the reader to \cite{LiNA} for more details.

Let $\pi \colon X \to \mathbb{D}^*$ be a projective meromorphic degeneration of $d$-dimensional Calabi--Yau manifolds, over the complex punctured disc $ \mathbb{D}^*$. Setting $K=\C((t))$, the space $X$ can be viewed as an algebraic variety over $K$. One can then associate to it its Berkovich analytification $X^{\an}$ together with a canonical simplicial subset $\Sk(X) \subset X^{\an}$, known as the \emph{essential skeleton} of $X$. For background on Berkovich non-archimedean geometry, we refer to \cite{Berkovich1990}, and for the construction and properties of the essential skeleton to \cite{MustataNicaise, NicaiseXu}.

Fix a relatively ample line bundle $L$ on $X$. By Yau’s solution of the Calabi conjecture \cite{yau}, each fiber $X_t=\pi^{-1}(t)$ carries a unique Ricci-flat K\"ahler metric in the class $c_1(L_{|X_t})$, obtained as the solution of a complex Monge--Ampère equation. When the family is maximally degenerate, i.e. when $\dim_{\R}\Sk(X)=d$, it is expected that the asymptotic behaviour of these metrics can be described via non-archimedean geometry, as conjectured by Kontsevich and Soibelman \cite{KontsevichSoibelman}.

More precisely, the volume forms associated to the Ricci-flat K\"ahler metrics converge, in a suitable sense, to a limit measure $\mu_0$ on $X^{\an}$ \cite[Theorem A]{BoucksomJonsson2017}, which is a Lebesgue measure supported on the essential skeleton $\Sk(X)$ and of total mass $(L^d)$.
It is proved in \cite{BoucksomFavreJonsson2015} that there exists a unique semi-positive metric $\Psi$ on the analytification of $L$ solving the non-archimedean Monge--Ampère equation
$\MA_{\NA}(\Psi)=\mu_0.$
This metric is expected to encode the asymptotic behaviour of the Ricci-flat K\"ahler metrics on the fibers $X_t$ as $t\to 0$.

Following \cite[Definition 3.3]{LiNA}, we say that $(X,L)$ satisfies the \emph{comparison property} if there exists a model $(\cX,\mathscr{L})$ of $(X,L)$ over $\C[[t]]$ such that, writing the solution as $\Psi=\phi_{\mathscr{L}}+\phi,$
the function $\phi$ satisfies 
\begin{equation}\label{eq:comparison}
\phi=\phi\circ\rho_{\cX}
\end{equation}
over the interior of the maximal faces of $\Sk(\cX)$. Here $\Sk(\cX)$ denotes the skeleton of the model $\cX$, and $\rho_{\cX}$ is the associated Berkovich retraction; we refer to \cite[\S 3.1]{NicaiseXu} for precise definitions.

Assuming the comparison property, the function $\phi$ can be related to the solution of a real Monge--Ampère equation on the essential skeleton $\Sk(X)$. Building on this relation, it is shown in \cite{LiNA} that the comparison property implies a metric version of the SYZ conjecture from mirror symmetry \cite{StromingerYauZaslow}; see \cite[Theorem 1.3]{LiNA} for a precise statement. In particular, this result reduces a problem in differential geometry to the analysis of a non-archimedean Monge–Ampère equation, thereby placing it within the framework of non-archimedean pluripotential theory.

\subsection{Toric degenerations of Calabi--Yau complete intersections}
Let $k$ be an algebraically closed field of characteristic zero, $K = k((t))$, and $R=k[[t]]$.
Let $X/K$ be a toric degeneration of a Calabi--Yau complete intersection as defined in \cite{Gross2005}; in particular, $X$ is a maximally degenerate family in the sense of the previous section. We will use the notation from \cite{Yamamoto2021,YamaSYZ} and refer to these for the precise definition of the degeneration.

Writing $d = \dim_K (X)$, the family $X$ is naturally embedded in a proper toric variety $X_{\Sigma'}$ of dimension $d+r$ for some $r \ge 1$, and is polarized by the ample line bundle $L$ associated to a convex conewise-linear function $h$ on $\Sigma'$. We write $\mathbb{T}$ for the $K$-torus of the ambient toric variety, and denote by $M$ and $N$ the associated character and cocharacter lattices, respectively. We also denote by $\trop \colon \mathbb{T}^{\an} \to N_{\R}$ the tropicalization map, given by minus the $\log$ of the modulus of the coordinates.

The tropicalization $\tX \coloneqq \trop(X^{\an} \cap \mathbb{T}^{\an}) \subset N_{\R}$ of $X$ is a $d$-dimensional balanced polyhedral space in the sense of Definition \ref{def:balanced-poly}; see for instance \cite[Theorems 3.3.5 and 3.4.14]{MaclaganSturmfels2015}. We endow it with the polyhedral structure $\Pi \coloneqq \mathscr{P}_{X(f_1,\ldots,f_r)}$ constructed in \cite[Section 5.2]{Yamamoto2021}. 
Let $\overline{\tX} \coloneqq \overline{\tX}_{\Sigma'}$ be the compactification of $\tX$ with respect to fan $\Sigma'$ (see Definition \ref{def:compactification}), that is denoted by $\trop(X)$ in \emph{loc.\,cit.}
We then denote by $\overline{\trop} \colon X^{\an} \to \overline{\tX}$ the extended tropicalization map (see \cite[§2.3]{YamaSYZ}), whose restriction to $\tX$ agrees with $\trop$.

Additionally, let $B\coloneqq \Sk(\Pi)$ be the union of the bounded faces of $\Pi$; this is denoted by $B^{\check{h}}_{\nabla}$ in \cite{Yamamoto2021}, see \cite[Proposition 5.7]{Yamamoto2021}. It follows from \cite[Theorem 1.1]{YamaSYZ} that the restriction of the tropicalization map to $\Sk(X)$ induces a homeomorphism between $\Sk(X) \subset X^{\an}$ and $B \subset \tX$. Recall from the previous section that $\Sk(X)$ carries a natural Lebesgue measure $\mu_0$ of total mass $(L^d)$, which we view as a measure on $\tX$ (supported on $B$) via the above homeomorphism.

Let $\phi_0$ be the semi-positive model metric on $(X^{\an}, L^{\an})$ induced by the restriction of the toric metric associated to the piecewise linear function $h$ (see \cite[Proposition 4.3.10]{BurgosPhilipponSombra}). By the proof of \cite[Lemma 4.3]{GotoYamamoto}, there exists a toric $R$-model $(X_{\tilde{\Sigma}'},  \mathscr{L}_0)$ of $(X_{\Sigma'} , L)$ such that writing $\mathcal{X}$ for the closure of $X$ in $X_{\tilde{\Sigma}'}$, the metric $\phi_0$ coincides with the model metric associated to the restriction of $\mathscr{L}_0$ to X. We also endow $\tX$ with the $\PAPC$ function $\gamma \coloneqq h|_{\tX}$.

We will furthermore use the minimal snc $R$-model $\cX$ of $X$ constructed in \cite[Proposition 5.13]{YamaSYZ} by repeatedly blowing-up $\mathcal{X}$, and denote by $\cX_s \coloneqq \cX \times_R k$ its special fiber. We endow it with the line bundle $\mathscr{L}$ obtained by pulling-back $\mathscr{L}_0$ via the blowup map; in particular, it holds that $\cL_{|X} =L$ and $\phi_0= \phi_{\mathscr{L}}$.
Writing $\Sk(\cX)$ its Berkovich skeleton, by \cite[Theorem 1.1]{YamaSYZ} we have that $\Sk(\cX) = \Sk(X)$ and the map
$\trop \colon \Sk(\cX) \to B$
is an integral piecewise affine isomorphism. Finally, the associated Berkovich retraction is denoted $\rho_{\cX} \colon X^{\an} \to \Sk(\cX)$.
 
\begin{prop}\label{prop:solMANA}
Let notations as above. Assume that 
\begin{itemize}
    \item $(L^d)=\deg(\gamma)$
    \item there exists $\phi \in \PCreg(\tX, \gamma)$ such that  $\PMA(\phi) = \mu_0$,
    \item the function $\phi_{\NA}\coloneqq (\phi-\gamma) \circ \overline{\trop}$ is such that the metric $(\phi_0 + \phi_{\NA})$ on $L^{\an}$ is semi-positive. 
\end{itemize}
Then the metric $\Psi \coloneqq \phi_0 +\phi_{\NA}$ is the (unique) solution to the non-archimedean Monge--Ampère equation
$$\NAMA(\Psi) = \mu_0$$
on $X^{\an}$.
Moreover, when $k= \C$, the comparison property \eqref{eq:comparison} holds for the degeneration $X$.
\end{prop}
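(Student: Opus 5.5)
The plan is to compare the non-archimedean Monge--Ampère measure of $\Psi$ with the push-forward under $\overline{\trop}$ of the polyhedral measure, reducing first to piecewise affine data and then passing to limits. Choose a sequence $(\phi_j)_j$ in $\PAPC(\tX,\gamma)$ decreasing uniformly to $\phi$, as allowed by Proposition/Definition \ref{def:PC-reg}. Each $(\phi_j-\gamma)\circ\overline{\trop}$ is the pullback of a bounded piecewise affine function on $\overline{\tX}$. Hence $\Psi_j\coloneqq\phi_0+(\phi_j-\gamma)\circ\overline{\trop}$ is a model metric on $L^{\an}$, induced by a toric model of $(X_{\Sigma'},L)$ whose polyhedral complex refines $\Pi$ and on which $\phi_j$ is affine. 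Because $\phi_j$ is convex, this toric model metric is semipositive on the ambient toric variety, and so its restriction $\Psi_j$ is semipositive as well. This gives a sequence of semipositive model metrics converging uniformly to $\Psi$, which shows in particular that $\Psi$ is a continuous semipositive metric; this is essentially the third hypothesis. By continuity of $\NAMA$ along uniform limits \cite{BoucksomFavreJonsson2015} and by Theorem \ref{thm:PMA PCreg}, it therefore suffices to prove that $\NAMA(\Psi_j)=\overline{\trop}^{*}$-compatible with $\PMA(\phi_j)$, namely that
\[
\overline{\trop}_*\NAMA(\Psi_j)=\PMA(\phi_j)
\]
as measures on $\overline{\tX}$, and that the limit measure on $X^{\an}$ is the one carried by $\Sk(X)$.

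The first key step is the piecewise affine identity. It is a tropical/toric intersection computation. For a toric model metric restricted to the closure of $X$, the measure $\NAMA$ is a sum of Dirac masses at the divisorial points of the vertical components, weighted by intersection numbers of the line bundle on those components. Via the Sturmfels--Tevelev or Osserman--Rabinoff multiplicity formula, these vertical components lie over the vertices $v$ of the tropical complex, and the sum of their weighted degrees equals $(\phi_j^d\cdot[\tX])(v)$, with $[\tX]$ the tropical multiplicities. This matches Definition \ref{def:MA-PA}, using Remark \ref{ex:fan} for the local toric computation. Taking limits gives $\overline{\trop}_*\NAMA(\Psi)=\PMA(\phi)=\mu_0$. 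The equality of total masses $(L^d)=\deg(\gamma)$ serves as a consistency check.

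The second step lifts this from $\overline{\tX}$ back to $X^{\an}$. The measure $\mu_0$ is supported on $B$, and $\overline{\trop}$ restricts to a homeomorphism $\Sk(X)\to B$. It remains to show that $\NAMA(\Psi)$ gives no mass to points of $X^{\an}\setminus\Sk(X)$ that map into $B$. For this, note that $\Psi-\phi_0$ is invariant under a retraction. On the snc model $\cX$, the function $\phi_{\NA}$ factors through $\trop$, and over $B$ the map $\trop$ is compatible with $\rho_{\cX}$ \cite{YamaSYZ}. The plan is to show that $\phi_{\NA}=\phi_{\NA}\circ\rho_{\cX}$ over the interiors of the maximal faces, and then to apply the result of \cite{BoucksomFavreJonsson2015} that functions invariant under the retraction have Monge--Ampère measure supported on $\Sk(\cX)$ there. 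Combined with the locality of $\NAMA$, this shows the mass sits on $\Sk(X)$ and equals $\mu_0$. Uniqueness then follows from \cite{BoucksomFavreJonsson2015}.

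The comparison property is read off from the same construction. Since $\Psi=\phi_{\mathscr L}+\phi_{\NA}$, the invariance $\phi_{\NA}=\phi_{\NA}\circ\rho_{\cX}$ over the interiors of the maximal faces of $\Sk(\cX)$ is exactly \eqref{eq:comparison}. The main obstacle is this fiberwise compatibility of $\overline{\trop}$ with $\rho_{\cX}$ over open maximal faces of $B$, where $\phi-\gamma$ is not piecewise affine. I would first try to establish it for the approximants $\phi_j$ using the explicit blowup description in \cite{YamaSYZ}, and then pass to the uniform limit.
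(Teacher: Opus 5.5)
\paragraph{The gap: semipositivity of the approximants.} Your argument breaks down in the first step. You claim that $\Psi_j=\phi_0+(\phi_j-\gamma)\circ\overline{\trop}$ is semipositive ``because $\phi_j$ is convex'', so that the ambient toric model metric is semipositive. But $\phi_j$ is only polyhedrally convex on $\tX$. The toric model metric on $X_{\Sigma'}$ is defined by some extension of $\phi_j-\gamma$ to a piecewise affine function on $N_\R$. As the paper stresses, PC functions on $\tX$ in general admit no convex extension to $N_\R$. So nothing you wrote shows that $\Psi_j$ is semipositive.

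This is precisely why the proposition \emph{assumes} semipositivity in its third hypothesis rather than deducing it. That hypothesis concerns only the limit $\Psi$, not the approximants $\Psi_j$. Without semipositivity of the $\Psi_j$, the measures $\NAMA(\Psi_j)$ are signed measures with no uniform control of their total variation. The continuity of $\NAMA$ under uniform convergence from \cite{BoucksomFavreJonsson2015} then does not apply. Consequently the identity $\overline{\trop}_*\NAMA(\Psi)=\PMA(\phi)$ is not obtained from the piecewise affine case.

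Your second step only gives information on the \emph{support} of $\NAMA(\Psi)$, so without the first step the argument does not close. A repair would need an independent proof that the restricted model line bundles are nef on every vertical curve of the model of $X$, for example by tropicalizing such curves into the local stars of $\tX$. That is a different argument, not supplied here, and it would make the third hypothesis redundant, which the paper does not claim.

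\paragraph{How the paper avoids this.} The paper uses no approximation at all.
\begin{itemize}
\item The compatibility $\trop=\trop\circ\rho_{\cX}$ over the interiors of the maximal faces of $\Sk(\cX)$ is \cite[Lemma 4.2]{GotoYamamoto}. You single this out as the main obstacle, but it is a property of the two maps alone. It therefore gives $\phi_{\NA}=\phi_{\NA}\circ\rho_{\cX}$ there for \emph{every} $\phi$, and the piecewise affine approximants play no role.
\item With this invariance and the assumed semipositivity of $\Psi$, Vilsmeier's comparison theorem \cite[Theorem 1.1]{Vilsmeier2021} gives $\NAMA(\Psi)=d!\,\RMA(\phi_{|\relint(\tau)})$ on $\rho_{\cX}^{-1}(\relint(\tau))$. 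This is exactly the local input your second step needs, and it yields the measure itself, not just its support.
\item Proposition~\ref{prop:comp MA} together with Lemma~\ref{lem:weightone}, namely $[\tX](\tau)=1$, identifies this with $\mathbf{1}_{\relint(\tau)}\mu_0$.
\item Equality of total masses, plus the fact that $\mu_0$ does not charge lower-dimensional faces, finishes the proof.
\end{itemize}
The weight-one lemma thus replaces your push-forward formula with tropical multiplicities, and the limiting argument, which is where your proof breaks, is never needed.
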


\begin{proof}
By \cite[Lemma 4.2]{GotoYamamoto}, the equality $\trop = \trop \circ \rho_{\cX}$ holds over the interior of the maximal faces of $\Sk(\cX)$, and thus so does $\phi_{\NA} = \phi_{\NA} \circ \rho_{\cX}$. Fixing such a maximal face $\tau$, it follows from \cite[Theorem 1.1]{Vilsmeier2021} that the measure $\NAMA(\Psi)$ agrees with $d!\, \RMA(\phi_{\relint(\tau)})$ on $\rho_{\cX}^{-1}(\relint(\tau))$, where $\Psi \coloneqq \phi_0 +\phi_{\NA}$ and $\RMA(\phi_{\relint(\tau)})$ is viewed as a measure on $\rho_{\cX}^{-1}(\relint(\tau))$ by pushforward via the inclusion $\relint(\tau) \subset \rho_{\cX}^{-1}(\relint(\tau))$; note that, since $k$ is algebraically closed, the factor $[\tilde{K}(S):\tilde{K}]$ appearing in \emph{loc.\,cit.\,}is equal to $1$. 

On the other hand, Proposition \ref{prop:comp MA} and Lemma \ref{lem:weightone} below give the equalities 
$$ \mathbf{1}_{\relint(\tau)}  \PMA(\phi) =d! \,([\tX](\tau))\MA_{\R}(\phi_{| \relint(\tau)}) = d! \MA_{\R}(\phi_{|\relint(\tau)}),$$ so that $\NAMA(\Psi)$ agrees with $\mu_0$ away from the preimage by $\rho_{\cX}$ of the lower-dimensional faces of $\Sk(\cX)$. As both $\NAMA(\Psi)$ and $\mu_0$ have total mass $(L^d)$ and the measure of lower-dimensional faces of $\Sk(\cX)$ with respect to $\mu_0$ is zero, it follows that they agree everywhere. This proves that $\Psi$ is the unique solution to the non-archimedean Monge--Ampère equation $\NAMA(\Psi) = \mu_0$.

Finally, we have shown above that $\phi_{\NA} = \phi_{\NA} \circ \rho_{\cX}$ holds over the interior of maximal faces of $\Sk(\cX)$, so that the comparison property \eqref{eq:comparison} holds for the model $(\cX, \mathscr{L})$ as $\phi_0 = \phi_{\mathscr{L}}$.
\end{proof}

\begin{lemma}\label{lem:weightone}
Let $\tau$ be a maximal face of $B \subset \tX$. Then $[\tX](\tau) =1$.
\end{lemma}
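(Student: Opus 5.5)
The plan is to read off the weight $[\tX](\tau)$ from the tropical multiplicity, and compute that multiplicity through the homeomorphism $\Sk(\cX)\simeq B$ of \cite[Theorem 1.1]{YamaSYZ}. Recall that, by the structure theorem for tropicalizations \cite[Theorem 3.4.14]{MaclaganSturmfels2015}, for a point $w$ in the relative interior of a maximal face $\tau$ the weight $[\tX](\tau)$ is the intersection multiplicity of the initial degeneration. Equivalently, it is the sum, over the irreducible components $C$ of $\operatorname{in}_w(X)$, of the multiplicities of $C$ in $\operatorname{in}_w(X)$, times the lattice index $[N_\tau : N_{\tau}']$. Here $N_\tau'$ is the image lattice of the torus acting on $C$.

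The first step is to use that $\tau$ is a bounded maximal face of $\Pi$. Then $\tau$ corresponds, via the construction of $\Pi=\mathscr{P}_{X(f_1,\ldots,f_r)}$ in \cite{Yamamoto2021}, to a $0$-dimensional stratum of the special fiber of the toric model $\mathcal{X}$. At such a stratum the degeneration is locally toroidal, in the sense that $\mathcal{X}$ is locally the toric degeneration of Gross \cite{Gross2005}. Hence for $w\in\relint(\tau)$ the initial degeneration $\operatorname{in}_w(X)$ is a torus orbit closure of dimension $d$ in the stratum of the ambient toric variety. This orbit closure is reduced and irreducible, since the snc model $\cX$ in \cite[Proposition 5.13]{YamaSYZ} is obtained by blowups away from such points, and the special fiber of a toric degeneration of Calabi--Yau complete intersections is reduced.

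The second step is to show that the lattice index is $1$. I would deduce this from the fact, in \cite[Theorem 1.1]{YamaSYZ}, that $\trop\colon\Sk(\cX)\to B$ is an \emph{integral} piecewise affine isomorphism. On the simplex of $\Sk(\cX)$ over $\tau$, the integral structure is given by the valuations of local monomial coordinates of the snc model. Integrality of the isomorphism in both directions means that it identifies this lattice with $N_\tau$. Comparing with the torus $\mathbb{T}_C$ acting on the orbit $C=\operatorname{in}_w(X)$, the monomial coordinates of the orbit are exactly those restricted characters. The image of $N$ in the cocharacters of $\mathbb{T}_C$ is then all of $N_\tau$, so the index is $1$. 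Combining both steps gives $[\tX](\tau)=1$.

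An alternative check uses the degree. The total mass of $\mu_0$ is $(L^d)$, and if all weights on $B$ were $1$, Proposition \ref{prop:comp MA} would be consistent with $\deg(\gamma)=(L^d)$. However, this argument only constrains sums of weights and cannot by itself force each weight to be $1$, so I would use it only as a sanity check.

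The main obstacle is the second step, namely matching the skeleton lattice with $N_\tau$ and verifying reducedness of $\operatorname{in}_w(X)$ from the explicit description in \cite{Yamamoto2021,YamaSYZ}. If integrality of the isomorphism is only stated in one direction, I would instead compute the multiplicity via \cite[Lemma 4.2]{GotoYamamoto} and the local equations $f_i$ at a vertex of the Newton subdivision. There the relevant Newton polytopes are unimodular simplices, which would give multiplicity one directly.
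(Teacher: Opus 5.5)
There is a genuine gap. It lies in your first step, which in fact carries the whole content of the lemma.

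The formula you quote is not the Maclagan--Sturmfels multiplicity. For $w\in\relint(\tau)$ with $\tau$ maximal, $\operatorname{in}_w(X)$ is invariant under the $d$-dimensional subtorus with cocharacter lattice $N_\tau$. Hence every irreducible component of $\operatorname{in}_w(X)$ is automatically a coset of that subtorus, and $[\tX](\tau)$ is just the sum of the multiplicities of these components. The index $[N_\tau:N_\tau']$ you add is therefore always $1$, and Step 2 proves nothing in this framework. The lemma is equivalent to $\operatorname{in}_w(X)$ being a single reduced coset. The lattice comparison you have in mind is not a separate factor; it is hidden in the number and multiplicities of these cosets.

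Your justification of the single-reduced-coset claim does not hold up:
\begin{itemize}
\item That each component is ``a torus orbit closure'' is automatic and says nothing about how many components there are.
\item Reducedness of the special fiber of $\mathcal{X}$ or $\cX$ does not control $\operatorname{in}_w(X)$. The latter is the special fiber of the closure of $X$ in the $w$-tilted torus (after a finite base change), not in the toric model. For instance, take a plane curve whose Newton subdivision has an interior edge of lattice length $2$ with integral dual vertices. The special fiber of its closure in the associated toric model is reduced, yet the dual bounded edge has weight $2$ and $\operatorname{in}_w(X)$ consists of two cosets.
\item ``Locally toroidal at $p$'' is insufficient. The weight measures how the ambient characters restrict to local coordinates at $p$, and in the chart of \cite[Lemma 4.2]{GotoYamamoto} this involves the $f_i$.
\item The claim that the blowups producing $\cX$ avoid these points is unsupported. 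The paper only uses that $\trop(\Sk(\cX))$ refines $\Sk(\Pi)$, and works at the snc points of $\cX$.
\item Your fallback via ``unimodular Newton polytopes'' is asserted rather than derived.
\end{itemize}

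What is missing is a multiplicity formula that genuinely involves a degree and a lattice index, together with an explicit local computation. The paper proceeds as follows:
\begin{itemize}
\item Fix a maximal face $\sigma$ of $\Sk(\cX)$ with corresponding snc point $p$, and take the chart $\cU$ of \cite[Lemma 4.2]{GotoYamamoto}.
\item Project to the $d$-torus $\mathbb{T}'$ with character lattice $M'=\langle m_1,\dots,m_{d+1}\rangle$. By \cite[Proposition 7.11]{GublerCLD}, $[\tX](\trop(\sigma))=d(\sigma)\,[M_{\trop(\sigma)}:M']^{-1}$.
\item The $z^{m_l}$ with $l\le d+1$ are snc coordinates at $p$, so one gets a formal isomorphism $\widehat{\cU}_{/p}\simeq\widehat{\mathscr{T}'}_{/f(p)}$. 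Hence $d(\sigma)=1$.
\item Integrality of the weight then forces the index to be $1$.
\end{itemize}

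Your integrality input from \cite[Theorem 1.1]{YamaSYZ} is the right kind of ingredient for the lattice part. It could be fed into the skeleton version of the Sturmfels--Tevelev formula of Gubler--Rabinoff--Werner, which gives the weight as a sum of lattice indices over the skeleton faces mapping onto $\tau$. Either way, you also need $\trop^{-1}(\trop(\relint\sigma))=\rho_{\cX}^{-1}(\relint\sigma)$, which comes from \cite[Lemma 4.2]{GotoYamamoto} and which your proposal never invokes. The homeomorphism $\Sk(\cX)\simeq B$ alone does not rule out other parts of $X^{\an}$ mapping into $\relint(\tau)$ and contributing to the weight.
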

\begin{proof}
By \cite[Section 5.5]{YamaSYZ}, the polyhedral structure $\trop(\Sk(\cX))$ is a refinement of $\Sk(\Pi)$. As a result, it suffices to show that for any maximal face $\sigma \in \Sk(\cX)$, the weight $[\tX](\trop(\sigma))$ is equal to $1$. To this end, we rely on the description of the weights provided in \cite{CLD,GublerCLD}, which we now recall.

Let $\mathbb{T}'$ be a split $K$-torus of dimension $d$ with cocharacter lattice $N'$, together with a surjective morphism $q \colon \mathbb{T} \to \mathbb{T}'$ such that the corresponding linear map $F \colon N_{\R} \to N'_{\R}$ is injective when restricted to $\trop(\sigma)$. It follows from \cite[Proposition 14.2.2]{CLD} that the restriction of $q^{\an}$ to $\trop^{-1}(\trop(\relint(\sigma))) =\rho_{\cX}^{-1}(\relint(\sigma)) \subset X^{\an} \cap \mathbb{T}^{\an}$ is flat and finite, hence we denote by $d(\sigma)$ its degree. 
By \cite[Proposition 7.11]{GublerCLD}, the weight $[\tX](\trop(\sigma))$ can be computed using the following formula from \cite[Definition 7.5]{GublerCLD}:
\begin{equation}\label{eq:weight}
[\tX](\trop(\sigma)) =d(\sigma) \cdot [M_{\trop(\sigma)}:M']^{-1},
\end{equation}
where $M_{\trop(\sigma)}$ is the dual lattice defined in Definition \ref{defi:polyhedral}, $M'$ is the character lattice of $\mathbb{T}'$, and $[M_{\trop(\sigma)}:M']$ its index in $M_{\trop(\sigma)}$. 
We will now construct suitable $\mathbb{T}'$ and $q$ as above, such that $d(\sigma) = [M_{\trop(\sigma)}:M']=1$.

Let $\sigma$ be a maximal face of $\Sk(\cX)$, and let $p \in \cX_s$ the corresponding closed point in the special fiber. Let $D_1,\ldots,D_{d+1}$ be the irreducible components of $\cX_s$ containing $p$. By the proof of \cite[Lemma 4.2]{GotoYamamoto}, there exists an open neighbourhood $\mathscr{U}$ of $p$ in $\cX$ given as follows:
$$\cU = \Spec R \left[z^{m_l} \colon 1 \le l \le d+r+1 \right] \bigg/ \left( t - \prod_{l=1}^{d+1} z^{m_l}, -z^{m_{d+1+i}} +f_i \prod_{l = l_i}^{d+1} z^{m_l} (1 \le i \le r) \right),$$
where $\left\{z^{m_l}\,|\, 1 \le l \le d+r+1\right\}$ is a basis of $M\oplus \Z$, each $l_i \le d+1$, $f_i \in k[z^{m_l}\,|\, 1 \le l \le d+r+1]$, and in particular for $l =1,\ldots,d+1$, the monomial $z^{m_l}$ is an equation for the irreducible component $D_l \subset \cX_s$ on $\cU$. Write $U \subset \mathbb{T}$ for the generic fiber of $\cU \cap \mathbb{T}_R$; this embedding into $\mathbb{T}$ is the restriction to $U$ of the embedding of $X \cap \mathbb{T}$ inside $\mathbb{\T}$,
as the model $\cX$ is constructed by repeatedly blowing-up the closure of $X$ in a toric $R$-model, along closed subvarieties of the special fiber; see 
\cite[§ 5.3]{YamaSYZ}.

Define $M' \subset M \oplus \mathbb{Z}$ to be the sublattice generated by $m_1,\ldots,m_{d+1}$, and the toric $R$-scheme
$$\mathscr{T}' \coloneqq \Spec R\left[M'\right] \bigg/ \left( \prod_{i=1}^{d+1} z^{m_{i}} =t \right),$$
whose generic fiber $\mathbb{T}'$ is a $d$-dimensional $K$-torus; we denote its cocharacter lattice by $N'$. The inclusion $M' \subset M \oplus \Z$ gives rise to a surjective homomorphism of $K$-tori $q \colon \mathbb{T} \to \mathbb{T}'$, and let $F \colon N_{\R} \to N'_{\R}$ be the induced linear projection. 
We claim that the restriction of $F$ to $\trop(\sigma)$ is injective. Indeed, for $x \in U^{\an}$ and $m \in M$, we have that $\langle m, \trop(x) \rangle = -\log \lvert z^{m} \rvert_x$, and by \cite[(4.16), proof  of Lemma 4.2]{GotoYamamoto}, the equality
$$\log \lvert z^{m_{d+1+i}} \rvert_x = \sum_{l = l_i}^{d+1} \log \lvert z^{m_{l}} \rvert_x   $$
holds for all $i \ge 1$ when $x \in \sigma$. It follows that the point $\trop(x) \in N_{\R}$ is uniquely determined by the $\langle m, \trop(x) \rangle$ for $m \in M'$, which proves the claim.

We will now compute $[\tX](\trop(\sigma))$ using  \eqref{eq:weight}. Define a morphism $f\colon \cU\to \mathscr{T}'$ at the level of rings, by sending the monomial $\chi^{m_i} \in K[M']$ to $z^{m_i}$. Its restriction to the generic fiber coincides with the restriction $q_{| U}$.
\\Since $\cU$ is snc at $p$ and $z^{m_i}$ is an equation for $D_i$ at $p$, we infer that $f$ induces an isomorphism between the formal schemes $\widehat{\cU}_{/p}$ and $\widehat{\mathscr{T}}_{/ f(p)}$ 
-- here $\widehat{\cU}_{/p}$ means the formal completion of $\cU$ along the closed subscheme $p$. Passing to the generic fiber in the sense of Berkovich, we get that the restriction of $f^{\an}$ to $\rho_{\cX}^{-1}(\relint(\sigma)) $ is an isomorphism onto its image inside $(\mathbb{T}')^{\an}$, so that the degree $d(\sigma)$ of $q$ must be $1$. Since $X^{\an}$ is algebraic, the multiplicity $[\tX]\left(\trop(\sigma)\right)$ must be an integer by \cite[Proposition 7.11]{GublerCLD}, which forces $[M_{\trop(\sigma)}:M']=1$ and thus $[\tX](\trop(\sigma))=1$, as was to be shown.
\end{proof}

\subsection{Example} \label{ex:elliptic}
We review \cite[Example 2.34]{Li2024} from the perspective of this article. 

Let $Z_{\Delta}$ be the toric Fano surface obtained as blowup of $\Pro^1 \times \Pro^1$ at two toric points, and associated to the polytope $\Delta = \conv(\pm (1, -1) ; \pm (1, 0) ; \pm (0,1))$. We denote by $\Sigma \subset N_{\R} = \R^2$ the normal fan of $\Delta$; see Figure \ref{fig:exa}.
Consider a one-parameter family $X$ of Calabi--Yau hypersurfaces in $Z_\Delta$ degenerating to the toric boundary of $Z_\Delta$. In particular, this can be regarded as a family of elliptic curves over $\Spec \mathbb{C}((t))$. We denote by $\tX$ its tropicalization: it consists of the boundary of the dual polytope $\Delta^{\vee}$ and six unbounded edges; we denote by $\Pi$ this polyhedral structure on $\tX$.
\begin{center}
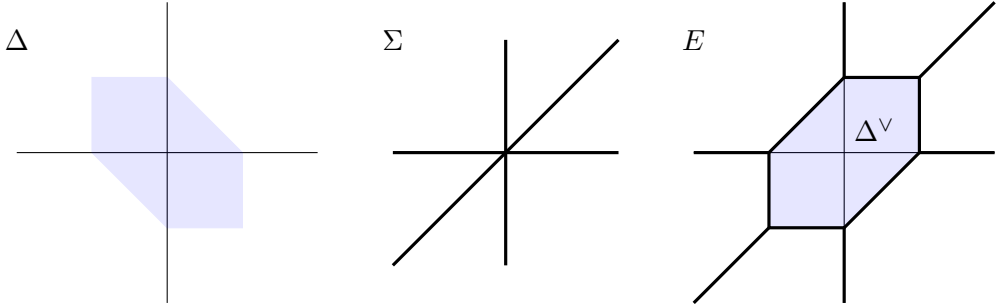
\begin{figure}[h]
\begin{tikzpicture}
\draw (-2, 0) -- (2, 0);
\draw (0, -2) -- (0, 2);
\filldraw[blue, opacity=0.1] (-1, 0) to (0,-1) to (1, -1) to (1, 0) to (0,1) to (-1, 1) to cycle;
\draw (-2, 1.5) node{$\Delta$};
\draw[very thick] (3,0) to (6,0);
\draw[very thick] (4.5,-1.5) to (4.5,1.5);
\draw[very thick] (3,-1.5) to (6,1.5);
\draw (3, 1.5) node{$\Sigma$};
\draw (7, 0) -- (11, 0);
\draw (9, -2) -- (9, 2);
\filldraw[blue, opacity=0.1] (8, 0) to (8,-1) to (9, -1) to (10, 0) to (10,1) to (9, 1) to cycle;
\draw[very thick] (8, 0) to (8,-1) to (9, -1) to (10, 0) to (10,1) to (9, 1) to cycle;
\draw (7, 1.5) node{$E$};
\draw(9.4,0.3) node{$\Delta^{\vee}$};
\draw[very thick] (10,0) to (11,0);
\draw[very thick] (10,1) to (11,2);
\draw[very thick] (9,1) to (9,2);
\draw[very thick] (8,0) to (7,0);
\draw[very thick] (8,-1) to (7,-2);
\draw[very thick] (9,-1) to (9,-2);
\end{tikzpicture}\caption{The polytope $\Delta$, its normal fan $\Sigma$ and the tropicalization $\tX$}\label{fig:exa}
\end{figure}
\end{center}
Write $D$ the toric boundary of $\Pro^1 \times \Pro^1$, $\pi \colon Z_{\Delta} \to \Pro^1 \times \Pro^1$ the blowup, and $E_1$, $E_2$ the exceptional divisors. For $0<\eps <2$, set 
$$D_{\eps} \coloneqq \pi^* D- \eps(E_1+E_2).$$
As $D_{\eps}$ is an ample toric divisor on $Z_{\Delta}$, it is associated to a strictly convex conewise-affine function on the fan of $Z_{\Delta}$, which we denote by $\gamma_{\eps}$. We also denote its restriction to $\tX$ by $\gamma_{\eps}$, which is described explicitly as follows: $\gamma_{\eps}$ is a function in $\PA(\Pi)$, with values at the vertices $\gamma_{\eps}((\pm 1, 0)) = \gamma_{\eps} ((0, \pm 1 )) =1$, $\gamma_{\eps}(\pm 1, \pm 1)) =(2-\eps)$, and $\gamma_{\eps}$ is affine with outgoing slope $(2-\eps) \geq 0$ on the rays starting at $\pm(1,1)$, and affine with outgoing slope $1$ on the four other rays.

Let $d\ell$ be the Lebesgue measure on $\partial \Delta^{\vee}$, normalized such that each edge has mass $\frac{4-\eps}{3}$. The total mass is then $ d \ell(\tX) =8-2 \eps= \deg_{\tX}(\gamma_{\eps})= \deg_X( \mathcal{O}_X(D_{\eps}))$. 
In \cite{Li2024}, the author uses an approach inspired from optimal transport to solve a real Monge--Ampère equation on the boundary of $\Delta^{\vee}$, and then describes the solution to the non-archimedean Monge--Ampère equation $\NAMA(\cdot)=d \ell$. It is shown that this approach does not work for every choice of polarization; on the other hand we are able to prove that the solution to the polyhedral Monge--Ampère equation always exists and recovers the solution to the non-archimedean Monge--Ampère equation. In particular, this implies the comparison property as in Proposition \ref{prop:solMANA}; see Proposition \ref{prop:exdim1} below.

\begin{lemma} \label{lem:equdim1}
Let $L_{\eps}$ be the restriction to $X$ of the toric line bundle $\mathcal{O}_{Z_{\Delta}}(D_{\eps})$, and let $\theta_{\eps}$ be the restriction to $L^{\an}$ of the toric metric induced by $\gamma_{\eps}$. Under the identification $\Sk(X) = \Sk(\Pi)$, the following equality of measures holds:
$$\MA_{\NA}(\theta_{\eps}) = \PMA(\gamma_{\eps}).$$
\end{lemma}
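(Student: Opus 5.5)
Both measures are atomic: $\PMA(\gamma_{\eps})$ lives on the vertices of $\Pi$, and $\MA_{\NA}(\theta_{\eps})$ is a model-metric Monge--Ampère measure, supported on divisorial points of the relevant model. The plan is to match their supports under $\Sk(X)\cong\Sk(\Pi)$ and then compare masses vertex by vertex.

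For the polyhedral side, I would compute $\PMA(\gamma_{\eps})$ directly. Since $d=1$, Definition~\ref{def:MA-PA} gives $\PMA(\gamma_{\eps})(v)=\sum_{e\succ v}[\tX](e)\,s_{e/v}(\gamma_{\eps})$, i.e.\ the Laplacian of Definition~\ref{def:laplacian}. By Lemma~\ref{lem:weightone} the bounded edges have weight one; the unbounded rays also get weight one from the standard tropical hypersurface description of a generic hypersurface with Newton polytope $\Delta$. The vertices of $\Pi$ are the six vertices of $\Delta^{\vee}$. At each vertex I sum the outgoing slopes of $\gamma_{\eps}$ along the two bounded edges and the one ray, using the explicit values $1$ at $(\pm1,0)$, $(0,\pm1)$, $2-\eps$ at $\pm(1,1)$, and the slopes on the rays. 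This gives vertex masses whose sum is $8-2\eps=\deg(\gamma_{\eps})$, which is consistent with Lemma~\ref{lem:degree}.

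For the non-archimedean side, I would use that $\theta_{\eps}$ is a toric model metric. By the proof of \cite[Lemma 4.3]{GotoYamamoto}, it is the model metric of $\mathscr{L}_0$ restricted to the closure $\mathcal{X}$ of $X$ in a toric $R$-model whose fan is the cone over $\Pi$ (after refinement), so the irreducible components of $\mathcal{X}_s$ correspond to the vertices of $\Pi$. For a model metric, $\MA_{\NA}(\theta_{\eps})=\sum_{E} b_E\,(\mathscr{L}_0|_{E})\,\delta_{x_E}$, the sum running over irreducible components $E$ of $\mathcal{X}_s=\sum b_E E$. Here the divisorial point $x_E$ maps under $\trop$ to the corresponding vertex of $\Pi$, with $b_E=1$ since vertices are integral. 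It then remains to show that the degree of $\mathscr{L}_0$ on the curve component $E_v$ equals the sum of outgoing slopes of $\gamma_{\eps}$ at $v$ weighted by $[\tX]$. This is the toric intersection formula of Example~\ref{ex:fan}, applied in the ambient toric $R$-model (the cone over $\Pi$, a fan of dimension $3$), intersected with the cycle $\overline{X}$, whose tropical weights are $[\tX]$. Equivalently, by the tropical projection formula, $\deg(\mathscr{L}_0|_{E_v})=(\tilde\gamma_{\eps}\cdot[\tX])(v)$.

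The main obstacle is this last identification. It requires checking that the components of $\mathcal{X}_s$ over each vertex are irreducible and reduced, i.e.\ that $E_v$ is the curve $V(\mathrm{cone}(v))\cap\mathcal{X}$ with multiplicity one. This should follow from the genericity/toric-degeneration setup (weight-one statements as in Lemma~\ref{lem:weightone}). An alternative route would be via Vilsmeier's comparison \cite{Vilsmeier2021} on edges, where both measures vanish, combined with the fact that both measures have total mass $8-2\eps$; however, this does not pin down the individual vertex masses, so the intersection-theoretic route is the one I would pursue first.
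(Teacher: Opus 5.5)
Your outline is sound, but it takes a different and heavier route than the paper. The paper never uses a $\Pi$-adapted model. It takes $\cX$ to be the closure of $X$ in the constant toric model $Z_\Delta\times_k\Spec R$. Because $X$ degenerates to the toric boundary, this $\cX$ is already snc and $\cX_s$ is the reduced toric boundary of $Z_{\Delta,k}$. That boundary is a hexagon of six curves, one for each ray $\pm e_1,\pm e_2,\pm(e_1+e_2)$ of $\Sigma$, i.e.\ one for each vertex of $\Delta^\vee$. Moreover, $\theta_\eps$ is the model metric of the pull-back $\cL_\eps$ of $\mathcal{O}(D_\eps)$. Each vertex mass of $\NAMA(\theta_\eps)$ is therefore an intersection number on the blow-up of $\PP^1\times\PP^1$: $D_\eps\cdot\tilde C=2-\eps$ for strict transforms of boundary curves, and $D_\eps\cdot E_i=\eps$ for the exceptional curves. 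These match the slope computation of $\PMA(\gamma_\eps)$ vertex by vertex, so no reducedness check and no algebraic-versus-tropical comparison is needed.

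Your approach buys generality: it does not use the special shape of this degeneration and would apply to any polarization and any toric model adapted to a structure $\Pi$. Its key step, however, is outsourced, and not to the right place. Example~\ref{ex:fan} only computes intersections of toric divisors with torus-invariant strata $V(\tau)$ of a complete toric variety; it says nothing about the non-invariant curve $E_v$. What you actually need is three facts:
\begin{enumerate}
\item $E_v$ is the closure of the initial degeneration $\mathrm{in}_v(X)$ in the toric surface $V(v)$ attached to $\Star_v(\Pi)$;
\item $\trop(\mathrm{in}_v(X))=\Star_v(\tX)$ with its weights;
\item the degree of a toric line bundle on such a curve is the tropical product of its piecewise linear function with that tropicalization.
\end{enumerate}
These are standard (Sturmfels--Tevelev, Katz, Gubler) but external to the paper. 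Likewise, $b_E=1$ does not follow from integrality of $v$; integrality only makes the toric divisor of the ray through $(v,1)$ reduced. It comes from reducedness of $\mathrm{in}_v(X)$, here a trinomial attached to a unimodular triangle, as you yourself acknowledge later.
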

\begin{proof}
The measure $\PMA(\gamma_{\eps})$ is straightforward to compute: it is the atomic measure putting mass $(2-\eps)$ at $(\pm 1, 0)$ and $(0, \pm 1)$, and mass $\eps$ at $\pm (1,1)$.

Let $\cX$ be the closure of $X$ in the $R$-model $Z_{\Delta}$: it is a minimal snc model, whose special fiber is the toric boundary of $Z_{\Delta, k}$, and such that the dual graph of the special fiber is $\Sk(X)$. Writing $\cL_{\eps}$ the canonical model of $L_{\eps}$, it holds that $\theta_{\eps}$ is the model metric associated to $\cL_{\eps}$. As a result, by definition of the non-archimedean Monge--Ampère measure, we need to show that for any irreducible component $C \subset \cX_s$ corresponding to a vertex $v \in \Sk(X)$, the following equality holds:
$$\cL_{\eps} \cdot C =  \PMA(\gamma_{\eps})(v).$$ The left-hand side agrees with the degree $\deg_C(\cL_{\eps})$, which by flatness can be computed on the generic fiber $Z_{\Delta}$, hence agrees with $D_{\eps} \cdot \tilde{C}$, where $\tilde{C}/K$ is the corresponding toric curve in $Z_{\Delta}$. If $C$ is the strict transform of a boundary component of $\Pro^1 \times \Pro^1$, we then have $D_{\eps} \cdot \tilde{C} = 2-\eps$, and if $C$ is an exceptional divisor we have $\tilde{C} \cdot D_{\eps} =\eps$, which agrees with the formula for $\PMA(\gamma_{\eps})$.
\end{proof}

Consider the solution $\phi$ to the polyhedral Monge--Ampère equation
$$\PMA(\phi) = d \ell$$
with $\phi \in \PC^{\on{reg}}(\tX, \gamma_{\eps})$. We give an explicit description of $\phi$; we begin by normalizing the solution such that $ \phi(1,0) = 0$. 
By Proposition \ref{prop:comp MA}, on each edge of $\tX$, the function $\phi$ solves also the real Monge--Ampère equation with respect to $d \ell$, thus $\phi$ is affine on the unbounded faces of $\tX$ and quadratic on each bounded face - in particular $\phi$ is piecewise smooth on $\Pi$ in the sense of Definition \ref{defi:PS}. On an edge $e$ of $\partial \Delta^{\vee}$, the solution $\phi$ is then of the form
$$\phi_e(t) = \frac{(4-\eps)t^2}{6} + a_e t +b_e$$
where $t$ is the integral coordinate of $e$. Moreover, by Proposition \ref{prop:laplacian}, the value $\PMA(\phi)(v)$ at a vertex $v \in \Delta^{\vee}$ is the sum of the outgoing slopes of $\phi$ along the edges emanating from $v$. This implies that at each vertex of $\tX$, the sum of the outgoing slopes of $\phi$ is zero, and by explicit computation, we get  
$$\phi(\pm 1, 0) = \phi(0, \pm 1) =0,$$
$$\phi(\pm (1,1))=\frac{1-\eps}{3}.$$
Indeed, we define on the edge from $(0,1)$ to $(-1,0)$ with integral coordinate $t$, a function $$ \phi_1(t)= \frac{(4-\eps)(t^2-t)}{6}$$ (in particular is invariant under $t \mapsto (1-t)$), and on the edge $\{ (1, t) | t \in [0,1] \}$, we define $$ \phi_2(t)= \frac{(4-\eps)t^2}{6}- \frac{1}{3}\left(1+\frac{\eps}{2}\right)t;$$ the function on other edges is defined similarly using symmetries of $\tX$. We claim that the function $\phi$ defined to restrict to $\phi_1$ or $\phi_2$, and extended to unbounded edges such that it is affine with same slope as $\gamma$, is a solution to $\PMA(\phi) = d \ell$.

It is clear that $\PMA(\phi) =d \ell$ on each edge, so it remains to check that the sum of outgoing slopes at each vertex is zero, by symmetry we only treat $v = (0,1)$ and $w=(1,1)$.\\
At $v$, the outgoing slope in the unbounded direction is the one of $\gamma$, hence $1$. In the direction of $w$ it is $\phi_2'(0)=-( \frac{1}{3}+\frac{\eps}{6})$, and in the direction of $(-1, 0)$ it is $-\phi'_1(1) = -\frac{4-\eps}{6}$, these 3 slopes do sum up to zero.
\\At $w$, the slope in the unbounded direction is again the one of $\gamma$, i.e. $(2-\eps)$. In the $v$ direction it is $-\phi_2'(1)$, and also in the direction of $(1,0)$ by the symmetries. But $\phi_2'(1) =1-\frac{\eps}{2}$, so that indeed $2 \phi_2'(1) = (2-\eps)$ and the outgoing slopes at $w$ add up to zero. This proves $\PMA(\phi) = d \ell$.

\begin{prop} \label{prop:exdim1}
Let $\phi_{\NA}\coloneqq (\phi-\gamma_\eps) \circ \overline{\trop}$. The metric $\Psi = \theta_{\eps} + \phi_{\NA}$ is the solution to the non-archimedean Monge--Ampère equation
$$\MA_{\NA}(\Psi) = d \ell$$
on $X^{\an}$, and the comparison property holds for $(X, \mathcal{O}_X(D_{\eps}))$.
\end{prop}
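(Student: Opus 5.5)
The natural strategy is to mimic the proof of Proposition \ref{prop:solMANA}, using the explicit solution $\phi$ constructed above together with Lemma \ref{lem:equdim1}. Three things need to be checked: semi-positivity of $\Psi = \theta_\eps + \phi_{\NA}$, equality of the non-archimedean Monge--Ampère measure with $d\ell$, and the invariance $\phi_{\NA} = \phi_{\NA}\circ \rho_{\cX}$.

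The invariance is the easy part. Take $\cX$ to be the closure of $X$ in the toric $R$-model of $Z_\Delta$, as in the proof of Lemma \ref{lem:equdim1}. It is a minimal snc model whose skeleton is identified with $\Sk(\Pi) = \partial\Delta^\vee$. The function $\phi - \gamma_\eps$ is constant along every unbounded edge of $\Pi$, since $\phi$ was chosen there to have the same slope as $\gamma_\eps$. So $\phi - \gamma_\eps = (\phi-\gamma_\eps)\circ r_\Pi$, and combining this with the compatibility $\trop\circ\rho_{\cX} = r_\Pi\circ \trop$ (cf.\ \cite[Lemma 4.2]{GotoYamamoto}) gives $\phi_{\NA} = \phi_{\NA}\circ\rho_{\cX}$. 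This identity holds on all of $X^{\an}$, not only over the interiors of maximal faces. In particular $\phi_{\NA}$ is a continuous function factoring through $\rho_{\cX}$, and it is piecewise smooth on the skeleton.

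The main obstacle is semi-positivity. The plan is to approximate $\phi$ uniformly by functions $\phi_j \in \PAPC(\tX,\gamma_\eps)$ that are affine on the unbounded edges with the slopes of $\gamma_\eps$. The natural choice is piecewise linear interpolation of $\phi$ on finer and finer subdivisions of the bounded edges. These interpolants are convex on each edge because $\phi$ is. At the vertices their outgoing slopes sum to a nonnegative number, because the corresponding sum for $\phi$ is $0$ and interpolation only increases outgoing slopes. Each $(\phi_j-\gamma_\eps)\circ\overline{\trop}$ is then a model function determined by a vertical divisor on a toric blowup of $\cX$ obtained by subdividing the edges of the skeleton. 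Its curvature is computed on each component $C$ by the intersection numbers $\cL\cdot C$, which by the same calculation as in Lemma \ref{lem:equdim1} equal the values $\PMA(\phi_j)(v)\ge 0$. So each such model metric is nef, and its uniform limit $\Psi$ is semi-positive.

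For the measure, use the continuity of $\MA_{\NA}$ under uniform limits \cite{BoucksomFavreJonsson2015}: $\MA_{\NA}(\Psi) = \lim_j \MA_{\NA}(\theta_\eps + (\phi_j-\gamma_\eps)\circ\overline{\trop}) = \lim_j \PMA(\phi_j) = \PMA(\phi) = d\ell$, where the last limit uses Theorem \ref{thm:PMA PCreg}. Alternatively, one can argue face by face as in Proposition \ref{prop:solMANA} via \cite{Vilsmeier2021} and Proposition \ref{prop:comp MA}, using that the weights on bounded edges are $1$. In that version, the atoms at the vertices match because both measures have total mass $8-2\eps$ and $d\ell$ charges no vertex. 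Uniqueness of solutions \cite{BoucksomFavreJonsson2015} then identifies $\Psi$ as the solution, and the comparison property follows for the model $(\cX,\cL_\eps)$ from the invariance established above.
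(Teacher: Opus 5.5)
Your proposal is correct and follows the paper's proof. In both, $\phi$ is approximated uniformly by $\PAPC$ functions $\phi_j$ with $\phi_j-\gamma_\eps$ constant on the unbounded edges, so that $(\phi_j-\gamma_\eps)\circ\overline{\trop}$ factors through $\rho_{\cX}=r_\Pi\circ\overline{\trop}$ and yields model metrics of curvature $\PMA(\phi_j)\ge 0$. Two citation-level remarks: the paper takes this global identity from \cite[Lemma 5.25]{YamaSYZ}, whereas \cite[Lemma 4.2]{GotoYamamoto}, as used in the paper, only gives $\trop=\trop\circ\rho_{\cX}$ over open maximal faces; and the vertical-divisor part of the curvature is the graph Laplacian via \cite{Thuillier,Wanner}, which is what your intersection computation amounts to, rather than ``the same calculation as in Lemma \ref{lem:equdim1}''. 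The only real difference is that the paper concludes the equation and the comparison property by feeding semi-positivity into Proposition \ref{prop:solMANA}, while your main route passes to the limit using continuity of $\MA_{\NA}$ and of $\PMA$ under uniform convergence, which is equally valid here.
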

\begin{proof}
By Proposition \ref{prop:solMANA} and Lemma \ref{lem:equdim1}, it suffices to show that the metric $(\theta_{\eps} +\phi_{\NA})$ on $(\mathcal{O}_X(D_{\eps}))^{\an}$ is semi-positive.

Set $\psi \coloneqq \phi-\gamma_\eps$. On every unbounded edge of $\Pi$, $\psi$ is a continuous convex function, and by Proposition \ref{prop:comp MA} the second derivative $\psi''$ vanishes. As a result, it must be constant on every unbounded face of $\Pi$, so that $\psi = \psi \circ r_{\Pi}$. Now let $(\phi_j)_j$ be a sequence in  $\PAPC(\tX,\gamma_\eps)$ converging to $\phi$ uniformly. Setting $\psi_j \coloneqq  (\phi_j-\gamma_\eps) \circ r_{\Pi}$, we have that $(\gamma_\eps+\psi_j) \in \PAPC(\tX,\gamma_\eps)$ and $\psi_j$ converges uniformly to $\psi$ on $\tX$. As a result, it suffices to show that $(\theta_{\eps}+\psi_j \circ \overline{\trop})$ is a semi-positive model metric for all $j$. 

By \cite[Lemma 5.25]{YamaSYZ} (note that the blowups performed to construct $\cX$ are the identity in this example) and under the identification $\lvert \Sk(X) \rvert = \lvert \Sk(\Pi) \rvert$, the Berkovich retraction $\rho_{\cX}$ agrees with the composition $r_{\Pi} \circ \overline{\trop}$, so that $\psi_j \circ \overline{\trop} = \psi_j \circ \rho_{\cX}$. It then follows from \cite[Theorem 3.2.10 (ii)]{Thuillier} and \cite[Theorem 3.28]{Wanner} that $\NAMA (\psi_j \circ \overline{\trop}) = i_*\Delta ((\psi_{j})_{| \Sk(\Pi)})$, where here the Laplacian is defined on the finite graph $\Sk(\Pi)$ (and does not see the unbounded edges), and $i \colon \Sk(X) \to X^{\an}$ is the embedding. Since $\psi_j$ is constant along the unbounded edges, we have $\Delta ((\psi_{j})_{| \Sk(\Pi)}) = \PMA(\psi_j)$ by Proposition \ref{prop:laplacian}. It follows from Lemma \ref{lem:equdim1} that
$$\NAMA(\theta_{\eps})+ \NAMA (\psi_j \circ \overline{\trop})=\PMA(\gamma_\eps+\psi_j) \ge 0,$$
so that $(\theta_{\eps}+\psi_j \circ \overline{\trop})$ is semi-positive, which concludes.
\end{proof}

\vspace{0.5cm}
\footnotesize \printbibliography
\end{document}

%% file: macros.tex
\newcommand{\N}{\ensuremath{\mathbb{N}}}
\newcommand{\Z}{\ensuremath{\mathbb{Z}}}

\newcommand{\Q}{\ensuremath{\mathbb{Q}}}

\newcommand{\R}{\ensuremath{\mathbb{R}}}
\newcommand{\C}{\ensuremath{\mathbb{C}}}

\newcommand{\PP}{\ensuremath{\mathbb{P}}}

\newcommand{\Pro}{\ensuremath{\mathbb{P}}}

\newcommand{\cX}{\ensuremath{\mathscr{X}}}

\newcommand{\cL}{\ensuremath{\mathscr{L}}}

\newcommand{\cU}{\ensuremath{\mathscr{U}}}

\newcommand{\eps}{\ensuremath{\varepsilon}}

\renewcommand{\R}{\ensuremath{\mathbb{R}}}
\renewcommand{\C}{\ensuremath{\mathbb{C}}}
\renewcommand{\Pro}{\ensuremath{\mathbb{P}}}

\renewcommand{\cU}{\ensuremath{\mathscr{U}}}

\newcommand{\Spec}{\ensuremath{\mathrm{Spec}\,}}

\newcommand{\dist}{\mathrm{dist}}

\newcommand{\Hom}{\mathrm{Hom}}

\newcommand{\an}{\mathrm{an}}

\newcommand{\NAMA}{\ensuremath{\mathrm{MA_{NA}}}}
\newcommand{\RMA}{\ensuremath{\mathrm{MA_\mathbb{R}}}}

\newcommand{\PMA}{\ensuremath{\mathrm{MA_{poly}}}}

\newcommand{\Star}{\mathrm{Star}}

\newcommand{\trop}{\operatorname{trop}}

\newcommand{\conv}{\operatorname{conv}}

\newcommand{\MA}{\operatorname{MA}}

\newcommand{\PSH}{\operatorname{PSH}}
\newcommand{\tX}{\mathbf{X}}

\newcommand{\Sk}{\mathrm{Sk}}

\newcommand{\T}{\mathbb{T}}
\DeclareMathOperator{\Ker}{Ker}

\DeclareMathOperator{\PA}{PA}
\DeclareMathOperator{\Aff}{Aff}
\DeclareMathOperator{\Lip}{Lip}
\DeclareMathOperator{\diam}{diam}
\DeclareMathOperator{\poly}{poly}
\DeclareMathOperator{\relint}{relint}
\DeclareMathOperator{\Cone}{Cone}

\DeclareMathOperator{\PC}{PC}
\DeclareMathOperator{\PCC}{PCC}

\DeclareMathOperator{\PCreg}{PC^{\mathrm{reg}}}

\DeclareMathOperator{\Vol}{Vol}

\DeclareMathOperator{\PAPC}{PAPC}

\DeclareMathOperator{\Capa}{Cap}
\DeclareMathOperator{\rec}{rec}

\DeclareMathOperator{\NA}{NA}
\newcommand{\on}[1]{\operatorname{#1}}

\newcommand{\ca}[1]{{\mathcal{#1}}}

%% file: References.bib
@article{BoucksomJonssonc,
     author = {S\'ebastien Boucksom and Mattias Jonsson},
     title = {Global pluripotential theory over a trivially valued field},
     journal = {Annales de la Facult\'e des sciences de Toulouse : Math\'ematiques},
     pages = {647--836},
     publisher = {Universit\'e Paul Sabatier, Toulouse},
     volume = {Ser. 6, 31},
     number = {3},
     year = {2022},
     doi = {10.5802/afst.1705},
     language = {en},
     url = {https://afst.centre-mersenne.org/articles/10.5802/afst.1705/}
}

@article{BEGZ,
author = {S{\'e}bastien Boucksom and Philippe Eyssidieux and Vincent Guedj and Ahmed Zeriahi},
title = {{Monge--Ampère equations in big cohomology classes}},
volume = {205},
journal = {Acta Mathematica},
number = {2},
publisher = {Institut Mittag-Leffler},
pages = {199 -- 262},
year = {2010},
doi = {10.1007/s11511-010-0054-7},
URL = {https://doi.org/10.1007/s11511-010-0054-7}
}

@book{ewald,
 author = {Ewald, G{\"u}nter},
 title = {Combinatorial convexity and algebraic geometry},
 fseries = {Graduate Texts in Mathematics},
 series = {Grad. Texts Math.},
 issn = {0072-5285},
 volume = {168},
 isbn = {0-387-94755-8},
 year = {1996},
 publisher = {New York, NY: Springer},
 language = {English},
 zbMATH = {960150},
 Zbl = {0869.52001}
}

@article{AminiPiquerez2020,
  author      = {Omid Amini and Matthieu Piquerez},
  date        = {2020},
  title       = {Hodge theory for tropical varieties},
  eprint      = {2007.07826},
  eprintclass = {math.AG},
  eprinttype  = {arXiv},
}

@article{AHK,
 author = {Adiprasito, Karim and Huh, June and Katz, Eric},
 title = {Hodge theory for combinatorial geometries},
 fjournal = {Annals of Mathematics. Second Series},
 journal = {Ann. Math. (2)},
 issn = {0003-486X},
 volume = {188},
 number = {2},
 pages = {381--452},
 year = {2018},
 language = {English},
 doi = {10.4007/annals.2018.188.2.1},
 zbMATH = {6921184},
 Zbl = {1442.14194}
}

@article{BGJM,
    author = {Burgos Gil, J. and Gubler, W. and Jell, Philipp and Martin, F.},
    title = "{Differentiability of non-archimedean volumes and non-archimedean Monge--Ampère equations, with an appendix by Lazarsfeld}",
    journal = {Algebraic Geometry},
    volume = {7},
    number = {(2)},
    pages = {113-152},
    year = {2020},
    doi = {10.14231/AG-2020-005},
}

@article{LiSurvey,
AUTHOR  =   {Li, Y.},
TITLE   =   {Survey on the metric {S}{Y}{Z} conjecture and non-archimedean geometry},
YEAR    =   {2022},
eprint =   {2204.11363},
archivePrefix={arXiv},
primaryClass={math.AG}
}

@Article{GotoYamamoto,
      title={Toric degenerations of Calabi--Yau complete intersections and metric SYZ conjecture}, 
      author={Keita Goto and Yuto Yamamoto},
      year={2024},
      eprint={2407.09133},
      archivePrefix={arXiv},
      primaryClass={math.AG},
      url={https://arxiv.org/abs/2407.09133}, 
}

@Article{GathmannKerberMarkwig2009,
  author       = {Gathmann, Andreas and Kerber, Michael and Markwig, Hannah},
  date         = {2009},
  journaltitle = {Compos. Math.},
  title        = {Tropical fans and the moduli spaces of tropical curves},
  doi          = {10.1112/S0010437X08003837},
  issn         = {0010-437X},
  number       = {1},
  pages        = {173--195},
  url          = {https://doi.org/10.1112/S0010437X08003837},
  volume       = {145},
  fjournal     = {Compositio Mathematica},
  mrclass      = {14N35 (14N10)},
  mrnumber     = {2480499},
  mrreviewer   = {Yunfeng Jiang},
}

@Article{KT,
    TITLE   =   {Non-archimedean {K}ähler geometry},
    AUTHOR  =   {Kontsevich, M. and Tschinkel, Y.},
    YEAR    =   {2002},
    NOTE    =   {Unpublished note}
}

@Article{APW1,
    TITLE   =   {Mixed differential calculus and Monge--Ampère equations on tropical varieties},
    AUTHOR  =   {Amini, O. and Piquerez, M. and Wu, L.},
    YEAR    =   {2026},
    NOTE    =   {In preparation}
}

@Article{APW2,
    TITLE   =   {On solutions to tropical Monge--Ampère equations},
    AUTHOR  =   {Amini, O. and Piquerez, M. and Wu, L.},
    YEAR    =   {2026},
    NOTE    =   {In preparation}
}

@InProceedings{GublerCLD,
author="Gubler, Walter",
editor="Baker, Matthew
and Payne, Sam",
title="Forms and Currents on the Analytification of an Algebraic Variety (After Chambert-Loir and Ducros)",
booktitle="Nonarchimedean and Tropical Geometry",
year="2016",
publisher="Springer International Publishing",
address="Cham",
pages="1--30",
isbn="978-3-319-30945-3"
}

@Article{CLD,
    TITLE   =   {Formes différentielles réelles et courants sur les espaces de {B}erkovich},
    AUTHOR  =   {Chambert-Loir, A. and Ducros, A.},
    YEAR    =   {2025},
    eprint={1204.6277},
      archivePrefix={arXiv},
      primaryClass={math.AG},
      url={https://arxiv.org/abs/1204.6277}, 
    }

@article{BHJ17,
 author = {Boucksom, S{\'e}bastien and Hisamoto, Tomoyuki and Jonsson, Mattias},
 title = {Uniform {{\(K\)}}-stability, {Duistermaat}-{Heckman} measures and singularities of pairs},
 fjournal = {Annales de l'Institut Fourier},
 journal = {Ann. Inst. Fourier},
 issn = {0373-0956},
 volume = {67},
 number = {2},
 pages = {743--841},
 year = {2017},
 language = {English},
 doi = {10.5802/aif.3096},
 zbMATH = {6821960},
 Zbl = {1391.14090}
}

@article{BHJ19,
 author = {Boucksom, S{\'e}bastien and Hisamoto, Tomoyuki and Jonsson, Mattias},
 title = {Uniform {K}-stability and asymptotics of energy functionals in {K{\"a}hler} geometry},
 fjournal = {Journal of the European Mathematical Society (JEMS)},
 journal = {J. Eur. Math. Soc. (JEMS)},
 issn = {1435-9855},
 volume = {21},
 number = {9},
 pages = {2905--2944},
 year = {2019},
 language = {English},
 doi = {10.4171/JEMS/894},
 zbMATH = {7117723},
 Zbl = {1478.53115}
}

@article{BBGZ,
     author = {Berman, Robert J. and Boucksom, S\'ebastien and Guedj, Vincent and Zeriahi, Ahmed},
     title = {A variational approach to complex {Monge--Amp\`ere} equations},
     journal = {Publications Math\'ematiques de l'IH\'ES},
     pages = {179--245},
     year = {2013},
     publisher = {Springer-Verlag},
     volume = {117},
     doi = {10.1007/s10240-012-0046-6},
     zbl = {1277.32049},
     language = {en},
     url = {https://www.numdam.org/articles/10.1007/s10240-012-0046-6/}
}

@article{BGK,
 author = {Burgos Gil, J. and Gubler, W. and Künnemann, K.},
 title = {A tropical formula for non-Archimedean local heights},
 year = {2025},
 eprint =   {2512.07431},
 eprintclass    =   {math.AG},
 eprinttype =   {arXiv}
}

@book{gutierrez,
 author = {Guti{\'e}rrez, Cristian E.},
 title = {The {Monge}--{Amp{\`e}re} equation},
 edition = {2nd edition},
 fseries = {Progress in Nonlinear Differential Equations and Their Applications},
 series = {Prog. Nonlinear Differ. Equ. Appl.},
 issn = {1421-1750},
 volume = {89},
 isbn = {978-3-319-43372-1; 978-3-319-43374-5},
 year = {2016},
 publisher = {Basel: Birkh{\"a}user/Springer},
 language = {English},
 doi = {10.1007/978-3-319-43374-5},
 zbMATH = {6610115},
 Zbl = {1356.35004}
}

@Book{CoxLittleSchenck2011,
  author     = {Cox, David A. and Little, John B. and Schenck, Henry K.},
  title      = {Toric varieties},
  year       = {2011},
  volume     = {124},
  series     = {Graduate Studies in Mathematics},
  publisher  = {American Mathematical Society, Providence, RI},
  isbn       = {978-0-8218-4819-7},
  pages      = {xxiv+841},
  doi        = {10.1090/gsm/124},
  url        = {http://dx.doi.org/10.1090/gsm/124},
  mrclass    = {14M25 (05A15 05E45 52B12)},
  mrnumber   = {2810322},
  mrreviewer = {Ivan V. Arzhantsev},
}

@incollection{baker-faber,
 author = {Baker, Matthew and Faber, Xander},
 title = {Metrized graphs, {Laplacian} operators, and electrical networks},
 booktitle = {Quantum graphs and their applications. Proceedings of an AMS-IMS-SIAM joint summer research conference on quantum graphs and their applications, Snowbird, UT, USA, June 19--23, 2005},
 isbn = {0-8218-3765-6},
 pages = {15--33},
 year = {2006},
 publisher = {Providence, RI: American Mathematical Society (AMS)},
 language = {English},
 zbMATH = {5082564},
 Zbl = {1114.94025}
}

@Article{BedfordTaylor1982,
  author       = {Bedford, Eric and Taylor, B. A.},
  date         = {1982},
  journaltitle = {Acta Mathematica},
  title        = {A new capacity for plurisubharmonic functions},
  doi          = {10.1007/bf02392348},
  issn         = {0001-5962},
  number       = {0},
  pages        = {1--40},
  volume       = {149},
  publisher    = {International Press of Boston},
}

@Book{Fulton1993,
  author     = {Fulton, W.},
  title      = {Introduction to toric varieties},
  year       = {1993},
  volume     = {131},
  series     = {Annals of Mathematics Studies},
  note       = {The William H. Roever Lectures in Geometry},
  publisher  = {Princeton University Press, Princeton, NJ},
  isbn       = {0-691-00049-2},
  pages      = {xii+157},
  doi        = {10.1515/9781400882526},
  url        = {http://dx.doi.org/10.1515/9781400882526},
  mrclass    = {14M25 (14-02 14J30)},
  mrreviewer = {T. Oda},
}

@Book{Berkovich1990,
  author     = {Berkovich, V. G.},
  title      = {Spectral theory and analytic geometry over non-{A}rchimedean fields},
  year       = {1990},
  volume     = {33},
  series     = {Mathematical Surveys and Monographs},
  publisher  = {American Mathematical Society, Providence, RI},
  isbn       = {0-8218-1534-2},
  pages      = {x+169},
  mrclass    = {32P05 (32C15 32C37 46S10 47S10)},
  mrreviewer = {W. Bartenwerfer},
}

@Article{MustataNicaise,
  author     = {Mustata, M. and {Nicaise}, J.},
  title      = {Weight functions on non-{A}rchimedean analytic spaces and the {K}ontsevich-{S}oibelman skeleton},
  doi        = {10.14231/AG-2015-016},
  issn       = {2214-2584},
  number     = {3},
  pages      = {365--404},
  url        = {http://dx.doi.org/10.14231/AG-2015-016},
  volume     = {2},
  fjournal   = {Algebraic Geometry},
  journal    = {Algebr. Geom.},
  mrclass    = {14G22 (13A18 14F17)},
  mrreviewer = {Alessandra Bertapelle},
  year       = {2015},
}

@Article{NicaiseXu,
  author   = {{Nicaise}, J. and {Xu}, C.},
  title    = {The essential skeleton of a degeneration of algebraic varieties},
  year     = {2016},
  volume   = {138},
  number   = {6},
  pages    = {1645--1667},
  issn     = {0002-9327},
  doi      = {10.1353/ajm.2016.0049},
  url      = {http://dx.doi.org/10.1353/ajm.2016.0049},
  fjournal = {American Journal of Mathematics},
  journal  = {Amer. J. Math.},
  mrclass  = {14E30 (14G22 14J32)},
}

@Article{StromingerYauZaslow,
  author    = {Strominger, A. and Yau, {S. T.} and Zaslow, E.},
  title     = {Mirror symmetry is T-duality},
  year      = {1996},
  volume    = {479},
  number    = {1-2},
  month     = {11},
  pages     = {243--259},
  issn      = {0550-3213},
  journal   = {Nuclear Physics B},
  publisher = {Elsevier},
}

@InBook{KontsevichSoibelman,
  author    = {Kontsevich, M. and Soibelman, Y.},
  title     = {Affine Structures and Non-Archimedean Analytic Spaces},
  booktitle = {The Unity of Mathematics: In Honor of the Ninetieth Birthday of I.M. Gelfand},
  year      = {2006},
  editor    = {Etingof, Pavel and Retakh, Vladimir and Singer, I. M.},
  publisher = {Birkh{\"a}user Boston},
  pages     = {321--385},
  address   = {Boston, MA},
}

@Article{GublerJellKuennemannEtAl,
  author     = {{Gubler}, W. and {Jell}, P. and {Kuennemann}, K. and {Martin}, F.},
  title      = {{Continuity of Plurisubharmonic Envelopes in Non-Archimedean Geometry and Test Ideals}},
  issn       = {0373-0956},
  note       = {With an appendix by Jos\'{e} Ignacio Burgos Gil and Mart\'{\i}n Sombra},
  number     = {5},
  pages      = {2331--2376},
  url        = {http://aif.cedram.org/item?id=AIF_2019__69_5_2331_0},
  volume     = {69},
  fjournal   = {Universit\'{e} de Grenoble. Annales de l'Institut Fourier},
  journal    = {Ann. Inst. Fourier (Grenoble)},
  mrclass    = {32P05 (14G22 14M25 32U05)},
  mrnumber   = {4018262},
  mrreviewer = {Mattias Jonsson},
  year       = {2019},
}

@InCollection{OssermanRabinoff2013,
  author     = {Osserman, Brian and Rabinoff, Joseph},
  booktitle  = {Tropical and non-{A}rchimedean geometry},
  date       = {2013},
  title      = {Lifting nonproper tropical intersections},
  doi        = {10.1090/conm/605/12110},
  isbn       = {978-1-4704-1021-6},
  pages      = {15--44},
  publisher  = {Amer. Math. Soc., Providence, RI},
  series     = {Contemp. Math.},
  url        = {https://doi.org/10.1090/conm/605/12110},
  volume     = {605},
  mrclass    = {14T05 (14C17 14G20 14M25)},
  mrnumber   = {3204266},
  mrreviewer = {Joaquim\ Ro\'{e}},
}

@Article{Paynea,
  author     = {Payne, Sam},
  title      = {Analytification is the limit of all tropicalizations},
  journal    = {Math. Res. Lett.},
  year       = {2009},
  volume     = {16},
  number     = {3},
  pages      = {543--556},
  issn       = {1073-2780},
  url        = {https://doi.org/10.4310/MRL.2009.v16.n3.a13},
  fjournal   = {Mathematical Research Letters},
  mrclass    = {14T05 (14G22)},
  mrnumber   = {2511632},
  mrreviewer = {Joaquim Ro\'e},
}

@article{hom-smooth,
 author = {Amini, Omid and Piquerez, Matthieu},
 title = {Homological smoothness and {Deligne} resolution for tropical fans},
 year = {2024},
 eprint      = {2405.05718},
  eprintclass = {math.AG},
  eprinttype  = {arXiv},
}

@Article{non-arch-stab,
  author        = {{Boucksom}, Sébastian and {Jonsson}, Mattias},
  title         = {Measures of finite energy in pluripotential theory: A synthetic approach.},
  year          = {2023},
  eprint        = {2307.01697},
  archiveprefix = {arXiv},
  primaryclass  = {math.AG},
}

@Article{BoucksomJonsson2017,
  author     = {{Boucksom}, S. and {Jonsson}, M.},
  title      = {Tropical and non-{A}rchimedean limits of degenerating families of volume forms},
  doi        = {10.5802/jep.39},
  issn       = {2429-7100},
  pages      = {87--139},
  url        = {https://doi.org/10.5802/jep.39},
  volume     = {4},
  fjournal   = {Journal de l'\'Ecole polytechnique. Math\'ematiques},
  journal    = {J. \'Ec. polytech. Math.},
  mrclass    = {32Q25 (14G22 14J32 14T05 32P05 53C23)},
  mrnumber   = {3611100},
  mrreviewer = {Dan Abramovich},
  year       = {2017},
}

@Article{YamaSYZ,
      title={Non-archimedean SYZ fibrations via tropical contractions}, 
      author={Yuto Yamamoto},
      year={2024},
      eprint={2404.04972},
      archivePrefix={arXiv},
      primaryClass={math.AG},
      url={https://arxiv.org/abs/2404.04972}, 
}

@article{zbMATH02244752,
 author = {Ardila, Federico and Klivans, Caroline J.},
 title = {The {Bergman} complex of a matroid and phylogenetic trees},
 fjournal = {Journal of Combinatorial Theory. Series B},
 journal = {J. Comb. Theory, Ser. B},
 issn = {0095-8956},
 volume = {96},
 number = {1},
 pages = {38--49},
 year = {2006},
 language = {English},
 doi = {10.1016/j.jctb.2005.06.004},
 zbMATH = {2244752},
 Zbl = {1082.05021}
}

@article{Liu,
 author = {Liu, Yifeng},
 title = {A non-archimedean analogue of the {Calabi}-{Yau} theorem for totally degenerate abelian varieties},
 fjournal = {Journal of Differential Geometry},
 journal = {J. Differ. Geom.},
 issn = {0022-040X},
 volume = {89},
 number = {1},
 pages = {87--110},
 year = {2011},
 language = {English},
 doi = {10.4310/jdg/1324476752},
 zbMATH = {6024983},
 Zbl = {1254.14026}
}

@book{Figalli,
 author = {Figalli, Alessio},
 title = {The {Monge}--{Amp{\`e}re} equation and its applications},
 fseries = {Zurich Lectures in Advanced Mathematics},
 series = {Zur. Lect. Adv. Math.},
 isbn = {978-3-03719-170-5; 978-3-03719-670-0},
 year = {2017},
 publisher = {Z{\"u}rich: European Mathematical Society (EMS)},
 language = {English},
 doi = {10.4171/170},
 zbMATH = {6669881},
 Zbl = {1435.35003}
}

@article{smooth-higher,
 author = {Itenberg, Ilia and Katzarkov, Ludmil and Mikhalkin, Grigory and Zharkov, Ilia},
 title = {Tropical homology},
 fjournal = {Mathematische Annalen},
 journal = {Math. Ann.},
 issn = {0025-5831},
 volume = {374},
 number = {1-2},
 pages = {963--1006},
 year = {2019},
 language = {English},
 doi = {10.1007/s00208-018-1685-9},
 zbMATH = {7070539},
 Zbl = {1460.14146}
}

@article{Jell,
 author = {Jell, Philipp},
 title = {Constructing smooth and fully faithful tropicalizations for {Mumford} curves},
 fjournal = {Selecta Mathematica. New Series},
 journal = {Sel. Math., New Ser.},
 issn = {1022-1824},
 volume = {26},
 number = {4},
 pages = {23},
 note = {Id/No 60},
 year = {2020},
 language = {English},
 doi = {10.1007/s00029-020-00586-2},
 zbMATH = {7239259},
 Zbl = {1440.14283}
}

@article{yau,
 author = {Yau, Shing-Tung},
 title = {On the {Ricci} curvature of a compact {K{\"a}hler} manifold and the complex {Monge}--{Amp{\`e}re} equation. {I}},
 fjournal = {Communications on Pure and Applied Mathematics},
 journal = {Commun. Pure Appl. Math.},
 issn = {0010-3640},
 volume = {31},
 pages = {339--411},
 year = {1978},
 language = {English},
 doi = {10.1002/cpa.3160310304},
 zbMATH = {3576316},
 Zbl = {0369.53059}
}

@Book{MaclaganSturmfels2015,
  author     = {Maclagan, Diane and Sturmfels, Bernd},
  title      = {Introduction to tropical geometry},
  isbn       = {978-0-8218-5198-2},
  pages      = {xii+363},
  publisher  = {American Mathematical Society, Providence, RI},
  series     = {Graduate Studies in Mathematics},
  volume     = {161},
  mrclass    = {14T05 (05B35 14M25 15A80 52B70)},
  mrnumber   = {3287221},
  mrreviewer = {Patrick Popescu-Pampu},
  year       = {2015},
}

@article {FS,
    AUTHOR = {Fulton, W. and Sturmfels, B.},
     TITLE = {Intersection theory on toric varieties},
   JOURNAL = {	Topology},
      VOLUME = {36},
      YEAR = {1997},
    NUMBER = {2},
     PAGES = {335--353},
}

@article{matroids-lagrangian,
 author = {Ardila, Federico and Denham, Graham and Huh, June},
 title = {Lagrangian geometry of matroids},
 fjournal = {Journal of the American Mathematical Society},
 journal = {J. Am. Math. Soc.},
 issn = {0894-0347},
 volume = {36},
 number = {3},
 pages = {727--794},
 year = {2023},
 language = {English},
 doi = {10.1090/jams/1009},
 zbMATH = {7682634},
 Zbl = {1512.05068}
}

@Article{Gross2005,
  author       = {Gross, Mark},
  date         = {2005},
  journaltitle = {Math. Ann.},
  title        = {Toric degenerations and {B}atyrev-{B}orisov duality},
  doi          = {10.1007/s00208-005-0686-7},
  issn         = {0025-5831},
  number       = {3},
  pages        = {645--688},
  url          = {https://doi.org/10.1007/s00208-005-0686-7},
  volume       = {333},
  fjournal     = {Mathematische Annalen},
  mrclass      = {14J32 (14M25)},
  mrnumber     = {2198802},
  mrreviewer   = {Diego Matessi},
}

@Article{BoucksomFavreJonsson2015,
  author       = {Boucksom, S\'{e}bastien and Favre, Charles and Jonsson, Mattias},
  date         = {2015},
  journaltitle = {J. Amer. Math. Soc.},
  title        = {Solution to a non-{A}rchimedean {M}onge-{A}mp\`ere equation},
  doi          = {10.1090/S0894-0347-2014-00806-7},
  issn         = {0894-0347},
  number       = {3},
  pages        = {617--667},
  url          = {https://doi.org/10.1090/S0894-0347-2014-00806-7},
  volume       = {28},
  fjournal     = {Journal of the American Mathematical Society},
  mrclass      = {32P05 (32W20)},
  mrnumber     = {3327532},
  mrreviewer   = {Christian Lehn},
}

@article{AndreassonHultgren,
      title={Solvability of Monge--Amp\`ere equations and tropical affine structures on reflexive polytopes}, 
      author={Rolf Andreasson and Jakob Hultgren},
      year={2023},
      eprint={2303.05276},
      archivePrefix={arXiv},
      primaryClass={math.DG},
      url={https://arxiv.org/abs/2303.05276}, 
}

@article{LiNA,
author = {Yang Li},
title = {{Metric SYZ conjecture and non-Archimedean geometry}},
volume = {172},
journal = {Duke Mathematical Journal},
number = {17},
publisher = {Duke University Press},
pages = {3227 -- 3255},
year = {2023},
doi = {10.1215/00127094-2022-0099},
URL = {https://doi.org/10.1215/00127094-2022-0099}
}

@article{Wanner,
author = {Wanner, V.},
title = {{Comparison of two notions of subharmonicity on non-archimedean curves}},
volume = {293},
journal = {Math. Zeitschrift},
issue = {1},
pages = {432-1823},
year = {2019},
doi = {0.1007/s00209-018-2205-z},
}

@Article{Li2024,
  author       = {Li, Yang},
  date         = {2024},
  journaltitle = {Cambridge Journal of Mathematics},
  title        = {Metric SYZ conjecture for certain toric Fano hypersurfaces},
  doi          = {10.4310/cjm.2024.v12.n1.a3},
  issn         = {2168-0949},
  number       = {1},
  pages        = {223--252},
  volume       = {12},
  publisher    = {International Press of Boston},
}

@Book{BurgosPhilipponSombra,
  author       = {José Ignacio {Burgos Gil} and Patrice Philippon and Martin Sombra},
  date         = {2014},
  title = {Arithmetic geometry of toric varieties. Metrics, measures and heights.},
  publisher = {Astérisque, no. 360},
  url       = {http://numdam.org/item/AST_2014__360__R1_0/},
}

@article{BurgosSombra2010,
author = {Burgos Gil, José and Sombra, Martín},
year = {2010},
month = {08},
pages = {},
title = {When do the Recession Cones of a Polyhedral Complex Form a Fan?},
volume = {46},
journal = {Discrete and Computational Geometry},
doi = {10.1007/s00454-010-9318-4},
}

@Article{BurgosGilGublerJellKuennemann2021,
      title={Pluripotential theory for tropical toric varieties and non-archimedean Monge--Amp\'ere equations}, 
      author={José Ignacio {Burgos Gil} and Walter Gubler and Philipp Jell and Klaus Künnemann},
      year={2021},
      eprint={2102.07392},
      archivePrefix={arXiv},
      primaryClass={math.AG},
      url={https://arxiv.org/abs/2102.07392}, 
}

@Article{Vilsmeier2021,
  author       = {Vilsmeier, Christian},
  date         = {2021},
  journaltitle = {Math. Z.},
  title        = {A comparison of the real and non-archimedean {M}onge-{A}mp\`ere operator},
  doi          = {10.1007/s00209-020-02527-3},
  issn         = {0025-5874},
  number       = {1-2},
  pages        = {633--668},
  url          = {https://doi.org/10.1007/s00209-020-02527-3},
  volume       = {297},
  fjournal     = {Mathematische Zeitschrift},
  mrclass      = {32P05 (14G22 14T20 32W20)},
  mrnumber     = {4204708},
}

@article{mihatsch,
 author = {Mihatsch, Andreas},
 title = {On tropical intersection theory},
 fjournal = {Selecta Mathematica. New Series},
 journal = {Sel. Math., New Ser.},
 issn = {1022-1824},
 volume = {29},
 number = {2},
 pages = {33},
 note = {Id/No 17},
 year = {2023},
 language = {English},
 doi = {10.1007/s00029-022-00818-7},
 zbMATH = {7679754},
 Zbl = {1527.14119}
}

@Article{HultgrenJonssonMazzonMcCleerey2022,
      title={Tropical and non-Archimedean Monge--Ampère equations for a class of Calabi-Yau hypersurfaces}, 
      author={Hultgren, Jakob and Jonsson, Mattias and Mazzon, Enrica and McCleerey, Nicholas},
      year={2022},
      eprint={2208.13697},
      archivePrefix={arXiv},
      primaryClass={math.AG},
      url={https://arxiv.org/abs/2208.13697}, 
}

@phdthesis{Thuillier,
  TITLE = {{Th{\'e}orie du potentiel sur les courbes en g{\'e}om{\'e}trie analytique non archim{\'e}dienne. Applications {\`a} la th{\'e}orie d'Arakelov}},
  AUTHOR = {Thuillier, Amaury},
  URL = {https://theses.hal.science/tel-00010990},
  SCHOOL = {{Universit{\'e} Rennes 1}},
  YEAR = {2005},
  TYPE = {Thèses},
  HAL_ID = {tel-00010990},
  HAL_VERSION = {v1},
}

@article{Yamamoto2021,
  author    = {Yamamoto, Yuto},
  date      = {2021},
  title     = {Tropical contractions to integral affine manifolds with singularities},
  eprint       = {2105.10141},
  eprintclass       = {math.AG},
  eprinttype = {arXiv}
}

@Article{BoteroGil2022,
  author       = {Botero, Ana Mar\'{\i}a and Burgos Gil, Jos\'{e} Ignacio},
  date         = {2022},
  journaltitle = {Math. Z.},
  title        = {Toroidal b-divisors and {M}onge-{A}mp\`ere measures},
  doi          = {10.1007/s00209-021-02789-5},
  issn         = {0025-5874},
  number       = {1},
  pages        = {579--637},
  url          = {https://doi.org/10.1007/s00209-021-02789-5},
  volume       = {300},
  fjournal     = {Mathematische Zeitschrift},
  mrclass      = {14C17 (14T15 26B25 32W20)},
  mrnumber     = {4359537},
  mrreviewer   = {Xia Liao},
}

@book{Rockafellar,
 author = {Rockafellar, R. Tyrrell},
 title = {Convex analysis},
 fseries = {Princeton Mathematical Series},
 series = {Princeton Math. Ser.},
 volume = {28},
 year = {1970},
 publisher = {Princeton University Press, Princeton, NJ},
 language = {English},
 doi = {10.1515/9781400873173},
 zbMATH = {3307135},
 Zbl = {0193.18401}
}

@article{DoVu,
 author = {Do, Duc Thai and Vu, Duc-Viet},
 title = {Complex {Monge}--{Amp{\`e}re} equations with solutions in finite energy classes},
 fjournal = {Mathematical Research Letters},
 journal = {Math. Res. Lett.},
 issn = {1073-2780},
 volume = {29},
 number = {6},
 pages = {1659--1683},
 year = {2022},
 language = {English},
 doi = {10.4310/MRL.2022.v29.n6.a2},
 zbMATH = {7682672},
 Zbl = {1523.32064}
}

@article{NGL,
 author = {Di Nezza, Eleonora and Guedj, Vincent and Lu, Chinh H.},
 title = {Finite entropy vs finite energy},
 fjournal = {Commentarii Mathematici Helvetici},
 journal = {Comment. Math. Helv.},
 issn = {0010-2571},
 volume = {96},
 number = {2},
 pages = {389--419},
 year = {2021},
 doi = {10.4171/CMH/515},
 zbMATH = {7367627},
 Zbl = {1473.32014}
}

@article{yuan-zhang,
 author = {Yuan, Xinyi and Zhang, Shou-Wu},
 title = {The arithmetic {Hodge} index theorem for adelic line bundles},
 fjournal = {Mathematische Annalen},
 journal = {Math. Ann.},
 issn = {0025-5831},
 volume = {367},
 number = {3-4},
 pages = {1123--1171},
 year = {2017},
 doi = {10.1007/s00208-016-1414-1},
 zbMATH = {6706027},
 Zbl = {1372.14017}
}

@Article{BK,
      title={On the height of the universal abelian variety}, 
      author={Burgos Gil, Jos{\'e} Ignacio and Kramer, J{\"u}rg},
      year={2024},
      eprint={2403.11745},
      archivePrefix={arXiv},
      primaryClass={math.NT},
      url={https://arxiv.org/abs/2403.11745}, 
}

@article{GY,
 author = {Cai, Yulin and Gubler, Walter},
 title = {Abstract divisorial spaces and arithmetic intersection numbers},
 date = {2024},
  eprint =   {2409.00611},
  eprintclass = {math.AG},
  eprinttype  = {arXiv}
}

@article{BBS,
 author = {Botero, Ana Mar{\'{\i}}a and Burgos Gil, Jos{\'e} Ignacio and Sombra, Mart{\'{\i}}n},
 title = {Convex analysis on polyhedral spaces},
 fjournal = {Mathematische Zeitschrift},
 journal = {Math. Z.},
 issn = {0025-5874},
 volume = {301},
 number = {2},
 pages = {1631--1674},
 year = {2022},
 doi = {10.1007/s00209-021-02869-6},
 zbMATH = {7525051},
 Zbl = {1535.26009}
}

@Article{BoucksomFavreJonsson,
  author       = {Boucksom, S. and Favre, Charles and Jonsson, Mattias},
  date         = {2016},
  journaltitle = {Journal of Algebraic Geometry},
  title        = {Singular semipositive metrics in non-{A}rchimedean geometry},
  doi          = {10.1090/jag/656},
  issn         = {1056-3911},
  number       = {1},
  pages        = {77--139},
  url          = {https://doi.org/10.1090/jag/656},
  volume       = {25},
  fjournal     = {Journal of Algebraic Geometry},
  journal      = {J. Algebraic Geom.},
  mrclass      = {14G22},
  mrnumber     = {3419957},
  mrreviewer   = {Marco A. Garuti},
  year         = {2016},
}

@article{delta-forms,
 author = {Gubler, Walter and K{\"u}nnemann, Klaus},
 title = {A tropical approach to nonarchimedean {Arakelov} geometry},
 fjournal = {Algebra \& Number Theory},
 journal = {Algebra Number Theory},
 issn = {1937-0652},
 volume = {11},
 number = {1},
 pages = {77--180},
 year = {2017},
 language = {English},
 doi = {10.2140/ant.2017.11.77},
 zbMATH = {6679113},
 Zbl = {1386.14096}
}

@Article{AllermannRau2010,
  author       = {Allermann, Lars and Rau, Johannes},
  date         = {2010},
  journaltitle = {Mathematische Zeitschrift},
  title        = {First steps in tropical intersection theory},
  doi          = {10.1007/s00209-009-0483-1},
  issn         = {1432-1823},
  number       = {3},
  pages        = {633--670},
  url          = {https://doi.org/10.1007/s00209-009-0483-1},
  volume       = {264},
  refid        = {Allermann2010},
}
